\theoremstyle{definition}
\theoremstyle{plain}
\newtheorem{theorem}{Theorem}[section]
\newtheorem{corollary}[theorem]{Corollary}
\newtheorem{proposition}[theorem]{Proposition}
\newtheorem{lemma}[theorem]{Lemma}
\theoremstyle{remark}
\tikzset{style green/.style={
    set fill color=green!50!lime!60,
    set border color=white,
  },
  style cyan/.style={
    set fill color=cyan!90!blue!60,
    set border color=white,
  },
  style orange/.style={
    set fill color=orange!80!red!60,
    set border color=white,
  },
  hor/.style={
    above left offset={-0.15,0.31},
    below right offset={0.15,-0.125},
    #1
  },
  ver/.style={
    above left offset={-0.1,0.3},
    below right offset={0.15,-0.15},
    #1
  }
}
\newcommand{\ok}[1]{{\color{darkgreen}{ok}}}
\definecolor{darkgreen}{rgb}{0,0.6,0}
\DeclareMathOperator{\SN}{SN}
\DeclareMathOperator{\Tr}{Tr}
\def\beq{\begin{equation}}
  \def\eeq{\end{equation}}
  \def\beqn{\begin{eqnarray*}}
  \def\eeqn{\end{eqnarray*}}
  \def\bitem{\begin{itemize}}
  \def\eitem{\end{itemize}}
  \def\benum{\begin{enumerate}}
  \def\eenum{\end{enumerate}}
  \def\bmult{\begin{multline*}}
  \def\emult{\end{multline*}}
  \def\bcenter{\begin{center}}
  \def\ecenter{\end{center}}
\newcolumntype{P}[1]{>{\centering\arraybackslash}p{#1}}
\DeclareMathOperator*{\argmax}{arg\, max}
\newcommand{\argmin}{\mathop{\mathrm{arg\,min}}}
\DeclareMathOperator{\rank}{rank}
\DeclareMathOperator{\tr}{tr}
\def\cC{\mathcal{C}}
\def\cD{\mathcal{D}}
\def\cG{\mathcal{G}}
\def\cH{\mathcal{H}}
\def\cI{\mathcal{I}}
\def\cL{\mathcal{L}}
\def\cN{\mathcal{N}}
\def\cP{\mathcal{P}}
\def\cQ{\mathcal{Q}}
\def\cR{\mathcal{R}}
\def\cU{\mathcal{U}}
\def\cW{\mathcal{W}}
\def\cY{\mathcal{Y}}
\def\br{\mathbf{r}}
\def\1{{\mathbf 1}}
\def\0{{\mathbf 0}}
\newcommand{\bDelta}{{\boldsymbol\Delta}}
\def\bbC{\mathbb{C}}
\def\bbE{\mathbb{E}}
\def\bbN{\mathbb{N}}
\def\bbP{\mathbb{P}}
\def\bbR{\mathbb{R}}
\def\bbS{\mathbb{S}}
\def\bbZ{\mathbb{Z}}
\DeclareMathAlphabet\mathbfcal{OMS}{cmsy}{b}{n}
\newcommand{\E}{\operatorname{\mathbb{E}}}
\renewcommand{\P}{\operatorname{\mathbb{P}}}
\newcommand{\floor}[1]{\left\lfloor#1\right\rfloor} 
\newcommand{\ceil}[1]{\left\lceil#1\right\rceil}
\newcommand{\abs}[1]{\left\lvert#1\right\rvert}
\def\proscal<#1,#2>{\langle #1,#2\rangle}
\newcommand{\poubelle}[1]{}
\newcommand{\loc}{{\mathrm{loc}}}
\newcommand{\op}{\mathrm{op}}
\newcommand{\dya}{\mathrm{dya}}
\newcommand{\lone}{\mathrm{L}_1}
\newcommand{\noise}{E}
\newcommand{\algoPrincipal}{\mathbf{ISR}}
\newcommand{\algoRefineLocally}{\mathbf{SLR}}
\newcommand{\DAG}{\mathbf{DAG}}
\newcommand{\rk}{\mathbf{rk}}
\newcommand{\pth}{\mathbf{path}}
\newcommand{\iso}{\mathrm{iso}}
\newcommand{\biso}{\mathrm{biso}}
\newcommand{\reco}{\mathrm{reco}}
\newcommand{\perm}{\mathrm{perm}}
\newcommand{\spaceAnd}{\quad \text{ and } \quad}
\def\bG{\mathbf{G}}
\def\bI{\mathbf{I}}
\def\bP{\mathbf{P}}
\def\br{\mathbf{r}}
\def\1{{\mathbf 1}}
\def\0{{\mathbf 0}}
\def\bbC{\mathbb{C}}
\def\bbE{\mathbb{E}}
\def\bbN{\mathbb{N}}
\def\bbP{\mathbb{P}}
\def\bbR{\mathbb{R}}
\def\bbS{\mathbb{S}}
\def\bbZ{\mathbb{Z}}
\DeclareMathAlphabet\mathbfcal{OMS}{cmsy}{b}{n}
\begin{document}

\begin{frontmatter}

	\title{Optimal rates for ranking a permuted isotonic matrix in polynomial time}
	\runtitle{Optimal ranking}
	\begin{aug}
	\author{Emmanuel Pilliat, Alexandra Carpentier, and Nicolas Verzelen}
	\runauthor{Pilliat et al.}
	\end{aug}

	\begin{abstract}
        We consider a ranking problem where we have noisy observations from a matrix with isotonic columns whose rows have been permuted by some permutation $\pi^*$. This encompasses many models, including crowd-labeling and ranking in tournaments by pair-wise comparisons. In this work, we provide an optimal and polynomial-time procedure for recovering $\pi^*$, settling an open problem in~\cite{flammarion2019optimal}. As a byproduct, our procedure is used to improve the state-of-the art for ranking problems in the stochastically transitive model (SST). 
        Our approach is based on iterative pairwise comparisons by suitable data-driven weighted means of the columns. These weights are built using a combination of spectral methods with new dimension-reduction techniques. In order to deal with the important case of missing data, we establish a new concentration inequality for sparse and centered rectangular Wishart-type matrices. 
  %
	\end{abstract}

\end{frontmatter}

\maketitle

\section{Introduction}

Ranking problems have recently spurred a lot of interest in the statistical and computer science literature. This includes a variety of problems ranging from ranking experts/workers in crowd-sourced data, ranking players in a tournament or equivalently sorting objects based on pairwise comparisons.

To fix ideas, let us consider a problem where we have noisy partial observations from an unknown matrix $M\in [0,1]^{n\times d}$. In crowdsourcing problems, $n$ stands for the number of experts (or workers), $d$ stands for the number of questions (or tasks) and $M_{i,k}$ for the probability that expert $i$ answers question $k$ correctly. For tournament problems, we have $n=d$ players (or objects) and $M_{i,k}$ stands for the probability that player $i$ wins against player $k$. Based on these noisy data, the general goal is to provide a full ranking of the experts or of the players. 

Originally, these problems were tackled using parametric model for the matrix $M$. Notably, this includes the noisy sorting model \cite{braverman2008noisy} or Bradley-Luce-Terry model \cite{bradley1952rank}. Still, it has been observed that these simple models are often unrealistic and do not tend to fit well. 

This has spurred a recent line of literature where strong parametric assumptions are replaced by non-parametric assumptions \cite{shah2015estimation,shah2016stochastically,shah2019feeling,shah2020permutation, mao2020towards,mao2018breaking,liu2020better,flammarion2019optimal,bengs2021preference,saad2023active}. In particular, for tournament problems, the strong stochastically transitive (SST) model presumes that the square matrix $M$ is, up to a common permutation $\pi^*$ of the rows and of the columns, bi-isotonic and satisfies the skew symmetry condition $M_{i,k}+M_{k,i}=1$. Although optimal rates for estimation of the permutation $\pi^*$ have been pinpointed in the earlier paper of Shah et al.~\cite{shah2016stochastically}, there remains a large gap between these optimal rates and the best known performances of polynomial-time algorithms. This has led to  conjecture the existence of a statistical-computational gap~\cite{mao2020towards, liu2020better}.

For crowdsourcing data, the counterpart of the SST model is the so-called bi-isotonic model, where the rectangular matrix $M$ is bi-isotonic, up to an unknown permutation $\pi^*$ of its rows and an unknown permutation $\eta^*$ of its columns. 
This model turns out to be really similar to the SST model and the existence of a statistical-computational gap has also been conjectured~\cite{mao2020towards}. 

\medskip 

In this paper, we tackle a slightly different route and we consider the arguably more general isotonic model~\cite{flammarion2019optimal}. The only assumption is that all the columns of $M$ are nondecreasing up to an unknown permutation of the rows, making the isotonic model more flexible than the bi-isotonic and SST models. It is in fact the most general model under which an unambiguous ranking of the experts is well-defined. In this model as well, there is a gap between the (statistical) optimal rates, and the rate obtained by the (polynomial-time) algorithm in~\cite{flammarion2019optimal}.

Our main contributions are as follows. For the isotonic model, we establish the optimal rate for recovering the permutation, and we introduce a polynomial-time procedure achieving this rate, thereby settling the absence of any computational gap in this model. Besides, our procedure and results have important consequences when applied to the SST and bi-isotonic model. More specifically, we achieve the best known guarantees  in these two models~\cite{liu2020better,mao2018breaking} and even improve them in some regimes.

\subsection{Problem formulation}\label{ss:pf}


Let us further introduce our model.  A bounded matrix $A\in [0,1]^{n\times d}$ is said to be isotonic if its columns are nondecreasing, that is $A_{i,k}\leq A_{i+1,k}$ for any $i\in [n-1]$ and $k \in [d]$. Henceforth, we write $\mathbb{C}_{\iso}$ for the collection of all $n\times d$  isotonic matrices taking values in $[0,1]$. In our model, we recall that we  assume that the signal matrix $M$ is isotonic up to an unknown permutation of its rows. In other words, there exists a permutation $\pi^*$ of $[n]$ such that the matrix $M_{\pi^{*-1}}$ defined by $(M_{\pi^{*-1}})_{i,k}= (M_{\pi^{*-1}(i),k})$ has nondecreasing columns, that is
\begin{equation}\label{eq:assumption_isotonic}
	M_{\pi^{*-1}(i),k} \leq M_{\pi^{*-1}(i+1),k} \enspace ,
\end{equation} 
for any $i \in \{1, \dots, n-1\}$ and $k \in \{1, \dots, d\}$, or equivalently $M_{\pi^{*-1}} \in \bbC_{\iso}$.
Henceforth, $\pi^*$ is called an oracle permutation. Using the terminology of crowdsourcing, we refer to  $i^{\mathrm{th}}$ row of $M$ as \emph{expert} $i$ and to $k^{\mathrm{th}}$ column as  \emph{question} $k$.

In this work, we have $N$ partial and noisy observations of the matrix $M$ of the form $(x_t,y_t)$ where 
\begin{equation}\label{eq:model_partial}
y_t = M_{x_t} + \varepsilon_t \quad t=1,\ldots, N \enspace .
\end{equation}
For each $t$, the position $x_t\in [n]\times [d]$ is sampled uniformly. The noise variables $\varepsilon_t$'s are  independent and their distributions only 
depend on the position $x_t$. We only assume that all these distributions are centered and are subGaussian with a subGaussian norm of at most $1$ -- see e.g. \cite{wainwright2019high}. In particular, this encompasses the typical case where the $y_t$'s follow Bernoulli distributions with parameters $M_{x_t}$.

As usual in the literature e.g.~\cite{pilliat2022optimal,liu2020better, mao2020towards}, we use, for technical convenience, the Poissonization trick which amounts to assuming that the number $N$ of observations has been sampled according to a Poisson distribution with parameter $\lambda n d$. We refer to $\lambda>0$ as the sampling effort. When $\lambda > 1$, we have, in expectation, several independent observations per entry $(i,j)$ - and $\lambda=1$ means that there is on average one observation per entry. In this paper, we are especially interested in the sparse case where $\lambda$ is much smaller than one, i.e.~the case where we have missing observations for some entries. We refer to $\lambda=1$ as the full observation regime at it bears some similarity to the case often considered in the literature --e.g.~\cite{shah2016stochastically,flammarion2019optimal}, where we have a full observation of the matrix,
\beq\label{eq:model_0}
Y= M + E' \in \mathbb{R}^{n\times d}\ . 
\eeq
The entries of the noise matrix $E'$  are independent, centered, and $1$-subGaussian. 





In this work, we are primarily interested in estimating the permutation $\pi^*$. Given an estimator $\hat{\pi}$, we use the square Frobenius norm $\|M_{\hat{\pi}^{-1}} - M_{\pi^{*-1}} \|_F^2$ as the loss. This loss quantifies the distance between the matrix $M$ reordered according to the estimator $\hat{\pi}$ and the matrix $M$ sorted according to the oracle permutation $\pi^*$. This loss is explicitly used in~\cite{liu2020better,pilliat2022optimal} and is implicit in earlier works --see e.g.~\cite{shah2016stochastically}.

We define the associated optimal risk of permutation recovery as a function of the number $n$ of experts, the number $d$ of question and the sampling effort $\lambda$,
\beq\label{eq:minimax_risk_perm}
\cR^*_{\perm}(n,d,\lambda)= \inf_{\hat \pi}  \sup_{\stackrel{\pi^*\in \Pi_n}{M:\,  M_{\pi^{*-1}}\in \mathbb{C}_{\iso}}} \mathbb E_{(\pi^*, M)} [\|M_{\hat \pi^{-1}} - M_{\pi^{*-1}}\|_F^2] \enspace , 
\eeq
where the infimum is taken over all estimators. Here, $\Pi_n$ stands for the collection of all permutations of $[n]$. If the main focus is not only to estimate $\pi^*$, but also to reconstruct the unknown matrix $M$, we also consider the optimal reconstruction rate
\begin{equation}\label{eq:minimax_risk_est}
	\cR^*_{\reco}(n,d,\lambda)= \inf_{\hat M}\sup_{\stackrel{\pi^*\in \Pi_n}{M:\,  M_{\pi^{*-1}}\in \mathbb{C}_{\iso}}} \E\left[\|\hat M - M\|_F^2\right]\ \enspace .
\end{equation}
It turns out that reconstructing the matrix $M$ is more challenging than estimating the permutation $\pi^*$. Considering both risks allows to disentangle the reconstruction of the matrix $M$: looking at both enables to distinguish the error that is due to estimating the permutation, from the error that comes from estimating an isotonic matrix. 


\subsection{Past results on the isotonic model and our contributions}\label{ss:contributions}

In, the specific case where $d=1$ (a single column), our model is equivalent to uncoupled isotonic regression and is motivated by optimal transport. Rigollet and Niles-Weed~\cite{rigollet2019uncoupled} have established that the reconstruction error of $M$ is of the order of $n (\tfrac{\log\log(n)}{\log(n)})^2$. 

For the general case $d\geq 1$, Flammarion et al.~\cite{flammarion2019optimal} have shown\footnote[1]{The authors consider the isotonic model as a subcase of a seriation model, where each columns of $M_{\pi^{*-1}}$ is only assumed to be unimodal.} that the optimal reconstruction error in the full observation model~\eqref{eq:model_0} is of the order of $n^{1/3}d+n$. However, the corresponding procedure is not efficient. They also introduce an efficient procedure that first estimates $\pi^*$ using a score based on row comparisons on $Y$. Unfortunately, this method only achieves a reconstruction error of the order of $n^{1/3}d + n\sqrt{d}$ which is significantly slower than the optimal one. Whether or not there is a statistical-computationnal gap was therefore an open problem.

We prove in this work that there is no computational statistical gap in this model. More precisely, we introduce estimators that are both polynomial-time and minimax optimal up to some polylog factors.
To that end, we characterize the optimal risks $\cR^*_{\perm}(n,d,\lambda)$ and $\cR^*_{\reco}(n,d,\lambda)$ of permutation estimation and matrix reconstruction, for all possible number of experts $n\geq 1$, number of questions  $d\geq 1$ and all sampling efforts $\lambda$, up to some polylog factors in $nd$. \Cref{tab:rates} summarizes our findings in the arguably most interesting cases\footnote[2]{We are indeed mostly interested in the more realistic sparse observation regime (meaning $\lambda  \leq 1$). The case $\lambda \leq 1/d$ leads to the trivial minimax bound of order $nd$ for both reconstruction and estimation, as in this case we have less than one observations per expert on average. As for the case $\lambda > 1/d$ but $\lambda \leq 1/n$, we have less than one observation per question on average, and this leads to a minimax risk of order $n\sqrt{d/\lambda}$ for permutation estimation and of order $nd$ for matrix recontruction.} $\lambda \in [1/(n\land d), 1]$. 


\begin{table}[h]
\begin{center}
	\begin{tabular}{| c | c | c |}
		\noalign{\global\arrayrulewidth=0.3mm}
		\arrayrulecolor{gray}
		  \hline
		  ~ & $n \leq d^{3/2}\sqrt{\lambda}$ & $d^{3/2}\sqrt{\lambda} \leq n$\\ \hline
		  $\mathcal R^*_{\perm}$ & {$n^{2/3}\sqrt{d}\lambda^{-5/6}$} & {$n/\lambda$}\\ \hline
		  $\mathcal R^*_{\reco}$  & $n^{1/3}d\lambda^{-2/3}$ & $n/\lambda$\\
		  \hline
		\end{tabular} \enspace ,
\end{center}
\caption{Optimal rates in our model, for all possible values of $n,d$ and $\lambda \in [1/(n\land d), 1]$, up to a polylogarithmic factor in $nd$. These rates are achieved by polynomial-time estimators.}
\label{tab:rates}
\end{table}

\subsection{Implication for other models and connection to the literature}\label{subsec:literature}



As discussed earlier, the isotonic model is quite general and encompasses both the bi-isotonic model for crowdsourcing problems as well the SST model for tournament problems. 

Let us first focus on the SST model which corresponds to the case where $n=d$ together with a bi-isotonicity and a skew-symmetry assumption. In the full observation scheme (related to the case $\lambda=1$) where one observes the noisy matrix $n\times n$, Shah et al.~\cite{shah2016stochastically} have established that the optimal rates for estimating $\pi^*$ and reconstructing the matrix $M$ are of the order of $n$. In contrast, their efficient procedure which estimates $\pi^*$ according to the row sums of $Y$ only achieves the rate of $n^{3/2}$. In more recent years, there has been a lot of effort dedicated to improving this $\sqrt{n}$ statistical-computational gap. The SST model was also generalized to partial observations by  \cite{chatterjee2019estimation}, which corresponds to  $\lambda \leq 1$. They introduced an efficient procedure that targets a specific sub-class of the SST model, and that achieves a rate of order $n^{3/2}\lambda^{-1/2}$ in the worst case for matrix reconstruction.

Recently, a few important contributions tackling both the bi-isotonic model and the SST model made important steps towards better  understanding the statistical-computational gap. We first explain how their results translate in the SST model. Mao et al.~\cite{mao2020towards, mao2018breaking} introduced a polynomial-time procedure handling partial observation, achieving a rate of order $n^{5/4}\lambda^{-3/4}$ for matrix reconstruction. Nonetheless, \cite{mao2020towards} failed to exploit global information shared between the players/experts -- as they only compare players/experts two by two -- as pointed out by \cite{liu2020better}. Building upon this remark, \cite{liu2020better} managed to get the better rate $n^{7/6+o(1)}$ with a polynomial-time method in the case $\lambda = n^{o(1)}$. 

Let us turn to the more general bi-isotonic model. Here, the rectangular matrix $M \in \bbR^{n \times d}$ is bi-isotonic up an unknown permutation $\pi^*$ of the rows and an unknown permutation $\eta^*$ of the columns. Since $M$ is not necessarily square, this model can be used in more general crowd-sourcing problems. The optimal rate for reconstruction in this model with partial observation has been established in \cite{mao2020towards} to be of order $\nu(n,d,\lambda):= (n\lor d)/\lambda + \sqrt{nd/\lambda}\land n^{1/3}d\lambda^{-2/3}\land d^{1/3}n\lambda^{-2/3}$ up to polylog factors, in the non-trivial regime where $\lambda \in [1/(n\land d), 1]$. However, the polynomial-time estimator provided by Mao et al. \cite{mao2020towards} only achieves the rate $n^{5/4}\lambda^{-3/4} + \nu(\lambda, n, d)$. In a nutshell, Mao et al. first compute column sums to give a first estimator of the permutation of the questions. Then, they compare the experts on aggregated blocks of questions, and finally compare the questions on aggregated blocks of experts. As explained in the previous paragraph for SST models, Liu and Moitra~\cite{liu2020better}  improved this rate to $n^{7/6 + o(1)}$ in the square case $(n=d)$, with a subpolynomial number of observations per entry $(\lambda = n^{o(1)})$. 	
Their estimators of the permutations $\pi^*$, $\eta^*$ were based on hierarchical clustering and on local aggregation of high variation areas. Both \cite{liu2020better,mao2020towards} made heavily use of the bi-isotonicity structure of $M$ by alternatively sorting the columns and rows.
As mentioned for the SST model, the order of magnitude $n^{7/6+o(1)}$ remains nevertheless suboptimal, and whether there exists an efficient algorithm achieving the optimal rate in this bi-isotonic model remains an open problem. 

We now discuss the implications of our work concerning the bi-isotonic model and SST model. First, in the full observation setting $(\lambda=1)$ and square case for the bi-isotonic model $(n=d)$, we reach in polynomial-time the upper bound $n^{7/6}$ up to polylog factors, for both permutation estimation and matrix reconstruction. In particular, we improve the rate in \cite{liu2020better} by a subpolynomial factor in $n$, and we do not need a subpolynomial number of observation per entry. Moreover, our procedure being primarily designed for the isotonic model, it does not require any shape constraint on the rows in contrast to \cite{liu2020better,mao2020towards}.
Beyond the full observation regimes, we provide guarantees on our estimator of $\pi^*$ for different values of $\lambda$. In particular, in \Cref{cor:ub_reco_biso}, we derive an estimator of the matrix $M$ that achieves a maximum reconstruction risk $\sup_{\pi^*, \eta^*, M} \E\left[\|\hat M - M_{\pi^{*-1}\eta^{*-1}}\|_F^2\right]$ of order less than $n^{7/6}\lambda^{-5/6}$ up to polylogs, thereby improving the state-of-the-art polynomial-time methods in partial observation \cite{mao2020towards}. Lastly, we perform our analysis in the general rectangular case, giving guarantees for general values of $d$.

The optimal risks and the known polynomial-time upper bounds for the isotonic, bi-isotonic with two permutations and SST models are summarized in \Cref{tab:comparison}. For the sake of simplicity, we focus in the table to the specific case case $n=d$ and $\lambda \in [1/n, 1]$.

\begin{table}[h]
\begin{center}
	\begin{tabular}{|P{2cm} | P{1cm} | P{3.1cm} | P{3.5cm} | P{3.5cm} |}
		\noalign{\global\arrayrulewidth=0.3mm}
		\arrayrulecolor{gray}
		  \hline
		  \multicolumn{2}{|c|}{\multirow{2}{*}{\vspace{-1cm}$\begin{array}{cc} 
			\text{Different models, with}\\
			\text{$M \in \bbR^{n\times n}$}\\
		\end{array}$}} & {\bf Isotonic} & Bi-isotonic($\pi^*, \eta^*)$ & SST \\ 
		  \multicolumn{2}{|c|}{}&{\bf$M_{\pi^{*-1}}$ has nondecreasing columns } &$M_{\pi^{*-1} \eta^{*-1}}$ has nondecreasing columns and rows& $M_{\pi^{*-1} \pi^{*-1}}$ has nondecreasing columns and rows, and $M_{ik} + M_{ki} = 1$\\
		  \hline
		  Permutation estimation & Poly. Time & {\bf $n^{7/6}\lambda^{-5/6}$ {\bf [Th~\ref{th:UB}]}} &
		  $\begin{array}{cc} 
			n^{7/6+o(1)}  & \text{\cite{liu2020better}}(\lambda=n^{o(1)})\\
			n^{7/6}\lambda^{-5/6} & \textbf{[Th~\ref{th:UB}]}\\
		\end{array}$
		& $\begin{array}{cc} 
			n^{7/6+o(1)}  & \text{\cite{liu2020better}}(\lambda=n^{o(1)})\\
			n^{7/6}\lambda^{-5/6} & \textbf{[Th~\ref{th:UB}]}\\
		\end{array}$\\
		\cline{2-5}
		~ & optimal rate& {\bf $n^{7/6}\lambda^{-5/6}$ [Th~\ref{th:lower_bound_poisson}]} &
		  $n/\lambda$  \cite{mao2020towards} & $n/\lambda$ \cite{mao2020towards}\\
		  \hline
		  Matrix \newline reconstruction &Poly. Time & 
		  $\begin{array}{cc} 
			n^{3/2} & (\lambda=1)\text{\cite{flammarion2019optimal}}\\
			n^{4/3}\lambda^{-2/3} & \textbf{[Cor~\ref{cor:ub_reco_biso}]}\\
		\end{array}$
		& $\begin{array}{cc} 
			n^{7/6+o(1)}  & \text{\cite{liu2020better}}(\lambda=n^{o(1)})\\
			n^{5/4}\lambda^{-3/4} & \text{\cite{mao2020towards}}\\
			n^{7/6}\lambda^{-5/6} & \textbf{[Cor~\ref{cor:ub_reco_biso}]}\\
		\end{array}$ & $\begin{array}{cc} 
			n^{7/6+o(1)}  & \text{\cite{liu2020better}}(\lambda=n^{o(1)})\\
			n^{5/4}\lambda^{-3/4} & \text{\cite{mao2020towards}}\\
			n^{7/6}\lambda^{-5/6} & \textbf{[Cor~\ref{cor:ub_reco_biso}]}\\
		\end{array}$\\
		\cline{2-5}
		  ~ & optimal rate & $n^{4/3}\lambda^{-2/3}$ \cite{flammarion2019optimal}\newline (also [Prop~\ref{prop:LB_reconstruction}]) &
		  $n/\lambda$  \cite{mao2020towards} & $n/\lambda$ \cite{mao2020towards}\\
		  \hline
		\end{tabular} \enspace 
\end{center}
\caption{For the isotonic model, the optimal rate for permutation estimation (resp. matrix reconstruction) corresponds to $\cR^*_{\perm}$ (resp. $\cR^*_{\reco})$. For the two other columns, the optimal rates are similarly defined as minimax risk over the corresponding models. The Poly. Time rows correspond to state-of-the art rates achieved by polynomial-time methods. All the rates are given up to polylogarithmic factors in $n$.}
\label{tab:comparison}
\end{table}

Finally, we mention the even more  specific model where the matrix $M$ is bi-isotonic up to a single  permutation $\pi^*$ acting on the rows. This corresponds to the case where $\eta^*$ is known in the previous paragraph~\cite{mao2020towards,pilliat2022optimal,liu2020better}. Equivalently, this also corresponds to our isotonic model~\Cref{eq:model_partial} with the additional assumption that all the  rows are nondecreasing, that is $M_{i,k} \leq M_{i,k+1}$. 
For this model, it is possible to leverage the shape constrains on the rows to build efficient and optimal estimators, this for all $n$, $d$, and $\lambda$ -- see \cite{pilliat2022optimal}.

\subsection{Overview of our techniques}

In this work, we introduce the iterative soft ranking $(\algoPrincipal)$ procedure, which gives an estimator $\hat \pi$ based on the observations. 
Informally, this method iteratively updates a weighted directed graph between experts, where the weight between any two experts quantifies the significance of their comparison. The procedure increases the weights at each step. After it stops, the final estimator is an arbitrary permutation $\hat \pi$ that agrees as well as possible  with the final weighted directed graph.

As mentioned in \cite{liu2020better}, it is hopeless to use only local information between pairs of experts to obtain a rate of order $n^{7/6}$ up to polylogs, and we must exploit global information. Still, we do it in a completely different way of Liu and Moitra~\cite{liu2020better} who were building upon the bi-isotonicity of the matrix. 

One first main ingredient of our procedure is a new dimension reduction technique. At a high level, suppose that we have partially ranked the rows in such a way that, for a given triplet ($P$, $O$, $I$) of subsets of $[n]$, we are already quite confident that experts in $P$ are below those in $I$ and above those in $O$. Relying on the shape constraint of the matrix $M$, it is therefore possible to build a high-probability confidence regions for rows in $P$ based on the rows in $O$ and the rows in $I$. If, for a question $j$, the confidence region is really narrow, this implies that all experts in $P$ take almost the same value on this column. As a consequence, this question is almost irrelevant for further comparing the experts in $P$.
In summary, our dimension reduction technique selects the set of questions for which the confidence region of $P$ is wide enough, and in that way reduces the dimension of the problem while keeping most of the relevant information.

The second main ingredient, once the dimension is reduced, is to use a spectral method to capture some global information shared between experts.
That is why our procedure makes significant use of spectral methods to compute the updates of the weighted graph. Although this spectral scheme already appears  in recent works~\cite{pilliat2022optimal,liu2020better},  those are used here for updating the weight of the comparison graph rather than performing a clustering as in~\cite{liu2020better}. Moreover, the analysis of the spectral step in the partial observation regime $(\lambda \ll 1)$ leads to technical difficulties -- see the discussion in \Cref{subsec:comments}.

Related to the latter problem, we need to establish a new tail bound on sparse rectangular matrices. More specifically, for a rectangular matrix $X$ with centered independent entries that satisfy a Bernstein type condition, we provide a high-probability control of the operator norm of $XX^T -\mathbb{E}[XX^T]$. This result, based on non-commutative matrix Bernstein concentration inequality,  may be  of independent interest e.g. for controlling the spectral properties of a sparse bipartite random graph. We state it in \Cref{sec:concentration}, independently of the rest of the paper.

\subsection{Notation}

Given a vector $u$ and $p\in [1,\infty]$, we write $\|u\|_{p}$ for its $l_p$ norm. For a matrix $A$, $\|A\|_F$ and $\|A\|_{\op}$ stand for its Frobenius and its operator norm. We write $\lfloor x\rfloor$ (resp. $\lceil x\rceil$) for the largest (resp. smallest) integer smaller than (resp. larger than) or equal to $x$. 
Although $M$ stands for an $n\times d$ matrix, we extend it sometimes in an infinite matrix defined for all $i \in \bbN, k \in \{1, \dots, d\}$ by setting $M_{ik} = 0$ when $i \leq 0$ and $M_{ik} = 1$ when $i \geq n+1$. The corresponding infinite matrix $M_{\pi^{*(-1)}}$ which is obtained by permuting the $n$ original rows is still isotonic and takes values in $[0,1]$. We shall often work with submatrices $M(P, Q)$ of $M$ that are restricted to a subset $P\subset [n]$ and $Q\subset [d]$ of rows and columns. If $A$ is any matrix in $\bbR^{P \times Q}$, we write $\overline A$ for the matrix whose rows are all equal to the average row of $A$, namely $\overline A_{ik} = \frac{1}{|P|}\sum_{j \in P}A_{jk}$.

\section{Results}

In this section, we first establish the statistical limit with a lower bound on $\cR^*_{\perm}(n,d,\lambda)$. Then, we state the existence of a polynomial-time estimator that is minimax optimal up to polylog factors. More precisely, we prove that for all integers $n,d$ and $\lambda \in [1/d, 8n^2]$, the optimal rate of permutation estimation $\cR^*_{\perm}$ is of the order of
\begin{equation}\label{eq:rho_perm} 
\rho_{\perm}(n,d, \lambda):=\frac{n^{2/3}\sqrt{d}}{\lambda^{5/6}}\bigwedge n\sqrt{\frac{d}{\lambda}}+\frac{n}{\lambda} \enspace ,
\end{equation}
up to some polylog factors. As a corollary, we then establish that the optimal rate of matrix reconstruction $\cR^*_{\reco}$ is of order
\begin{equation}\label{eq:rho_est} 
\rho_{\reco}(n,d, \lambda):=\frac{n^{1/3}d}{\lambda^{2/3}}+\frac{n}{\lambda} \enspace ,
\end{equation}
up to polylog factors. We therefore establish that these two problems do not exhibit a computational-statistical gap.

\subsection{Minimax lower bound for permutation estimation}\label{subsec:lower_bound}

Assume that $\lambda \in [1/d, 8n^2]$ is fixed and that we are given $N=Poi(\lambda nd)$ independent observations under model \Cref{eq:model_partial}. Namely, we observe $(x_t,y_t)_{t=1, \dots, N}$ where $x_t$ is sampled uniformly in $[n]\times [d]$ and $y_t=M_{x_t}+ \varepsilon_t$ conditionally to $x_t$. The following theorem states that $\rho_{\perm}$ is a lower bound on the maximum risk of permutation estimation for all $n,d, \lambda \in [1/d, 8n^2]$, up to some numerical constant.
\begin{theorem}\label{th:lower_bound_poisson}
	There exists a universal constant $c>0$ such that, for any $n\geq 2$,  $d\geq 1$, and $\lambda \in [1/d, 8n^2]$, we have
	 \begin{align}
		\cR^*_{\perm}(n,d,\lambda)
	\geq  c\rho_{\perm}(n,d,\lambda)\ . \label{eq:lower_bound_partia_observation}
	\end{align}
\end{theorem}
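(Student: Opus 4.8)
The plan is to establish the lower bound $\rho_{\perm}(n,d,\lambda) = \frac{n^{2/3}\sqrt d}{\lambda^{5/6}} \wedge n\sqrt{d/\lambda} + \frac n\lambda$ by exhibiting, for each of the three terms, a family of ``hard'' instances on which no estimator can do better, and then combining them. Since a lower bound on $\cR^*_{\perm}$ follows from a lower bound on any sub-collection of the parameter space, I would split into three separate constructions and take the best of the three (equivalently, since $a\wedge b + c \asymp (a+c)\wedge(b+c)$ up to constants, this yields the stated rate). Throughout I would use the standard reduction: lower-bound the Bayes risk for a suitable prior on $(\pi^*, M)$, reduce to a multiple-testing / hypothesis-testing problem via Fano's inequality or Assouad's lemma, and bound the relevant KL-divergences using the subGaussian (here Gaussian suffices) noise model together with the Poissonization so that entry $(i,k)$ is observed $\mathrm{Poi}(\lambda)$ times.

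\textbf{The $n/\lambda$ term.} Here I would use a ``block-swap'' construction: take $M_{\pi^{*-1}}$ with columns that are constant except for a single jump, so that the rows split into two groups of size $n/2$, with values $1/2 \pm \delta$ on every column, for a tiny $\delta$. Within each group, pair up rows $2j-1, 2j$ and let $\pi^*$ either keep or swap each pair independently — this is an Assouad-type hypercube with $n/2$ coordinates. Swapping a pair costs $\asymp \delta^2 d$ in the loss (on $d$ columns), while distinguishing a swap requires testing a mean shift of $\delta$ with $\mathrm{Poi}(\lambda d)$ effective samples, so we need $\delta^2 \lambda d \gtrsim 1$, i.e. $\delta^2 d \asymp 1/\lambda$; summing over $n/2$ pairs gives risk $\gtrsim n/\lambda$. (One must check this construction is consistent with isotonicity — arranging the groups along the column order makes it so.)

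\textbf{The two competing terms.} For $\frac{n^{2/3}\sqrt d}{\lambda^{5/6}}$ and $n\sqrt{d/\lambda}$ I would again use hypercube/Varshamov-Gilbert constructions but now with a block structure: partition the $n$ rows into $m$ consecutive blocks of size $b = n/m$, and inside each block allow a permutation perturbation that moves a row across a ``height'' of order $b$ while perturbing the matrix entries by $\delta$ on each of the $d$ columns (respecting isotonicity by keeping column profiles monotone with controlled increments). A single such perturbation costs $\asymp b\,\delta^2 d$ in squared-Frobenius loss but is hard to detect because with $\mathrm{Poi}(\lambda b d)$ observations informative about it one needs $b\,\delta^2\,\lambda d \gtrsim 1$; there are $m$ independent blocks. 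Optimizing the free parameters $b$ (equivalently $m$) and $\delta$ subject to $\delta \le$ the per-column increment budget $\asymp 1/b$ (so that the matrix stays in $[0,1]$ and isotonic) produces the two regimes: when the increment constraint is not binding one gets the $n^{2/3}\sqrt d\,\lambda^{-5/6}$ rate, and when it is binding (small $d$ relative to $n$, i.e. $n \gtrsim d^{3/2}\sqrt\lambda$) one gets the $n\sqrt{d/\lambda}$ rate. The KL bound is routine: each observation at an entry where two hypotheses differ by $\delta$ contributes $\lesssim \delta^2$ to the KL, and the expected number of such observations is $\lambda$ times the number of perturbed entries.

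\textbf{Main obstacle.} The delicate part is \emph{simultaneously} enforcing (i) the isotonic shape constraint on $M_{\pi^{*-1}}$, (ii) the values staying in $[0,1]$, and (iii) enough ``room'' for the permutation perturbations to be both costly in loss and statistically indistinguishable — all while keeping the prior a genuine product over blocks so that Assouad/Fano applies cleanly. In particular one has to be careful that a perturbation of $\pi^*$ genuinely changes $M_{\pi^{*-1}}$ by the claimed amount in Frobenius norm (rows must actually differ on $\Theta(d)$ columns), and that the induced loss $\|M_{\hat\pi^{-1}} - M_{\pi^{*-1}}\|_F^2$ — which is what $\cR^*_{\perm}$ measures, not the permutation distance — is correctly lower-bounded; this forces the construction to have well-separated row values. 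I expect the bulk of the work (and where the paper's authors likely spend their effort) to be in designing one clean block gadget flexible enough to yield all three terms by parameter tuning, and in verifying the $[0,1]$/monotonicity bookkeeping; the information-theoretic step itself (KL computation, Fano) is standard given the Gaussian-noise-plus-Poisson-sampling setup.
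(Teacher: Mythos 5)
There is a genuine gap, and it sits exactly at the heart of the theorem: your construction for the two competing terms cannot produce them. You perturb the rows of a block by $\delta$ \emph{on every one of the $d$ columns} (a fixed, known support), and you bound the KL by ``$\delta^2$ times $\lambda$ times the number of perturbed entries.'' Under that accounting the Frobenius separation between two hypotheses and their KL divergence are proportional, with ratio $1/\lambda$: if two labellings of a block differ on $r$ rows, the loss separation is $\asymp r\delta^2 d$ while the KL is $\asymp \lambda r \delta^2 d$. Any Assouad/Fano argument then yields at most (log-cardinality of the packing)$/\lambda \lesssim n/\lambda$, no matter how you tune $b$ and $\delta$ (your own scaling confirms this: risk $\asymp m\cdot b\delta^2 d$ with $b\delta^2\lambda d\asymp 1$ gives $n/(b\lambda)\le n/\lambda$). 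But $n\sqrt{d/\lambda}\ge n/\lambda$ whenever $\lambda\ge 1/d$, and $n^{2/3}\sqrt d\,\lambda^{-5/6}\ge n/\lambda$ whenever $\lambda\ge n^2/d^3$, so in the regimes where the minimum term dominates, a fixed-support construction is structurally incapable of certifying it. The missing idea in the paper's proof is to put the within-block elevation of height $\upsilon/\sqrt{p\lambda}$ on a \emph{uniformly random, unknown} subset of $q$ out of $d$ columns (independently across blocks of size $p$, the ``high'' half of each block being a Varshamov--Gilbert packing element); the marginal distribution is then a sparse mixture and the KL to the null is only of order $\upsilon^2 q^2/d$ (Lemma~\ref{lem:KLborn}), a factor $q/d$ smaller than the fixed-support KL, while the loss separation $\asymp q\upsilon^2/\lambda$ per misclassified row is unchanged. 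Combining Fano's inequality with the isotonicity/boundedness budget $\upsilon/\sqrt{p\lambda}\le p/(8n)$ (the staircase of step $p/(4n)$ separating consecutive blocks, which also justifies the reduction to per-block testing) gives risk $\gtrsim nq\upsilon^2/(p\lambda)$ under $\upsilon\lesssim 1\wedge\sqrt{p\lambda}\wedge\sqrt{pd}/q\wedge\sqrt\lambda p^{3/2}/n$, and optimizing $(p,q,\upsilon)$ produces exactly $n^{2/3}\sqrt d\,\lambda^{-5/6}$ and $n\sqrt{d/\lambda}$ in the two regimes. Your overall architecture (one block gadget, Fano/Assouad, careful $[0,1]$ and monotonicity bookkeeping, increment budget tied to the block size) matches the paper's, but without the random sparse column support the decoupling of loss from KL never happens and the bound stalls at $n/\lambda$.

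Two smaller points. First, your $n/\lambda$ gadget as written is broken: swapping two rows \emph{within} a group whose rows are identical (all equal to $1/2-\delta$ or all equal to $1/2+\delta$) incurs zero loss; you need to pair one row from each group (or, equivalently, run Fano over the choice of the ``high'' half), after which the argument goes through for all $\lambda\ge 1/d$ — the paper instead obtains this term by invoking the bi-isotonic lower bound of \cite{pilliat2022optimal}, which transfers since that model is a subclass. Second, the reduction to independent per-block problems needs the argument that any estimator can be modified, without increasing the loss, into one mapping each block to itself and measurable with respect to that block's data; this is where the staircase separation is used and it takes some care in the paper (cycle surgery on $\hat\pi$), not just ``the prior is a product over blocks.''
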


In the proof, we show a slightly stronger result that also covers the cases $\lambda < 1/d$ and $\lambda > 8n^2$, where $\cR^*_{\perm}(n,d,\lambda)$ is in fact respectively lower bounded by a quantity of order $nd$ and $n \sqrt{d}/\lambda$. For the sake of readability, we chose to omit these arguably less interesting cases in the statement of \Cref{th:lower_bound_poisson} and of \Cref{th:UB}.

\subsection{Optimal  permutation estimation}\label{sec:minimax_tpper}

Let us fix a quantity $\delta\in (0,1)$ that will correspond to a small probability. We need to introduce some notation. We write 
\begin{equation}\label{eq:definition_phil1}
	\phi_{\lone} = 10^4\log\left(\frac{10^2nd}{\delta}\right) \enspace .
\end{equation}
Our procedure depends on a sequence of tuning parameters. For this reason, we introduce a subset $\Gamma \subset \bbR^+$, henceforth called a grid. The grid $\Gamma$ is said to be valid
if it contains a sequence $\gamma_0 \geq \dots \geq \gamma_{2\floor{\log_2(n)}+2}$ of length $2\floor{\log_2(n)} + 3$ such that that for all $u$,
\begin{equation}\label{eq:valid_gamma}
	\gamma_{u} - \gamma_{u+1} \geq \gamma_{2\floor{\log_2(n)}+2} + \phi_{\lone} \spaceAnd \gamma_{2\floor{\log_2(n)}+2} \geq \phi_{\lone}\enspace .
\end{equation}
In light of this definition, we could simply choose the valid sequence $\Gamma=\{\phi_{\lone}, 2\phi_{\lone},\ldots, (2\floor{\log_2(n)}+3)\phi_{\lone}\}$ with a corresponding $\gamma_0$ that is polylogarithmic. Still, for practical purpose, we consider general grids; examples of such gris are discussed in more details in \Cref{subsect:valid_grid}.

For any valid subset $\Gamma$, we define $\bar \gamma$ as the smallest possible value of $\gamma_0$ over all sequences that satisfy~\eqref{eq:valid_gamma}.
\begin{equation}\label{eq:def_gamma}
	\bar \gamma = \min \{\gamma ~:~ \exists (\gamma_u) \text{ satisfying \Cref{eq:valid_gamma} s.t. } \gamma_{0} = \gamma\} \enspace .
\end{equation}

Our main procedure $\algoPrincipal$, for iterative soft ranking, will be described in detail in \Cref{sec:algo_sketch}. The only tuning parameters are the the number of steps $T$  and the valid grid $\Gamma$. 

\begin{theorem}\label{th:UB}
	There exists $C>0$ such that the following holds. Let $\lambda \in [1/d, 8n^2]$ and $\delta>0$. Assume that $\Gamma$ is a valid grid and that $T \geq 4\bar \gamma^6$ with $\bar\gamma$ defined in \Cref{eq:def_gamma}. For any permutation $\pi^*\in \Pi_n$ and any matrix $M$ such that $M_{\pi^{*-1}}\in \mathbb{C}_{\iso}$, the estimator $\hat \pi$ from Algorithm $\algoPrincipal(T, \Gamma)$ defined in the next section satisfies
	$$ \|M_{\hat \pi^{-1}} - M_{\pi^{*-1}} \|_F^2 \leq CT\bar\gamma^{6}\rho_{\perm}(n,d,\lambda) \enspace ,$$
	with probability at least $1- 10T\delta$.
\end{theorem}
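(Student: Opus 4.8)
The plan is to analyze the iterative procedure $\algoPrincipal(T,\Gamma)$ step by step, tracking a potential function that measures how much ``certified'' ordering information has accumulated in the weighted comparison graph. At a high level, the argument has three stages: (i) a per-step correctness guarantee showing that, with high probability, every weight increment the procedure commits to is consistent with the oracle order $\pi^*$ up to a controlled error budget; (ii) a progress/termination argument showing that after $T \geq 4\bar\gamma^6$ steps the graph is ``dense enough'' that any permutation $\hat\pi$ agreeing with it incurs loss bounded by the target rate; and (iii) a union bound over the $O(T)$ random events invoked. Throughout I would condition on the good event (of probability $\geq 1 - 10T\delta$) on which all the concentration statements used by the subroutines hold simultaneously — in particular the spectral concentration bound of \Cref{sec:concentration} for the sparse Wishart-type matrices, and the $\ell_1$-type deviation bounds governed by $\phi_{\lone}$.

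First I would set up the invariant. At each step $t$ the procedure maintains, for certain triplets $(P,O,I)$ of blocks of experts produced by the current partial ranking, a weighted directed graph whose edge weights lie on the grid $\Gamma$. The key structural claim is a \emph{soundness lemma}: conditionally on the good event, whenever an edge weight between (blocks of) experts $i$ and $j$ is raised to level $\gamma_u$, the true ordering genuinely separates them, in the sense that the ``margin'' $\sum_k |M_{\pi^{*-1}(i),k} - M_{\pi^{*-1}(j),k}|$ over the currently active question set is at least of order $\gamma_u$ up to the additive slack $\phi_{\lone}$ built into the validity condition \Cref{eq:valid_gamma}. This is where the dimension-reduction ingredient enters: restricting to the questions on which the confidence region for $P$ (built from $O$ and $I$ via the shape constraint) is wide ensures that the spectral statistic computed on the reduced matrix has signal-to-noise ratio large enough that the comparison is reliable; the sparse-Wishart tail bound controls the noise operator norm in the $\lambda \ll 1$ regime, which is exactly the delicate point. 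I would phrase this as: each increment either correctly orders a pair or is ``small'' (the pair is genuinely close and hence contributes little to the loss).

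Second, I would quantify progress. Define a potential equal to the total weight accumulated in the graph. Each step either increases the potential by a definite amount or refines the block partition; because weights are capped (the longest valid chain has length $2\floor{\log_2(n)}+3$ and top value $\bar\gamma$) and there are $O(\log n)$ refinement levels, after $T$ steps with $T \gtrsim \bar\gamma^6$ the graph has reached a fixed point in which, for every pair of experts not yet separated, the true margin between them is small — of order $\bar\gamma$ times a polylog, uniformly. Summing the squared margins of all unseparated pairs, and charging each misranked pair in $\hat\pi$ to its margin via the standard argument that relates $\|M_{\hat\pi^{-1}} - M_{\pi^{*-1}}\|_F^2$ to a sum over inverted pairs (this is the usual reduction used for this loss, e.g.\ as in \cite{liu2020better,pilliat2022optimal}), yields the bound $\|M_{\hat\pi^{-1}} - M_{\pi^{*-1}}\|_F^2 \leq C T \bar\gamma^6 \rho_{\perm}(n,d,\lambda)$. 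The three pieces of $\rho_{\perm}$ — the spectral term $n^{2/3}\sqrt d\,\lambda^{-5/6}$, the trivial term $n\sqrt{d/\lambda}$, and the $n/\lambda$ term — should emerge respectively from the SNR threshold of the spectral comparison after dimension reduction, from the crude bound when $d$ is too small for the spectral method to help, and from the irreducible per-expert sampling error.

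The main obstacle I anticipate is stage (i): propagating the per-step spectral guarantee through the iteration in the sparse regime. Two coupled difficulties arise. One is that the question set on which comparisons are made is data-dependent (it is the output of the dimension-reduction step applied to random data), so the spectral concentration must be made uniform over the possible active sets, or else the analysis must carefully decouple the randomness used for selecting questions from that used for comparing experts — presumably via sample splitting implicit in the Poissonization. The other is that the ``signal'' available after dimension reduction is only a lower bound on the margin, not the margin itself, so one must verify that discarding the narrow-confidence questions never destroys more than a polylog fraction of the separating signal between experts that genuinely need to be separated; this is precisely the quantitative content that forces the $\bar\gamma^6$ (rather than, say, $\bar\gamma$) blow-up and it is where I expect the bulk of the technical work — relating wideness of the confidence region, the shape constraint, and the per-column signal — to live. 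Everything else (the union bound, the combinatorial reduction of Frobenius loss to inverted pairs, the termination counting) is routine by comparison.
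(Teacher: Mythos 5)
There is a genuine gap, and it sits exactly where you locate "the bulk of the technical work" but also earlier, in how per-step guarantees are aggregated into the final bound. First, your loss reduction is not the one that works here: you invoke "the standard argument that relates $\|M_{\hat\pi^{-1}}-M_{\pi^{*-1}}\|_F^2$ to a sum over inverted pairs," but the loss is row-based, and bounding $\|M_{\hat\pi(i)\cdot}-M_{i\cdot}\|_2^2$ requires knowing that all experts lying between $i$ and $\hat\pi(i)$ in the true order have nearly identical rows. The paper gets this by a hierarchical covering construction: a family of pivot experts $I(\tau)$ built over $\log_2 n$ epochs, a packing statement (\Cref{lem:packing_general}), a covering statement (\Cref{lem:covering_general}), and the consistency/weak-transitivity invariant (\Cref{lem:consistency}), which together localize $\hat\pi(i)$ inside a conservative neighborhood of its pivot and yield $\|M_{\hat\pi^{-1}}-M\|_F^2\leq 4\sum_\tau \SN(\cP^K_{\tau+1,2(\tau+1)})$ (\Cref{prop:UB_by_square_norm}), where $\SN$ is the within-neighborhood variance around the group mean. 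Nothing in your sketch produces this localization, and an inverted-pairs accounting does not obviously recover the rate.

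Second, your progress mechanism — a potential equal to total accumulated edge weight, capped weights, hence a "fixed point" after $T\gtrsim\bar\gamma^6$ steps at which every unseparated pair has small margin — is not established and is not the right statement. Being unseparated at the end only says that the finitely many data-driven weighted sums actually tested stayed below threshold; it does not bound the $\ell_2$ distance between the rows, which can be large while concentrated on coordinates the tested directions miss. The paper's substitute is quantitative: \Cref{prop:UB_on_square_norm} shows that one pass of \Cref{alg:refine_locally_sketch} either drives $\SN$ of an ordered family of neighborhoods below $C\bar\gamma^6\rho_{\perm}$ or contracts it by a factor $1-1/(4\bar\gamma^2)$ (via \Cref{lem:structure_isotonic_matrices}, the PCA guarantee \Cref{lem:concentration_pca} built on \Cref{prop:concentration_bernstein_op}, the sandwich of $\widehat Q^h$ in \Cref{lem:inclusion_Q}, and the column-budget count \Cref{lem:Control_Q}). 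The hypothesis $T\geq 4\bar\gamma^6$ is then used to iterate this multiplicative contraction $K\approx T/\log_2 n$ times per epoch so that $e^{-K/(4\bar\gamma^4)}nd$ falls below the target; the extra factor $T$ in the final bound comes from the $5T$-fold subsampling ($\lambda_0=\lambda/5T$), not from a potential cap. Your sketch names the right ingredients (sample splitting, dimension reduction, the sparse Wishart bound), but without the neighborhood-variance contraction and the epoch/covering machinery the claimed bound does not follow.
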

In particular, if we suitably choose $\Gamma$ (as discussed above) and $T= 4\lceil \bar \gamma^6 \rceil$ and $\delta= 1/(nd)^2$, we deduce from 
\Cref{th:UB} that 
\begin{equation*}
	\cR^*_{\perm}(n,d,\lambda) \leq C' \log^{C'}(nd) \rho_{\perm}(n,d,\lambda) \enspace ,
\end{equation*}
for some numerical constant $C'>0$.
In the case where $\lambda = n^{o(1)}$ and $n=d$, this bound achieves the order of magnitude $n^{7/6}$, which aligns with the result presented in Theorem 2 of Liu and Moitra \cite{liu2020better}. However, it is important to note that the analysis made in \cite{liu2020better} focuses on the statistically easier bi-isotonic model, and their procedure heavily relies on the isotonicity structure imposed on the questions. 

\subsection{Optimal reconstruction of the matrix $M$}\label{ss:recM}

We now turn to the problem of estimating the signal matrix $M$. Obviously, the reconstruction of the matrix $M$ from the observation of model in\Cref{eq:model_partial} is at least as hard as if we knew the permutation $\pi^*$. In this favorable situation, estimating $M$ amounts to estimating $d$ isotonic vectors from partial and noisy observations $Y_{ik} = \frac{1}{\lambda}\sum_ty_t\1_{x_t=(ik)}$. The isotonic regression problem is already well understood, and we state the following lower bound without proof since it directly follows from \cite{mao2020towards} (see in particular Theorem 3.1 therein). We recall that $\rho_{\reco}(n,d,\lambda)$ is defined in~\eqref{eq:rho_est}.

\begin{proposition}\label{prop:LB_reconstruction}
	There exists a universal constant $c>0$ such that, for any $n\geq 2$, any $d\geq 1$, and any $\lambda >0$, we have
	 \begin{align}
		\cR^*_{\reco}(n,d,\lambda)
	\geq  c\rho_{\reco}(n,d,\lambda)\ . \label{eq:lower_bound_partia_observation}
	\end{align}
\end{proposition}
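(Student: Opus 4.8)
The plan is to bound $\cR^*_{\reco}(n,d,\lambda)$ from below separately by a constant multiple of each of the two terms of $\rho_{\reco}(n,d,\lambda)=n^{1/3}d\lambda^{-2/3}+n/\lambda$; since $a+b\le 2\max(a,b)$, this gives the claim. Both bounds come from restricting the supremum in \eqref{eq:minimax_risk_est} to a suitable sub-family of instances, and each is in fact a special case of Theorem~3.1 of \cite{mao2020towards}, which is why \Cref{prop:LB_reconstruction} is stated without a self-contained proof. Below I describe the two reductions.

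For the term $n^{1/3}d\lambda^{-2/3}$, I would restrict to instances with $\pi^*=\mathrm{id}$, which only decreases the supremum, so the problem becomes that of estimating an unknown $M\in\bbC_{\iso}$. Conditionally on the sampling positions, the averaged statistics $Y_{ik}=\lambda^{-1}\sum_t y_t\1_{x_t=(i,k)}$ are independent over $(i,k)$, centered at $M_{ik}$, with variance of order $1/\lambda$ (apart from the probability-$e^{-\lambda}$ event that $(i,k)$ is never sampled), so the problem decouples into $d$ independent isotonic regression problems over $[0,1]^n$ with per-coordinate noise level of order $1/\lambda$. Taking the $d$ columns i.i.d.\ from a common hard family, the $d$-column risk is $d$ times the single-column risk, so it remains to reprove the classical isotonic-regression lower bound: over nondecreasing vectors in $[0,1]^n$ observed with noise variance $\sigma^2$, the squared $\ell_2$ minimax risk is of order $\sigma^{4/3}n^{1/3}\wedge n$. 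I would do this by a Varshamov--Gilbert argument over piecewise-constant nondecreasing vectors with $p\asymp (n/\sigma^2)^{1/3}$ blocks of size $n/p$, perturbing each block height by $\pm\tau$ with $\tau^2\asymp(\sigma^2 p/n)\wedge p^{-2}$ (so that monotonicity is preserved and each single block test has bounded KL); an estimator must err on a constant fraction of blocks, costing $\asymp n\tau^2$. Plugging $\sigma^2\asymp1/\lambda$ and summing over columns yields $\cR^*_{\reco}\gtrsim n^{1/3}d\lambda^{-2/3}\wedge nd$, which is of order $n^{1/3}d\lambda^{-2/3}$ on the range $\lambda\ge 1/n$.

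For the term $n/\lambda$, I would instead restrict to instances in which all columns of $M_{\pi^{*-1}}$ coincide with a single nondecreasing vector $\theta\in[0,1]^n$. The corresponding $M$ has constant rows, hence $M_{\pi^{*-1}}$ is in particular bi-isotonic, so these instances lie inside the (two-permutation) bi-isotonic submodel of our isotonic model --- indeed permuting columns of a column-isotonic matrix keeps it in $\bbC_{\iso}$. On this submodel the reconstruction difficulty is that of recovering $\pi^*$: with $\overline{\hat M}$ the row-averaged estimator, $\|\hat M-M\|_F^2\ge d\sum_i(\overline{\hat M}_{i\cdot}-\theta_{\pi^*(i)})^2$, and deciding the relative order of two rows whose $\theta$-values differ by $\delta$ is information-theoretically impossible once $\delta\lesssim(\lambda d)^{-1/2}$, since it amounts to testing a mean gap $\delta$ from $\Theta(\lambda d)$ noisy observations. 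Choosing $\theta$ so that its coordinates fall into consecutive groups whose within-group gaps sit below this resolution, the order within each group cannot be recovered, and a direct computation (the contribution of a group of size $g$ with total span $\asymp(\lambda d)^{-1/2}$ is $\asymp d\,g\,(\lambda d)^{-1}=g/\lambda$, and the group sizes sum to $n$) gives a total loss of order $n/\lambda$; this is exactly the construction behind the $(n\vee d)/\lambda$ term of Theorem~3.1 in \cite{mao2020towards}, valid on the range $\lambda\ge 1/d$.

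The analytic core here is classical (the two lower-bound constructions above), so the only genuinely careful points are the Poissonization bookkeeping --- verifying that the averaged statistics really carry per-entry noise of order $1/\lambda$ once one conditions on the sampling pattern --- and the submodel inclusions that let one quote \cite{mao2020towards} verbatim. Neither is a serious obstacle, which is consistent with the paper stating \Cref{prop:LB_reconstruction} as a direct consequence of that reference.
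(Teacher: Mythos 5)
Your proposal is correct and takes essentially the same route as the paper, which states the proposition without proof as a direct consequence of Theorem 3.1 of \cite{mao2020towards} after the very reduction you describe (with the permutation known the problem is $d$ isotonic regressions, giving $n^{1/3}d\lambda^{-2/3}$, while the $n/\lambda$ term comes from the constant-column construction inside the bi-isotonic submodel); your two sketches are exactly the standard constructions behind that theorem. The only caveat is the one you already flag: the two bounds are valid for $\lambda\geq 1/n$ and $\lambda\geq 1/d$ respectively, which is also the implicit range of the paper's statement, since for much smaller $\lambda$ the claimed rate exceeds the trivial cap $nd$.
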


In particular, since $\rho_{\perm}(n,d,\lambda) \ll \rho_{\reco}(n,d,\lambda)$ in many regimes in $n$, $d$, $\lambda$, this proposition implies that the reconstruction of a permuted isotonic matrix is harder than the estimation of the permutation, namely that $\cR^*_{\perm} \ll \cR^*_{\reco}$.

To build an optimal estimator of $M$, we compute the estimated permutation $\hat \pi$ of \Cref{th:UB} and estimate an isotononic matrix based on this ordering. This approach is similar to what is done in~\cite{mao2020towards,pilliat2022optimal}, for related problems where a bi-isotonic assumption is done.
For simplicity, set  the tuning parameters $T$, $\Gamma$ for \Cref{alg:principal} so that $T = 4\ceil{\overline \gamma^{6}}$ and $\bar \gamma^6 \leq C'\log^{C'}(nd/\delta)$. We  split  the samples $y_t$ defined in \Cref{eq:model_partial} into two independent sequences of samples $(y_t^{(1)})$, $(y_t^{(2)})$. First, we compute the estimator $\hat \pi$ of $\pi^*$ with the first sub-samples $(y_t^{(1)})$. Then, we define $\hat M_{\iso}$ as the projection of $Y^{(2)}_{\hat \pi}$ onto the convex set of isotonic matrices, where $Y^{(2)}$ is the matrix defined by $Y^{(2)}_{ik}= \frac{1}{\lambda}\sum_ty_t^{(2)}\1_{x_t^{(2)}=(i,k)}$. More precisely, set
\begin{equation*}
	\hat M_{\iso} = \argmin_{\tilde M \in \bbC_{\iso}} \| \tilde M - Y^{(2)}_{\hat \pi^{-1}}\|_2^2 \enspace .
\end{equation*}

The following corollary controls the risk of $\hat M_{\iso}$.
\begin{corollary}\label{cor:reconstruction_UB}
	Assume that $\lambda \in [1/d, 8n^2]$. There exists a universal constant $C''$ such that the following holds for any permutation $\pi^*\in \Pi_n$ and any matrix $M\in \bbC_{\iso}$.
	\begin{equation*}
\bbE[\|(\hat M_{\iso})_{\hat \pi} - M\|_F^2] \leq C'' \log^{C''}(nd)\rho_{\reco}(n,d,\lambda) \enspace .
	\end{equation*}
\end{corollary}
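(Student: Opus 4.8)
The plan is to decompose the reconstruction error into the error of estimating the permutation, which \Cref{th:UB} already controls, and a ``known-permutation'' isotonic regression error, controlled by standard shape-constrained arguments. Since permuting rows does not change the Frobenius norm, I would first write $\|(\hat M_{\iso})_{\hat\pi} - M\|_F^2 = \|\hat M_{\iso} - M_{\hat\pi^{-1}}\|_F^2$ and then apply the triangle inequality,
\[
\|\hat M_{\iso} - M_{\hat\pi^{-1}}\|_F^2 \;\le\; 2\,\|\hat M_{\iso} - M_{\pi^{*-1}}\|_F^2 + 2\,\|M_{\pi^{*-1}} - M_{\hat\pi^{-1}}\|_F^2 \enspace .
\]
The second term is exactly the permutation loss of $\hat\pi$; since $\hat\pi$ is computed on the first subsample, whose sampling effort $\lambda/2$ is of the same order as $\lambda$, \Cref{th:UB} with the prescribed $(T,\Gamma)$ bounds it by $C\log^{C}(nd/\delta)\,\rho_{\perm}(n,d,\lambda)$ on an event $\cE_1$ with $\P(\cE_1^c)\le 10T\delta$ (using $\rho_{\perm}(n,d,\lambda/2)\asymp\rho_{\perm}(n,d,\lambda)$).

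For the first term I would use the independence of $\hat\pi$ from the second subsample. Conditionally on the first subsample, $\hat\pi$ is a fixed permutation, so $\E[Y^{(2)}_{\hat\pi^{-1}}]=M_{\hat\pi^{-1}}$ and $W:=Y^{(2)}_{\hat\pi^{-1}}-M_{\hat\pi^{-1}}$ is a row permutation of the centered noise matrix $Y^{(2)}-M$; in particular each column of $W$ has independent centered entries with sub-exponential tails and variance proxy $O(1/\lambda)$ (this is where the Poissonized sampling enters). Since $\hat M_{\iso}$ is the Euclidean projection of $Y^{(2)}_{\hat\pi^{-1}}$ onto the convex set $\bbC_{\iso}\ni M_{\pi^{*-1}}$, the obtuse-angle inequality $\langle Y^{(2)}_{\hat\pi^{-1}}-\hat M_{\iso},\,M_{\pi^{*-1}}-\hat M_{\iso}\rangle\le 0$ rearranges, after one Cauchy--Schwarz step, into the classical basic inequality
\[
\|\hat M_{\iso}-M_{\pi^{*-1}}\|_F^2 \;\le\; \|M_{\hat\pi^{-1}}-M_{\pi^{*-1}}\|_F^2 + 2\,\langle W,\ \hat M_{\iso}-M_{\pi^{*-1}}\rangle \enspace .
\]

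It then remains to bound the stochastic term. Because $\bbC_{\iso}$ is a Cartesian product, over the $d$ columns, of the cone of $[0,1]$-valued nondecreasing vectors of $\bbR^n$, I would run a column-by-column peeling (localization) argument: on each column combine the metric entropy of this cone with a Bernstein-type deviation bound (to handle the sub-exponential noise) to obtain, uniformly over the localization radii, $\langle W_{\cdot k},\,\hat M_{\iso,\cdot k}-M_{\pi^{*-1},\cdot k}\rangle\le \tfrac14\|\hat M_{\iso,\cdot k}-M_{\pi^{*-1},\cdot k}\|^2 + C\log^{C}(nd/\delta)\,r_k$, where $\sum_k r_k\le C\log^{C}(nd/\delta)\,\rho_{\reco}(n,d,\lambda)$ is, up to polylog factors, the minimax risk of estimating $d$ bounded isotonic vectors of $\bbR^n$ at per-entry noise level $\asymp 1/\lambda$ --- the rate behind \Cref{prop:LB_reconstruction}, see \cite{mao2020towards} (the $[0,1]$ bound is what keeps this under control in the very sparse regime $\lambda\approx 1/d$). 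Summing over $k$, absorbing the quadratic term into the basic inequality, and inserting the bound on the permutation loss gives, on $\cE_1\cap\cE_2$ with $\P(\cE_2^c)\le\delta$,
\[
\|(\hat M_{\iso})_{\hat\pi}-M\|_F^2 \;\le\; C\log^{C}(nd/\delta)\big(\rho_{\perm}(n,d,\lambda)+\rho_{\reco}(n,d,\lambda)\big) \;\le\; C\log^{C}(nd/\delta)\,\rho_{\reco}(n,d,\lambda)\enspace ,
\]
the last inequality being the elementary fact $\rho_{\perm}\le\rho_{\reco}$ on $\lambda\in[1/d,8n^2]$. Finally, since $\hat M_{\iso},M\in[0,1]^{n\times d}$ the loss is always at most $nd$, so the complement of $\cE_1\cap\cE_2$ contributes at most $(10T+1)\delta\,nd$ to the expectation, which is negligible for $\delta$ a sufficiently negative power of $nd$; taking expectations then gives the claim with a larger constant $C''$. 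The one delicate point I anticipate is this peeling bound: getting the sharp $\rho_{\reco}$ rate requires the entropy/chaining analysis of the bounded isotonic cone to be carried out with the heavier-tailed Poissonized noise of the sparse regime rather than with sub-Gaussian noise --- routine by now, but it must be done with care; all the other steps are straightforward.
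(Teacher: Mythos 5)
Your proposal is correct and, at the top level, uses the same decomposition as the paper: separate the loss into the permutation error $\|M_{\hat\pi^{-1}}-M_{\pi^{*-1}}\|_F^2$ (controlled by \Cref{th:UB} on the first subsample, with the high-probability bound converted to expectation via the trivial bound $nd$ and $\delta$ a negative power of $nd$) plus an isotonic-regression error for the second, independent subsample, and then use $\rho_{\perm}\lesssim\rho_{\reco}$. Where you diverge is in how the second piece is handled. The paper writes $\hat M_{\iso}=P_{\iso}(M_{\hat\pi^{-1}}+E')$ with $E'=Y^{(2)}_{\hat\pi^{-1}}-M_{\hat\pi^{-1}}$ and uses the non-expansiveness of the projection $P_{\iso}$ (as in Proposition 3.3 of \cite{mao2020towards}) to compare $\hat M_{\iso}$ with the \emph{oracle} least squares $P_{\iso}(M+E')$, so the residual term is exactly the risk of isotonic least squares with a known ordering; this is then bounded column by column by directly invoking the known risk bound for bounded isotonic regression with noise level $1/\lambda$ \cite{zhang2002risk}, giving $C_1 n^{1/3}d/\lambda^{2/3}$ in expectation with no further empirical-process work. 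You instead keep the actual estimator and use the variational (basic) inequality, which leaves you with the stochastic term $\langle W,\hat M_{\iso}-M_{\pi^{*-1}}\rangle$ to be controlled by a column-wise peeling/entropy argument under sub-exponential Poissonized noise — i.e.\ you re-derive the isotonic LSE risk bound rather than quote it. That step is the only nontrivial work in your route, and you correctly flag it; it is standard (and is precisely the content of \cite{zhang2002risk} and of the partial-observation analysis in \cite{mao2020towards}), and your remark that the $[0,1]$ boundedness rescues the very sparse regime $\lambda\asymp 1/d$ is the right observation (there, the per-column additive $\lambda^{-1}\log$ term is dominated once one also uses the trivial per-column bound $n\le n^{1/3}\lambda^{-2/3}$). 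In short: both arguments are valid; the paper's non-expansiveness trick buys a much shorter proof by reducing to a quotable oracle risk bound, while your basic-inequality route is self-contained but requires carrying out the localization with heavier-tailed noise — and could be shortened to essentially the paper's proof by inserting the non-expansiveness step and citing \cite{zhang2002risk} for the oracle term.
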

As a consequence, the polynomial-time estimator $\hat M_{\iso}$ achieves the optimal risk for all values of $n$ and $d$. For $\lambda = 1$, the optimal risk $\rho_{\reco}(n,d,1)$ is of the order of $n^{1/3}d + n$. 
In particular, our risk bound strictly improves over the one of Flammarion et al.~\cite{flammarion2019optimal} - e.g.~their procedure achieves the estimation error $n\sqrt{d}$ for $n\geq d^{1/3}$. Their slower convergence rates are mainly due to the fact that their estimator of the permutation $\pi^*$ is suboptimal in this regime. 


\subsection{Polynomial-time reconstruction in the bi-isotonic model}\label{subsec:reco_biso}

We now turn our attention to the problem of estimating the matrix $M$ when $M$ satisfies the additional assumption of being bi-isotonic up to unknown permutations $\pi^*$ and $\eta^*$ of its rows and columns respectively. In other words, the matrix $M_{\pi^{*-1} \eta^{*-1}}$ has non-decreasing entries. As explained in the introduction, this model has attracted a lot of attention in the last decade and encompasses the SST model for tournament problems. 

To simplify the exposition, we focus in this section on the case $n=d$ and $\lambda \in [\tfrac{1}{n}, 1]$. Since the bi-isotonic model is a specific case of the isotonic model, we could rely on the estimator $\widehat{M}_{\iso}$ introduced in the previous subsection. In fact, we can improve this estimation rate by relying on the bi-isotonicity of the matrix $M_{\pi^{*-1} \eta^{*-1}}$.

As previously, we choose the tuning parameters of \Cref{alg:principal} in such a way that $T =4 \ceil{\overline \gamma^{6}}$ and $\bar \gamma^6 \leq C'\log^{C'}(nd/\delta)$. Then, we use the following procedure:
\begin{enumerate}
	\item Subsample the data into $3$ independent samples $(y^{(1)}_t)$, $(y^{(2)}_t)$, $(y^{(3)}_t)$.
	\item Run our procedure \Cref{alg:principal} to obtain an estimator $\hat \pi$ of the permutation $\pi^*$ of the rows, using the first sample.
	\item Run again \Cref{alg:principal} to obtain an estimator $\hat \eta$ of the permutation $\eta^*$ of the columns, using the second sample.
	\item Compute the least-square estimator $\hat M_{\biso} = \argmin_{\tilde M \in \bbC_{\biso}} \| \tilde M - Y^{(3)}_{\hat \pi^{-1} \hat \eta^{-1}}\|_2^2$, where $\bbC_{\biso}$ is the set of all bi-isotonic matrices with entries in $[0,1]$ and $Y^{(3)}_{ik}= \frac{1}{\lambda}\sum_ty_t^{(3)}\1_{x_t^{(3)}=(i,k)}$.
\end{enumerate}

The following corollary states that $\hat M_{\biso}$ achieves a reconstruction rate of order $n^{7/6}\lambda^{-5/6}$ in the bi-isotonic model.
\begin{corollary}\label{cor:ub_reco_biso} Assume that $\lambda \in [1/n, 8n^2]$.
	There exists a universal constant $C''$ such that 
	\begin{equation*}
		\sup_{\stackrel{\pi^*, \eta^* \in \Pi_n}{M:\,  M_{\pi^{*-1} \eta^{*-1}}\in \mathbb{C}_{\mathrm{biso}}}} \E\left[\|(\hat M_{\biso})_{\hat \pi \hat \eta}- M\|_F^2\right] \leq C''\log^{C''}(n)n^{7/6}\lambda^{-5/6} \enspace .
		\end{equation*}
\end{corollary}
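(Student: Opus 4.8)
The plan is to reduce the bi-isotonic reconstruction bound to two ingredients: the permutation-estimation guarantee of \Cref{th:UB} (applied twice, once for rows and once for columns), and a standard oracle-type bound for least-squares projection onto the cone $\bbC_{\biso}$. First I would condition on the event, of probability at least $1 - 1/n$ say (absorbing the $10T\delta$ into a polylog by our choice of $\delta = (nd)^{-2}$ and $T = 4\ceil{\overline\gamma^6}$), on which both estimators are good: by \Cref{th:UB} applied to the first sample (viewing the columns of $M$ as the "questions" and noting that a bi-isotonic matrix is in particular isotonic up to the row permutation $\pi^*$), we get $\|M_{\hat\pi^{-1}} - M_{\pi^{*-1}}\|_F^2 \lesssim \log^{C'}(n)\,\rho_{\perm}(n,n,\lambda)$; applying it again to the second sample with the roles of rows and columns exchanged (the transpose $M^T_{\eta^{*-1}}$ is isotonic up to the row permutation $\eta^*$) gives the analogous bound for $\hat\eta$. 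Since $n = d$, we have $\rho_{\perm}(n,n,\lambda) = n^{2/3}\sqrt{n}\,\lambda^{-5/6} \wedge n\sqrt{n/\lambda} + n/\lambda \lesssim n^{7/6}\lambda^{-5/6}$ on $\lambda \in [1/n,1]$, which is exactly the target rate; so both permutation errors are controlled by $\log^{C''}(n)\,n^{7/6}\lambda^{-5/6}$.

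Next I would do the bias–variance decomposition for the least-squares estimator $\hat M_{\biso}$. Writing $\widetilde M = M_{\pi^{*-1}\eta^{*-1}}$ (the sorted, truly bi-isotonic matrix) and noting $Y^{(3)}$ has entries that are, conditionally on the sampling pattern, equal to $M + \text{noise}$ with the effective per-entry noise variance of order $1/\lambda$ (after Poissonization and rescaling by $1/\lambda$), I would use the standard fact that projection onto a closed convex cone is $1$-Lipschitz together with a chaining/entropy bound for the bi-isotonic cone. Concretely, $\|(\hat M_{\biso})_{\hat\pi\hat\eta} - M\|_F^2 \lesssim \|(\hat M_{\biso}) - Y^{(3)}_{\hat\pi^{-1}\hat\eta^{-1}}\|_F^2$-type manipulations give, after taking expectations, a term of order (statistical risk of bi-isotonic regression at noise level $1/\lambda$) plus a term controlled by how far $Y^{(3)}_{\hat\pi^{-1}\hat\eta^{-1}}$ is from being bi-isotonic — and that latter discrepancy is exactly governed by $\|M_{\hat\pi^{-1}\hat\eta^{-1}} - \widetilde M\|_F^2$. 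The bi-isotonic regression risk at noise level $\sigma^2 = 1/\lambda$ over an $n\times n$ grid is, up to polylogs, of order $\sigma^2 (n \vee \text{(metric-entropy term)}) \lesssim n^{4/3}\lambda^{-2/3} + n/\lambda$ by the results imported from \cite{mao2020towards} (their Theorem 3.1), and this is dominated by $n^{7/6}\lambda^{-5/6}$ for $\lambda \in [1/n,1]$.

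It then remains to bound the misranking contribution $\|M_{\hat\pi^{-1}\hat\eta^{-1}} - \widetilde M\|_F^2$ by a constant times $\|M_{\hat\pi^{-1}} - M_{\pi^{*-1}}\|_F^2 + \|M_{\hat\eta^{-1}} - M_{\eta^{*-1}}\|_F^2$ (composed appropriately), i.e.\ a triangle-inequality argument showing that permuting rows then columns accumulates the two permutation errors additively; this uses that applying a fixed permutation is a Frobenius isometry, so e.g.\ $\|M_{\hat\pi^{-1}\hat\eta^{-1}} - M_{\pi^{*-1}\hat\eta^{-1}}\|_F = \|M_{\hat\pi^{-1}} - M_{\pi^{*-1}}\|_F$ and similarly for the $\eta$ step applied to the matrix $M_{\pi^{*-1}}$ whose columns still have the relevant shape. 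Combining, $\E\|(\hat M_{\biso})_{\hat\pi\hat\eta} - M\|_F^2 \lesssim \log^{C''}(n)(n^{4/3}\lambda^{-2/3} + n/\lambda + n^{7/6}\lambda^{-5/6}) \lesssim \log^{C''}(n)\,n^{7/6}\lambda^{-5/6}$ on the good event, and the complementary event of probability $\le 1/n$ contributes at most $(1/n)\cdot n^2 = n$ to the expectation since all entries are in $[0,1]$, which is again absorbed. The main obstacle I expect is the second step: making the oracle inequality for $\hat M_{\biso}$ rigorous when the matrix being projected, $Y^{(3)}_{\hat\pi^{-1}\hat\eta^{-1}}$, is \emph{not} of the form (bi-isotonic signal) $+$ (independent sub-Gaussian noise) — because $\hat\pi,\hat\eta$ are random and the "signal part" $M_{\hat\pi^{-1}\hat\eta^{-1}}$ is only approximately bi-isotonic. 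The clean way around this is to use the independence of the third sample from $(\hat\pi,\hat\eta)$ (conditioning on the first two samples), so that conditionally the noise in $Y^{(3)}$ is genuinely independent and centered, and then treat $M_{\hat\pi^{-1}\hat\eta^{-1}}$ as a fixed (conditionally) approximate-signal, applying a deterministic least-squares-over-a-cone inequality of the form $\|\hat M_{\biso} - M_{\hat\pi^{-1}\hat\eta^{-1}}\|_F^2 \le \|\Pi_{\bbC_{\biso}}(\widetilde M) - M_{\hat\pi^{-1}\hat\eta^{-1}}\|_F^2 + (\text{cross terms with the noise})$ and bounding the cross terms by Cauchy–Schwarz plus the entropy of $\bbC_{\biso}$.
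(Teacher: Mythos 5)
Your overall architecture is the same as the paper's: split the samples, apply \Cref{th:UB} once for rows and once (via transposition) for columns, combine the two permutation errors by triangle inequality plus the fact that a fixed permutation is a Frobenius isometry, and then use non-expansiveness of the projection onto $\bbC_{\biso}$ (with the third, independent sample) to reduce the remaining term to the risk of bi-isotonic least squares with known permutations. That part is fine and matches the paper.

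The genuine gap is in your bound for the projection term and the domination claim that follows. You quote the regression risk after projection onto $\bbC_{\biso}$ as being of order $n^{4/3}\lambda^{-2/3}+n/\lambda$ and then assert that this is dominated by $n^{7/6}\lambda^{-5/6}$ on $\lambda\in[1/n,1]$. The inequality goes the wrong way: $n^{4/3}\lambda^{-2/3}\leq n^{7/6}\lambda^{-5/6}$ holds if and only if $\lambda\leq 1/n$, so in the entire regime of the corollary your bound would only give the rate $n^{4/3}\lambda^{-2/3}$, which is the \emph{isotonic} reconstruction rate of \Cref{cor:reconstruction_UB}, not the claimed improvement. The term $n^{4/3}\lambda^{-2/3}=n^{1/3}d\,\lambda^{-2/3}$ (for $n=d$) is the rate for column-wise isotonic regression; the whole point of projecting onto the bi-isotonic cone is that the least-squares risk over $\bbC_{\biso}$ with known permutations is much smaller, of order $n/\lambda$ up to polylogarithmic factors (this is what the paper invokes from \cite{mao2020towards}), and $n/\lambda\leq n^{7/6}\lambda^{-5/6}$ precisely when $\lambda\geq 1/n$. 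Replacing your regression bound by this one repairs the proof and makes the permutation error the dominant term, as intended. A second, minor point: you bound the bad-event contribution by $(1/n)\cdot n^2=n$, which is not below $n^{7/6}\lambda^{-5/6}$ when $\lambda$ is large (the statement allows $\lambda$ up to $8n^2$); since you already take $\delta=(nd)^{-2}$, the failure probability is in fact $O(\log^{C}(n)\,n^{-4})$, so the correct accounting gives a contribution $O(\log^{C}(n)\,n^{-2})$, which is negligible — just carry that through instead of the $1/n$ shortcut.
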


Here, we have fixed $n=d$ to simplify the exposition but we could extend the analysis to general $n$ and $d$. Our risk bound improves over the rate $n^{5/4}\lambda^{-3/4}$ of Mao et al.~\cite{mao2020towards}. In~\cite{liu2020better}, Liu and Moitra have introduced a procedure achieving the rate $n^{7/6}$ in the specific case where $\lambda = n^{o(1)}$. In some way, our procedure generalizes their results for general $\lambda$, while being applicable to the more general isotonic models.

Still, we recall that the optimal risk (without computational constraints) for estimating the matrix 
$M$ is of the order $n/\lambda$ -- see e.g.~\cite{shah2016stochastically,mao2020towards}. This remains an open problem to establish the existence of a computational-statistical gap or to construct a polynomial-time procedure achieving this risk on SST and bi-isotonic models. 

\section{Description of the $\algoPrincipal$ procedure}\label{sec:algo_sketch}

\subsection{Weighted directed graph $\cW$ and estimator $\hat \pi$}\label{subsec:weigthed_graph_permutation}

Our approach involves the iterative construction of a weighted directed graph $\cW$, represented by an antisymmetric matrix in $\bbR^{n \times n}$. More formally, for any experts $i$, $j$ in $[n]$, we have $\cW(i, j) = -\cW(j, i)$. In a nutshell, $\cW(i, j)$ quantifies our evidence of the comparisons between expert $i$ and expert $j$. If $\cW(i, j)$ is large and positive (resp. negative), we are confident that the expert $i$ is above (below) the expert $j$. Most of the procedure is dedicated to the construction of $\cW$. Before this, let us explain how we deduce our estimator $\hat \pi$ from $\cW$.

For a given weighted directed graph $\cW$, we define its corresponding directed graph at threshold $\gamma > 0$ as
\begin{equation}\label{eq:graph_from_weighted}
	\cG(\cW, \gamma) = \{(i,j) \in [n]^2 ~:~ \cW(i,j) > \gamma \} \enspace .
\end{equation}
For any thresholds $\gamma < \gamma'$, it holds that $\cG(\cW, \gamma) \subset \cG(\cW, \gamma')$. In other words, the function $\gamma \to \cG(\cW, \gamma)$ is nondecreasing. When $\gamma \geq \max_{i,j}|\cW(i,j)|$, $\cG(\cW, \gamma) = \emptyset$ is the trivial graph with no edges.
Let $\hat \gamma$ be the smallest threshold $\gamma$ such that $\cG(\cW, \gamma)$ is a directed acyclic graph ($\DAG$). By monotonicity, $\cG(\cW, \hat \gamma)$ is also the largest $\DAG$ among $\{\cG(\cW, \gamma), \gamma \geq \hat \gamma \}$. We then build the estimator $\hat \pi$ by picking any  permutation that is consistent with the graph $\widehat \cG := \cG(\cW, \hat \gamma)$, that is if $(i,j) \in \widehat \cG \cap [n]^2$ then $\hat \pi(i) \geq \hat \pi(j)$.
To put it another way, the general idea of our procedure can be summarized into these three components:
\begin{enumerate}
	\item Construct a weighted directed graph $\cW$ between the experts.
	\item Compute the largest directed acyclic graph $\widehat \cG$ of $\cW$.
	\item Take any arbitrary permutation $\hat \pi$ that is consistent with $\widehat \cG$.
\end{enumerate}
The construction of $\cW$ is at the core of this paper, and the computation of $\widehat \cG$ and $\hat \pi$ will be discussed in \Cref{subsec:further_discussion}. Still, we already point out that the third point can be dealt in polynomial time using Mirsky's algorithm~\cite{mirsky1971dual}.

\subsection{Construction of $\cW$ with $\algoPrincipal$}

\subsubsection{Description of the subsampling}\label{subsec:subsample}
Let us now describe the construction of the weighted directed graph $\cW$. Let $T \geq 1$ be an arbitrary integer, representing the number of steps of our procedure. In what follows, we explain how we subsample the data from \Cref{eq:model_partial} into $5T$ independent matrices $(Y^{(s)})_{s=1\dots5T}$.
Recall that we are given $N$ observations $(x_t, y_t)$, where $N$ follows a Poisson distribution $\cP(\lambda nd)$. Let us divide the observations into $5T$ batches $(N^{(s)})_{s = 0, \dots, 5T-1}$, aggregated into matrices of averaged observations $Y^{(s)}$. To that end, we let $S_u$ be i.i.d. uniform random variables in $\{0, \dots, 5T - 1\}$ representing a random batch for observation $u$ and we define
\begin{equation}\label{eq:batches}
	N^{(s)} = \{u \in \{1 \dots, N\} ~:~ S_u = s \} \spaceAnd Y^{(s)}_{ik} = \sum_{t \in N^{(s)}} \tfrac{y_t}{\br^{(s)}_{ik}\lor 1}\1\{x_t = (i,k)\} \enspace ,
\end{equation}
where, for any $(i,k) \in [n] \times [d]$,~ $\br^{(s)}_{ik} = \sum_{t \in N^{(s)}}\1\{x_{t} = (i,k)\}$ is the number of times the coefficient position $(i,k)$ is observed in batch $s$. $Y^{(s)}_{ik}$ is equal to $0$ if $(i,k)$ is not observed in batch $s$ and it is equal to the average of the observations $y_t$ for which $x_t = (i,k)$ otherwise. 
We also define the mask matrix $B^{(s)}$ as being equal to $0$ at location $(i,k)$ if the value is missing from batch $s$, and to $1$ otherwise.  
\begin{equation}\label{eq:def_matrix_B}
	B^{(s)}_{ik} = \1\{\br^{(s)}_{ik} \geq 1\}\enspace .
\end{equation}
Define $\lambda_0 = \lambda/5T$.
In our sampling scheme, where the data is divided into $5T$ samples, each coefficient $B^{(s)}_{ik}$ has a probability of $1 - e^{-\lambda_0}$ of being equal to one.
It is worth mentioning that a different subsampling scheme was performed in \cite{pilliat2022optimal}, consisting in aggregating consecutive columns. However, such a scheme is not applicable in our case as we do not assume the rows of $M$ to be nondecreasing, unlike in \cite{pilliat2022optimal}.

\subsubsection{Neighborhoods in comparison graphs}

At each step $t = 0, \dots, T-1$ of the procedure, we aim to enrich our knowledge of the order of the experts, which we formally do by nondecreasing the weights of $\cW$ in absolute value. At $T=0$, we start with the weights $\cW_{ij}$ all being equal to zero. A meaningful update of $\cW$ around a reference expert $i$ can be done when we restrict ourselves to experts that are in a neighborhood of $i$. Broadly speaking, a neighborhood of $i$ is a set made of all the experts $j$ that are not comparable to $i$ with respect to a given partial order.

More precisely, for any directed graph $\cG$ and any experts $i,j \in \{1, \dots, n \}$, we say that $i$ and $j$ are $\cG$-comparable if there is a path from $i$ to $j$ or from $j$ to $i$ in $\cG$.  The neighborhood $\cN(\cG, i)$ of $i$ in $\cG$ can then naturally be defined as the set of experts $j$ that are not $\cG$-comparable with $i$. Equipped with the concept of neighborhood, our overall strategy involves iterating over all possible thresholds $\gamma \in \Gamma$ such that $\cG(\cW, \gamma)$ is acyclic, as well as all possible experts $i$. At each iteration, we apply the soft local ranking procedure \Cref{alg:refine_locally_sketch} described in the next subsection. \Cref{alg:refine_locally_sketch}  updates the weights between $i$ and any expert $j$ in the neighborhood $\cN(\cG(\cW, \gamma), i)$ of $i$.  Our approach can be summarized as follows:
\begin{enumerate}
	\item Subsample the data - see \Cref{subsec:subsample}.
	\item Initialize $\cW$ to be the directed graph with all weights set to $0$.
	\item For all $t= 0, \dots, T-1$ and $\gamma \in \Gamma$ such that $\cG(\cW, \gamma)$ is acyclic and all $i \in [n]$, update  $\cW$ with the soft local ranking procedure \Cref{alg:refine_locally_sketch}.
\end{enumerate}

\medskip

\begin{algorithm}[H]
	\caption{\label{alg:principal} $\algoPrincipal(T, \Gamma)$}
	\begin{algorithmic}[1]
		\Require $N$ and observations $(x_t, y_t)_{t = 1, \dots, N}$ according to \Cref{eq:model_partial}, a number of steps $T$ and a valid grid $\Gamma$ as in \Cref{eq:valid_gamma}
		\Ensure A weighted graph $\cW$ and an estimator $\hat{\pi}$
		\State Aggregate the observation into $5T$ matrices of observation $(Y^{(s)})$ as in \Cref{eq:batches}
		\State Initialize $\cW(i,j) = 0$ for all $(i,j) \in [n]^2$, and $\hat \gamma = 0$
		\For{$t = 0, \dots, T-1$}
		\For{$\gamma \in \Gamma \cap [\hat \gamma, +\infty)$}
		\State Compute $\cG = \cG(\cW, \gamma)$ the directed graph at threshold $\gamma$ of $\cW$ as in \Cref{eq:graph_from_weighted} and set $P = \cN(\cG, i)$.\label{line:compute_directed_1} 
		\State Take $5$ samples $\cY = (Y^{(5t)}, \dots, Y^{(5t+4)})$
		\For{$i \in [n]$} \label{line:loop}
		\State Apply $\algoRefineLocally(\cY, \cW, \gamma, i, \cG, P)$ to update $\cW$ \label{line:soft_cluster}
		\EndFor
		\EndFor
		\State Set $\hat \gamma$ as the smallest $\gamma$ such that $\cG(\cW, \gamma)$ is acyclic \label{line:acyclicite}
		\EndFor
		\State Set $\widehat \cG = \cG(\cW, \hat \gamma)$ be the largest acyclic $\DAG$ (see \Cref{eq:graph_from_weighted})\label{line:largest_dag}
		\State Set $\hat{\pi}$ to be any arbitrary permutation that is consistent with $\widehat \cG$\label{line:dag_to_estimator}
		\State \Return{$\cW$ and $\hat{\pi}$}
	\end{algorithmic}
\end{algorithm}

The main \Cref{line:soft_cluster} of \Cref{alg:principal} aims to provide a soft ranking of the neighborhood $P$ of $i$ by setting positive (resp. negative) weights $\cW_{ij}$ to experts $j\in P$ that are significantly below (resp. above) $i$. \Cref{line:acyclicite} together with restricting $\gamma \geq \hat \gamma$ simply guarantees that all the considered graph $\cG$ are acyclic. Finally, Lines \ref{line:largest_dag} and \ref{line:dag_to_estimator} simply correspond to the construction of the final permutation, described in the second and third points of \Cref{subsec:weigthed_graph_permutation}.
\subsection{Description of the updating procedure}\label{subsec:description_procedure_trisection}

\subsubsection{Local weighted sums}
Let us describe the process of updating a given weighted graph $\cW$, which will be used twice at each call of the soft local ranking \Cref{alg:refine_locally_sketch}. Let us fix a weighted graph $\cW$, an element $s \in \{0, \dots, 5T-1\}$ and $Y:=Y^{(s)}$ the matrix defined in \Cref{eq:batches}. We also let $i \in [n]$ be an arbitrary expert corresponding to \Cref{line:loop} of \Cref{alg:principal}, and $\gamma$ be any threshold in the grid $\Gamma$. We write $P := \cN(\cG(\cW, \gamma),i) \subset [n]$ for the neighborhood of $i$ in $\cG(\cW, \gamma)$, echoing the notation of the sets that are trisected in \cite{pilliat2022optimal}. 

Since the matrix $M$ is, up to a row-permutation, a column-wise isotonic matrix, it follows that, if the expert $i$ is above $j$, then for any vector $w\in \mathbb{R}_+^d$, we have $\sum_{k=1}^d w_{ik}M_{ik}\geq \sum_{k=1}^d w_{jk}M_{jk}$. As a consequence, the crux of the algorithm is to find suitable data-driven weights $w$ that allow to discriminate the experts. As explained in the introduction, earlier works focused on uniform weights $w=\1_{[d]}$~\cite{shah2016stochastically} which, unfortunately leads to suboptimal results.
Before discussing the choice of the weights $w$ in the following subsections, let us first formalize how we leverage on $w$ to compare the experts and update the graph $\cW$.

Given a subset $Q \subset [d]$ of columns and a non-zero vector $w \in \bbR_+^{Q}$, we first check whether the following condition is satisfied:
\begin{equation}\label{eq:condition_w}
	\lambda_0\|w\|_2^2 \geq \|w\|^2_{\infty} \enspace ,
\end{equation}
where we recall that $\lambda_0 = \lambda/5T$.
This condition is always verified when $\lambda_0 \geq 1$, and it is equivalent to $\lambda_0 |Q| \geq 1$ when $w = \1_Q$. Condition~\eqref{eq:condition_w} ensures that $w$ is not too sparse which could be harmful when many observations are lacking ($\lambda_0$  small).

If this condition is not satisfied, then we leave the weights of $\cW$ unchanged. Otherwise, we define the $(Y, P, w)$-updating weights $\cU := \cU(Y, P, w)$ around $i$ as
\begin{equation}\label{eq:updating_edges}
	\cU_{i j} = \frac{1}{\sqrt{\frac{1}{\lambda_0} \land \lambda_0}} \cdot \proscal<Y_{i \cdot} - Y_{j \cdot}, \frac{w}{\|w\|_2}> \enspace ,\\
\end{equation}
where, for all $w' \in \bbR^Q$ and $a \in \bbR^{d}$, we write $\proscal<a, w'>= \sum_{k \in Q}a_kw'_k$.
We can then update the weighted directed graph around $i$ by setting, for all $i \in P$ such that $|\cU_{i j}| \geq |\cW_{i j}|$,
\begin{equation}\label{eq:update_W}
	\cW_{i j} = \cU_{i j} \spaceAnd \cW_{j i} = -\cU_{i j}\enspace .
\end{equation}
As explained above, if we replace $Y_{i \cdot}$ and $Y_{j \cdot}$ by $M_{i \cdot}$ and $M_{j \cdot}$ respectively in~\eqref{eq:updating_edges}, then the corresponding value of the statistic is non-negative if expert $i$ is above $j$. Hence, a large value for $\cU_{i j}$ provides evidence that $i$ is above $j$.

Computing $\cU(Y, P, w)$ for suitable directions $w$ is the basic brick or our procedure, since it is through the update \Cref{eq:update_W} that we iteratively increase the weights of $\cW$. This update shares some similarities to the pivoting algorithm introduced in \cite{liu2020better} and also used in \cite{pilliat2022optimal}, in the sense that while we are fixing an arbitrary reference expert $i$ to compute pairwise comparisons, they fix a set $P$ and compute a pivot expert $i_0$ that would correspond to a quantile of the set $\{\proscal<Y_{j \cdot}, \tfrac{w}{\|w\|_2}>, j \in P\}$ in the case $\lambda_0 = 1$.

Note that the orientation of a given weighted edge $(i,j)$ can change during the procedure if it turns out that $|\cU_{i j}| \geq |\cW_{i j}|$ and that $\cU_{i j}\cW_{i j} \leq 0$. This simply means that if the direction $w$ leads to a more significant weight between some experts $i$ and $j$, then we are more confident to use the vector $w$ and to revise the order between $i$ and $j$.

For $Q \subset [d]$, choosing $w = \1_Q$ in \Cref{eq:updating_edges} amounts to compute the average of the observations over all questions in $Q$. 
We now explain in the main sections how we iteratively build adaptive weights $w$ that allow to improve over the naive global average given by $w = \1_{[d]}$.

\subsubsection{Definitions of a rank in a $\DAG$}

We first introduce a few definitions on directed acyclic graphs $\cG$, which we formally define as a set of directed edges $(i, j) \in [n]^2$ for which there is no cycle. We denote $\pth(i,j) = \{(k_1, \dots, k_L) ~:~ L > 0 \mbox{ and } (i,k_1), \dots, (k_L, j) \in \cG\}$ as the set of all possible paths from $i$ to $j$, and we write $|s|$ for the length of any path $s$. We say that $i$ and $j$ are $\cG$-comparable if $\pth(i,j) \cup \pth(j,i) \neq \emptyset$, and we write $\cN(i, \cG)$ for the set of all experts that are not $\cG$-comparable with $i$. If $i$, $j$ are $\cG$-comparable, it either holds that $\pth(i,j) = \emptyset$ or $\pth(j, i) = \emptyset$. We say in the first case that $i$ is $\cG$-below $j$ and that $i$ is $\cG$-above $j$ in the second case. we also define the relative rank from $i$ according to $\cG$ as the length of the longest path in $\cG$ from $i$ to $j$, or minus the longest past from $j$ to $i$ depending on wether $i$ is $\cG$-above or $\cG$-below $j$:

\begin{equation}\label{eq:rank}
	\begin{aligned}
	\rk_{\cG, i}(j) &= \max \{|s| ~:~ s \in \pth(i,j) \} - \max \{|s| ~:~ s \in \pth(j,i)\} \enspace .
	\end{aligned}
\end{equation}
Here, we use the convention $\max \emptyset = 0$. With this definition, the neighborhood of a given expert $i$ is equal to the set of experts whose relative rank is equal to $0$, that is $\cN(\cG, i) = \rk_{\cG, i}^{-1}(0)$.  Moreover, an expert $j\in [n]$ is $\cG$-above (resp. $\cG$-below) $i$ if and only if $\rk_{\cG,i}(j) \geq 1$ (resp. $\rk_{\cG,i}(j) \leq -1)$. Although $\cG$ stands for a finite set of edges with endpoints in $[n]$, we extend it to a set of edges with endpoints in $\bbZ^2$ by putting in $\cG$ every $(i,j) \in \bbZ^2$ such that $i > j$ and $j \leq 0$ or $i \geq n + 1$.


\subsubsection{Description of the soft local ranking algorithm}\label{sec:soft_local}

To update the weighted directed graph $\cW$ in \Cref{line:soft_cluster} of \Cref{alg:principal}, we apply the soft local ranking procedure $\algoRefineLocally$ to all experts $i \in [n]$ and all thresholds $\gamma$. To define our soft local ranking procedure, let us fix $\cW$, an expert $i$ and a threshold $\gamma$ such that $\cG(\cW, \gamma)$ is acyclic. As a shorthand, we write $\cG$ and $P$ respectively for the thresholded graph $\cG(\cW, \gamma)$ and the neighborhood $\cN(\cG,i)$ of $i$ in $\cG$.

We write $\cD$ for the set of all dyadic numbers: $\cD = \{ 2^k ~:~ k \in \bbZ \}$ and we define the set $\cH = \cD \cap \left[\frac{1}{nd},1\right]$.
We denote $\overline y(P)$ as the mean of the vectors $Y_{j \cdot}$ over all $j \in P$, that is $\overline y_k(P) = \tfrac{1}{|P|}\sum_{j \in P} Y_{j k}$, for any $k \in [d]$.
$\algoRefineLocally$ 
relies on the following steps repeated over all height $h \in \cH$.  It is also described in Algorithm~\ref{alg:refine_locally_sketch}.
\begin{enumerate}
    \item {\bf Dimension reduction.} Using the first sample $Y^{(1)}$, we first reduce the dimension by selecting a subset $\widehat{Q}^h\subset [d]$ corresponding to wide confidence regions. Recall that $\rk_{\cG, i}$ is the relative rank to $i$ defined in \Cref{eq:rank}. For any $a > 0$, define the sets $\cN_a := \cN_a(\cG, i)$ (resp. $\cN_{-a}:=\cN_{-a}(\cG, i)$) of experts $j$ which are $\cG$-above (resp. $\cG$-below) all the experts of $P$ and whose relative rank to any $i' \in P$ is at most $a$ in absolute value:
	\begin{equation}\label{eq:neighborhood}
		\cN_a = \bigcap_{i' \in P} \rk_{\cG, i'}^{-1}([1, a]) \spaceAnd \cN_{-a} = \bigcap_{i' \in P} \rk_{\cG, i'}^{-1}([- 1, - a]) \enspace .
	\end{equation}

	Secondly, we define for any question $k \in [d]$ and $a \geq 1$ the width statistic $\widehat \bDelta_k$ as the difference between the mean of the experts in $\cN_a$ and the mean of the experts in $\cN_{-a}$. Then, $\hat a_{k}$ is set to be the first value of $a \geq 1$ such that any $a' \geq a$ has a corresponding width statistic of at least $(\lambda_0 \land 1)h$:
	\begin{equation}\label{eq:stat_delta}
		\widehat \bDelta_{k}(a) = \overline y_{k}(\cN_a) - \overline y_{k}(\cN_{-a}) \spaceAnd \hat a_k(h) = \max \left\{a \geq 1 ~:~ \frac{1}{\lambda_0 \land 1}\widehat \bDelta_{k}(a) < h \right\} +1\enspace .
	\end{equation}
	Finally, we define $\widehat Q^{h} := \widehat Q^{h}(\cG, i)$ as the set of indices $k$ such that $\hat a_k(h)$ is relatively small.
	\begin{equation}\label{eq:set_Q}
		\widehat Q^{h} = \{k \in [d] ~:~ |\cN_{\hat a_k(h)}|\land |\cN_{-\hat a_k(h)}|\leq \frac{1}{\lambda_0 h^2} \} \enspace .
	\end{equation}
    Intuitively, if the experts above and below $i$ vary by more than $h$ on a specific question $k$, then this question should belong to $\widehat Q^{h}$. Conversely, if the experts below and above $i$ are nearly equal on the question $k$, than $\hat a_k(h)$ will be large and $k$ will not be selected in $\widehat Q^{h}$.
    
    \item {\bf Average-based weighted sums.} Still using the first sample $Y^{(1)}$, we examine the corresponding submatrix  $Y^{(1)}(P, \widehat Q)$ restricted to questions in $\widehat Q$. If the row sums of $Y$ are larger than the current edges, we update the weighted edges. More formally, we 
	compute the $(Y^{(1)}, P, \1_{\widehat Q})$-updating weighted edges  $(\cU_{\widehat Q})$ around $i$ as defined in \Cref{eq:updating_edges} and update $\cW$ as in \Cref{eq:update_W}. We then also update $\cG = \cG(\cW, \gamma)$ and $P = \cN(\cG, i)$.
\item {\bf PCA-based weighted sums.} Relying on the samples $Y^{(2)}$, $Y^{(3)}$, $Y^{(4)}$, $Y^{(5)}$, we do a slight abuse of notation and write $Y^{(s)}$ for the restriction of $Y^{(s)}$ to the subset $P, \widehat Q^{h}$ for $s=2,3,4,5$. Ideally, we would get an informative direction $w$ from the largest right singular vector of $\mathbb{E}[Y^{(2)}-\overline{Y}^{(2)}] \in \bbR^{P \times \widehat Q^{h}}$. Indeed, it is known (see the proofs for more details) that the entries of the first right singular vector of an isotonic matrix all share the same sign and are most informative to compare the experts. However, computing directly the empirical right-singular vector of $Y^{(2)}-\overline{Y}^{(2)}$ does not lead to the desired bounds because (i) this matrix is perhaps highly rectangular (ii) the noise is possibly heteroskedastic and (iii) this matrix is perhaps sparse because of the many missing observations when $\lambda_0$ is small. Here, we use a workaround which is reminiscent of that of~\cite{pilliat2022optimal} and discussed later. First, we compute $\hat v$ as a proxy for the first left singular vector of $\mathbb{E}[Y^{(2)}-\overline{Y}^{(2)}]$.
\begin{equation}\label{eq:ACP}
	\hat v := \hat v( P, \widehat Q^h) = \argmax_{v \in \bbR^{P}:~\| v \|_2 \leq 1} \Big[ \|v^T(Y^{(2)} - \overline{Y}^{(2)})\|_2^2 - \frac{1}{2}\| v^T(Y^{(2)} - \overline{Y}^{(2)} - Y^{(3)} + \overline{Y}^{(3)})\|_2^2\Big] \enspace .
\end{equation}
The right-hand side term in~\eqref{eq:ACP} deals with the heteroskedasticity of the noise matrix $E$ in~\Cref{eq:model_0}. 
$\hat v$ in~\eqref{eq:ACP} can be computed efficiently since it corresponds to the leading eigenvector of a symetric matrix. For technical reasons occurring in the sparse observation regime (i.e. when $\lambda_0$ is small), we then threshold the largest absolute values of the coefficients of $\hat v$ at $\sqrt{\lambda_0}$ and define $(\hat v_-)_i = \hat v_i \1\{|\hat v_i| \leq \sqrt{\lambda_0}\}$. After having calculated $\hat v_-$, we consider as in \cite{pilliat2022optimal} the image $\hat z = \hat v_-^T (Y^{(4)}-\overline Y^{(4)}) \in \bbR^{\widehat Q}$  of $\hat v_{-}$. We then threshold the smallest values of $\hat z$ and take the absolute values of the components. Thus, we get $\hat w^+\in \mathbb{R}^{\widehat Q}$ defined by $(\hat w^+)_l= |\hat z_l|\1\{|\hat z_l|\geq \gamma\sqrt{\lambda_0 \land \frac{1}{\lambda_0}}\}$ for any $l  \in \widehat Q$. 

Finally, we consider the last submatrix $Y^{(5)} = Y^{(5)}(P, \widehat Q)$. We apply these weights $\hat w^+$ to compute the row-wise weighted sums of $Y^{(5)}$ and update the weighted edges. More formally, we compute the $(Y^{(5)}, P, \hat w^+)$-updating weighted edges $\cU(Y^{(5)}, P, \hat w)$ around $i$ as defined in \Cref{eq:updating_edges}. We finally update the weighted directed graph $\cW$ with $\cU(Y^{(5)}, P, \hat w^+)$ as in \Cref{eq:update_W}.
\end{enumerate}

\begin{algorithm}[H]
	\caption{$\algoRefineLocally((Y^{(s)})_{s= 1, \dots, 5},\cW, \gamma, i, \cG, P)$ \label{alg:refine_locally_sketch}}
	\begin{algorithmic}[1]
		\Require $6$ samples $(Y^{(s)})_{s=1,\dots,5}$, a weighted directed graph $\cW$, a threshold $\gamma$ such that $\cG(\cW, \gamma)$ is acyclic and an expert $i \in [n]$. $\cG$ and $P$ are shorthands for the thresholded graph $\cG(\cW, \gamma)$ and the neighborhood $\cN(\cG,i)$.
		\Ensure An update of $\cW$
		\Statex
		\For{$h \in \cH$}
		\State \label{line:set_Q}Compute $\widehat Q^{h} := \widehat Q(\cG, i)$ as in \cref{eq:set_Q} using sample $Y^{(1)}$
		\State \label{line:first_stat_l1}Let $\cU_{\widehat Q^{h}}$ be the $(Y^{(1)}, P, \1_{\widehat Q^{h}})$-updating weighted edges around $i$ as in \Cref{eq:updating_edges}, using again sample $Y^{(1)}$ \label{line:update_1}
		\State Update $\cW$ with $\cU(\widehat Q^{h})$ as in \Cref{eq:update_W} and update $\cG = \cG(\cW, \gamma)$, $P = \cN(\cG, i)$
		\State Restrict the samples $(Y^{(s)})_{s=2,\dots,5}$ to $P, \widehat Q^h$ in the following remaining steps
		\State \label{line:ACP}Compute the PCA-like direction $\hat v := \hat v(P, \widehat Q^{h})$ as in~\eqref{eq:ACP} and define $(\hat v_-)_i = \hat v_i \1\{|\hat v_i| \leq \sqrt{\lambda_0}\}$
		\State \label{line:direction}Compute $\hat z = \hat v_-^T (Y^{(4)}-\overline Y^{(4)})$ and define $\hat w^+$ by $(\hat w^+)_l= |\hat z_l|\1\{|\hat z_l|\geq \gamma\sqrt{\lambda_0 \land \frac{1}{\lambda_0}}\}$ for any $l  \in \widehat Q^{h}$
		\State \label{line:second_stat_l1}Let $\cU(Y^{(5)},\hat w^+)$ be the $(Y^{(5)}, P,\hat w^+)$-updating weighted edges around $i$ as in \Cref{eq:updating_edges}
		\State Update $\cW$ with $\cU(Y^{(5)},\hat w^+)$ as in \Cref{eq:update_W} \label{line:update_2}
		\EndFor
	\end{algorithmic}
\end{algorithm}

\subsection{Toy example illustrating \Cref{alg:refine_locally_sketch}}

To understand why the steps described in \Cref{alg:refine_locally_sketch} are relevant, assume that $\pi^* = \mathrm{id}$ and consider the following simple example where $n = 204$, $d = 10$, and where the isotonic matrix $M_{\pi^{*-1}}$ can be decomposed into three blocks of rows as
\begin{equation*}
	M_{\pi^{*-1}}= \alpha + \frac{h}{2} \left(\begin{array}{cccccccccc}
		\0&\0&\tikzmarkin[ver=style cyan]{1}\1&\1&\0&\tikzmarkin[ver=style cyan]{2}\1&\1&\0&\tikzmarkin[ver=style cyan]{3}\1&\1\\
	  \hline
	  0&0&0&1&0&1&0&0&1&1 \\
	  0&0&0&1&0&1&0&0&1&1 \\
	  0&0&0&-1&0&-1&0&0&-1&-1 \\
	  0&0&0&-1&0&-1&0&0&-1&-1\\
	  \hline
	  \0&\0&-\1&-\1\tikzmarkend{1}&\0&-\1&-\1\tikzmarkend{2}&\0&-\1&-\1\tikzmarkend{3}
	\end{array}\right) \enspace .
  \end{equation*}

In the above matrix, $\alpha$ is any number in $(h, 1-h)$, and $\0,\1$ are the columns in $\bbR^{100}$ whose coefficients are respectively all equal to $0$ and $1$. Assume that the statistician already knows that the first and the third blocks are made of experts that are respectively above and below the second block. If $\cW, P,\gamma$ are the parameters fixed in \Cref{alg:refine_locally_sketch}, the three blocks correspond respectively to the subsets $\cN_{1} \cup \cN_2$, $P$ and $\cN_{-1}\cup \cN_{-2}$ in our example. Provided that $\cN_{-2}$ and $\cN_{2}$ are large enough, the set $\widehat Q^{h}$ only keeps  columns corresponding to indices $k$ where $\widehat \bDelta_k(1)$ is large -- those are highlighted in blue.

Then, we can work on the reduced subset $\widehat Q^h$ of columns highlighted in blue. As one may check, $\widehat Q^h$ contains all the relevant columns to decipher the experts in the block $P$. Besides, the expected matrix of observations restricted to the block $P$ and to $\widehat Q^h$ is of rank one:
\begin{equation*}
	\bbE[Y - \overline Y]= \frac{h}{2}\left(\begin{array}{cccccccccc}
	  0&1&1&0&1&1 \\
	  0&1&1&0&1&1 \\
	  0&-1&-1&0&-1&-1 \\
	  0&-1&-1&0&-1&-1\\
	\end{array}\right) \enspace .
  \end{equation*}
In particular, the right singular vector of this matrix is of the form $(0,1,1,0,1,1)$ and provides suitable weights to decipher the two largest experts from the two lowest experts in the above matrix. The PCA-based weighted sums steps above precisely aims at estimating these weights.  

\subsection{Comments on the procedure and relation to the literature}\label{subsec:comments}

Finding confidence regions $\widehat Q$ before computing weighted sums on the corresponding columns is at the core of our procedure. This idea generalizes the RankScore procedure of \cite{flammarion2019optimal} which rather computes averages on the subsets $[d]$ or on the singletons $\{1\}, \dots, \{d\}$. As mentioned in the introduction, only using the subsets of the RankScore method in \cite{flammarion2019optimal} does not allow to reach the optimal rate for permutation estimation or matrix reconstruction.

In \Cref{alg:refine_locally_sketch}, the computation of subsets $\widehat Q^{h}$ is reminiscent of some aspects of the non oblivious trisection procedure used in \cite{pilliat2022optimal} for the bi-isotonic model. In fact, the statistic $\widehat \bDelta_k$ corresponds to the statistic $\widehat \bDelta^{(\mathrm{ext})}_{k,1}$ in \cite{pilliat2022optimal}. Apart from that, the selection of subsets of questions was quite different in~\cite{pilliat2022optimal} as it mostly involved change-point detection ideas as introduced in~\cite{liu2020better}. However, those ideas are irrelevant in our setting because the rows do not exhibit any specific structure in the isotonic model.

The high-level sorting method in \cite{pilliat2022optimal} is based on a hierarchical sorting tree with memory. In contrast, our new algorithm is based on an iterative refinement of a weighted comparison graph. This new algorithm is more natural and benefits from the fact that it is almost free of any tuning parameter. Indeed, at the end of \Cref{alg:principal}, we simply use the threshold $\hat \gamma$ corresponding to the largest acyclic $\hat \cG$ graph in $\cW$. No significant threshold needs to be chosen, since any permutation that is consistent with $\hat \cG$ is also necessarily consistent with $\cW$ thresholded at values larger than $\hat \gamma$.


The spectral step in \cite{pilliat2022optimal} is quite similar to the third step of our procedure described in~\cref{sec:soft_local}, except for the first thresholding of $\hat v$ to obtain $\hat v_-$. In \cite{pilliat2022optimal}, this workaround was not needed mainly because in the bi-isotonic model, it is possible to aggregate sparse observations by merging consecutive columns -- see~\cite{pilliat2022optimal} for further details. This is however not possible here. 

As mentioned in the introduction, Liu and Moitra~\cite{liu2020better} obtain an upper bound of the permutation loss of the order of $n^{7/6}$ for the estimation of two unknown permutations in the case where $M \in \bbR^{n\times n}$ is bi-isotonic. Broadly speaking, their method involves iterating a clustering method called block-sorting over groups of rows or columns that are close with each other. Using this sorting method based on block-sorting, their whole approach alternates between row sorting and column sorting for a subpolynomial number of time.
Besides, their procedure makes heavily use of bi-isotonicity of the matrix. 
It turns out that \Cref{alg:refine_locally_sketch} reaches the same rate in this bi-isotonic model by running only once on the rows, and once on the columns, as described  in \Cref{subsec:reco_biso}. Otherwise said, if the problem is to estimate only $\pi^*$ in the bi-isotonic model, we proved that only the isotonicity of the columns is necessary to achieve the state-of-the-art polynomial-time upper bound of order $n^{7/6}$.



\subsection{Examples of valid grids $\Gamma$}\label{subsect:valid_grid}
Remark that the simple set $\{(u+1)\cdot\phi_{\lone}, u \in \{0, \dots, 2\floor{\log_2(n)} + 2\}\}$ is a valid grid of logarithmic size with $\bar \gamma \leq (2\log_2(n) + 3)\phi_{\lone}$. This set is the smallest valid grid achieving the smallest possible value of $\bar \gamma$. However, it depends on the  quantity $\phi_{\lone}$ which is perhaps a bit pessimistic in practice.

An other choice can be to take $\bbR^+$ itself, albeit infinite. Indeed, the set $\{\cG(\cW, \gamma), \gamma \geq 0 \}$ is made of at most $n^2$ possible directed graphs for any $\cW$ during the whole procedure. Choosing $\bbR^+$ is convenient since it does not depend on the constants in $\phi_{\lone}$ that are likely to be overestimated. The drawback of choosing $\bbR^+$ though is that the number of tested $\gamma$ in \Cref{alg:refine_locally_sketch} becomes quadratic in $n$.

Finally, a good compromise is to take the set 
$\{(1+ \frac{1}{\log_2(n)})^{u'}, ~u' \in \bbZ \}$. It is easy to check that it contains a sequence satisfying \Cref{eq:valid_gamma} whose length is at least $2\floor{\log_2(n)}+3$ and whose maximum $\bar \gamma$ is a polylogarithmic function in $nd/\delta$.

\subsection{Discussion on the computation of $\widehat \cG$ and $\hat \pi$}\label{subsec:further_discussion}

Once we have suitable weighted graph $\cW$, it remains to construct the permutation $\hat \pi$, as in the second and third point of \Cref{subsec:weigthed_graph_permutation}.

For the second point, checking that a given directed graph is acyclic can be done through depth first search with a computational complexity less than $n$, so that computing $\hat \gamma$ can be done with less than $|\Gamma|n$ operations. As discussed in \Cref{subsec:description_procedure_trisection}, it is possible to choose $\Gamma$ to be of size of order less than $\log(n)$. If $\Gamma$ is bounded and is such that any different thresholds $\gamma, \gamma'$ in $\Gamma$ satisfy $|\gamma - \gamma'|\geq \eta$ for some $\eta > 0$, the computation of $\hat \gamma$ can always be done with complexity of order less than $n\log(\max (\Gamma)/\eta)$. 

Regarding the third point, a permutation $\hat \pi$ can be computed in polynomial time from the directed acyclic graph $\hat \cG$ using Mirsky's algorithm~\cite{mirsky1971dual} -- see also~\cite{pananjady2022isotonic}. It simply consists in finding the minimal experts $i$ in $\hat \cG$, removing them and repeat this process. This construction is in fact equivalent to ranking the experts according to the index $\rk_{\hat \cG, 0}$ as defined in \Cref{eq:rank}.

\section{Concentration inequality for rectangular matrices}\label{sec:concentration}

In this section, we state a concentration inequality for rectangular random matrices with independent entries satisfying a Bernstein-type condition. 
This section can be read independently of the rest of the paper. Let $p$ and $q$ be two positive integers and $X \in \bbR^{p \times q}$ be a random matrix with independent and mean zero coefficients. Assume that there exists $\sigma > 0$ and $K \geq 1$ such that for any $i = 1,\dots, p$ and $k = 1, \dots, q$,
\begin{equation}\label{eq:condition_moments_bernstein}
	\forall u \geq 1, ~~~~\bbE[(X_{ik})^{2u}] \leq \frac{1}{2}u!\sigma^2 K^{2(u-1)} \enspace .
\end{equation}

This Bernstein-type condition \Cref{eq:condition_moments_bernstein} is exactly the same as Assumption 1 in \cite{bellec2019concentration} -- see~\cite{bellec2019concentration} for a discussion. Let $\Lambda \in \bbR^{p\times p}$ be any orthogonal projection matrix, i.e. $\Lambda = \Lambda^T$ and $\Lambda^2 = \Lambda$. We write $r_{\Lambda}$ for the rank of $\Lambda$.

\begin{proposition}\label{prop:concentration_bernstein_op}
	There exists a positive numerical constant $\kappa$ such that the following holds for any $\delta > 0$.
	\begin{equation}
		\|\Lambda(XX^T - \bbE[XX^T])\Lambda\|_{\op} \leq \kappa \left[\sqrt{(\sigma^4pq + \sigma^2 q)\log(p/\delta)} + (\sigma^2r_{\Lambda} + K^2\log(q))\log(p/\delta)\right] \enspace .
	\end{equation}
\end{proposition}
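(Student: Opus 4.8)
The strategy is to reduce the claim to the non-commutative matrix Bernstein inequality applied to a well-chosen sum of independent self-adjoint random matrices, after a suitable rescaling. Write $X = \sum_{i=1}^{p} e_i r_i^T$ where $r_i \in \bbR^q$ is the $i$-th row of $X$; more precisely, decompose entrywise as $X = \sum_{i,k} X_{ik} e_i f_k^T$ over independent scalar summands. Then $XX^T = \sum_{i,k,k'} X_{ik}X_{ik'} e_i e_i^T \langle f_k, f_{k'}\rangle$, but since rows are independent and within a row the entries are independent, it is cleaner to group by row: set $Z_i = \Lambda\big(r_i r_i^T - \bbE[r_i r_i^T]\big)\Lambda \in \bbR^{p\times p}$, so that $\Lambda(XX^T - \bbE[XX^T])\Lambda = \sum_{i=1}^p Z_i$ is a sum of $p$ independent, centered, self-adjoint matrices. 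First I would verify the two ingredients needed for matrix Bernstein: (a) a uniform bound on $\|Z_i\|_{\op}$ (or a sub-exponential Orlicz/moment bound, since the $Z_i$ are unbounded), and (b) a bound on the matrix variance $\big\|\sum_i \bbE[Z_i^2]\big\|_{\op}$.

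For the variance proxy, note $\bbE[Z_i^2] = \Lambda\,\bbE\big[(r_i r_i^T - \bbE r_i r_i^T)\Lambda(r_i r_i^T - \bbE r_i r_i^T)\big]\Lambda$. Expanding and using that within row $i$ the coordinates $X_{ik}$ are independent with $\bbE X_{ik}=0$, $\bbE X_{ik}^2 \le \sigma^2$ (from \Cref{eq:condition_moments_bernstein} with $u=1$) and $\bbE X_{ik}^4 \le 3\sigma^2 K^2$ (with $u=2$, $\tfrac12\cdot 2!\cdot\sigma^2 K^2 \le \sigma^2 K^2$ actually, so even $\bbE X_{ik}^4 \le \sigma^2 K^2$ — I would double-check the constant), one gets $\bbE[r_i r_i^T] = D_i$ a diagonal matrix with entries $\le \sigma^2$, hence $\|\bbE[r_i r_i^T]\|_{\op} \le \sigma^2$ and $\sum_i \bbE[r_i r_i^T] = \bbE[XX^T]$ has operator norm $\le \sigma^2 p$ — wait, more carefully $\bbE[XX^T]$ is diagonal with entries $\sum_k \bbE X_{ik}^2 \le \sigma^2 q$, so $\|\bbE[XX^T]\|_{\op} \le \sigma^2 q$. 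The fourth-moment contributions to $\bbE[Z_i^2]$ produce a term of order $\sigma^2 K^2$ per diagonal entry and cross-terms of order $\sigma^4$ summing (after tracing against $\Lambda$, which contributes a factor $r_\Lambda$ in the trace-type terms and a factor $q$ in others). Carefully bookkeeping these gives $\big\|\sum_i \bbE[Z_i^2]\big\|_{\op} \lesssim \sigma^4 pq + \sigma^2 q\cdot(\text{something}) + \sigma^2 r_\Lambda \cdot \sigma^2 q$; the dominant pieces should be $\sigma^4 p q + \sigma^2 q$ matching the square of the first bracket term, and an $r_\Lambda$-dependent term handled by the deterministic part. For the operator-norm tail of a single $Z_i$: $\|r_i\|_2^2 = \sum_k X_{ik}^2$ is a sum of independent sub-exponential (indeed sub-Gamma, by \Cref{eq:condition_moments_bernstein}) variables, so by scalar Bernstein $\|r_i\|_2^2 \lesssim \sigma^2 q + \sigma^2\sqrt{q}\,t + K^2 t$ with probability $1-e^{-t}$; taking $t \asymp \log(q/\delta')$ and a union bound over $i\le p$ gives $\max_i \|Z_i\|_{\op} = \max_i\|\Lambda r_i\|_2^2 \lesssim \sigma^2 q + K^2\log(pq/\delta)$ — though to get the stated $\sigma^2 r_\Lambda$ term rather than $\sigma^2 q$ in the "$B$" parameter of Bernstein one should instead bound $\|\Lambda r_i\|_2^2$ using that $\Lambda$ has rank $r_\Lambda$, giving $\bbE\|\Lambda r_i\|_2^2 \le \sigma^2 r_\Lambda$ and concentration $\|\Lambda r_i\|_2^2 \lesssim \sigma^2 r_\Lambda + K^2\log(q/\delta)$ per $i$. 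This is exactly why both $\sigma^2 r_\Lambda$ and $K^2\log q$ appear multiplied by $\log(p/\delta)$ in the final bound.

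With these two ingredients, the matrix Bernstein inequality (in the form for unbounded summands with sub-exponential norms, e.g. Tropp's version) yields, for $r_\Lambda \le p$ and effective dimension $p$,
\begin{equation*}
\Big\|\sum_i Z_i\Big\|_{\op} \lesssim \sqrt{v\,\log(p/\delta)} + B\,\log(p/\delta),
\end{equation*}
where $v = \big\|\sum_i\bbE Z_i^2\big\|_{\op} \lesssim \sigma^4 pq + \sigma^2 q$ (up to lower-order $r_\Lambda$ terms absorbed into $B$) and $B \lesssim \sigma^2 r_\Lambda + K^2\log(q)$ is the uniform sub-exponential scale of the $Z_i$. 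Substituting gives precisely the claimed inequality. The main obstacle I anticipate is the variance computation: getting a clean operator-norm bound on $\sum_i \bbE[Z_i^2]$ with the right dependence on $r_\Lambda$ versus $q$ requires carefully separating the "trace" contractions (which see the rank $r_\Lambda$ of $\Lambda$) from the "identity-like" contractions (which see the ambient dimension $q$ of the rows), and tracking how the fourth-moment (Bernstein $K$) terms enter — this is the place where a naive bound would lose a factor of $p$ or $q$ and fail to match the stated rate. A secondary technical point is making the matrix Bernstein step rigorous for \emph{unbounded} $Z_i$: I would handle this either via the moment version of matrix Bernstein or by a standard truncation argument, truncating $\|r_i\|_2^2$ at the scale $B\log(p/\delta)$ and controlling the (polynomially small in $\delta$) probability of the truncation event, using that on the complement the correction to the mean is negligible.
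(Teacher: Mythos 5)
Your overall strategy --- write the centered, projected Gram matrix as a sum of independent centered rank-one matrices and apply a matrix Bernstein/Chernoff bound, with variance proxy of order $\sigma^4pq+\sigma^2q$ and a per-summand scale $\sigma^2 r_\Lambda + K^2\log q$ coming from a Hanson--Wright-type bound on the projected vector plus a truncation argument --- is exactly the route the paper takes. However, your opening decomposition is wrong as written: with $r_i\in\bbR^q$ the $i$-th row of $X$, the matrix $r_ir_i^T$ is $q\times q$, so $Z_i=\Lambda(r_ir_i^T-\bbE[r_ir_i^T])\Lambda$ is not even well defined for $\Lambda\in\bbR^{p\times p}$, and in any case $\sum_i r_ir_i^T=X^TX$, not $XX^T$: the Gram matrix $XX^T$ has off-diagonal entries $\langle r_i,r_j\rangle$ and is not a sum of row-wise contributions. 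The correct (and the paper's) decomposition is over columns, $\Lambda(XX^T-\bbE[XX^T])\Lambda=\sum_{k=1}^q \Lambda(X_{\cdot k}X_{\cdot k}^T-\bbE[X_{\cdot k}X_{\cdot k}^T])\Lambda$, i.e.\ $q$ independent summands in $\bbR^{p\times p}$. This is not purely cosmetic: pursued literally, the row decomposition would control the $q\times q$ matrix $X^TX-\bbE[X^TX]$, with $p$ summands, variance of order $\sigma^2K^2p+\sigma^4pq$, a $\log q$ dimension factor, and no natural way for the rank-$r_\Lambda$ projector $\Lambda\in\bbR^{p\times p}$ to enter --- so it could not produce the stated bound.

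That said, all of your subsequent bookkeeping silently presupposes the column decomposition (you apply $\Lambda$ to the summand vectors, use $\bbE\|\Lambda r_i\|_2^2\le\sigma^2 r_\Lambda$, and arrive at the dimension factor $\log(p/\delta)$ and variance $\sigma^4pq+\sigma^2q$), and once the index is corrected the plan coincides with the paper's proof: the per-column variance bound $\|\bbE[\bP_k^2]\|_{\op}\lesssim \sigma^2K^2+\sigma^4p$ follows from \Cref{eq:condition_moments_bernstein} exactly as you sketch (off-diagonal terms vanish, diagonal terms split into a fourth-moment piece and a $\sigma^4p$ cross piece); the scale $\sigma^2r_\Lambda+K^2\log q$ is obtained in \Cref{lem:prop_concentration_norme_2_bellec} from Bellec's Hanson--Wright-type inequality for Bernstein-moment variables applied to $\|\Lambda X_{\cdot k}\|_2^2$; the unboundedness of the summands is handled, as you anticipate, by truncating on the event $\max_k\|\Lambda X_{\cdot k}\|_2^2\lesssim\sigma^2r_\Lambda+K^2\log q$, whose small failure probability produces the additive $1/q$ term in the per-summand MGF bound of \Cref{lem:prop_concentration_mgf_op}; and the conclusion is a matrix Chernoff argument with dimension factor $p$. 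So the only genuine defect is the row/column swap, which must be fixed for your first identity to hold; the remaining steps match the paper's argument.
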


For the sake of the discussion, consider the particular case where $X_{ik} = B_{ik} E_{ik}$, with $B_{ik}$ and $E_{ik}$ being respectively independent Bernoulli random variable of parameter $\sigma^2$ and centered Gaussian random variable with variance $1$.  By a simple computation done e.g. in \Cref{eq:moment_bernstein}, $X_{ik}$ satisfies condition \Cref{eq:condition_moments_bernstein} with $K$ being of the order of a constant.
Hence, if $K^2\log(q) \leq \sigma^2 p$, applying \Cref{prop:concentration_bernstein_op} with the identity matrix $\Lambda$ gives 
\begin{equation}\label{eq:concentration_bernstein_simple}
	\|XX^T - \bbE[XX^T]\|_{\op} \leq 2\kappa\sigma^2\left[\sqrt{pq \log(p/\delta)} + p\log(p/\delta)\right]\enspace ,
\end{equation}
with probability at least $1- \delta$.

\medskip

Up to our knowledge, the inequality \Cref{eq:concentration_bernstein_simple} is tighter than state-of-the-art result  random rectangular sparse matrices in the regime where $q \gg p$ and $\sigma^2 \ll 1$. In fact, most of the results in the literature concerning random matrices state concentration inequalities for the non centered operator norm $\|XX^T\|_{\op}$ -- see the survey of Tropp \cite{2015arXiv150101571T}.

More specifically, Bandeira and Van Handel \cite{bandeira2016sharp} provide tight non-asymptotic bounds for the spectral norm of a square symmetric random matrices with independent Gaussian entries, and derive tail bounds for the operator norm of $XX^T$. For instance, Corollary 3.11 in \cite{bandeira2016sharp}, implies that, for some numerical constant $c$,  $\bbE[\|XX^T\|^2_{\op}] \leq c(\sigma^2 (p \lor q) + \log(p \lor q))$. Together with a triangular inequality, Bandeira and Van Handel imply $\|XX^T - \bbE[XX^T]\|^2_{\op}\leq c\sigma^2( (p \lor q) + \log(\tfrac{p \lor q}{\delta})) $ with probability higher than $1-\delta$. 

While the order of magnitude $\sigma^2(p\lor q)$ is tight for controlling the operator norm $\|XX^T\|^2_{\op}$ of the non-centered Gram matrix with high probability, \Cref{eq:concentration_bernstein_simple} implies that the right bound for 
$\|XX^T - \bbE[XX^T]\|^2_{\op}$ is rather $\sigma^2\sqrt{pq}$ which is significantly smaller in the regime $p \ll q$ and $\sigma^2 \ll 1$.

In the proof of \Cref{th:UB}, we could have used those previous results for controlling the matrices of the form $\|XX^T - \bbE[XX^T]\|^2_{\op}$. However, we would have then achieved a suboptimal risk upper bound. Indeed, \Cref{prop:concentration_bernstein_op} plays critical role in the proof of \Cref{th:UB}, when we need to handle matrices with partial observations that are possibly highly rectangular in the spectal step of the procedure \Cref{eq:ACP}.

The  proof of \Cref{prop:concentration_bernstein_op} relies on the observation that the matrix $XX^T - \mathbb{E}[XX^T]$ is the sum of $q$ centered rank 1 random matrices. This allows us to apply Matrix Bernstein-type concentration inequalities for controlling  the operator norm of this sum -- see \cite{2015arXiv150101571T} or Section 6 of \cite{wainwright2019high}. 


\appendix

\section{Proof of \Cref{th:UB}}\label{sec:general_analysis}

\subsection{Notation and signal-noise decomposition}
We first introduce some notation, and in particular the noise matrices on which we will apply concentration inequalities.
In what follows, we define for any matrix $A \in \bbR^{n \times d}$, and any vector $w \in \bbR^d$: 
\begin{equation}\label{eq:abuse_notation}
	\proscal<A_{i \cdot}, w> = \sum_{k =1}^d A_{ik}w_k \enspace .
\end{equation}
If $w$ belongs to $\bbR^Q$ where $Q$ is some subset of $[d]$, we also write $<A_{i \cdot}, w>= \sum_{k \in Q}^d A_{ik}w_k$. The same notation stands for the scalar product on matrices, namely $\proscal<A,A'> = \Tr(A^TA')$ if $A' \in \bbR^{n\times d}$. If $A$ and $A'$ are two matrices in $\bbR^{n \times d}$, then we write the coordinate-wise product $(A \odot A')_{ik} = A_{ik}A'_{ik}$. In what follows, we assume that $\pi^* = \mathrm{id}$. We make this assumption without loss of generality since we can reindex each expert $i$ with $i'=\pi^{*-1}(i)$. Recalling that $B$ is defined in \Cref{eq:def_matrix_B} we define 
\begin{equation}\label{eq:def_lambda_1}
	\lambda_1 := \bbP(B^{(s)}_{ik} = 1) = 1- e^{-\lambda_0}\enspace .
\end{equation}

If $\lambda_0 \leq 1$, we have $\lambda_0 \geq \lambda_1 \geq (1-\tfrac{1}{e})\lambda_0$. We assume in what follows that $\lambda_0 \leq 1$, which corresponds to the case where there are potentially many unobserved coefficients. The case $\lambda_0 \geq 1$ will be treated in \Cref{sec:full}. For an observation matrix $Y^{(s)}$ defined in \Cref{eq:batches}, we make the difference between $\bbE[Y^{(s)}] = \lambda_1 M$, which is the unconditional expectation of $Y^{(s)}$, and $\bbE[Y^{(s)}|B^{(s)}] = B^{(s)} \odot M$, which is the expectation of $Y^{(s)}$ conditionally to the matrix $B$. We write the noise matrix

\begin{equation}\label{eq:two_possible_noises}
	\noise^{(s)} = Y^{(s)} - \lambda_1 M \spaceAnd \widetilde E^{(s)} = Y^{(s)} - B^{(s)}\odot M \enspace .
\end{equation}

Recall that $\varepsilon_t = y_t - M_{x_t}$ is the subGaussian noise part in model \Cref{eq:model_partial}, and that $N_s$ is defined in \Cref{eq:batches}. Each coefficient $\widetilde E^{(s)}_{ik}$ can be rewritten as the average of the noise $\varepsilon_t$. that are present in $N^{(s)}$ and that correspond to coefficient $x_t=(i,k)$.
\begin{equation}\label{eq:aggregated_noise}
	\widetilde E^{(s)}_{ik} = \sum_{t \in N^{(s)}} \tfrac{\varepsilon_t}{\br^{(s)}_{ik}\lor 1}\1\{x_t = (i,k)\} \enspace .
\end{equation}
From now on, we often omit the dependence in $s$.  We will extensively use the decomposition $Y = \lambda_1M + \noise$, where $\lambda_1$ is defined in \Cref{eq:def_lambda_1} and $\noise$ in \Cref{eq:two_possible_noises}. Recalling that $B_{ik} = \1\{r_{ik} \geq 1\}$, we often rewrite $E$ as the sum of two centered random variables:
\begin{equation*}
	\noise_{ik} = (B_{ik} - \lambda_1)M + B_{ik}\widetilde E_{ik} \enspace .
\end{equation*}

Handling the concentration of the noise is more challenging in the case $\lambda_0 \leq 1$ than in the full observation regime $\lambda_0 \geq 1$ discussed in \Cref{sec:full}. Indeed, while subGaussian concentration inequalities are effective in the full observation regime $\lambda_0 \geq 1$, they lead to slower estimation rate in the case $\lambda_0 \leq 1$, for instance in \Cref{lem:concentration_l1}. Indeed, it turns out that the variance of a coefficient $\varepsilon_{ik}$ is of order $\lambda_0 \leq 1$, while the hoeffding inequality only implies that $B_{ik} - \lambda_1$, and in particular $\varepsilon_{ik}$ are $c$-subGaussian for some numerical constant $c$.
To overcome this issue, one of the main ideas is to use Bernstein-type bounds on the coefficients of $\noise$ and on the random matrix $EE^T - \bbE[EE^T]$- see \Cref{lem:concentration_proscal_bernstein} and \Cref{prop:concentration_bernstein_op}.

\subsection{General property on $\cW$}

Recall that we assume that $\lambda_0 \leq 1$, so that $\tfrac{1}{\lambda_0} \land \lambda_0= \lambda_0$ in \Cref{eq:update_W}, and that $\phi_{\lone}$ is defined in \Cref{eq:definition_phil1} by $\phi_{\lone} := 10^4\log(10^2nd/\delta)$. In the following, we let $\xi$ be the event on which the noise concentrates well for all the pairs $(Q,w)$ considered during the whole procedure. More precisely, we say that we are under event $\xi$, if for any $s=0, \dots, 5T-1$ and for any pair $(Q,w)$ that is used to compute a refinement as in \Cref{eq:updating_edges} we have

\begin{equation}\label{eq:condition_phil1}
	\abs{\proscal<\noise^{(s)}_{i \cdot} - \noise^{(s)}_{j \cdot}, w>} \leq \tfrac{1}{3}\phi_{\lone}\sqrt{\lambda_0} \quad  \text{ for any } (i,j) \in [n]^2 \enspace .
\end{equation}

\begin{lemma}\label{lem:concentration_l1}
	The event $\xi$ holds true with probability at least $1-2T\delta$.
\end{lemma}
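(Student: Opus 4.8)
The plan is to establish a uniform-in-$(Q,w)$ bound by first reducing to a countable (indeed polynomial-sized) family of directions, then applying a Bernstein-type tail bound to each fixed direction, and finally taking a union bound. The subtlety is that the weight vectors $w$ used in the algorithm are \emph{data-driven}, so $w$ is not independent of the noise $\noise^{(s)}$; the remedy is the subsampling scheme of \Cref{subsec:subsample}, which ensures that any $w$ applied to a given sample $Y^{(s)}$ through \Cref{eq:updating_edges} was constructed only from \emph{other} samples $Y^{(s')}$, $s'\neq s$, and is therefore independent of $\noise^{(s)}$.

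First I would enumerate the directions that can possibly be plugged into \Cref{eq:updating_edges}. There are two kinds: the indicator directions $w=\1_{\widehat Q^{h}}$ from \Cref{line:update_1}, and the PCA-based directions $\hat w^{+}$ from \Cref{line:update_2}. The indicator directions range over at most $2^{?}$ a priori, but in fact $\widehat Q^h$ is determined by $\cG(\cW,\gamma)$, $i$, and $h$; across the whole run there are at most $n^2$ distinct graphs $\cG(\cW,\gamma)$ (as noted in \Cref{subsect:valid_grid}), at most $n$ choices of $i$, and $|\cH|\le \log_2(nd)+1$ choices of $h$, so at most $\mathrm{poly}(nd)$ indicator directions, each used with at most $5T$ samples. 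For the PCA directions $\hat w^{+}=\hat w^{+}(P,\widehat Q^h)$, the key point is that $\hat w^+$ is built from $Y^{(2)},Y^{(3)},Y^{(4)}$ (of the relevant batch of five) but is applied to $Y^{(5)}$; conditionally on $Y^{(2)},Y^{(3)},Y^{(4)}$ the direction $\hat w^+$ is \emph{fixed}, and $\noise^{(5)}$ is independent of it. So in every case, after conditioning on the appropriate sub-$\sigma$-algebra, we must bound $|\proscal<\noise^{(s)}_{i\cdot}-\noise^{(s)}_{j\cdot},w>|$ for a \emph{fixed} $w$ satisfying the admissibility condition \Cref{eq:condition_w}, i.e.\ $\lambda_0\|w\|_2^2\ge\|w\|_\infty^2$.

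Next I would invoke the Bernstein-type coordinate bound \Cref{lem:concentration_proscal_bernstein} (cited in the excerpt just before this lemma). The random variable $\proscal<\noise^{(s)}_{i\cdot},w>=\sum_{k} \noise^{(s)}_{ik} w_k$ is a sum of independent centered terms; using the decomposition $\noise_{ik}=(B_{ik}-\lambda_1)M_{ik}+B_{ik}\widetilde E_{ik}$ one checks that each term has variance of order $\lambda_0 w_k^2$ and a Bernstein parameter controlled by $\|w\|_\infty$, so that a Bernstein inequality gives, for a fixed $w$ and fixed $(i,j)$,
\begin{equation*}
  \Prob\!\left(|\proscal<\noise^{(s)}_{i\cdot}-\noise^{(s)}_{j\cdot},w>| > t\right)
  \le 2\exp\!\left(-c\min\!\Big(\tfrac{t^2}{\lambda_0\|w\|_2^2},\tfrac{t}{\|w\|_\infty}\Big)\right).
\end{equation*}
Plugging $t=\tfrac13\phi_{\lone}\sqrt{\lambda_0}$ and using $\|w\|_\infty\le\sqrt{\lambda_0}\|w\|_2$ (which is exactly \Cref{eq:condition_w}), both terms inside the min are $\gtrsim \phi_{\lone}^2\gtrsim \log(nd/\delta)$, so each such probability is at most, say, $\delta/(\text{poly}(nd)\cdot T)$ once the numerical constant $10^4$ in $\phi_{\lone}$ from \Cref{eq:definition_phil1} is used to absorb $c$ and the polynomial factors. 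I would then take a union bound over the $\mathrm{poly}(nd)$ admissible directions, the $\le n^2$ pairs $(i,j)$, and the $5T$ samples $s$. The polylog budget in $\phi_{\lone}$ is comfortably large enough to kill all these polynomial factors while leaving the stated probability $1-2T\delta$ (the factor $T$ coming from the $5T\le$-many batches, and the constant folded into the definition of $\xi$).

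\textbf{Main obstacle.} The genuinely delicate point is \emph{not} the concentration inequality itself but the independence bookkeeping: one must argue carefully that for each of the finitely many $(s,w)$ pairs arising in an execution, the direction $w$ is measurable with respect to a $\sigma$-algebra independent of $\noise^{(s)}$, and that the \emph{set} of possible $w$'s (before conditioning) is of polynomial size so that the union bound is affordable. The cleanest way to handle this is to bound, deterministically over all possible executions, the collection of directions that \emph{could} be tested — using that $\widehat Q^h$ is a function of $(\cG,i,h)$ only and that $\cG$ ranges over the $\le n^2$ thresholdings of a weighted graph $\cW$ whose entries, being updating weights from \Cref{eq:updating_edges}, take values in a fixed finite set determined by the data — and likewise that $\hat w^+$ ranges over functions of $(P,\widehat Q^h)$ and the three auxiliary samples. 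Once this enumeration is in place, the rest is the routine Bernstein-plus-union-bound computation sketched above.
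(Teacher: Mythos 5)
Your concentration core is exactly the paper's route: the paper proves \Cref{lem:concentration_proscal_bernstein} (a Chernoff/Bernstein bound for a fixed matrix $W$ built from the decomposition $\noise_{ik}=(B_{ik}-\lambda_1)M_{ik}+B_{ik}\widetilde E_{ik}$), applies it with $W$ supported on row $i$ and equal to $w/\|w\|_2$ there, uses Condition \Cref{eq:condition_w} to bound $\|w\|_\infty/\|w\|_2\le\sqrt{\lambda_0}$, takes $\delta'=\delta/(4Tn^6)$, and finishes with a union bound over the at most $2n^2T|\cH|(|\Gamma|\land n^2)$ pairs $(Q,w)$ considered during the procedure, the constant $10^4$ in $\phi_{\lone}$ absorbing all logarithmic factors. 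So your choice of $t=\tfrac13\phi_{\lone}\sqrt{\lambda_0}$ and the budget computation match the paper.

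The independence bookkeeping, which you rightly flag as the delicate point, is however where your proposal is wrong in two concrete places. First, your claim that any $w$ applied to $Y^{(s)}$ was constructed only from other batches fails for the average-based update: in \Cref{alg:refine_locally_sketch}, $\widehat Q^h$ is computed from $Y^{(1)}$ (\cref{line:set_Q}) and the updating statistic $\cU_{\widehat Q^h}$ is computed on that same sample $Y^{(1)}$ (\cref{line:first_stat_l1}). The correct decoupling there is across rows, not across batches: $\widehat Q^h$ depends on $Y^{(1)}$ only through the rows in $\cN_a\cup\cN_{-a}$, which are disjoint from $P$ for $a\ge 1$, whereas the inner products in \Cref{eq:updating_edges} only involve rows $i,j\in P$; this is precisely the argument the paper makes explicit in the proof of \Cref{lem:event_indistinguishable}, and your batch-independence argument is only valid for the PCA direction $\hat w^+$ applied to $Y^{(5)}$. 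Second, your fallback of a deterministic polynomial-size enumeration does not exist as stated: $\widehat Q^h$ is \emph{not} a function of $(\cG,i,h)$ alone, since it is defined through the data-dependent width statistics $\widehat\bDelta_k$ in \Cref{eq:stat_delta}--\Cref{eq:set_Q}, so for a fixed $(\cG,i,h)$ it could a priori be any of $2^d$ subsets (and the collection of graphs $\cG$ arising in a run is itself random). The paper instead union-bounds over the pairs actually used in the run, implicitly relying on the row-disjointness (resp. batch-independence) just described; to make your write-up correct you should replace both of your justifications for the $\1_{\widehat Q^h}$ updates by that conditioning-on-disjoint-rows argument, and keep the conditioning-on-earlier-batches argument only for $\hat w^+$.
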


The idea of \Cref{lem:concentration_l1} is to apply a bernstein-type inequality and a union bound on all the possible dot products $\proscal<\noise^{(s)}_{i \cdot}, w>$, for all the $5T$ possible $s$ and the at most $2T$ possible $w$. The upper bound is of the order of the square of the variance of $\noise_{ik}$ up to the polylogarithm factor $\phi_{\lone}$. The crucial point is that if $\proscal<\noise^{(s)}_{i \cdot}, w>$ is not $\lambda_0$-subGaussian, it satisfies the Bernstein's Condition [ 2.15 of \cite{massart2007concentration}] with variance $\nu=\lambda_0$ and scaling factor $b=\|w\|_{\infty}$. We then obtain an upper bound of order $\sqrt{\lambda_0}$ since any $w$ considered in the update step \Cref{eq:update_W} must satisfy \Cref{eq:condition_w}.Recall that $\bar \gamma$ is defined in \Cref{eq:def_gamma}. We fix in what follows a sequence $\overline \gamma = \gamma_0 > \gamma_1 > \gamma_2 > \dots > \gamma_{\floor{2\log_2(n)}} = \gamma_{\min}$ in $\Gamma$ satisfying property \Cref{eq:valid_gamma}. We say that $u$ is the level of the corresponding threshold $\gamma_u$. We say $\cW$ and $(\gamma_u)$ satisfies the property  $\cC(\cW, (\gamma_u))$ if the following holds
\begin{enumerate}
	\item {\bf consistency:} For any $(i,j) \in \cG(\cW, \gamma_{\min})$ it holds that $\pi^*(i) > \pi^*(j)$.
	\item {\bf weak-transitivity:} Fix any $u \in \{0, \dots, \floor{2\log_2(n)} -1\}$. For any experts $i, j$, $k$, if $i$ is $\cG(\cW, \gamma_u)$-above $j$ and $k \in \cN(\cG(\cW, \gamma_{u+1}), j)$, then any $i'\geq i$ is also $\cG(\cW, \gamma_{\min})$-above $k$.
\end{enumerate}

The first point of the above property means that at threshold $\gamma_{\min}$, there is no mistake in the directed graph $\cG(\cW, \gamma_{\min})$, meaning that if there is an edge from $i$ to $j$ in $\cG(\cW, \gamma_{\min})$, then $i$ is truly above $j$. Moreover, we only state the consistency property of the graph $\cG(\cW, \gamma_{\min})$, but this property also implies that, for any more conservative threshold $\gamma \geq \gamma_{\min}$, any $(i,j) \in \cG(\cW, \gamma)$ satisfies $\pi^*(i) > \pi^*(j)$. This is due to the fact that $\cG(\cW, \gamma) \subset \cG(\cW, \gamma_{\min})$.
The weak transitivity property states in particular that if there is a path from $i$ to $j$ in the more conservative graph $\cG(\cW, \gamma_u)$, then there is a path from $i$ to any $k$ in the neighborhood of $j$ at the less conservative threshold $\gamma_{\min}$.
The following lemma states that the above property remains true for the weighted graph $\cW'$, after any update \Cref{eq:update_W} of the whole procedure.

\begin{lemma}\label{lem:consistency}
	Under $\xi$, the property $\cC(\cW', (\gamma_u))$ holds true for any directed weighted graph $\cW'$ 
obtained at any stage of \Cref{alg:principal} and \Cref{alg:refine_locally_sketch}. 
\end{lemma}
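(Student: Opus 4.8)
The plan is to proceed by induction on the sequence of updates performed in \Cref{alg:principal}, proving that the property $\cC(\cW', (\gamma_u))$ is preserved at every call of \Cref{eq:update_W}. Initially, $\cW \equiv 0$, so $\cG(\cW, \gamma)$ is empty for every $\gamma > 0$ and both the consistency and weak-transitivity properties hold trivially. Now fix some update step of the form \Cref{eq:update_W} based on some pair $(Q,w)$, some reference expert $i_0$, and some threshold $\gamma$ such that $\cG(\cW, \gamma)$ is acyclic; write $\cW$ for the graph before and $\cW'$ for the graph after this update, and assume $\cC(\cW, (\gamma_u))$ holds. Since the update can only increase $|\cW_{ij}|$ for pairs $(i,j)$ with $i$ or $j$ equal to $i_0$ and $j \in P := \cN(\cG(\cW,\gamma), i_0)$, only those edges are affected, and only new edges of the form $(i_0, j)$ or $(j, i_0)$ with $|\cU_{i_0 j}| > \gamma_{\min}$ can be created in $\cG(\cW', \gamma_{\min})$.

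\textbf{Consistency.} For the first point, I would show that any newly created edge is correctly oriented. Suppose $\cU_{i_0 j} > \gamma_{\min} \geq \phi_{\lone}$ for some $j \in P$; I must show $\pi^*(i_0) > \pi^*(j)$. Decompose $\cU_{i_0 j} = \tfrac{1}{\sqrt{\lambda_0}}\proscal<Y_{i_0 \cdot} - Y_{j \cdot}, \tfrac{w}{\|w\|_2}>$ using $Y = \lambda_1 M + \noise$ (recall $\tfrac1{\lambda_0}\land\lambda_0 = \lambda_0$ since $\lambda_0 \le 1$). Under $\xi$, the noise term $\tfrac1{\sqrt{\lambda_0}}|\proscal<\noise_{i_0\cdot} - \noise_{j\cdot}, \tfrac{w}{\|w\|_2}>|$ is at most $\tfrac13\phi_{\lone}$ by \Cref{eq:condition_phil1} (after normalizing $w$ by $\|w\|_2$, which preserves \Cref{eq:condition_w}). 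Hence the signal term $\tfrac{\lambda_1}{\sqrt{\lambda_0}}\proscal<M_{i_0\cdot} - M_{j\cdot}, \tfrac w{\|w\|_2}>$ is at least $\gamma_{\min} - \tfrac13\phi_{\lone} > 0$, so it is strictly positive. If $\pi^*(j) > \pi^*(i_0)$, i.e.\ $j$ is above $i_0$ in the true order, then since $M_{\pi^{*-1}}$ is column-isotonic and $w \ge 0$ componentwise, we would have $\proscal<M_{i_0\cdot}-M_{j\cdot}, w> \le 0$, a contradiction. Therefore $\pi^*(i_0) \ge \pi^*(j)$, and since $i_0 \ne j$ (the graph is acyclic so $i_0 \notin P$), $\pi^*(i_0) > \pi^*(j)$. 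The orientation for the reverse edge $\cU_{j i_0} = -\cU_{i_0 j} < -\gamma_{\min}$ is handled symmetrically. Since consistency of $\cG(\cW, \gamma_{\min})$ is already assumed for the old edges and $\cG(\cW', \gamma_{\min}) \subseteq \cG(\cW, \gamma_{\min}) \cup \{\text{new edges at } i_0\}$, consistency of $\cW'$ follows.

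\textbf{Weak-transitivity.} This is the main obstacle and requires care. Fix a level $u$ and experts $i, j, k$ with $i$ being $\cG(\cW', \gamma_u)$-above $j$ and $k \in \cN(\cG(\cW', \gamma_{u+1}), j)$; I must show every $i' \ge i$ is $\cG(\cW', \gamma_{\min})$-above $k$. The delicate point is that the neighborhood $P$ used in the update was computed with respect to the \emph{old} graph $\cG(\cW,\gamma)$ at the reference threshold $\gamma$, whereas the hypotheses concern $\cW'$ at thresholds $\gamma_u, \gamma_{u+1}$. I would split into cases according to whether the path from $i$ to $j$ in $\cG(\cW',\gamma_u)$ and the relevant structure around $k$ use any of the newly created edges at $i_0$. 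If no new edge is used in either, the conclusion follows from $\cC(\cW,(\gamma_u))$ directly together with $\cG(\cW,\gamma_{\min}) \subseteq \cG(\cW', \gamma_{\min})$. If a new edge is used, then $i_0$ lies on the relevant path, and I would exploit two facts: first, by the gap condition \Cref{eq:valid_gamma}, $\gamma_u - \gamma_{u+1} \ge \gamma_{\min} + \phi_{\lone}$, so the newly created weight $|\cU_{i_0 j'}| = \tfrac{\lambda_1}{\sqrt{\lambda_0}}|\proscal<M_{i_0\cdot}-M_{j'\cdot},\tfrac{w}{\|w\|_2}>| \pm \tfrac13\phi_{\lone}$ controls the true separation $\proscal<M_{i_0\cdot}-M_{j'\cdot},w>$ up to the additive slack $\phi_{\lone}$; and second, $P = \cN(\cG(\cW,\gamma), i_0)$ means the members of $P$ are mutually $\cG(\cW,\gamma)$-incomparable, so by $\cC(\cW,(\gamma_u))$ their true order is ``close'' — quantitatively, any expert $\cG(\cW,\gamma)$-above all of $P$ and any expert $\cG(\cW,\gamma)$-below all of $P$ sandwich $P$, with no true-order inversions within $P$ that survive threshold $\gamma_{\min}$. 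Chaining: if $i$ reaches $j$ via a path through $i_0$ at threshold $\gamma_u$, and $k$ is in $\cN(\cG(\cW',\gamma_{u+1}),j)$, then using the weak-transitivity of the old graph for the portions of the path not involving the new edges, plus the consistency bound just established for the new edges, plus the $\phi_{\lone}$-slack absorbed by the gap $\gamma_u - \gamma_{u+1} \ge \gamma_{\min}+\phi_{\lone}$, one deduces a path from $i'$ (for any $i' \ge i$) to $k$ with all weights exceeding $\gamma_{\min}$, hence $i'$ is $\cG(\cW',\gamma_{\min})$-above $k$. The bookkeeping of which threshold each edge on the chain survives, and verifying the telescoping of the gaps absorbs exactly the accumulated $\phi_{\lone}$ error terms, is where the bulk of the work lies; the valid-grid condition \Cref{eq:valid_gamma} is engineered precisely so this telescoping closes.
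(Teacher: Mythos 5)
Your consistency argument is correct and is essentially the paper's: any edge of $\cG(\cW',\gamma_{\min})$ carries a weight equal to some previously computed statistic $\cU$ along a nonnegative direction $w$, and under $\xi$ the noise term is at most $\tfrac13\phi_{\lone}\sqrt{\lambda_0}$, so the signal term $\lambda_1\proscal<M_{i\cdot}-M_{j\cdot},w>$ is strictly positive, which forces $\pi^*(i)>\pi^*(j)$.

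The weak-transitivity half, however, is where the lemma's content lies, and there you have a genuine gap: you never produce the argument, you only assert that a case split plus "telescoping of the gaps" yields "a path from $i'$ to $k$ with all weights exceeding $\gamma_{\min}$". No mechanism is given for why such edges should exist for an \emph{arbitrary} $i'\geq i$, which may never have been compared to anything near $i$, $j$ or $k$; chaining old-graph transitivity with the newly created edges at the reference expert cannot by itself reach such an $i'$. The paper's proof works instead along the single direction $w$ that certifies the comparison of $i$ and $j$ at level $\gamma_u$, and combines three ingredients that are absent from your sketch: (i) the entrywise monotonicity $M_{i'\cdot}\geq M_{i\cdot}$ (isotonicity in the true order together with $w\geq 0$), which is what transfers the conclusion from $i$ to every $i'\geq i$; (ii) the quantitative use of the hypothesis $k\in\cN(\cG(\cW,\gamma_{u+1}),j)$, namely that the weight between $j$ and $k$ is below $\gamma_{u+1}$, so that $\abs{\proscal<Y_{k\cdot}-Y_{j\cdot},w>}\leq\gamma_{u+1}\sqrt{\lambda_0}\,\|w\|_2$ along that same direction; and (iii) three applications of the noise bound of $\xi$, absorbed by the grid gap $\gamma_u-\gamma_{u+1}\geq\gamma_{\min}+\phi_{\lone}$, giving
\begin{equation*}
\proscal<Y_{i'\cdot}-Y_{k\cdot},w>\;\geq\;\proscal<Y_{i\cdot}-Y_{j\cdot},w>-\proscal<Y_{k\cdot}-Y_{j\cdot},w>-\phi_{\lone}\sqrt{\lambda_0}\;\geq\;(\gamma_u-\gamma_{u+1}-\phi_{\lone})\sqrt{\lambda_0}\;\geq\;\gamma_{\min}\sqrt{\lambda_0},
\end{equation*}
from which the $\gamma_{\min}$-level comparison of $i'$ and $k$ follows. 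Without (i) and (ii) your "bookkeeping" cannot close, so the proposal as written does not establish weak-transitivity. (A minor additional point: you should also justify that edges above level $\gamma_{\min}$ cannot flip orientation during an update, which under $\xi$ follows from the consistency argument; your claim that only new edges at $i_0$ can appear implicitly uses this.)
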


We denote in the following $\cW_t$ for the directed weighted graph at the begining of step $t$. For any $u \in [0, \floor{2\log_2(n)}]$, we also write as a short hand $\cG_{t, u} = \cG(\cW_t, \gamma_u)$ for the directed graph at begining of step $t$ and level $u$ and $P_{t,u}(i) = \cN(\cG_{t, u}, i)$ for the set of experts that are not comparable with $i$ according to $\cG_{t,u}$. For any sequence of experts $I$, we write $\cP_{t, u}(I)$ for the sequence of subsets $(P_{t,u}(i))_{i \in I}$.
Let us now divide the $T$ steps of the algorithm into $\tau_{\max} = \floor{\log_2(n)}+1$ epochs of $K= \floor{T/\tau_{\max}}$ steps. For any $\tau \in [0,\tau_{\max}]$, we also write $\cG^K_{\tau, u} = \cG_{\tau K, u}$, $P^K_{\tau,u}(i) = P_{\tau K, u}(i)$ and $\cP^K_{\tau, u}(I) = \cP^K_{\tau, u}(I)$. Now we consider for each epoch $\tau$ 
a sequence of experts $I(\tau) = (i_1(\tau), \dots, i_{L_\tau}(\tau))$ defined by induction:
\begin{itemize}
	\item $I(0)$ is the empty sequence
	\item For $\tau \geq 0$, let $(i_1, \dots, i_L)$ be the sequence ordered according to $\pi^*$ and corresponding to the union of the already constructed sequences $\bigcup_{\tau' \leq \tau} I(\tau')$ , and $i = 0$, $i_{L + 1} = n+1$. For any $l \in [0, L]$, let $A_l$ be the set of experts that are $\cG^K_{\tau+1, 2\tau+1}$-below $i_{l+1}$ but $\cG^K_{\tau+1, 2\tau+1}$-above $i_l$. For  all $l$ such that $A_l$ is not empty, we define $i'_l$ as the expert of $A_l$ which is any expert closest to the median $\floor{(i_l + i_{l+1})/2}$, and the new sequence $I(\tau + 1) := (i'_l)$.
\end{itemize}

By definition, remark that $I(1)$ is equal to $(\floor{(n+1)/2})$.
The induction step aims at building a sequence $I(\tau+1)$ that is disjoint from $\cup_{\tau' \leq \tau} I(\tau')$, and that cuts each set $A_l$ of experts that are above $i_l$ and below $i_{l+1}$ according to the graph at epoch $\tau + 1$ and level $2\tau+1$. Given the already constructed collections of perfectly ordered experts $I(\tau')$ for $\tau' \leq \tau$, the idea of $I(\tau+1)$ is that it tends to fill the gaps between the neighborhoods in $\cG_{\tau+1,2\tau+1}$ of any two successive experts in $\cup_{\tau' \leq \tau}I(\tau')$. 

\medskip

By monotonicity, it holds that for any expert $i$, epoch $\tau$ and level $u$ that 
$P^K_{\tau+1, u+1}(i) \subset P^K_{\tau+1,u}(i) \subset P^K_{\tau,u}(i)$. We say that the sets $P^K_{\tau,2\tau}(i)$ and $P^K_{\tau, 2\tau+1}(i)$ are the neighborhoods of $i$ at the beginning of epoch $\tau$ and that the sets $P^K_{\tau+1, 2\tau}(i), P^K_{\tau+1, 2\tau+1}(i)$ are the neighborhood of $i$ at the end of epoch $\tau$. The neighborhoods at the end of a given epoch $\tau$ are obtained from the neighborhoods at the beginning the of epoch $\tau$ after $K$ steps of the \Cref{alg:principal}. On the other hand, we say that the sets $P^K_{\tau, 2\tau}, P^K_{\tau+1, 2\tau}$ are the conservative subsets at epoch $\tau$, since they correspond to a more conservative directed graph with threshold $\gamma_{2\tau} \geq \gamma_{2\tau+1}$. The following lemma states that, at any epoch $\tau$, the conservative subsets at the beginning of epoch $\tau$ are well separated according to the true order $\pi^* = \mathrm{id}$:
\begin{lemma}\label{lem:packing_general}
	Under event $\xi$, for any $\tau \in [0, \tau_{\max}]$, letting $(i_1, \dots, i_L) = I(\tau)$, we have 
	\begin{align*}
		P^K_{\tau,2\tau}(i_1) < \dots <  P^K_{\tau,2\tau}(i_{L}).
	\end{align*}
\end{lemma}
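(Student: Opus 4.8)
The plan is to prove \Cref{lem:packing_general} by induction on $\tau$, but to make the induction close I would carry along a companion statement about the \emph{whole} set of anchors accumulated up to epoch $\tau$. Writing $\widetilde I(\tau)$ for the sequence $\bigcup_{\tau'\le\tau}I(\tau')$ sorted according to $\pi^*=\mathrm{id}$, I would prove simultaneously, under $\xi$ and by a joint induction on $\tau$: (I) that $\widetilde I(\tau)$ is a chain of $\cG^K_{\tau+1,2\tau+1}$ — i.e.\ any two of its elements are $\cG^K_{\tau+1,2\tau+1}$-comparable with the induced order agreeing with $\pi^*$ — and (II) that $P^K_{\tau,2\tau}(i_1)<\dots<P^K_{\tau,2\tau}(i_L)$ for $I(\tau)=(i_1<\dots<i_L)$, where "$A<B$" for nonempty $A,B\subset[n]$ means $\max A<\min B$. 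Since $I(\tau)$ is a subsequence of $\widetilde I(\tau)$, statement (II) is exactly the lemma. The base cases $\tau=0,1$ are immediate: $\widetilde I(0)=I(0)=\emptyset$ and $\widetilde I(1)=I(1)=(\floor{(n+1)/2})$, so there is nothing to compare.

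For the inductive step of (I) I would use that $\widetilde I(\tau-1)$, a chain of $\cG^K_{\tau,2\tau-1}$ by the inductive hypothesis, is precisely the sequence from which $I(\tau)$ is built using that same graph. Each new anchor $i'_l\in I(\tau)$ lies in $A_l$, hence is $\cG^K_{\tau,2\tau-1}$-above $i_l$ and $\cG^K_{\tau,2\tau-1}$-below $i_{l+1}$, which by the consistency part of \Cref{lem:consistency} (applied at $\gamma_{2\tau-1}\ge\gamma_{\min}$) forces $i_l<i'_l<i_{l+1}$ in $\pi^*$-order; in particular the $i'_l$ are distinct and disjoint from $\widetilde I(\tau-1)$. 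Splicing each $i'_l$ into the chain between $i_l$ and $i_{l+1}$, and observing that any two elements of $\widetilde I(\tau)$ are then joined by a directed path running along the old backbone, one gets that $\widetilde I(\tau)$ is a chain of $\cG^K_{\tau,2\tau-1}$. Finally, the monotonicity relations recalled before \Cref{lem:packing_general} give $P^K_{\tau+1,2\tau+1}(x)\subset P^K_{\tau,2\tau-1}(x)$ for every $x$, so $\cG^K_{\tau,2\tau-1}$-comparability is inherited by $\cG^K_{\tau+1,2\tau+1}$, yielding (I) at $\tau$.

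For the inductive step of (II) I would write $\widetilde I(\tau-1)=(i_1<\dots<i_L)$ and take two consecutive elements $i'_l<i'_{l'}$ of $I(\tau)$, with $l<l'$ among the indices of nonempty $A$'s. From the bracketing above, $i'_l<i_{l+1}\le i_{l'}<i'_{l'}$ in $\pi^*$-order, so the older anchor $w:=i_{l+1}\in\widetilde I(\tau-1)$ lies strictly between them. Since $\widetilde I(\tau-1)$ is a chain of $\cG^K_{\tau,2\tau-1}$, $w$ is $\cG^K_{\tau,2\tau-1}$-below $i_{l'}$, which is $\cG^K_{\tau,2\tau-1}$-below $i'_{l'}$; combining, $i'_l$ is $\cG^K_{\tau,2\tau-1}$-below $w$ and $i'_{l'}$ is $\cG^K_{\tau,2\tau-1}$-above $w$. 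I would then invoke the weak-transitivity part of \Cref{lem:consistency} at level $u=2\tau-1$ with witness $w$: for $a\in P^K_{\tau,2\tau}(i'_l)=\cN(\cG^K_{\tau,(2\tau-1)+1},i'_l)$, since $w$ is $\cG^K_{\tau,2\tau-1}$-above $i'_l$, every index $\ge w$ (in particular $w$ itself) is $\cG(\cW_{\tau K},\gamma_{\min})$-above $a$, whence $a<w$ by consistency; symmetrically $b>w$ for every $b\in P^K_{\tau,2\tau}(i'_{l'})$. Thus $P^K_{\tau,2\tau}(i'_l)<\{w\}<P^K_{\tau,2\tau}(i'_{l'})$, and since each neighborhood contains its own anchor (so is nonempty) and "$<$" is transitive between nonempty sets, the whole chain of inequalities in (II) follows.

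The real work here is organizational rather than analytic: choosing the right companion statement (the chain property (I)) and threading the one-level/one-epoch bookkeeping through the relations $P^K_{\tau+1,u+1}\subset P^K_{\tau+1,u}\subset P^K_{\tau,u}$ — the key point being that $I(\tau)$ is built with $\cG^K_{\tau,2\tau-1}$ while the conclusion concerns $\cG^K_{\tau,2\tau}$, exactly the one-level gap on which the weak-transitivity property of $\cC(\cW,(\gamma_u))$ is calibrated. Two smaller points to pin down are that \Cref{lem:consistency} must be used in both orientations ($\cG$-above and $\cG$-below), the "below" form following from the stated one by the reflection symmetry $m\mapsto n+1-m$ of the construction; and that the freshly inserted anchors are correctly bracketed so that consecutive elements of $I(\tau)$ are always separated by an older anchor — which is what makes the witness $w$ available and is the reason no separate argument is needed for adjacencies between old and new anchors.
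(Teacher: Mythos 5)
Your argument is correct and is essentially the paper's proof: the separation comes from applying the weak-transitivity and consistency properties of \Cref{lem:consistency} at the level at which the new anchors were sandwiched (both orientations, as you note), so that every element of $P^K_{\tau,2\tau}(i'_l)$ lies strictly on the correct side of an older anchor in the $\pi^*$-order. The only difference is organizational: the companion chain statement (I) is dispensable, since the paper separates the neighborhoods of $i'_l$ and $i'_{l'}$ by using the two bracket anchors $i_{l+1}$ and $i_{l'}$ as separate witnesses together with the $\pi^*$-sortedness of the accumulated anchors, rather than manufacturing a single common witness $w$.
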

In other words, \Cref{lem:packing_general} implies that, for any $l < l'$, any expert in $P^K_{\tau,2\tau}(i_l)$ is $\pi^*$-below any expert in $P^K_{\tau,2\tau}(i_{l'})$. 
As a consequence, it holds that for any $l \in [1, L_{\tau} - 2]$,
\begin{equation}\label{eq:packing_stronger}
	P^K_{\tau,2\tau}(i_l) \overset{\cG^K_{\tau, 2\tau}}{\prec} P^K_{\tau,2\tau}(i_{l+2})\enspace .
\end{equation}
Namely, any expert in $P^K_{\tau,2\tau}(i_l)$ is $\cG^K_{\tau, 2\tau}$-below any expert in $P^K_{\tau,2\tau}(i_{l+2})$. Indeed, \Cref{lem:packing_general} and first point of event $\xi$ imply that any expert $j$ in $P^K_{\tau,2\tau}(i_l)$ is $\cG^K_{\tau, 2\tau}$-below $i_{l+1}$, since $j$ cannot be in $P^K_{\tau,2\tau}(i_l)$. On the other hand, $i_{l+1}$ is itself $\cG^K_{\tau, 2\tau}$-below any expert of $P^K_{\tau,2\tau}(i_{l+2})$ for the same reason.
The following lemma states that the ending less conservative subsets are covering the set of all experts.
\begin{lemma}\label{lem:covering_general}
	Under event $\xi$, it holds that $$[n] = \bigcup_{\tau = 0}^{\tau_{\max} - 1}\bigcup_{i\in I(\tau)} P^K_{\tau+1, 2\tau+1}(i) \enspace .$$
\end{lemma}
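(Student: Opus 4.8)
The plan is to prove, by induction on $\tau$, the stronger statement that the experts \emph{not} covered by the first $\tau$ epochs — namely those in $[n]\setminus\bigcup_{\tau'<\tau}\bigcup_{i\in I(\tau')}P^K_{\tau'+1,2\tau'+1}(i)$ — are distributed among a \emph{controlled number of gaps} between the consecutive experts of $\bigcup_{\tau'\le\tau}I(\tau')$, and that the number of such gaps decreases geometrically. More precisely, I would track the quantity $g(\tau)$ equal to the number of indices $l\in[0,L_\tau]$ (with $I(\tau)=(i_1,\dots,i_{L_\tau})$ augmented by $i_0=0$, $i_{L_\tau+1}=n+1$) for which the set $A_l$ of experts that are $\cG^K_{\tau+1,2\tau+1}$-below $i_{l+1}$ but $\cG^K_{\tau+1,2\tau+1}$-above $i_l$ is nonempty. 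The claim I want as an invariant is: every expert not yet covered at the start of epoch $\tau$ lies in some $A_l$ for $I(\tau)$, and moreover $g(\tau+1)\le \lceil g(\tau)/2\rceil$ (or some similar halving), so that after $\tau_{\max}=\floor{\log_2 n}+1$ epochs there are no uncovered experts left.

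First I would set up the base case: at $\tau=0$, $I(0)$ is empty and the only "gap" is all of $[n]$, so $g(0)=1$ trivially covers everything with one gap; and $I(1)=(\floor{(n+1)/2})$ is the median, consistent with the induction scheme. For the inductive step, fix $\tau$ and consider a gap index $l$ with $A_l\neq\emptyset$ for $I(\tau)$. By construction, $I(\tau+1)$ picks $i'_l\in A_l$ as close as possible to the midpoint $\floor{(i_l+i_{l+1})/2}$. I would then argue two things. (i) The experts of $A_l$ that are \emph{not} in the neighborhood $P^K_{\tau+1,2\tau+1}(i'_l)$ — i.e. those that become $\cG^K_{\tau+1,2\tau+1}$-comparable to $i'_l$ — are genuinely split by $i'_l$ into those above and those below it; this uses Lemma \ref{lem:consistency} (consistency of $\cG(\cW,\gamma_{\min})$, hence of the more conservative $\cG^K_{\tau+1,2\tau+1}$) to know that "comparable" means "correctly comparable" with respect to $\pi^*=\mathrm{id}$. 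So each old gap produces at most two new sub-gaps, but — and this is the key quantitative point — by the median choice together with Lemma \ref{lem:packing_general}, which controls how far apart the conservative neighborhoods of the $I(\tau)$ experts are, the \emph{sizes} of these sub-gaps are at most half the size of the original gap (in the index metric on $[n]$), so after $\floor{\log_2 n}+1$ halvings every gap has collapsed. (ii) The experts of $A_l$ that \emph{are} in $P^K_{\tau+1,2\tau+1}(i'_l)$ are by definition covered by the union in the statement at epoch $\tau$. Combining (i) and (ii): an expert uncovered after epoch $\tau$ must lie in one of the strictly smaller sub-gaps of $I(\tau+1)$, which feeds the induction.

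The main obstacle I anticipate is making precise the "halving of gap size" step — i.e., controlling the index-distance spanned by $A_l$ in terms of $i_{l+1}-i_l$, and then showing the midpoint choice $i'_l$ really cuts this distance down by a factor $2$. This is where Lemma \ref{lem:packing_general} has to be used carefully: it tells us the conservative neighborhoods $P^K_{\tau,2\tau}(i_l)$ are ordered and disjoint, but I need to translate that into a statement about $A_l$ at the \emph{next} level $2\tau+1$ and \emph{next} epoch $\tau+1$, which requires the monotonicity relations $P^K_{\tau+1,2\tau+1}(i)\subset P^K_{\tau+1,2\tau}(i)\subset P^K_{\tau,2\tau}(i)$ quoted just before Lemma \ref{lem:packing_general}, plus the consistency from Lemma \ref{lem:consistency}. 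A secondary subtlety is bookkeeping the indices: one must verify that $I(\tau+1)$ is indeed disjoint from $\bigcup_{\tau'\le\tau}I(\tau')$ (so that the epochs genuinely make progress rather than re-picking the same experts), which follows because $i'_l\in A_l$ lies strictly between $i_l$ and $i_{l+1}$ in $\pi^*$-order and is $\cG^K_{\tau+1,2\tau+1}$-comparable to neither endpoint's neighborhood — but this needs to be stated cleanly. Once the geometric decay of the number and size of gaps is established, the conclusion that after $\tau_{\max}=\floor{\log_2 n}+1$ epochs the union exhausts $[n]$ is immediate. Everything is under event $\xi$, so all invocations of Lemmas \ref{lem:consistency}, \ref{lem:packing_general} are legitimate.
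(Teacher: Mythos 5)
There is a genuine gap, and it sits exactly at the step you flag as your ``main obstacle''. Two problems. First, your proposed invariant $g(\tau+1)\le\lceil g(\tau)/2\rceil$ on the \emph{number} of gaps goes the wrong way: each nonempty $A_l$ is split by the newly selected expert $i'_l$ into up to two sub-gaps, so the gap count can grow, not shrink; no merging mechanism exists. Second, your fallback claim that ``the sizes of these sub-gaps are at most half the size of the original gap'' is false as stated, and \Cref{lem:packing_general} does not deliver it: the expert $i'_l$ is only the element of $A_l$ \emph{closest} to the midpoint $m=\floor{(i_l+i_{l+1})/2}$, and nothing forces $A_l$ to contain anything near $m$. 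If all of $A_l$ is concentrated near $i_{l+1}$, then $i'_l$ sits near $i_{l+1}$ and the sub-interval between $i_l$ and $i'_l$ has essentially the full length of the original gap, so a naive size-halving induction breaks down.

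The paper's proof avoids both issues by inducting on a different quantity: the distance of an \emph{arbitrary} expert to the covered set $\bigcup_{\tau'\le\tau}\bigcup_{i\in I(\tau')}P^K_{\tau'+1,2\tau'+1}(i)\cup\{0,n+1\}$, shown to be at most $(n+1)/2^{\tau+1}$. The missing idea that makes the halving work when $i'_l$ is far from $m$ is a case analysis on the position of the uncovered expert $j$ relative to $m$ and $i'_l$: if $j$ and $i'_l$ lie on opposite sides of $m$, then $m\notin A_l$ (else $i'_l$ would not be the closest element of $A_l$ to $m$), and by the consistency property (\Cref{lem:consistency}) an expert between $i_l$ and $i_{l+1}$ that is not in $A_l$ must lie in $P^K_{\tau+1,2\tau+1}(i_l)\cup P^K_{\tau+1,2\tau+1}(i_{l+1})$, i.e.\ the midpoint $m$ is \emph{already covered}; hence $j\in[i_l,m]$ is within $(m-i_l)/2$ of a covered point. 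In the other case ($j$ on the same side as $i'_l$), $j$ is sandwiched between the two covered points $i'_l$ and $i_{l+1}$ (or $i_l$ and $i'_l$) and is within $(i_{l+1}-m)/2$ of one of them. This is what halves the distance at every epoch; \Cref{lem:packing_general} plays no role here, only consistency and the closest-to-median construction of $I(\tau+1)$. Your outline, which relies on sub-gap lengths shrinking geometrically, cannot be completed without this covered-midpoint observation.
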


Let $\hat \pi$ be the estimator obtained from the final weighted directed graph $\cW$ at the end of the procedure, that is any permutation on $[n]$ that is consistent with the largest acyclic graph of the form $\cG(\cW, \gamma)$ for all $\gamma > 0$. For any sequence of subsets $\cP = (P_1, \dots, P_L)$ we define
\begin{equation}\label{eq:def_square_norm}
	\SN(\cP) = \sum_{P \in \cP} \|M(P) - \overline M(P) \|_F^2 \enspace.
\end{equation}
The following proposition that we can control the $L_2$ error of $\hat \pi$ by the maximum over all epoch $\tau$ of the sum over $\tau$ of the square norms of the groups in $\cP^K_{\tau+1, 2\tau+1}$.

\begin{proposition}\label{prop:UB_by_square_norm}
	Under event $\xi$, it holds that
	\begin{equation}\label{eq:UB_by_square_norm}
		\|M_{\hat \pi^{-1}} - M\|_F^2 \leq 4\sum_{\tau = 0}^{\tau_{\max}-1}\SN(\cP^K_{\tau+1, 2(\tau + 1)}) \enspace .
	\end{equation}
\end{proposition}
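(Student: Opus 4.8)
The plan is to deduce the bound from the structural lemmas already established: the consistency property of $\cW$ (Lemma~\ref{lem:consistency}), the covering Lemma~\ref{lem:covering_general}, the packing Lemma~\ref{lem:packing_general}, and the separation \eqref{eq:packing_stronger}. The estimator $\hat\pi$ is consistent with $\widehat\cG = \cG(\cW, \hat\gamma)$, and since $\hat\gamma \geq \gamma_{\min}$ we have $\widehat\cG \subset \cG(\cW, \gamma_{\min})$, so by the consistency part of $\cC(\cW,(\gamma_u))$ under $\xi$, whenever $(i,j)\in\widehat\cG$ we have $\pi^*(i) > \pi^*(j)$; in other words $\hat\pi$ never contradicts the true order on pairs that are $\widehat\cG$-comparable. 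The error $\|M_{\hat\pi^{-1}} - M\|_F^2$ therefore only charges pairs $(i,j)$ that are \emph{not} $\widehat\cG$-comparable, i.e.\ that lie in a common neighborhood $\cN(\widehat\cG, \cdot)$, and more precisely one wants to say: for each row $i$, the discrepancy $\|M_{\hat\pi^{-1}(i)\cdot} - M_{i\cdot}\|$ is controlled by how much $M$ varies over the $\widehat\cG$-neighborhood of $i$.

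First I would make the reduction to neighborhoods precise. Using that $\hat\pi$ is a linear extension of $\widehat\cG$, one shows $\|M_{\hat\pi^{-1}} - M\|_F^2 \leq 2\sum_{i} \max_{j \in \cN(\widehat\cG, i)}\|M_{i\cdot} - M_{j\cdot}\|_2^2$ or, more usefully, bounds the whole Frobenius error by a sum of terms of the form $\|M(P) - \overline M(P)\|_F^2$ over a collection of neighborhoods $P$ that covers $[n]$; this is the standard ``oscillation over an unresolved block'' estimate and uses monotonicity of the columns of $M$ (recall $\pi^* = \mathrm{id}$). The factor $4$ in \eqref{eq:UB_by_square_norm} should come out of this step together with the doubling when passing between $\|M(P)-\overline M(P)\|_F^2$ and pairwise differences within $P$. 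Here $\widehat\cG$ corresponds to some threshold $\hat\gamma$, and since $\hat\gamma$ can be larger than all the $\gamma_u$ in the fixed sequence, the neighborhoods $\cN(\widehat\cG, i)$ are \emph{at least as large} as the $P^K_{\tau+1, 2(\tau+1)}(i)$; so a priori this goes the wrong way. The resolution is that we do not bound the error by $\widehat\cG$-neighborhoods directly but by the smallest neighborhoods produced \emph{during} the run, exploiting that $\hat\gamma \leq \gamma_{\min}$: indeed $\widehat\cG$ is the \emph{largest} acyclic graph, hence it contains $\cG(\cW, \gamma_{\min})$, so $\cN(\widehat\cG, i) \subset \cN(\cG(\cW,\gamma_{\min}), i) = P_{T, \floor{2\log_2 n}}(i)$. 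Thus the error is controlled by the final-time, least-conservative neighborhoods.

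Next I would invoke the covering Lemma~\ref{lem:covering_general}, which says $[n] = \bigcup_{\tau=0}^{\tau_{\max}-1}\bigcup_{i\in I(\tau)} P^K_{\tau+1, 2\tau+1}(i)$. Each final neighborhood $\cN(\widehat\cG, i)$ is contained in some $P^K_{\tau+1, 2\tau+1}(i')$ for an appropriate epoch $\tau$ and witness $i' \in I(\tau)$ with $i \in P^K_{\tau+1, 2\tau+1}(i')$ — here one uses nestedness $P^K_{\tau+1, 2\tau+1}(i) \subset P^K_{\tau+1, 2\tau}(i)$ and the fact that two experts sharing a neighborhood at the final time also share one of the covering neighborhoods (this is where weak-transitivity from Lemma~\ref{lem:consistency} and the packing Lemma~\ref{lem:packing_general} / \eqref{eq:packing_stronger} enter: the covering sets with the \emph{same} epoch parity are well-separated, so a pair that is unresolved must fall inside a single covering set rather than straddling two). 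Therefore $\sum_i \|M_{\hat\pi^{-1}(i)\cdot} - M_{i\cdot}\|_2^2$ is dominated, up to the constant $4$, by $\sum_{\tau=0}^{\tau_{\max}-1}\sum_{i\in I(\tau)}\|M(P^K_{\tau+1,2\tau+1}(i)) - \overline M(P^K_{\tau+1,2\tau+1}(i))\|_F^2 = \sum_{\tau=0}^{\tau_{\max}-1}\SN(\cP^K_{\tau+1, 2(\tau+1)})$, which is exactly \eqref{eq:UB_by_square_norm}.

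The main obstacle I anticipate is the bookkeeping in the covering/charging step: one must argue that every pair $(i,j)$ that $\hat\pi$ can possibly get wrong is actually contained in a \emph{single} covering neighborhood $P^K_{\tau+1,2\tau+1}(i')$ — not split across two consecutive covering sets, and not leaking into a coarser scale — so that its contribution is genuinely charged to one $\SN$ term and not double-counted or missed. Making this airtight requires carefully combining the nestedness of the $P^K_{\tau,u}$ across epochs and levels with the separation statement \eqref{eq:packing_stronger} (consecutive-with-gap-two covering sets are $\cG$-ordered) and the weak-transitivity clause of $\cC(\cW,(\gamma_u))$, which is precisely what guarantees that an unresolved pair cannot span the boundary between two ordered blocks. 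Once this combinatorial fact is in hand, the column-monotonicity of $M$ turns the per-row oscillation into the $\SN$ quantities and the proposition follows.
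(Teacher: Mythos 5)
Your plan assembles the right ingredients (the covering \Cref{lem:covering_general}, consistency and weak-transitivity from \Cref{lem:consistency}, column isotonicity, and a triangle inequality around a block mean producing the factor $4$), and its overall architecture is the same as the paper's. But the pivotal step is precisely the one you defer as ``the main obstacle,'' and the route you sketch for it does not go through. You propose the set inclusion $\cN(\widehat\cG,i)\subset P^K_{\tau+1,2\tau+1}(i')$ for the covering witness $i'$, at the same level $2\tau+1$. Weak-transitivity cannot deliver this: it turns comparability with $i'$ at a level $u$ into domination of the whole level-$(u+1)$ neighborhood of $i'$, so starting from $i\in P^K_{\tau+1,2\tau+1}(i')$ you are forced to pass to the more conservative set $P^K_{\tau+1,2\tau}(i')$. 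Moreover, the statement actually needed (and proved in the paper) is not a containment of final neighborhoods — neighborhoods need not be intervals, and the position $\hat\pi(i)$ need not even belong to $\cN(\cG(\cW,\gamma_{\min}),i)$ — but a counting statement about the integer $\hat\pi(i)$: let $[a,b]$ be the maximal interval of $P^K_{\tau+1,2\tau}(i')$ containing $i'$; every $j>b$ dominates some $j'\notin P^K_{\tau+1,2\tau}(i')$ with $j'>b\ge i'$, which by consistency must be $\cG^K_{\tau+1,2\tau}$-above $i'$, and weak-transitivity then makes $j$ $\cG(\cW,\gamma_{\min})$-above $i$ in the final graph; since under $\xi$ one has $\hat\gamma\le\gamma_{\min}$, hence $\cG(\cW,\gamma_{\min})\subset\widehat\cG$ and $\hat\pi$ respects it, this forces $\hat\pi(i)\le b$, and symmetrically $\hat\pi(i)\ge a$, so $\hat\pi(i)\in[a,b]\subset P^K_{\tau+1,2\tau}(i')$. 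The bound then follows by expanding $\|M_{\hat\pi(i)\cdot}-M_{i\cdot}\|^2$ around $\overline m(P^K_{\tau+1,2\tau}(i'))$ and using injectivity of $\hat\pi$ together with the inclusion of the covering set in the conservative set, which is where the factor $4$ comes from. Without this argument your charging step remains an assertion, not a proof, and the variant you state is at the wrong level to be provable by the available lemmas.

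Two further slips in the first paragraph of your plan: you claim $\hat\gamma\ge\gamma_{\min}$, hence $\widehat\cG\subset\cG(\cW,\gamma_{\min})$, and conclude that every edge of $\widehat\cG$ agrees with $\pi^*$. Under $\xi$ the inequality goes the other way (you correct the direction later), and edges of $\widehat\cG$ with weight in $(\hat\gamma,\gamma_{\min})$ carry no correctness guarantee; the only fact used is that $\hat\pi$ is consistent with $\cG(\cW,\gamma_{\min})\subset\widehat\cG$, from which the counting argument above extracts the needed control on $\hat\pi(i)$.
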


Recall that $\bar \gamma$ is defined in \Cref{eq:def_gamma}, and that $\Gamma$ can be taken to be a valid grid with $\bar \gamma$ smaller than a polylogarithm in $n,d,\delta$.
The final proposition states that at any level $u$ and any step $t$, any sequence of subset that can be ordered according to the already constructed graph $\cG_{t,u}$ as in \Cref{eq:packing_stronger} will either have a square norm smaller than the minimax rate $\rho_{\perm}$, defined in \Cref{eq:rho_perm} or almost exponentially decrease its square norm with high probability.

\begin{proposition}\label{prop:UB_on_square_norm}
	Fix any $u \in [0, 2\tau_{\max}]$ and step $t < T$, and assume that $I = (i_1, \dots, i_{L})$ is a sequence of experts that satisfies $P_{t, u}(i_1) \overset{\cG_{t,u}}{\prec} \dots \overset{\cG_{t,u}}{\prec} P_{t, u}(i_{L})$. Then on the intersection of the event $\xi$ (defined in \Cref{eq:condition_phil1}) and an event of probability higher than $1 - 5\delta$, it holds that 
	\begin{equation*}
		\SN(\cP_{t+1, u}(I)) \leq \left[C\bar \gamma^{6}\rho_{\perm}(n, d, \lambda_0)\right]\lor \left[\left(1 - \frac{1}{4\bar \gamma^2}\right)\SN(\cP_{t,u}(I))\right]\enspace ,
	\end{equation*}
	for some numerical constant $C$. 
\end{proposition}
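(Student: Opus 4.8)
We prove the proposition by a per-block contraction estimate followed by a summation over the blocks of the sequence $I$. Since $P_{t,u}(i_1)\overset{\cG_{t,u}}{\prec}\dots\overset{\cG_{t,u}}{\prec}P_{t,u}(i_L)$, the neighbourhoods $P_\ell:=P_{t,u}(i_\ell)$ are pairwise disjoint, so $\sum_\ell|P_\ell|\le n$. Because every update~\eqref{eq:update_W} only increases $|\cW(i,j)|$, we have $\cG_{t,u}\subseteq\cG_{t+1,u}$, hence $P_{t+1,u}(i_\ell)\subseteq P_\ell$; and since $\|M(P)-\overline M(P)\|_F^2=\min_{c\in\bbR^d}\sum_{j\in P}\|M_{j\cdot}-c\|_2^2$, deleting rows from $P$ can only decrease this quantity, so $\SN(\cP_{t+1,u}(I))\le\SN(\cP_{t,u}(I))$ holds unconditionally; the substance of the statement is to improve this to a genuine contraction on the blocks that are still spread out. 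Writing $V_\ell^2:=\|M(P_\ell)-\overline M(P_\ell)\|_F^2$, it suffices to show that, on $\xi$ intersected with an auxiliary event $\Omega$ with $\P(\Omega)\ge1-5\delta$, each block satisfies
\begin{equation}\label{eq:plan_per_block}
\|M(P_{t+1,u}(i_\ell))-\overline M(P_{t+1,u}(i_\ell))\|_F^2\ \le\ R_\ell\ \vee\ \Bigl(1-\tfrac1{2\bar\gamma^2}\Bigr)V_\ell^2
\end{equation}
for budgets $R_\ell\ge0$ with $\sum_\ell R_\ell\le C\bar\gamma^4\rho_{\perm}(n,d,\lambda_0)$. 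Granting this, summing~\eqref{eq:plan_per_block} gives $\SN(\cP_{t+1,u}(I))\le C\bar\gamma^4\rho_{\perm}+(1-\tfrac1{2\bar\gamma^2})\SN(\cP_{t,u}(I))$, and the stated maximum follows by discussing whether $\SN(\cP_{t,u}(I))$ lies above or below a threshold of order $\bar\gamma^6\rho_{\perm}$ and relabelling constants. The delicate point here is the choice of the $R_\ell$: a non-contracting block $P_\ell$ is shown to have $V_\ell^2$ bounded by a quantity of order $\bar\gamma^4$ times a per-block rate, and these rates can be chosen so that they add up to $O(\rho_{\perm}(n,d,\lambda_0))$ because each column of the monotone matrix $M$ has total variation at most $1$ and can therefore carry a given amount of within-block variation only for a controlled number of the consecutive blocks $P_1\prec\dots\prec P_L$.

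\textbf{Per-block contraction, dimension-reduction step.} Fix a block $P=P_\ell$ with $|P|\ge2$ for which $V_\ell^2$ exceeds its budget $R_\ell$. During step $t$, the inner loops of \Cref{alg:principal} reach the pair $(\gamma_u,i_\ell)$ and call $\algoRefineLocally$; since all the remaining operations of step $t$ can only shrink $P_\ell$ further, it suffices to analyse this single call, and inside it one well-chosen dyadic height $h^\ast\in\cH$. At height $h^\ast$ the set $\widehat Q^{h^\ast}$ of~\eqref{eq:set_Q} keeps the columns $k$ on which the experts that the current graph certifies above (resp. below) $P$ — namely $\cN_a$ and $\cN_{-a}$ for the relevant $a=\hat a_k(h^\ast)$ — differ by at least $\sim\lambda_0 h^\ast$. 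By \Cref{lem:consistency} these sets genuinely sandwich $P$ in the true order, so the column-wise monotonicity of $M$ yields, up to the $Y^{(1)}$-noise, the entrywise bounds $\overline M(\cN_{-a},\widehat Q^{h^\ast})\le M(P,\widehat Q^{h^\ast})\le\overline M(\cN_a,\widehat Q^{h^\ast})$. Consequently the within-$P$ variation on columns outside $\widehat Q^{h^\ast}$ is negligible for the right scale, while there is a dyadic $h^\ast$ for which $\widehat Q^{h^\ast}$ collects a $1/\mathrm{polylog}$-fraction of $V_\ell^2$ (the loss coming from the $|\cH|=O(\log(nd))$ scales) and for which $|\widehat Q^{h^\ast}|$ is controlled via~\eqref{eq:set_Q}. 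When this variation is concentrated on a few high-gap columns, the plain average $w=\1_{\widehat Q^{h^\ast}}$ used in the second step of $\algoRefineLocally$ already separates the extreme experts of $P$ from $i_\ell$; the spread-out regime is handled by the spectral step.

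\textbf{Per-block contraction, spectral step and edge updates.} On the reduced block $P\times\widehat Q^{h^\ast}$ the centred signal $N:=M(P,\widehat Q^{h^\ast})-\overline M(P,\widehat Q^{h^\ast})$ is essentially of rank one, and the first right singular vector of an isotonic-type matrix has entries of a common sign; projecting the rows of $N$ onto the first left singular direction $v_1$ therefore separates the larger experts of $P$ from the smaller ones by an amount comparable to $\sigma_1(N)/\sqrt{|P|}$, with $\sigma_1(N)^2\gtrsim\|N\|_F^2\gtrsim V_\ell^2/\mathrm{polylog}$. The procedure estimates $v_1$ by $\hat v$ of~\eqref{eq:ACP}, whose population objective is maximised at $v_1$, the subtracted term in~\eqref{eq:ACP} cancelling the heteroskedastic part of the noise covariance; a Davis--Kahan-type argument gives $\hat v\approx v_1$ provided the spectral gap beats $\|E^{(2)}(E^{(2)})^\top-\bbE[E^{(2)}(E^{(2)})^\top]\|_{\op}$ restricted to this block, which may be extremely rectangular and, when $\lambda_0\ll1$, sparse; this is precisely where \Cref{prop:concentration_bernstein_op} enters, its improvement from $\sigma^2(p\vee q)$ to $\sigma^2\sqrt{pq}$ being what makes the final rate optimal. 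Truncating $\hat v$ at $\sqrt{\lambda_0}$ to form $\hat v_-$ discards rows whose mass is inflated by missing data; passing to $\hat z=\hat v_-^\top(Y^{(4)}-\overline Y^{(4)})$, thresholding its small coordinates and taking absolute values yields weights $\hat w^+\in\bbR^{\widehat Q^{h^\ast}}$ aligned with the informative direction. Finally the updating statistic $\cU_{i_\ell j}$ of~\eqref{eq:updating_edges}, evaluated on the fresh sample $Y^{(5)}$ with direction $\hat w^+$, has a signal part that increases with the separation of $i_\ell$ and $j$ along $v_1$ and, by the definition of $\xi$ in~\eqref{eq:condition_phil1}, a noise part at most $\tfrac13\phi_{\lone}$; for experts $j\in P$ lying far above or below $i_\ell$ along $v_1$ the signal exceeds $\gamma_u$ — this uses the validity of $\Gamma$, i.e.~the gap condition~\eqref{eq:valid_gamma} — so new edges enter $\cG(\cW,\gamma_u)$ and remove those $j$ from $P_{t+1,u}(i_\ell)$. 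The removed experts carry a $1/\mathrm{polylog}$-fraction of $V_\ell^2$, so the residual block obeys $\|M(P_{t+1,u}(i_\ell))-\overline M(P_{t+1,u}(i_\ell))\|_F^2\le(1-1/(2\bar\gamma^2))V_\ell^2$, which is the second branch of~\eqref{eq:plan_per_block}; when the separation of the most extreme experts still fails to cross $\gamma_u$, the variation $V_\ell^2$ must have been below its budget $R_\ell$, which is the first branch, and this is what ties the powers of $\bar\gamma$ in $R_\ell$ to the threshold $\gamma_u$ and to the two truncations.

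\textbf{Main obstacle and probability bookkeeping.} The hard part is the spectral step above in the sparse, heteroskedastic regime: bounding $\|E^{(2)}(E^{(2)})^\top-\bbE[E^{(2)}(E^{(2)})^\top]\|_{\op}$ (and the analogous matrix built from $Y^{(3)}$) on a possibly highly rectangular block requires the matrix--Bernstein estimate of \Cref{prop:concentration_bernstein_op}, and the chain $\hat v\to\hat v_-\to\hat z\to\hat w^+$ must be controlled so that the two truncations preserve alignment with $v_1$ while taming the heavy-tailed sparse noise — essentially all the difficulty of the partial-observation regime is concentrated here. A secondary, combinatorial difficulty is the inequality $\sum_\ell R_\ell\le C\bar\gamma^4\rho_{\perm}(n,d,\lambda_0)$ from the first paragraph, which must exploit the consecutive-interval structure of the $P_\ell$ and the unit total-variation budget of each monotone column. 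Finally, the auxiliary event $\Omega$ with $\P(\Omega)\ge1-5\delta$ is the intersection of the at most five concentration events needed beyond $\xi$ — one per subsample $Y^{(1)},\dots,Y^{(5)}$: uniform control over $h\in\cH$ of the width statistics $\widehat\bDelta_k$ and of $\widehat Q^h$ from $Y^{(1)}$; the operator-norm bounds of \Cref{prop:concentration_bernstein_op} for the PCA matrices from $Y^{(2)}$ and $Y^{(3)}$; the concentration of $\hat z$ from $Y^{(4)}$; and the concentration of $\cU$ from $Y^{(5)}$ — each taken uniformly over the polynomially many possible neighbourhood configurations at step $t$, the resulting logarithmic factor being absorbed into $\phi_{\lone}$.
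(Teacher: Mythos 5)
Your plan reproduces the architecture of the paper's proof — dimension reduction via $\widehat Q^h$, an average-based step, a spectral step protected by \Cref{prop:concentration_bernstein_op}, a per-block ``contract or be small'' dichotomy, and an aggregate bound exploiting the unit total variation of the isotonic columns — but the two steps that carry essentially all of the content are asserted rather than proved, so as it stands there is a genuine gap. First, the per-block dichotomy you label \eqref{eq:plan_per_block} is exactly \Cref{prop:local_square_norm_reduction}, and its proof is not a routine Davis–Kahan argument: it needs \Cref{lem:structure_isotonic_matrices} (the operator norm of an isotonic block is at least a $1/\gamma_u^2$ fraction of its squared Frobenius norm), the $L_1$-indistinguishability of the surviving experts (\Cref{lem:event_indistinguishable}) — which is the actual purpose of the average-based update with $w=\1_{\widehat Q^h}$, not the ``concentrated-variation'' role you assign it — both to control the truncation $\hat v\mapsto\hat v_-$ and to bound the thresholding error $\sum_{k\in S^{*c}}(z^*_k)^2$, a Harris-inequality argument to pass from $\hat w$ to $\hat w^+$, and a verification that $\hat w^+$ satisfies Condition \Cref{eq:condition_w} so that the update is performed at all. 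Moreover your claimed contraction of the \emph{full} block norm $V_\ell^2$ by $(1-1/(2\bar\gamma^2))$ does not follow directly: the spectral step only contracts the square norm restricted to $\widehat Q^{h_l}$, and transferring this to all columns costs the $1/(2|\cH|)$ fraction plus an additive term for columns outside $\cup_h Q^{*h}_l$, which is why the paper only gets the weaker per-block factor $(1-1/\gamma_u^3)$ in its Step 5.

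Second, the budget summation $\sum_\ell R_\ell\lesssim\bar\gamma^{O(1)}\rho_{\perm}(n,d,\lambda_0)$, which you defer as a ``secondary, combinatorial difficulty,'' is precisely where the rate $n^{2/3}\sqrt{d}\,\lambda^{-5/6}$ is created, and it cannot be extracted from ``each column has total variation at most one'' alone. In the paper it requires (a) converting indistinguishability into the bound \Cref{eq:control_l1_to_l2}, of order $\phi_0\gamma_u h|\widetilde P^h_l|\sqrt{|\overline Q^{*h}_l|/\lambda_0}$, taken as a minimum with the PCA budget; (b) \Cref{lem:Control_Q}, which controls $\sum_{l\in\cL(p)}|\overline Q^{*h}_l|$ only in aggregate over each dyadic size class $p$; (c) Jensen/Cauchy–Schwarz across blocks to handle the square root, so genuinely per-block budgets do not sum on their own; and (d) the optimization over $h$ and $p$ via $x\wedge y\le x^{2/3}y^{1/3}$, which is where the exponent $5/6$ appears. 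A smaller flaw: $\cG_{t,u}\subseteq\cG_{t+1,u}$ (hence $P_{t+1,u}(i_\ell)\subseteq P_\ell$) does not follow merely from $|\cW(i,j)|$ being nondecreasing, since the update \Cref{eq:update_W} may flip an edge's orientation; you need the consistency guaranteed under $\xi$ by \Cref{lem:consistency} to rule out reversals above the threshold. The probability bookkeeping is best done per pair $(l,h)$ with budget $\delta/(L|\cH|)$ as in the paper, but that part is cosmetic.
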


Let us fix $\tau \in \{0, \dots, \tau_{\max}-1\}$. Applying \Cref{prop:UB_on_square_norm} for each $t = K\tau, \dots, K\tau + K-1$ and $u = 2(\tau + 1)$ -the hypothesis of \Cref{prop:UB_on_square_norm} being satisfied by \Cref{eq:packing_stronger}, we obtain with probability $1 - 5(K+T)\delta$ that 
\begin{align*}
	\SN(\cP_{\tau+1, 2(\tau+1)}) &\leq \left[C\bar \gamma^{6}\rho_{\perm}(n, d, \lambda_0)\right]\lor e^{-\frac{T}{4\tau_{\max}\bar \gamma^4}}nd \\
	&\leq CT\bar \gamma^{6}\rho_{\perm}(n, d, \lambda) \enspace,
\end{align*}

if $T$ is larger than $4\bar \gamma^6 \geq 4\log^2(nd)\bar \gamma^4$.
We conclude the proof of \Cref{th:UB} with \Cref{prop:UB_by_square_norm}, using that $4\tau_{\max} \leq \bar \gamma$:
\begin{equation*}
	\|M_{\hat \pi^{-1}} - M\|_F^2 \leq 4\sum_{\tau = 0}^{\tau_{\max}-1}\SN(\cP^K_{\tau+1, 2(\tau + 1)})\leq CT\bar \gamma^{7}\rho_{\perm}(n, d, \lambda) \enspace .
\end{equation*}

\section{Proofs of the lemmas of \Cref{sec:general_analysis} and of \Cref{prop:UB_by_square_norm}}

\subsection{Proof of \Cref{prop:UB_by_square_norm}}
	Let $\hat \pi$ be any arbitrary permutation that is consistent with the largest $\DAG$ $\cG(\cW, \bar \gamma)$, as defined in \Cref{subsec:weigthed_graph_permutation}. Recall that we assume in this proof that $\pi^* = \mathrm{id}$.
	By \Cref{lem:covering_general}, for any $i\in [n]$ there exists $\tau \in [0, \tau_{\max}-1]$ and $i_0 \in I(\tau)$ such that $i \in P^K_{\tau+1, 2\tau + 1}(i_0)$. 
	
	Let us define the interval $[a,b]$ as the maximal interval containing $i_0$ and that is included in the more conservative set $P^{K}_{\tau+1, 2\tau}$. Now, if $j > b$, then by definition there exists $j'$ such that $j \geq j' > b$ and $j' \not \in P^{K}_{\tau+1, 2\tau}$.
	Summarizing the properties, we have
	$j \geq j' \overset{\cG_{\tau+1, 2\tau}}{\succ} i_0$, 
	and that $i$ is in the neighborhood of $i_0$ in the graph $\cG_{\tau+1, 2\tau+1}$.
	Hence, applying the weak-transitivity property (first in $\cC$), holding true on event $\xi$ - see \Cref{lem:consistency}, we obtain that $j$ is $\cG(\cW_{K(\tau+1)}, \gamma_{\min})$-above $i$. By the consistency property (second point in $\cC$), $j$ is also necessarily $\cG(\cW, \gamma_{\min})$-above $i$, and this proves that all the $n - b$ experts $j$ satisfying $j > b$ are $\cG(\cW, \gamma_{\min})$ above $i$. Hence it holds that $\hat \pi(i) \leq b$. By symmetry, we also prove that $\hat \pi(i) \geq a$, so that
	 \begin{equation}\label{eq:subset_3}
		\hat \pi(i) \in [a,b] \subset P^K_{\tau+1, 2\tau}(i_0).
	 \end{equation}
	 
	 Finally, we have 

	 \begin{align*}
		\| M_{\hat \pi^{-1}} - M \|_F^2 &= \sum_{i =1}^n \|M_{\hat \pi(i) \cdot} - M_{i \cdot} \|_F^2 \\
		&\leq \sum_{\tau = 0}^{\tau_{\max}}\sum_{i_0 \in I(\tau)}\sum_{i \in P^K_{\tau+1, 2\tau+1}(i_0)} \|M_{\hat \pi(i) \cdot} - M_{i \cdot} \|^2 \\
		&\leq 2\sum_{\tau = 0}^{\tau_{\max}}\sum_{i_0 \in I(\tau)}\sum_{i \in P^K_{\tau+1, 2\tau+1}(i_0)} \|M_{i \cdot} - \overline m(P^K_{\tau+1, 2\tau}(i_0)) \|^2 + \|M_{\hat \pi(i) \cdot} - \overline m(P^K_{\tau+1, 2\tau}(i_0)) \|^2 \\
		&\leq 4\sum_{\tau = 0}^{\tau_{\max}}\sum_{i_0 \in I(\tau)}\sum_{i \in P^K_{\tau+1, 2\tau}(i_0)} \|M_{i \cdot} - \overline m(P^K_{\tau+1, 2\tau}(i_0)) \|^2 \enspace ,
	 \end{align*}
	 where we used \Cref{lem:covering_general} for the first inequality and \Cref{eq:subset_3} for the last inequality.

\subsection{Proof of the lemmas of \Cref{sec:general_analysis}}

We postpone the proof of \Cref{lem:concentration_l1} to the next subsection.
\begin{proof}[Proof of \Cref{lem:consistency}]
	Recall that we consider the case $\lambda_0 \leq 1$, so that $\lambda_0 \land 1/\lambda_0 = \lambda_0$ in \Cref{eq:update_W}.

	Consider any substep of the whole procedure where the current directed weighted graph is $\cW'$. 
	For the first point, remark that $i$ is $\cG(\cW', \gamma_{\min})$-above $j$ only if there exists a previous substep during which we find out that $\proscal<Y_{i \cdot} - Y_{j \cdot},w> \geq \gamma_{\min}$ on some direction $w \in \bbR^Q$, where $Y$ is the sample used to refine the edges \Cref{eq:updating_edges}. Since $\gamma_{\min} > \phi_{\lone}$, then decomposing
	$Y = \lambda_1 M + \noise$ as in \Cref{eq:two_possible_noises}, we have
	\begin{align}
		\lambda_1\proscal<M_{i \cdot} - M_{j \cdot},w> \geq \proscal<Y_{i \cdot} - Y_{j \cdot},w> - \proscal<\noise_{i \cdot} - \noise_{j \cdot},w> > 0 \enspace,
	\end{align}
	where the last inequality comes from \Cref{eq:condition_phil1}, using the notation \Cref{eq:abuse_notation}.
	Since the coefficients of $w$ are nonegative, we have proven that $i$ is above $j$.
	For the second point, assume that $i$ is $\cG(\cW, \gamma_u)$-above $j$, and take $i' \geq i$. As before, there exists a direction $w$ used during the procedure such that $\proscal<Y_{i \cdot} - Y_{j \cdot},w> \geq \gamma_u$. Now consider any $k \in \cN(\cG(\cW, \gamma_{u+1}), j)$. On the direction $w$, we have under the event $\xi$ defined in \Cref{eq:condition_phil1} that
	\begin{align*}
		\proscal<Y_{i' \cdot} - Y_{k \cdot},w> &\geq \lambda_1\proscal<M_{i' \cdot} - M_{k \cdot},w> - \tfrac{1}{3}\phi_{\lone}\sqrt{\lambda_0 }\\
		&\geq \lambda_1\proscal<M_{i \cdot} - M_{k \cdot},w> - \tfrac{1}{3}\phi_{\lone}\sqrt{\lambda_0 }\\
		&\geq \proscal<Y_{i \cdot} - Y_{j \cdot},w> - \proscal<Y_{k \cdot} - Y_{j \cdot},w> - \phi_{\lone}\sqrt{\lambda_0 } \\
		&\geq (\gamma_u - \gamma_{u+1} - \phi_{\lone})\sqrt{\lambda_0 } \geq \gamma_{\min}\sqrt{\lambda_0 } \enspace ,
	\end{align*}
	where the last inequality comes from the assumption \Cref{eq:valid_gamma}.
	We conclude that $i'$ is $\cG(\cW',\gamma_{\min})$-above $k$.
\end{proof}

\begin{proof}[Proof of \Cref{lem:packing_general}]
	We proceed by induction over $\tau\geq 0$.
	The lemma is trivial for $\tau = 0,1$ since $I(0)$ is empty and $I(1) = (\floor{(n+1)/2})$. 
	Let $\tau \geq 1$ and $i_1,i_2, i_3$ be three experts in $I(\tau)\cup\{0, n+1\}$ such that $i_1 < i_2 < i_3$. Let $A$ be the set of experts that are $\cG^K_{\tau+1, 2\tau+1}$-above $i_1$ and $\cG^K_{\tau+1,2\tau+1}$-below $i_2$, and $A'$ be the set of experts that are $\cG^K_{\tau+1, 2\tau+1}$-above $i_2$ and $\cG^K_{\tau+1,2\tau+1}$-below $i_3$. Assume that both sets $A$ and $A'$ are nonempty, and let $j \in A$ and $j' \in A'$. Let us apply the weak-transitivity of $\cW,(\gamma_u)$ in Property $\cC$ - which holds true under $\xi$ from \Cref{lem:consistency} - with $u=2\tau+1$. Since $j$ is $\cG^K_{\tau+1,2\tau+1}$-below $i_2$, any $k \in P^K_{\tau+1, 2(\tau + 1)}(j)$ is $\pi^*$-below $i_2$. We also prove that any $k' \in P^K_{\tau+1, 2(\tau + 1)}(j')$ is $\pi^*$-above $i_2$. We conclude that $P^K_{\tau+1, 2(\tau + 1)}(j) < P^K_{\tau+1, 2(\tau + 1)}(j')$, and the proof of the lemma follows.
\end{proof}

\begin{proof}[Proof of \Cref{lem:covering_general}]
	We prove that, by construction, any expert $i \in [n]$ is at distance less than $(n+1)/2^{\tau+1}$ of $\bigcup_{\tau' = 0}^{\tau} \bigcup_{i\in I(\tau)} P^K_{\tau+1, 2\tau+1}(i)\cup\{0, n+1\}$. This is obvious for $\tau = 0$ since any expert is at distance less than $(n+1)/2$ of $0$ or $n+1$. 
	Let $(i_1, \dots, i_L)=\bigcup_{\tau' \leq \tau} I(\tau')$ be the collection of experts in the union of all possible $I(\tau')$ that is ordered according to $\pi^*$. If $j$ is any expert in $[n]$, then we let $l \in [0, L]$ be such that $i_l \leq j \leq i_{l+1}$. We can assume that $j \not \in P^K_{\tau+1, 2\tau+1}(i_l)$ and $j \not\in P^K_{\tau+1, 2\tau+1}(i_{l+1})$ because otherwise the distance of $j$ to $\bigcup_{\tau' = 0}^{\tau} \bigcup_{i \in I(\tau)}^{L}P^K_{\tau+1, 2\tau + 1}(i)$ is $0$. Using property $\cC$ holding true from \Cref{lem:consistency}, it holds that the set $A$ of experts that are $\cG^K_{\tau+1, 2\tau+1}$-above $i_l$ but $\cG^K_{\tau+1, 2\tau+1}$-below $i_{l+1}$ contains $j$ and therefore is nonempty. Now, let $m = \floor{(i_{l} + i_{l+1})/2}$ and $i'$ be any expert closest to $m$ in $A$, as defined in the construction of $I(\tau+1)$, and assume without loss of generality that $m \leq i'$. 
	We consider the following cases:
	\begin{itemize}
		\item $m \leq i' \leq j$: In that case, $j$ is at distance less than $(i_{l+1} - m)/2$ of $i'$ or $i_{l+1}$.
		\item $m \leq j < i'$: This case is not possible since $i'$ is the closest expert to $m$ in $A$.
		\item $j < m < i'$: In that case, since $i'$ minimizes the distance to $m$, we necessarily have that $m \in P^K_{\tau+1, 2\tau + 1}(i_l) \cup P^K_{\tau+1, 2\tau + 1}(i_{l+1})$. Hence $j$ is at distance less than $(m - i_l)/2$ of $m$ or $i_{l}$.
	\end{itemize}
	We have proved that the distance of any $j$ to $\bigcup_{\tau' = 0}^{\tau + 1} \bigcup_{i \in I(\tau)}^{L}P^K_{\tau+1, 2\tau + 1}(i)\cup\{0,n+1\}$ is at most $(m-i_l)/2$ or $(i_{l+1}-m)/2$. Using the induction hypothese, we have that $m - i_l$ and $i_{l+1} -m$ are both less than $n/2^{\tau+1}$, which concludes the induction.

	Finally, applying this property with $\tau_{\max}-1 = \floor{\log_2(n)}$ gives a distance strictly smaller than $1$, which proves the result.
\end{proof}

\subsection{Proof of \Cref{lem:concentration_l1}}

Let us start with the following lemma, which gives a concentration bound when $\lambda_0 \leq 1$:

\begin{lemma}\label{lem:concentration_proscal_bernstein}
	 For any $\delta'>0$ and for any matrix $W \in \bbR^{n \times d}$, the following inequality holds with probability at least $1-\delta'$:
	\begin{equation}\label{eq:concentration_bernstein}
		|\proscal<\noise, W>| \leq \sqrt{4e^2\|W\|_F^2\lambda_0\log\left(\frac{2}{\delta'}\right)} + \|W\|_{\infty}\log\left(\frac{2}{\delta'}\right) \enspace .
	\end{equation}
\end{lemma}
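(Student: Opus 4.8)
The plan is to recognize $\langle \noise, W\rangle = \sum_{(i,k)\in[n]\times[d]} W_{ik}\noise_{ik}$ as a sum of independent, mean-zero real random variables and to apply a Bernstein-type deviation inequality. Since the cells of $\noise=\noise^{(s)}$ are independent and centered, the whole argument boils down to a per-cell estimate: I will show that each $\noise_{ik}$ has variance of order $\lambda_0$ and satisfies the Bernstein moment condition $\bbE[|\noise_{ik}|^q]\le \tfrac{q!}{2}\,\nu\,b^{q-2}$ for all integers $q\ge 2$, with $\nu$ of order $\lambda_0$ and a numerical scale $b$. Then $W_{ik}\noise_{ik}$ satisfies the same condition with variance $\nu W_{ik}^2$ and scale $b|W_{ik}|\le b\|W\|_\infty$; summing the per-cell variances gives a total variance proxy of order $\lambda_0\|W\|_F^2$, and inverting the two-sided Bernstein bound at confidence level $\delta'$ produces a bound of the form $\sqrt{(\text{const})\lambda_0\|W\|_F^2\log(2/\delta')}+(\text{const})\|W\|_\infty\log(2/\delta')$; tracking the numerical constants yields precisely \eqref{eq:concentration_bernstein}.

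The per-cell estimate is the substantive part, and I would base it on the decomposition $\noise_{ik}=(B_{ik}-\lambda_1)M_{ik}+\noisetilde_{ik}$ recalled before the statement, where $B_{ik}=\1\{r_{ik}\ge1\}$ with $r_{ik}\sim\mathrm{Poi}(\lambda_0)$ by Poisson splitting of $N^{(s)}$ (so $\bbP(B_{ik}=1)=\lambda_1=1-e^{-\lambda_0}$), and where, conditionally on $r_{ik}=m\ge1$, $\noisetilde_{ik}$ is the average of $m$ independent mean-zero $1$-subGaussian noises (and $\noisetilde_{ik}=0$ on $\{r_{ik}=0\}$). Conditioning on $(B_{ik},r_{ik})$ and using $\bbE[\noisetilde_{ik}\mid B_{ik},r_{ik}]=0$ shows the two summands are uncorrelated, so $\bbE[\noise_{ik}^2]=\lambda_1(1-\lambda_1)M_{ik}^2+\bbE[\noisetilde_{ik}^2]$. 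The first term is $\le\lambda_1\le\lambda_0$ since $\lambda_0\le1$ and $|M_{ik}|\le1$, and $|(B_{ik}-\lambda_1)M_{ik}|\le1$, so this contribution is bounded and trivially satisfies a Bernstein condition with variance $\le\lambda_0$ and scale $1$. For the noise contribution, $\bbE[\noisetilde_{ik}^2]=\bbE\bigl[\1\{r_{ik}\ge1\}\,\mathrm{Var}(\noisetilde_{ik}\mid r_{ik})\bigr]\le \bar\sigma^2\,\bbE[\1\{r_{ik}\ge1\}/r_{ik}]=\bar\sigma^2 e^{-\lambda_0}\sum_{m\ge1}\lambda_0^m/(m!\,m)$, where $\bar\sigma^2$ is the numerical variance bound implied by $1$-subGaussianity, and since $\sum_{m\ge1}\lambda_0^m/(m!\,m)\le e^{\lambda_0}-1\le e\lambda_0$ for $\lambda_0\le1$, this is of order $\lambda_0$; the key structural point is that the weights $1/r_{ik}$, which reach their maximum $1$ exactly when $r_{ik}=1$, are harmless because $\bbP(r_{ik}=1)\asymp\lambda_0$ — the smallness of $\lambda_0$ compensates for the large weight. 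The higher moments are handled the same way: $(B_{ik}-\lambda_1)M_{ik}$ being bounded by $1$ gives $\bbE|(B_{ik}-\lambda_1)M_{ik}|^q\le\lambda_0$ for all $q\ge2$, while for $\noisetilde_{ik}$ one conditions on $r_{ik}=m$, uses the subGaussian moment bound $\bbE[|\noisetilde_{ik}|^q\mid r_{ik}=m]\le (C_0\sqrt{q/m})^q$, and sums against the Poisson weights, using $m^{-q/2}\le m^{-1}$ for $q\ge2$ to again extract the factor $\sum_{m\ge1}\lambda_0^m/(m!\,m)$ of order $\lambda_0$; comparing $q^{q/2}$ with $q!\ge (q/e)^q$ then absorbs everything into the Bernstein form. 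An equivalent route, which makes the sub-Gaussian nature of the noise part transparent, is to condition on the whole sampling design: then $\sum_{ik}W_{ik}\noisetilde_{ik}=\sum_{t\in N^{(s)}}\tfrac{W_{x_t}}{r_{x_t}}\varepsilon_t$ is sub-Gaussian with the random proxy $V=\sum_{ik}\tfrac{W_{ik}^2}{r_{ik}}\1\{r_{ik}\ge1\}$, whose expectation is of order $\lambda_0\|W\|_F^2$ and which concentrates by a Bernstein inequality for bounded variables; one then bounds $\bbE\bigl[e^{-s^2/(2V)}\bigr]$ using this concentration.

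The main obstacle is precisely this control of $\noisetilde_{ik}$: it is neither bounded nor sub-Gaussian at the scale one would naively expect, since it is a $1/r_{ik}$-weighted average of a Poisson number of sub-Gaussian noises, and a direct attempt to bound $\bbE[e^{\theta\noisetilde_{ik}}]$ uniformly in $\theta$ fails (the event $\{r_{ik}=1\}$ makes it blow up faster than sub-exponentially). Working with the integer moments $\bbE|\noise_{ik}|^q$ and invoking the moment form of Bernstein's inequality — or, equivalently, conditioning on the design and using the randomly-scaled sub-Gaussian bound above — is what makes the argument close with the sought $\lambda_0$-scale variance proxy and an $O(1)$ scale factor. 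The remaining ingredients — uncorrelatedness of the two pieces of $\noise_{ik}$, summing the per-cell variances over $(i,k)$, and the final inversion of Bernstein's tail — are routine.
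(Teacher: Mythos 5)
Your proposal is correct and rests on exactly the same ingredients as the paper's proof: the decomposition $\noise_{ik}=(B_{ik}-\lambda_1)M_{ik}+\noisetilde_{ik}$, the observation that each entry has variance of order $\lambda_0$ but only an $O(1)$ scale, and a Bernstein-type deviation bound for $\proscal<\noise,W>$ with variance proxy $\lambda_0\|W\|_F^2$ and scale $\|W\|_{\infty}$. The only difference is in execution: the paper bounds the per-entry moment generating function directly, splitting the two components by Cauchy--Schwarz and using that $\noisetilde_{ik}$ is $1$-subGaussian conditionally on $B_{ik}=1$, so that $\bbE[e^{2x\noisetilde_{ik}}]\leq 1+\lambda_1(e^{2x^2}-1)\leq e^{\lambda_1e^2x^2}$ for $|x|\leq 1$, and then concludes with an explicit truncated Chernoff choice of $x$; you instead verify the factorial moment condition $\bbE[|\noise_{ik}|^q]\leq \tfrac{q!}{2}\nu b^{q-2}$ with $\nu\asymp\lambda_0$, $b=O(1)$, by conditioning on the Poisson count $r_{ik}=m$ and summing subGaussian moment bounds against the Poisson weights, and then invoke the moment form of Bernstein's inequality. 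Both routes are equally valid; yours is a bit more laborious at the level of higher moments but makes the $1/r_{ik}$ mechanism explicit, while the paper's conditional-subGaussianity trick is shorter. Two small remarks: your claim that a direct bound on $\bbE[e^{\theta\noisetilde_{ik}}]$ ``blows up'' is imprecise (this MGF is globally bounded by $e^{\theta^2/2}$; what fails for large $\theta$ is the quadratic bound with the small variance proxy $\lambda_0$, which is precisely why the paper restricts to $|x|\leq 1$), and the exact constants $4e^2$ and the unit coefficient on $\|W\|_{\infty}\log(2/\delta')$ will not fall out of a generic moment-form Bernstein inequality without some care --- but this is immaterial for the way the lemma is used, and the paper's own Chernoff step is similarly loose (it implicitly needs $x\leq 1/\|W\|_{\infty}$, harmless since in all applications $\|W\|_{\infty}\leq 1$).
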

Now we apply \Cref{lem:concentration_proscal_bernstein} with the matrix $W$ with $0$ coefficients except at line $i$ where it is equal to the vector $\frac{w}{\|w\|_2}$ as defined in \Cref{eq:updating_edges} and we deduce that
\begin{equation}\label{eq:transformation_B_w}
	|\proscal<\noise_{i, \cdot}, \frac{w}{\|w\|_2}>| \leq \sqrt{4e^2\lambda_0\log\left(\frac{2}{\delta'}\right)} + \frac{\|w\|_{\infty}}{\|w\|_2}\log\left(\frac{2}{\delta'}\right) \leq 11 \sqrt{\lambda_0} \log(2/\delta') \enspace ,
\end{equation} 
where the last inequality comes from Condition \Cref{eq:condition_w} on $w$.
Now choosing $\delta' = \delta/(4Tn^6)$, a union bound over the at most $2n^2T|\cH|(|\Gamma| \land n^2)$ pairs $(Q,w)$ considered during the procedure, we deduce the bound of \Cref{lem:concentration_l1} for all $\lambda_0 \leq 1$. 

\begin{proof}[Proof of \Cref{lem:concentration_proscal_bernstein}]
	Recall that $E, \widetilde E$ are defined in \Cref{eq:two_possible_noises} and that we have in particular 
	\begin{equation*}
		E_{ik} = (B_{ik}-\lambda_1)M_{ik} + \widetilde E_{ik} \enspace .
	\end{equation*}
	
	Let $x>0$. By Cauchy-Schwarz inequality, we have 
	$$\bbE[e^{x \noise_{ik}}] \leq \sqrt{\bbE[e^{2x(B_{ik} - \lambda_1)M_{ik}}]}\sqrt{\bbE[e^{2x \widetilde E_{ik}}]} \enspace ,$$
	where we recall that $\lambda_1 = 1 - e^{-\lambda_0} \leq \lambda_0$.
	We have 
	\begin{align*}
		\bbE[e^{2x(B_{ik} - \lambda_1)M_{ik}}] \leq e^{-2\lambda_1 xM_{ik}}(\lambda_1 (e^{2xM_{ik}}-1) + 1) \leq e^{\lambda_1 e^2 x^2} \enspace ,
	\end{align*}
	and 
	\begin{align*}
		\bbE[e^{2x\widetilde E_{ik}}] \leq \lambda_1 (e^{2x^2}-1) + 1 \leq e^{\lambda_1 e^2 x^2} \enspace ,
	\end{align*}
	where we used the inequalities $e^{2x^2} - 1 \leq e^{2}x^2$ and $e^{2x} - 1 - 2x \leq e^{2}x^2$ for any $x \in [-1, 1]$.

	\medskip
	In particular, if $t>0$, a Chernoff bound with $x = \tfrac{t}{2\|W\|_F^2 \lambda_0 e^2} \land 1$ gives
	$$\bbP(\proscal<W, \noise> \geq t) \leq \exp(-(\tfrac{t^2}{4\|W\|_F^2\lambda_0 e^2 }\land t)) \enspace ,$$
	so that with probability at least $1-\delta'$:

	\begin{equation*}
		|\proscal<W, \noise>| \leq \sqrt{4e^2\|W\|_F^2\lambda_0\log\left(\frac{2}{\delta'}\right)} + \log\left(\frac{2}{\delta'}\right) \enspace .
	\end{equation*}

\end{proof}

\section{Proof of \Cref{prop:UB_on_square_norm}}\label{sec:proof_UB_on_square_norm}

{\bf Step 0 : general definitions}
\medskip 

In this proof, we fix $u \in \{0,\dots, 2\floor{\log_2(n)}+2\}$ and a corresponding threshold $\gamma_u$ in the sequence in $\Gamma$ satisfying $\gamma_u \geq \phi_{\lone}$ - see \Cref{eq:valid_gamma} - and a step $t < T$. We assume that $I = (i_1, \dots, i_{L})$ is a fixed sequence of experts that satisfies $P_{t, u}(i_1) \overset{\cG_{t,u}}{\prec} \dots \overset{\cG_{t,u}}{\prec} P_{t, u}(i_{L})$.

From now on, we ease the notation by omitting the dependence in $t,u,\gamma_u$ and we write $\cG = \cG_{t,u}$, $\cG' = \cG_{t+1, u}$, $\cP =(P_1, \dots, P_{L})$ for $\cP_{t,u}$ and $\cP'$ for $\cP_{t+1,u}$. We denote $\widetilde \cG^h$ for the 
directed graph at threshold $\gamma_u$ of the directed weighted graph $\widetilde \cW^h$ obtained at the end the first update \Cref{line:first_stat_l1} of \Cref{alg:refine_locally_sketch}. We also write $\widetilde P^h_l = \cN(\widetilde \cG^h, i_l)$ and $\widetilde \cP^h = (\widetilde P^h_1, \dots, \widetilde P^h_{L})$ for the corresponding sequence of subsets at height $h \in \cH$.
By monotonicity, it holds for any $h \in \cH$ that 
\begin{equation*}
	P'_l \subset \widetilde P^h_{l} \subset P_l \enspace .
\end{equation*}
\subsection{Step 1: Analysis of the selected set $\widehat Q$}\label{subsec:step_1}
Recall the definition of the neighborhoods \Cref{eq:neighborhood} of the set $P_l$ in the graph $\cG$:
\begin{equation*}
	\cN_a(l) = \bigcap_{i \in P_l} \rk_{\cG, i_l}^{-1}([1, a]) \spaceAnd \cN_{-a}(l) = \bigcap_{i \in P_l} \rk_{\cG, i_l}^{-1}([- 1, - a]) \enspace ,
\end{equation*}
Define for $\kappa > 0$ and $l \in [1, L]$ the population version $\bDelta^*_k$ of the width statistic $\widehat \bDelta_k$ - see \Cref{eq:stat_delta} - as the the difference of the best and worst expert of $P(i_l)$ if $a=0$ and as the difference of the average of the experts in $\cN_a(l)$ and the average of the expert in $\cN_{-a}(l)$: 
\begin{equation}
	\bDelta^*_k(0, l) = \max_{i,j \in P(i_l)}|M_{i,k} - M_{j,k}|\spaceAnd \enspace \bDelta^*_k(a, l) = \overline m_{k}(\cN_a(l)) - \overline m_{k}(\cN_{-a}(l)) \mbox{ if a $\geq 1$} .
\end{equation}

We also define $a^*(h,l)$ as the minimum $a \geq 1$ such that there are at least $\tfrac{1}{\lambda_0h^2}$ experts in $\cN_a(l)$ and in $\cN_{-a}(l)$:
\begin{equation}\label{eq:definition_nu}
	a^*(h, l) = \min \{a \geq 1 ~:~ |\cN_a(l)|\land |\cN_{-a}(l)| \geq \frac{1}{\lambda_0 h^2}\} \enspace .
\end{equation}
Now, define for $\phi \geq 1$:
\begin{align}\label{eq:definition_Q_pop}
	\begin{split}
		Q^{*h}_l(\phi) &:= \{k \in [d] ~:~ \bDelta^*_k(0, l) \in  [\phi h, 2 \phi h]\} \\
		\overline Q^{*h}_l(\phi) &:= \{k \in [d] ~:~ \bDelta^*_k(a^*(\phi^{-1}h, l), l) \geq  h/2 \}\enspace .
	\end{split}
\end{align}

The following lemma states that, for $\phi$ of order $\log(nd/\delta)$, we can sandwich $\widehat Q^h_l$ between the two fixed sets $Q^{*h}_l$ and $\overline Q^{*h}_l$:
\begin{lemma}\label{lem:inclusion_Q}
	Let $l$ be a fixed index in $\{1, \dots, L\}$ and $h$ a fixed height in $\cH$. There exists a numerical constant $\kappa_0 > 0$ such that, with probability at least $1- \delta/(L|\cH|)$, we have
	\begin{equation}
		Q^{*h}_l(\kappa_0\log(nd/\delta)) \subset \widehat Q_l^{h} \subset \overline Q^{*h}_l(\kappa_0\log(nd/\delta)) \enspace .
	\end{equation}
\end{lemma}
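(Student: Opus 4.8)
The plan is to reduce the lemma to one uniform concentration estimate for empirical column‑averages and then to chase the definitions of $\hat a_k(h)$, $a^*(\cdot,l)$, $Q^{*h}_l$ and $\overline Q^{*h}_l$. First I would note that the subsets $\cN_{\pm a}(l)$ of \Cref{eq:neighborhood} are functions of the graph $\cG=\cG_{t,u}$ and of $P_l$, hence measurable with respect to the samples used to build $\cW_t$ and in particular independent of the sample $Y^{(1)}=Y^{(5t)}$ from which $\widehat Q^h_l$ is computed. Conditionally on those earlier samples the chains $\cN_1(l)\subset\cN_2(l)\subset\cdots$ and $\cN_{-1}(l)\subset\cN_{-2}(l)\subset\cdots$ each contain at most $n$ distinct subsets, so it suffices to bound, for every $k\in[d]$, every $a\in[n]$ and both signs, the deviation of $\overline y_k(\cN_{\pm a}(l))$ around its conditional mean $\lambda_1\overline m_k(\cN_{\pm a}(l))$. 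Writing $Y^{(1)}=\lambda_1 M+\noise^{(1)}$ as in \Cref{eq:two_possible_noises}, each coordinate $\noise^{(1)}_{jk}$ has variance $O(\lambda_0)$ and Bernstein scale $O(1)$, so \Cref{lem:concentration_proscal_bernstein} applied to the one‑column matrix $W$ supported on the rows of $\cN_{\pm a}(l)$ with equal weights $1/|\cN_{\pm a}(l)|$ gives, with probability $1-\delta'$, that $|\tfrac1{\lambda_0}\overline y_k(\cN_{\pm a}(l))-\tfrac{\lambda_1}{\lambda_0}\overline m_k(\cN_{\pm a}(l))|$ is at most a constant times $\sqrt{\log(1/\delta')/(\lambda_0|\cN_{\pm a}(l)|)}+\log(1/\delta')/(\lambda_0|\cN_{\pm a}(l)|)$. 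Taking $\delta'=\delta/(2ndL|\cH|)$ and a union bound over the at most $2nd$ pairs $(k,\pm a)$ produces an event $\mathcal A$ of probability at least $1-\delta/(L|\cH|)$ on which this holds simultaneously; using $h\le 1$ and $\phi=\kappa_0\log(nd/\delta)$, on $\mathcal A$ the rescaled width $\tfrac1{\lambda_0}\widehat\bDelta_k(a)$ lies within $C h\log(nd/\delta)$ of $\tfrac{\lambda_1}{\lambda_0}\bDelta^*_k(a,l)$ whenever $|\cN_a(l)|\wedge|\cN_{-a}(l)|\ge\tfrac1{\lambda_0 h^2}$, and within $Ch/\kappa_0$ of it whenever $|\cN_a(l)|\wedge|\cN_{-a}(l)|\ge\tfrac{\phi^2}{\lambda_0 h^2}$, for a numerical constant $C$.

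Next I would use the deterministic fact, valid on $\xi$ by \Cref{lem:consistency} (the graph $\cG\subseteq\cG(\cW,\gamma_{\min})$ being consistent with $\pi^*=\mathrm{id}$), that every $j\in\cN_a(l)$ satisfies $j>\max P_l$ and every $j\in\cN_{-a}(l)$ satisfies $j<\min P_l$; isotonicity then gives $\overline m_k(\cN_a(l))\ge\max_{i\in P_l}M_{ik}$ and $\overline m_k(\cN_{-a}(l))\le\min_{i\in P_l}M_{ik}$, hence $\bDelta^*_k(a,l)\ge\bDelta^*_k(0,l)$ for every $a\ge 1$ with $\cN_{\pm a}(l)\ne\emptyset$. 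On $\xi\cap\mathcal A$ the two inclusions follow. For $\widehat Q^h_l\subset\overline Q^{*h}_l(\phi)$: if $k\in\widehat Q^h_l$ then $|\cN_{\hat a_k}|\wedge|\cN_{-\hat a_k}|\le\tfrac1{\lambda_0 h^2}$ while $|\cN_A|\wedge|\cN_{-A}|\ge\tfrac{\phi^2}{\lambda_0 h^2}$ for $A:=a^*(\phi^{-1}h,l)$, so by monotonicity of $a\mapsto|\cN_a|\wedge|\cN_{-a}|$ we get $\hat a_k<A$, hence $A$ exceeds the largest index with rescaled width below $h$, i.e. $\tfrac1{\lambda_0}\widehat\bDelta_k(A)\ge h$; since $\cN_{\pm A}$ is large the tight bound upgrades this to $\bDelta^*_k(A,l)\ge(\lambda_0/\lambda_1)(h-Ch/\kappa_0)\ge h/2$ for $\kappa_0$ large, so $k\in\overline Q^{*h}_l(\phi)$. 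For $Q^{*h}_l(\phi)\subset\widehat Q^h_l$: if $k\in Q^{*h}_l(\phi)$ then $\bDelta^*_k(a,l)\ge\phi h$ for all $a\ge1$, so for every $a\ge a^*(h,l)$, where $\cN_{\pm a}$ has size $\ge\tfrac1{\lambda_0 h^2}$, the coarse bound yields $\tfrac1{\lambda_0}\widehat\bDelta_k(a)\ge(1-1/e)\phi h-Ch\log(nd/\delta)\ge h$ once $\kappa_0$ is large enough that $(1-1/e)\kappa_0-C\ge1$; thus the largest index with rescaled width below $h$ is strictly less than $a^*(h,l)$, so $\hat a_k(h)<a^*(h,l)$, and then $|\cN_{\hat a_k}|\wedge|\cN_{-\hat a_k}|<\tfrac1{\lambda_0 h^2}$ by minimality of $a^*(h,l)$, i.e. $k\in\widehat Q^h_l$.

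The step I expect to be the main obstacle is making the $\phi=\kappa_0\log(nd/\delta)$ slack do two jobs at once with a single $\kappa_0$ — keeping $Ch/\kappa_0$ small enough to turn $\tfrac1{\lambda_0}\widehat\bDelta_k(A)\ge h$ into $\bDelta^*_k(A,l)\ge h/2$, and keeping $Ch\log(nd/\delta)$ negligible against the signal $(1-1/e)\phi h$ — together with the careful bookkeeping at the boundary index $a=a^*(\cdot,l)-1$, where $\cN_{\pm a}(l)$ may be empty or too small for any useful concentration and so must be handled through the minimality in the definition of $a^*(\cdot,l)$, the monotonicity of $a\mapsto|\cN_a(l)|\wedge|\cN_{-a}(l)|$, and the convention that an undefined width is not counted as below $h$; the degenerate cases where $P_l$ is a singleton or $a^*(h,l)=1$ should be disposed of separately. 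Throughout, the discrepancy between $\lambda_1$ and $\lambda_0$ costs only the harmless factor $\lambda_0/\lambda_1\in[1,e/(e-1)]$.
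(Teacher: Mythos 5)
Your proposal is correct and follows essentially the same route as the paper's proof: the same decomposition $Y^{(1)}=\lambda_1 M+\noise^{(1)}$, the same application of \Cref{lem:concentration_proscal_bernstein} to normalized column-indicator weights on $\cN_{\pm a}(l)$ with a union bound over $(k,a)$, the same use of consistency (\Cref{lem:consistency} under $\xi$) plus isotonicity to get $\bDelta^*_k(a,l)\ge\bDelta^*_k(0,l)$, and the same two-directional comparison of $\hat a_k(h)$ against $a^*(h,l)$ and $a^*(\phi^{-1}h,l)$ with the same choice of $\kappa_0$. The only quibble is your strict inequality $\hat a_k(h)<a^*(h,l)$, where the argument as written only yields $\hat a_k(h)\le a^*(h,l)$; this boundary case is handled with the same brevity in the paper itself, which directly asserts $|\cN_{\hat a_k(h)}|\wedge|\cN_{-\hat a_k(h)}|\le \tfrac{1}{\lambda_0 h^2}$, so it is not a gap relative to the paper's own proof.
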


\subsection{Step 2 : l1-control of the intermediary sets $\widetilde \cP^h$} Recall that $\gamma_u$ is a threshold corresponding to a sequence in $\Gamma$ as defined in \Cref{eq:valid_gamma}. For any sets $P\subset [n], Q \subset [d]$, we say that $M(P,Q)$ is indistinguishable in $L_1$-norm if it satisfies 
\begin{equation}\label{eq:indistinguishable}
	\max_{i,j \in P} \| M_{i\cdot}(P, Q) - M_{j\cdot}(P,Q) \|_1 \leq 3\gamma_u\sqrt{\frac{|Q|}{\lambda_0}}\enspace . 
\end{equation}

For a fixed $l \in \{1, \dots, L\}$, let $\xi_{\lone}(l,h)$ be the event under which $M(\widetilde P^h_l, \widehat Q^{h}_l)$ is indistinguishable in $L_1$-norm.

\begin{lemma}\label{lem:event_indistinguishable}
	Let $l$ be a fixed index in $\{1, \dots, L\}$ and $h \in \cH$ such that $\lambda_0|Q^{*h}_l|  \geq 1$. The event  $\xi_{\lone}(l,h)$ holds true with probability at least $1-\delta / (L|\cH|)$.
\end{lemma}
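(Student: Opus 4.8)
The plan is to read off the required $L_1$-closeness from the very reason an expert $j$ survives in $\widetilde P^h_l$: the direction $\1_{\widehat Q^h_l}$ tested in the first update of \Cref{alg:refine_locally_sketch} failed to separate $j$ from $i_l$. Concretely, I would fix $j\in\widetilde P^h_l$; since $\widetilde P^h_l\subset P_l$ the update of \Cref{line:update_1} did act on the edge $(i_l,j)$, and since $j$ remains $\widetilde\cG^h$-incomparable with $i_l$ afterwards one must have $|\cU_{i_l j}|\le\gamma_u$ for the $(Y^{(1)},P_l,\1_{\widehat Q^h_l})$-updating weight of \eqref{eq:updating_edges}. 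Unpacking that definition --- with $\lambda_0\le1$, so $\tfrac1{\lambda_0}\land\lambda_0=\lambda_0$ and $\|\1_{\widehat Q^h_l}\|_2^2=|\widehat Q^h_l|$ --- this reads $\big|\sum_{k\in\widehat Q^h_l}(Y^{(1)}_{i_lk}-Y^{(1)}_{jk})\big|\le\gamma_u\sqrt{\lambda_0|\widehat Q^h_l|}$.

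Next I would pass from $Y^{(1)}$ to $M$ through the decomposition $Y^{(1)}=\lambda_1 M+\noise^{(1)}$ of \eqref{eq:two_possible_noises} together with $\lambda_1\ge(1-\tfrac1e)\lambda_0$, reducing the statement to a bound on the aggregated noise $\big|\sum_{k\in\widehat Q^h_l}(\noise^{(1)}_{i_lk}-\noise^{(1)}_{jk})\big|$. To then recover the $L_1$-width of $M(\widetilde P^h_l,\widehat Q^h_l)$ I would use that $M$ is isotonic (recall $\pi^*=\mathrm{id}$): the maximum in \eqref{eq:indistinguishable} is attained at the $\pi^*$-extreme experts $i_-\le i_l\le i_+$ of $\widetilde P^h_l$ and equals $\sum_{k\in\widehat Q^h_l}(M_{i_+k}-M_{i_lk})+\sum_{k\in\widehat Q^h_l}(M_{i_lk}-M_{i_-k})$, a sum of two nonnegative quantities, each controlled by the first step applied to $j=i_+$ and $j=i_-$. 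Adding them and using $\gamma_u\ge\phi_{\lone}$ against the noise bound established below then gives exactly $3\gamma_u\sqrt{|\widehat Q^h_l|/\lambda_0}$.

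The delicate step, which I expect to be the main obstacle, is bounding the aggregated noise, because $\widehat Q^h_l$ is itself built from $Y^{(1)}$ and so cannot be frozen. The key observation is that $\widehat Q^h_l$ depends on $Y^{(1)}$ only through the rows lying in $\bigcup_a(\cN_a(l)\cup\cN_{-a}(l))$ (see \eqref{eq:neighborhood}--\eqref{eq:stat_delta}), which are disjoint from $P_l$; hence, conditionally on $\cG$ and on $Y^{(1)}$ restricted to the rows outside $P_l$, the set $\widehat Q^h_l$ is deterministic while the entries $\{\noise^{(1)}_{ik}:i\in P_l\}$ keep their law. I would then apply the Bernstein-type inequality \Cref{lem:concentration_proscal_bernstein} to $\langle\noise^{(1)},W\rangle$ with $W$ supported on the two rows $i_l,j$ and the columns of $\widehat Q^h_l$, take a union bound over the at most $n^2$ pairs of $P_l$, and intersect with the event of \Cref{lem:inclusion_Q}; the latter gives $\widehat Q^h_l\supset Q^{*h}_l$, hence $\lambda_0|\widehat Q^h_l|\ge\lambda_0|Q^{*h}_l|\ge1$ by the hypothesis, which makes the linear term of the Bernstein bound negligible against its subgaussian term. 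This yields the control $\big|\sum_{k\in\widehat Q^h_l}(\noise^{(1)}_{i_lk}-\noise^{(1)}_{jk})\big|\le\tfrac13\phi_{\lone}\sqrt{\lambda_0|\widehat Q^h_l|}$ simultaneously over $j$, on an event of probability at least $1-\delta/(L|\cH|)$ --- after choosing the union-bound levels so that this event and the one from \Cref{lem:inclusion_Q} each fit under that budget, and absorbing the resulting polylog factors into $\phi_{\lone}$. Combining these steps proves that $\xi_{\lone}(l,h)$ holds with the stated probability.
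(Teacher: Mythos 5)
Your proposal is correct and follows essentially the same route as the paper's proof: the key observation that $\widehat Q^h_l$ depends on $Y^{(1)}$ only through rows in $\cN_{\pm a}(l)$, hence is independent of $Y^{(1)}(P_l)$; the definitional bound $|\cU_{i_l j}|\leq\gamma_u$ for experts $j$ surviving in $\widetilde P^h_l$ (valid because condition \Cref{eq:condition_w} holds thanks to $Q^{*h}_l\subset\widehat Q^h_l$ and $\lambda_0|Q^{*h}_l|\geq 1$); the decomposition $Y^{(1)}=\lambda_1 M+\noise^{(1)}$ with the Bernstein bound of \Cref{lem:concentration_proscal_bernstein} and a union bound over pairs; and isotonicity to turn the inner product with $\1_{\widehat Q^h_l}$ into the $L_1$ width. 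The only differences are presentational (explicit conditioning and the passage through the extreme experts $i_-\leq i_l\leq i_+$, which is in fact slightly more careful than the paper's "for all $i,j$ by definition" step), and the resulting numerical constant lands a bit above $3$ — as does the paper's own computation when traced carefully — which is immaterial downstream.
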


Let $\kappa_0$ be a numerical constant given by \Cref{lem:inclusion_Q} and let $\phi_0 = \kappa_0 \log(nd/\delta)$. In what follows, we write for simplicity $(Q^{*h}_l, \widehat Q^{h}_l,\overline Q^{h}_l) = (Q^{*h}_l(\phi_0), \widehat Q^{h}_l(\phi_0),\overline Q^{h}_l(\phi_0))$.
\Cref{lem:event_indistinguishable} provides an upper bound only on the $L_1$ distance between rows of $M$ restricted to the subsets $\widetilde P^h_l$ and $\widehat Q^{h}_l$, while the square norm of a group \Cref{eq:def_square_norm} is defined with the $L_2$ distance. with \Cref{eq:indistinguishable}. The idea is that for any $k$ in $Q^{*h}$, and for any $i \in \widetilde P^h$, we have that $|M_{ik} - \overline m_{k}|^2\leq 2\phi_0 h|M_{ik} - \overline m_{k}|$. In particular, $\| M_{i\cdot}(\widetilde P^h_l, Q^{*h}_l) - \overline m_{\cdot}(\widetilde P^h_l, Q^{*h}_l) \|_2^2 \leq 2\phi_0 h\| M_{i\cdot}(\widetilde P^h_l, Q^{*h}_l) - \overline m_{\cdot}(\widetilde P^h_l, Q^{*h}_l) \|_1$. Hence, it holds from \Cref{lem:inclusion_Q}, \Cref{lem:event_indistinguishable}  and a union bound over all $l \in \{1, \dots, L\}$ and all $h \in \cH$ satisfying $\lambda_0|Q^{*h}_l|  \geq 1$ that
with probability at least $1-2\delta$,

\begin{equation}\label{eq:control_l1_to_l2}
	\sum_{i \in \widetilde P^h_l} \| M_{i\cdot}(\widetilde P^h_l, Q^{*h}_l) - \overline m_{\cdot}(\widetilde P^h_l, Q^{*h}_l) \|_2^2 \leq 6\phi_0 \gamma_u \left[h|\widetilde P^h_l|\sqrt{\frac{|\overline Q^{*h}_l|}{\lambda_0}}\right] \enspace ,
\end{equation}
simultaneously for all $l \in \{1, \dots, L\}$ and $h \in \cH$ satisfying $\lambda_0|Q^{*h}_l|  \geq 1$.
\begin{proof}[Proof of \Cref{lem:event_indistinguishable}]
  	Let $l$ be a fixed index in $\{1, \dots, L\}$ and $h$ be a fixed height in $\cH$. 
	If $a \geq 1$, the subset $P_l$ is disjoint from the sets $\cN_a(l) \cup \cN_{-a}(l)$ so that $\widehat Q^h_l$ is independent of $Y^{(1)}(P_l)$.
	Remark also that condition  \Cref{eq:condition_w} is satisfied since $\lambda_0|Q^{*h}_l|  \geq 1$ and $Q^{*h}_l \subset \widehat{Q}^{h}_l$.
	\medskip 

	Recall that we assume that $\lambda_0 \leq 1$. We write $w = \1_{\widehat Q^h_l}$ and we recall that $B = (B_{ik})$ is the matrix defined in \Cref{eq:def_matrix_B}. Let $i,j \in \widetilde P^{h}_l$ so that, by definition, we have that $\abs{\proscal<Y_{i\cdot} - Y_{j\cdot}, w>} \leq \gamma_u \sqrt{\lambda_0|\widehat Q^h_l|}$. 
	With probability at least $1-\delta /L$, for all $i,j$ in $P_l$ we have that
	\begin{align}
		\lambda_1 \abs{\proscal<M_{i\cdot} - M_{j\cdot}, w>} \leq \abs{\proscal<Y_{i\cdot} - Y_{j\cdot}, w>} + \abs{\proscal<\noise_{i\cdot} - \noise_{j\cdot}, w>} \leq (\gamma_u + \phi_{\lone}/2)\sqrt{\lambda_0|\widehat Q^h_l|} \enspace .
	\end{align}
	where the last inequality comes from \Cref{lem:concentration_proscal_bernstein} applied with $\delta' = \delta/n^3$ and from the definition of $\phi_{\lone}$ \Cref{eq:definition_phil1}. Recalling the two inequalities
	$\lambda_1 = 1- e^{-\lambda_0} \geq \lambda_0/2$ and $\phi_{\lone} \leq \gamma_u$, we obtain the result.
\end{proof}

\subsection{Step 3 : Local square norm reduction}
Henceforth we condition to the sample $Y^{(1)}$ of \Cref{alg:refine_locally_sketch} which allows us to assume that, for any $h \in \cH$, the two sequences of sets $\widetilde \cP^{h}$ and $\widehat \cQ^h$ are fixed.

For $\kappa_1 > 0$, let $\xi_{\loc}(l, h, \kappa_1)$ be the event holding true if the local square norm  of $M(P_l, \widehat Q_l^h)$ has decreased at the end of \Cref{alg:refine_locally_sketch}, that is
\begin{align}\label{eq:local_square_norm_reduction}
	\begin{split}
		\| M(P'_l, \widehat Q_l^h) -  \overline M(P'_l, \widehat Q_l^h)\|_F^2 \leq & ~\kappa_1 \gamma_u^4\left[\frac{1}{\lambda_0}\sqrt{|P_l| |\widehat Q_l^h|} + \frac{|P_l|}{\lambda_0}\right]  \\
		&~\lor \left(1 - \frac{1}{4\gamma_u^2}\right)\| M(P_l, \widehat Q_l^h) -  \overline M(P_l, \widehat Q_l^h)\|_F^2 \enspace .
	\end{split}
\end{align}

The following proposition states that given the fact that the experts in $\widetilde P^{h}_l$ are indistinguishable in $L_1$-norm and  $\lambda_0 (|\widetilde P^h_l|\land |Q_l^{*h}|) \geq 1$, the event $\xi_{\loc}$ holds true simultaneously for all $l$ and $h$ with high probability.

\begin{proposition}\label{prop:local_square_norm_reduction}There exists a numerical constant $\kappa_1$ such that the following holds, for any fixed index $l$ in $\{1, \dots, L\}$, and fixed height $h$ in $\cH$. Conditionally to $Y^{(1)}$, the event $\xi_{\lone}(l)$ and $\lambda_0 (|\widetilde P^h_l|\land |Q_l^{*h}|) \geq 1$, the event $\xi_{\loc}(l, h, \kappa_1)$ holds true with probability at least $1- 3\delta/(L|\cH|)$.
\end{proposition}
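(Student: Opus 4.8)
Throughout I take $\pi^*=\mathrm{id}$ and condition on the first sample $Y^{(1)}$, so that $P:=P_l$, $\widetilde P:=\widetilde P^h_l$, $Q:=\widehat Q^h_l$ are deterministic; write $m=|P|$, $q=|Q|$. By \Cref{lem:inclusion_Q} we have $Q^{*h}_l(\phi_0)\subset Q\subset\overline Q^{*h}_l(\phi_0)$ with $\phi_0=\kappa_0\log(nd/\delta)$, and on $\xi_{\lone}(l)$ the block $M(\widetilde P,Q)$ is $L_1$-indistinguishable, i.e.\ $\max_{i,j\in\widetilde P}\|M_{i\cdot}(\widetilde P,Q)-M_{j\cdot}(\widetilde P,Q)\|_1\le 3\gamma_u\sqrt{q/\lambda_0}=:L$. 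Since $P':=P'_l\subset\widetilde P\subset P$ and restricting rows followed by recentering can only decrease a centered Frobenius norm, it suffices to bound $\|M(P',Q)-\overline M(P',Q)\|_F^2$ by the stated right-hand side with $\|M(P,Q)-\overline M(P,Q)\|_F^2$ replaced by $F_1:=\|M(\widetilde P,Q)-\overline M(\widetilde P,Q)\|_F^2\le\|M(P,Q)-\overline M(P,Q)\|_F^2$. Let $A:=M(\widetilde P,Q)-\overline M(\widetilde P,Q)=\sum_r\sigma_r v_ru_r^\top$ be the SVD; I will use the structural facts that, $A$ being centered and column-isotonic, $u_1$ may be chosen with nonnegative entries, that $\mathbf 1^\top A=0$ so $v_1\perp\mathbf 1$, and that $v_1=Au_1/\sigma_1$ is monotone along the $\mathrm{id}$-order. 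Set $\rho_0:=\kappa_1\gamma_u^4(\lambda_0^{-1}\sqrt{mq}+\lambda_0^{-1}m)$. The argument splits on whether $F_1\le\rho_0$ or $F_1>\rho_0$. In the first case we are immediately done, since $\|M(P',Q)-\overline M(P',Q)\|_F^2\le F_1\le\rho_0$; here the $L_1$-step (the $\mathbf 1_{\widehat Q}$-update) has already done all the work, and one checks — using $\xi_{\lone}$ to control $\sum_{k\in Q}(\max_{\widetilde P}M_{\cdot k}-\min_{\widetilde P}M_{\cdot k})\le L$ together with the dyadic-spread/neighbourhood structure of $Q=\widehat Q^h_l$ coming from \eqref{eq:set_Q} and \Cref{lem:inclusion_Q} — that $F_1$ can only exceed $\rho_0$ when $Q$ genuinely carries variation that a spectral step can exploit.

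\textbf{Part (i): the PCA direction is good when $F_1>\rho_0$.} Writing $Y^{(s)}=\lambda_1M+E^{(s)}$, a direct expansion shows that the objective maximised in \eqref{eq:ACP} has expectation $\mathbb E[\mathrm{obj}(v)]=\|\lambda_1 v^\top A\|_2^2$ for every $v$: the correction term $-\tfrac12\|v^\top(Y^{(2)}-\overline Y^{(2)}-Y^{(3)}+\overline Y^{(3)})\|_2^2$ exactly cancels, in expectation, the variance of $v^\top(E^{(2)}-\overline E^{(2)})$, leaving a mean-zero bilinear term $2\lambda_1\langle vv^\top,A(E^{(2)})^\top\rangle$, a diagonal-corrected quadratic term $v^\top(\widetilde E^{(2)}(\widetilde E^{(2)})^\top-\mathbb E[\,\cdot\,])v$, and an $E^{(2)}$--$E^{(3)}$ cross term. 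The plan is to show that on an event of probability $\ge 1-\delta/(L|\cH|)$ each of these is at most a small multiple of $\lambda_1^2\rho_0$ uniformly over $v$ in the range of the orthogonal projection $\Lambda$ onto $\mathrm{span}(v_1,\widehat v)$ (rank $\le 2$): the diagonal-corrected term is precisely what \Cref{prop:concentration_bernstein_op} bounds, applied with $\sigma^2\asymp\lambda_0$, $K\asymp 1$, $r_\Lambda\le 2$, which yields the $\sqrt{\lambda_0^2mq}+\lambda_0 m$ scaling and, after dividing by $\lambda_1^2\asymp\lambda_0^2$, the floor $\rho_0$; the bilinear and cross terms are handled by standard matrix-Bernstein estimates. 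The truncation $(\widehat v_-)_i=\widehat v_i\mathbf 1\{|\widehat v_i|\le\sqrt{\lambda_0}\}$ is what keeps the contribution of a few spiky coordinates of $\widehat v$ within this bound when $\lambda_0\ll1$. Combining with the fact that $\sigma_1^2$ carries a $1/\mathrm{polylog}$ fraction of $F_1$ once $F_1>\rho_0$ — this uses a metric-entropy estimate for centered column-isotonic blocks with $L_1$-row-diameter $\le L$ and the precise definition of $\widehat Q^h_l$ — one gets $\|\widehat v_-^\top\lambda_1A\|_2^2\ge\tfrac12\lambda_1^2\sigma_1^2$ and, by a $\sin\theta$ bound, that $\widehat v_-^\top A$ makes angle $\le c_0/\gamma_u$ with $\pm u_1$; the sign is $+$ because $u_1\ge0$. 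Passing through $\widehat z=\widehat v_-^\top(Y^{(4)}-\overline Y^{(4)})=\lambda_1\widehat v_-^\top A+(\text{noise, bounded on }\xi)$ and truncating its coordinates below $\gamma\sqrt{\lambda_0}$ gives $\widehat w^+\ge0$ with $\|\widehat w^+\|_2\asymp c$ for some $c>0$ and, for all $i,j\in\widetilde P$,
\[
\langle M_{i\cdot}-M_{j\cdot},\widehat w^+\rangle=c\,\sigma_1\big((v_1)_i-(v_1)_j\big)+R_{ij},\qquad |R_{ij}|\le\tfrac14c\,\sigma_1\big|(v_1)_i-(v_1)_j\big|+\tfrac18c\,\gamma_u/\sqrt{\lambda_0},
\]
the remainder $R_{ij}$ collecting the angle error of $\widehat w^+$ against $u_1$ and the contribution of the truncated coordinates, bounded via $L$.

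\textbf{Part (ii): resolution and energy conversion.} Fix $j\in\widetilde P$ with $|\langle M_{i_l\cdot}-M_{j\cdot},\widehat w^+\rangle|\ge 5\gamma_u\|\widehat w^+\|_2/\sqrt{\lambda_0}$. On the event $\xi$ of \eqref{eq:condition_phil1}, $|\langle E^{(5)}_{i_l\cdot}-E^{(5)}_{j\cdot},\widehat w^+/\|\widehat w^+\|_2\rangle|\le\tfrac13\phi_{\lone}\sqrt{\lambda_0}\le\tfrac13\gamma_u\sqrt{\lambda_0}$, and since $\lambda_1\ge\lambda_0/2$ and $1/\lambda_0\wedge\lambda_0=\lambda_0$, the updating weight of \eqref{eq:updating_edges} computed on sample $Y^{(5)}$ with $w=\widehat w^+$ obeys $|\cU_{i_lj}|\ge\tfrac1{\sqrt{\lambda_0}}(\tfrac{\lambda_0}2\cdot5\gamma_u\sqrt{\lambda_0}-\tfrac13\gamma_u\sqrt{\lambda_0})>\gamma_u$, so after the update \eqref{eq:update_W} the pair $(i_l,j)$ lies in $\cG(\cW,\gamma_u)$; by monotonicity it still does at the start of step $t+1$, whence $j\notin P'=\cN(\cG_{t+1,u},i_l)$. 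Consequently $\{\langle M_{j\cdot},\widehat w^+\rangle:j\in P'\}$ sits in an interval of length $\le 10\gamma_u\|\widehat w^+\|_2/\sqrt{\lambda_0}$, and (since $\langle M_{\cdot},\widehat w^+\rangle$ is monotone) $P'$ is contained in a ``middle band'' of $\widetilde P$, with nonempty bottom and top chunks removed whenever $P'\neq\widetilde P$. Feeding in the expansion of Part (i) and $\|\widehat w^+\|_2\asymp c$ yields $\max_{j\in P'}(v_1)_j-\min_{j\in P'}(v_1)_j\le C_2\gamma_u/(\sigma_1\sqrt{\lambda_0})$. To finish, use the orthogonal decomposition
\[
\|M(P',Q)-\overline M(P',Q)\|_F^2\le\|A_{P',\cdot}\|_F^2=\sigma_1^2\sum_{j\in P'}(v_1)_j^2+\sum_{r\ge2}\sigma_r^2,
\]
so it is enough to show $\sum_{j\in P'}(v_1)_j^2\le 1-c/\gamma_u^2$ (recalling $\sum_{r\ge2}\sigma_r^2\le F_1\lesssim\gamma_u^2\sigma_1^2$ from Part (i)). Since $v_1$ is monotone, mean zero and of unit norm over $\widetilde P$, while $\sigma_1^2\gtrsim F_1>\rho_0\gtrsim\gamma_u^4 m/\lambda_0$ forces its spread over $P'$ to be $\le C_2'/(\gamma_u\sqrt m)$ — much smaller than the $\Omega(1/\sqrt m)$ spread such a vector must have over the whole block — a short computation (splitting on the sign of $v_1$ on the band and using that the monotone vector must do most of its ``rising'' on the removed chunks, whose $\ell_2$-mass is thereby forced to be $\gtrsim 1/\gamma_u^2$) gives $\sum_{j\in P'}(v_1)_j^2\le 1-c/\gamma_u^2$, i.e.\ the multiplicative decrease, after adjusting $\kappa_1$. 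A union bound over the spectral event of Part (i) and the events of \Cref{lem:inclusion_Q} and \Cref{lem:event_indistinguishable} (the latter two already built into the conditioning) gives the probability $1-3\delta/(L|\cH|)$.

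\textbf{Main obstacle.} The delicate step is Part (i): controlling the PCA objective uniformly over directions in the sparse regime $\lambda_0\ll1$. Naive sub-Gaussian bounds on $\|E^{(2)}\|_{\op}$ lose a polynomial factor and would inflate the noise floor from $\sqrt{mq}/\lambda_0$ to roughly $(m+q)/\lambda_0$; obtaining the correct floor is precisely what \Cref{prop:concentration_bernstein_op} and the $\sqrt{\lambda_0}$-truncation of $\widehat v$ are designed for. A secondary difficulty is the bookkeeping certifying, in the case $F_1>\rho_0$, that the leading singular value carries a $1/\mathrm{polylog}$ fraction of $F_1$; this relies on the metric-entropy behaviour of isotonic matrices together with the way the column set $\widehat Q^h_l$ is selected.
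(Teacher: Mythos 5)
Your setup, the case split on the energy threshold $\kappa_1\gamma_u^4(\lambda_0^{-1}\sqrt{mq}+\lambda_0^{-1}m)$, the use of \Cref{lem:structure_isotonic_matrices}-type structure, the role of \Cref{prop:concentration_bernstein_op} and of the $\sqrt{\lambda_0}$-truncation, and the observation that experts surviving in $P'$ must have weighted means along $\hat w^+$ within an $O(\gamma_u/\sqrt{\lambda_0})$ window all parallel the paper. But the final conversion of that window into a Frobenius contraction is where your argument genuinely breaks. You claim, ``by a $\sin\theta$ bound,'' that $\hat v_-^T A$ (and hence $\hat w^+$) makes angle $\le c_0/\gamma_u$ with $u_1$, and you then expand $\langle M_{i\cdot}-M_{j\cdot},\hat w^+\rangle = c\,\sigma_1((v_1)_i-(v_1)_j)+R_{ij}$ with a small remainder. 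No such alignment is available: the only spectral information is $\sigma_1^2\gtrsim \|A\|_F^2/\gamma_u^2$, with no gap between $\sigma_1$ and $\sigma_2$, so Davis--Kahan gives nothing, and $\hat v_-^TA$ may be spread over several comparable top directions. The paper's argument is built precisely to avoid this: it only shows that $\hat w^+$ \emph{captures energy}, $\|(M-\overline M)\hat w^+/\|\hat w^+\|_2\|_2^2\ge \tfrac{1}{2\gamma_u^2}\|M-\overline M\|_F^2$ (via the $z^*$ versus $\hat z$ comparison and the thresholding in \eqref{eq:def_S_star_S_hat}, see \eqref{eq:lower_former_signal}), and then concludes by the Pythagorean decomposition along $\hat w^+$ together with the bound \eqref{eq:upper_new_signal} on the $P'$-restricted energy along $\hat w^+$ — no statement about $u_1$ or $v_1$ is ever needed.

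Even granting the alignment, your ``energy conversion'' does not close. Bounding $\|M(P')-\overline M(P')\|_F^2$ by $\|A_{P',\cdot}\|_F^2$ (centering at the $\widetilde P$-mean rather than re-centering on $P'$) is lossy, and the asserted inequality $\sum_{j\in P'}(v_1)_j^2\le 1-c/\gamma_u^2$ does not follow from a small spread of $v_1$ on $P'$: a mean-zero, monotone, unit-norm vector can place mass $1-1/|\widetilde P|$ on a single coordinate, where the spread over $P'$ is zero; the paper's recentering-plus-Pythagoras step is exactly what handles such configurations. Moreover, even if your inequality held, combining it with $\sigma_1^2\ge \tfrac{4}{\gamma_u^2}\|A\|_F^2$ only yields a factor $1-c/\gamma_u^4$, weaker than the $1-\tfrac{1}{4\gamma_u^2}$ required in \eqref{eq:local_square_norm_reduction}, which would force a larger number of steps $T$ downstream. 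Two further gaps: you never verify that $\hat w^+$ satisfies Condition \eqref{eq:condition_w} ($\|\hat w\|_\infty^2\le\lambda_0\|\hat w\|_2^2$), without which the update \eqref{eq:update_W} is simply not performed in the sparse regime (the paper's Step 4 is devoted to this, using \eqref{eq:condition_Frobenius_acp} and the $L_1$-indistinguishability); and in Part (i) you propose to apply the concentration uniformly over $v$ in the span of $(v_1,\hat v)$, a data-dependent subspace, whereas \Cref{prop:concentration_bernstein_op} requires a fixed projector — the paper instead takes the supremum over the whole space with the deterministic centering projector and the projector onto $\ker(A M)^{\perp}$.
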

\Cref{prop:local_square_norm_reduction} is at the core of the analysis, and its proof contains a significant part of the arguments. This proposition and its proof are similar to Proposition D.5 in \cite{pilliat2022optimal}, but the main difficulty with respect to \cite{pilliat2022optimal} is that we do not achieve the optimal rate in $\lambda_0 \leq 1$ using only the subgaussianity of the coefficients of the noise $\noise$. A key step in the proof of \Cref{prop:local_square_norm_reduction} is \Cref{prop:concentration_bernstein_op}, which implies \Cref{lem:concentration_pca} and gives a concentration inequality of the operator norm of $\noise \noise^T - \bbE[\noise \noise^T]$. \Cref{prop:concentration_bernstein_op} is effective in that case since the coefficients of $\noise$ will be proven to satisfy \Cref{eq:condition_moments_bernstein}.

Then, the idea is that when a group $P'_l$ has a square norm of order at least $\frac{1}{\lambda_0}\sqrt{|P_l| |\widehat Q_l^h|} + \frac{|P_l|}{\lambda_0}$, the PCA-based procedure defined as in \Cref{eq:ACP} will output a vector $\hat v$ that is well aligned with the first left singular vector of $M(\widetilde P^h_l, \widehat Q^h_l)- \overline M(\widetilde P^h_l, \widehat Q^h_l)$. Moreover, the isotonic structure of $M(\widetilde P^h_l, \widehat Q^h_l)- \overline M(\widetilde P^h_l, \widehat Q^h_l)$ implies in fact that its operator norm is greater than a polylogarithmic fraction of its Frobenius norm (see \Cref{lem:structure_isotonic_matrices} or Lemma E.4 in \cite{pilliat2022optimal}], so that $\|\hat v^T(M(\widetilde P^h_l, \widehat Q^h_l)- \overline M(\widetilde P^h_l, \widehat Q^h_l))\|_2^2$ is of the same order as the square Frobenius norm. Hence after updating the edges, we can prove that the experts in $\widetilde P^h_l \setminus P'_l$ were contributing significantly to the Frobenius norm, which enforces the contraction part in the second term of the maximum in \Cref{eq:local_square_norm_reduction}. All the details of the proof can be found in \Cref{sec:proof_of_prop_reduction}.

\subsection{Step 4 : Control of the size of the sets $\overline Q^{*h}_l$}

 For any $p \in [n]\cap \{2^k~:~ k \in \bbZ^+\}$, let $\cL(p)$ be the sets of indices $l = 1, \dots, L$  whose corresponding group size $|P_l|$ belongs to $[p,2p)$. The two upper bounds implied by \Cref{eq:control_l1_to_l2} and \Cref{eq:local_square_norm_reduction} both depend on the selected subset of columns, which is included in $\overline Q_l^{*h}$ under the event of \Cref{lem:inclusion_Q}. The following lemma provides an upper bound on the sum over $l\in \cL(p)$ of the size of the sets $\overline Q_l^{*h}(\phi)$ defined in \Cref{eq:definition_Q_pop}, for any $\phi>0$.
 
\begin{lemma}\label{lem:Control_Q}
	For any $\phi \geq 1$ and any $h\in \cH$, it holds that 
	\begin{equation*}
		\sum_{l \in \cL(p)}|\overline Q^{*h}(\phi)| \leq 12 \phi^2 \left(\frac{1}{p\lambda_0 h^2}\lor 1\right)\frac{d}{h} \enspace .
	\end{equation*}
\end{lemma}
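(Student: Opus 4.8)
The plan is to charge each column selected into $\overline Q^{*h}_l(\phi)$ against the monotone variation of a one-dimensional profile, and to control the resulting multiplicity by the group size $p$ and the ``look distance'' $N_0 := \phi^2/(\lambda_0 h^2)$. First I would reduce to a total-variation estimate. Writing $a^*_l := a^*(\phi^{-1}h,l)$, the definition \eqref{eq:definition_nu} gives $|\cN_{a^*_l}(l)| \land |\cN_{-a^*_l}(l)| \ge N_0$. Introduce the nondecreasing vector $r \in [0,d]^{\bbZ}$ with $r_i := \|M_{i\cdot}\|_1$ (so $r_i = 0$ for $i \le 0$ and $r_i = d$ for $i \ge n+1$, using the extension of $M$); summing the definition of $\bDelta^*_k(a,l)$ over $k$ yields $\sum_{k=1}^d \bDelta^*_k(a,l) = \overline r(\cN_a(l)) - \overline r(\cN_{-a}(l))$, where $\overline r(S) := |S|^{-1}\sum_{i \in S} r_i$. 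Working under $\xi$, \Cref{lem:consistency} makes $\cG = \cG_{t,u}$ consistent with $\pi^* = \mathrm{id}$, so every element of $\cN_a(l)$ has larger index than every element of $\cN_{-a}(l)$, whence $\bDelta^*_k(a,l) \ge 0$ for each $k$. From the definition \eqref{eq:definition_Q_pop} of $\overline Q^{*h}_l(\phi)$ this gives $\tfrac h2 |\overline Q^{*h}_l(\phi)| \le \sum_{k \in \overline Q^{*h}_l(\phi)}\bDelta^*_k(a^*_l,l) \le \overline r(\cN_{a^*_l}(l)) - \overline r(\cN_{-a^*_l}(l))$, so it suffices to bound $\sum_{l \in \cL(p)}\big[\overline r(\cN_{a^*_l}(l)) - \overline r(\cN_{-a^*_l}(l))\big]$ by $6\phi^2\big(\tfrac{1}{p\lambda_0h^2}\lor 1\big)d$.

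Next I would set $a_l := \min P_l$, $b_l := \max P_l$; the hypothesis $P_1 \overset{\cG}{\prec}\cdots\overset{\cG}{\prec}P_L$ and consistency give $b_l < a_{l+1}$, so the $P_l$ are disjoint, of size in $[p,2p)$ for $l \in \cL(p)$. The heart of the argument is to attach to each $l$ the interval $J_l := [\min \cN_{-a^*_l}(l),\, \max \cN_{a^*_l}(l)]$ (which contains $P_l$) and to show that $|J_l| \le C_0(p + N_0)$ for a numerical constant $C_0$ — i.e., that the index-reach of these neighborhoods which actually matters for the monotone profile $r$ is $O(p + N_0)$ around $P_l$. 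Granting this, $\cN_{\pm a^*_l}(l) \subset J_l$ and monotonicity of $r$ give $\overline r(\cN_{a^*_l}(l)) - \overline r(\cN_{-a^*_l}(l)) \le \max_{i\in J_l} r_i - \min_{i\in J_l} r_i = \sum_{[m,m+1]\subset J_l}(r_{m+1}-r_m)$; summing over $l \in \cL(p)$ and exchanging sums, $\sum_{l\in\cL(p)}[\cdots] \le \sum_{m\in\bbZ}(r_{m+1}-r_m)\,|\{l \in \cL(p) : [m,m+1]\subset J_l\}|$. If $[m,m+1]\subset J_l$ then $P_l \subset J_l \subset [m - C_0(p+N_0),\, m + C_0(p+N_0)]$, so disjointness and $|P_l|\ge p$ cap the number of such $l$ by $O(1 + N_0/p)$; together with $\sum_m(r_{m+1}-r_m) = r_{+\infty}-r_{-\infty} \le d$ this gives $\sum_{l\in\cL(p)}[\cdots] \le O(1 + N_0/p)\,d$.

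Finally I would collect constants: $\sum_{l\in\cL(p)}|\overline Q^{*h}_l(\phi)| \le \tfrac2h\, O(1+N_0/p)\,d$, and since $N_0/p = \phi^2/(p\lambda_0h^2)$ and $\phi \ge 1$ one has $1 + N_0/p \le 6\phi^2\big(\tfrac{1}{p\lambda_0h^2}\lor 1\big)$, so $\sum_{l\in\cL(p)}|\overline Q^{*h}_l(\phi)| \le 12\phi^2\big(\tfrac{1}{p\lambda_0h^2}\lor 1\big)\tfrac dh$, with enough slack that the precise value of $C_0$ is immaterial.

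The main obstacle is the window claim: showing that $\cN_{a^*_l}(l)$ and $\cN_{-a^*_l}(l)$, which a priori could contain experts with indices far from $P_l$, are effectively localized at scale $p + N_0$ relative to the monotone profile $r$. I expect this to require the minimality of $a^*_l$ — so that the defining size threshold $N_0$ is met only barely, which, once the ordering of the $P_{l'}$ is taken into account, prevents these neighborhoods from reaching further than the adjacent groups and hence from absorbing variation already charged elsewhere. This is the analogue of the corresponding localization estimate in~\cite{pilliat2022optimal}, and is where essentially all the work of the lemma lies.
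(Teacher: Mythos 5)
Your skeleton is the same as the paper's: lower-bound each selected column by $h/2$, sum over columns so that only a total-variation-type quantity of the bounded monotone profile remains (the paper works column by column with $\sum_{l}\overline \bDelta^*_k(l)$ and the fact that $M_{\cdot k}$ has range in $[0,1]$, you aggregate into $r_i=\|M_{i\cdot}\|_1$ with total variation $d$ — an equivalent reduction), and then control an overlap multiplicity of order $1+\phi^2/(p\lambda_0h^2)$ using the ordering and disjointness of the groups. The problem is the step you defer, which is indeed where the lemma lives, and which you state in a form that is not correct. The sets $\cN_{\pm a^*_l}(l)$ are defined through the comparison graph $\cG$, and $\cG$-incomparability carries no information about index distance: the groups of $\cL(p)$ need not be index-intervals, and between two consecutive groups of $\cL(p)$ there may be arbitrarily many experts belonging to no group of $\cL(p)$ (for instance experts comparable only to the pivots). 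One can then have experts of $P_{l+1}$, hence of $\cN_{a}(l)$ for small $a$, whose index lies arbitrarily far above $P_l$, while all ranks stay small. Consequently no bound of the form $|J_l|\leq C_0(p+N_0)$ on the index span of $[\min \cN_{-a^*_l}(l),\max\cN_{a^*_l}(l)]$ can hold, the ``minimality of $a^*_l$'' heuristic does not produce it, and your counting of how many windows $J_l$ cover a given unit interval breaks down.

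The correct localization — and the one the paper proves — is relative to the ordered sequence of groups, not to index distance. Setting $\Lambda=\floor{\phi^2/(p\lambda_0h^2)}+1$, the union $P_{l+1}\cup\dots\cup P_{l+\Lambda}$ (resp. $P_{l-\Lambda}\cup\dots\cup P_{l-1}$) supplies at least $p\Lambda\geq \phi^2/(\lambda_0h^2)$ experts above (resp. below) $P_l$, which gives $a^*(\phi^{-1}h,l)\leq \Lambda$; and an expert of $\cN_{a^*}(l)$ that were $\cG$-above an expert of $P_{l+\Lambda}$ would, via the chain through the $\Lambda$ intermediate ordered groups, have rank at least $\Lambda+1>a^*$ from the experts of $P_l$. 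Hence the values of $\cN_{a^*}(l)$ (resp. $\cN_{-a^*}(l)$) are dominated by those of the maximal expert of $P_{l+\Lambda}$ (resp. minimal expert of $P_{l-\Lambda}$), so $\bDelta^*_k(a^*,l)\leq \overline\bDelta^*_k(l)$, and the multiplicity count is then performed along the group indices: for each column $k$, each inter-group increment is charged by at most $2\Lambda+1$ of the differences $\overline\bDelta^*_k(l)$, giving $\sum_{l\in\cL(p)}\overline\bDelta^*_k(l)\leq 2\Lambda+1$ and the bound $6\Lambda d/h$. So the essential mechanism is the rank structure along the chain of ordered groups (combined with consistency under $\xi$), not an index-window estimate; since this is precisely the part you leave unproven, and its correct formulation differs from your claim, the proposal does not establish the lemma. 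A minor further point: your final constant depends on the unspecified $C_0$, whereas the statement carries the explicit constant $12$.
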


The proof of \Cref{lem:Control_Q} is mainly implied by the fact that the coefficients of $M$ are bounded by $1$.  Then, the idea is that in the case where all the sets $P_{l}$ are of size $p$, it is enough to take a number of group $a$ of order at most $\tfrac{1}{p\lambda_0h^2} \lor 1$ above and below each $P_l$ to ensure that the corresponding neighborhood of $P_l$ has size $|\cN_a(l)| \land |\cN_{-a}(l)| \geq \frac{1}{\lambda_0h^2}$.

\subsection{Step 5 : Conclusion of the previous steps}
We first decompose the square norm $\SN(\cP)$ as defined in \Cref{eq:def_square_norm} into two terms.
Assume that the event of \Cref{lem:inclusion_Q}, $\xi_{\lone}(l)$ and $\xi_{\loc}(l,h, \kappa_1)$ - see \Cref{lem:event_indistinguishable} and \Cref{prop:local_square_norm_reduction} - hold true.
Define $\cL_{-}$ as the sequence of indices $l$ such that the corresponding reduced subsets $P'_l$ have low local square norm for all $h \in \cH$. More precisely, we say that $l \in \cL_{-}$ if for all $h \in \cH$ we have
\begin{align}\label{eq:L_minus}
	\begin{split}
		\| M(P'_l, \widehat Q_l^h) -  \overline M(P'_l, \widehat Q_l^h)\|_F^2 \leq & ~\kappa_1 \gamma_u^4\left[\frac{1}{\lambda_0}\sqrt{|P_l| |\widehat Q_l^h|} + \frac{|P_l|}{\lambda_0}\right] \\
		&~\lor \frac{1}{2|\cH|}\| M(P_l) -  \overline M(P_l)\|_F^2 \enspace .
	\end{split}
\end{align}
We also define the complementary $\cL_{+} = [1, L] \setminus \cL_{-}$ and their corresponding subsets $\cP'_+, \cP'_-$ in $\cP'$. We have the following decomposition:
\begin{equation}\label{eq:decomposition_square_norm}
	\SN(\cP') = \SN (\cP'_+) + \SN(\cP'_-) \enspace .
\end{equation}

\medskip

Let us now give an upper bound of $\SN (\cP'_+)$. For any $l \in \cL_{+}$, there exists by definition an element $h_l \in \cH$ such that $\|M(P'_l,\widehat Q^{h_l}_l) - \overline M(P'_l,\widehat Q^{h}_l)\|_F^2 > \kappa_1 \gamma_u^4\left[\frac{1}{\lambda_0}\sqrt{|P_l| |\widehat Q_l^h|} + \frac{|P_l|}{\lambda_0}\right] \lor \frac{1}{2|\cH|}\| M(P_l, \widehat Q_l^h) -  \overline M(P_l, \widehat Q_l^h)\|_F^2$. Hence applying \Cref{eq:local_square_norm_reduction} with $h=h_l$, we have that, for any $l \in \cL_+$,
\begin{align*}
	\|M(P'_l) - \overline M(P'_l) \|_F^2 &= \|M(P'_l,\widehat Q^{h_l}_l) - \overline M(P'_l,\widehat Q^{h_l}_l)\|_F^2 + \|M(P'_l, [d]\setminus \widehat Q^{h_l}_l) - \overline M(P'_l, [d]\setminus \widehat Q^{h_l}_l)\|_F^2 \\
	&\leq  \| M(P_l) -  \overline M(P_l)\|_F^2 - \frac{1}{4\gamma_u^2}\|M(P_l,\widehat Q^{h_l}_l) - \overline M(P_l,\widehat Q^{h_l}_l)\|_F^2 \\
	&\leq  \left(1 - \frac{1}{\gamma_u^3}\right)\| M(P_l) -  \overline M(P_l)\|_F^2 \enspace ,
\end{align*}

where the third inequality comes from the second term of \Cref{eq:L_minus} together with $P'_l \subset P_l$ and $\gamma_u \geq \phi_{L_1} \geq 8|\cH|$, with $\phi_{L_1}$ defined in \Cref{eq:definition_phil1}. Hence we obtain that 
\begin{equation}\label{eq:ub_var_P_plus}
	\SN(\cP'_+) \leq \left(1 - \frac{1}{\gamma_u^3}\right)\SN(\cP_+) \enspace .
\end{equation}
 
\medskip

Finally, we give an upper bound of $\SN (\cP'_-)$. Let us write $\cD_n = \{2^k~:~ k\in \bbZ^+\}\cap [n]$ for the set of dyadic integer smaller than $n$. Given $p\in \cD_n$, we write $\cL_-(p) = \cL(p)\cap\cL_-$ for the set of indices in $\cL_-$ such that $|P_l|\in [p, 2p)$, and $\cP'_-(p)$ for the corresponding sequence of subsets in $\cP'_-(p)$. Let $\phi_0 = \kappa_0 \log(nd/\delta)$, where $\kappa_0$ is a numerical constant given by \Cref{lem:inclusion_Q}. By definition of $Q^{*h}_l$, the square norm of a group $P'_l$ restricted to questions that do not belong the set $\cup_{h\in \cH}Q^{*h}_l$ is smaller than $\phi_0nd\cdot\min(\cH) \leq \phi_0$. Hence we have that
\begin{equation}\label{eq:var_P_minus_1}
	\SN(\cP'_-) = \sum_{p \in \cD_n}\SN(\cP'_-(p))\leq \phi_0 + \sum_{(p,h) \in \cD_n \times \cH}\sum_{l \in \cL_-(p)}\| M(P'_l,Q^{*h}_l) - \overline M(P'_l,Q^{*h}_l)\|_F^2 \enspace .
\end{equation}

If $\lambda_0|Q^{*h}_l|\leq 1$ then we use the trivial inequality $\| M(P'_l,Q^{*h}_l) - \overline M(P'_l,Q^{*h}_l)\|_F^2 \leq |P'_l||Q^{*h}_l| \leq |P^h_l|/\lambda_0$,since the entries of $M$ are bounded by one.

If $\lambda_0|Q^{*h}_l|\geq 1$ and $|\widetilde P^{h}_l| \lambda_0 \leq 1$, we have that $h|\widetilde P^{h}_l|\sqrt{\frac{|\overline Q^{*h}_l|}{\lambda_0}} \leq \sqrt{\frac{|\widetilde P^{h}_l||\overline Q^{*h}_l|}{\lambda_0^2}}$, using the fact that $h \leq 1$. Hence, since the experts in $P'_l \subset \widetilde P^{h}$ are indistinguishable in $L_1$ norm by \Cref{lem:event_indistinguishable}, \Cref{eq:control_l1_to_l2} holds true and we have
\begin{align*}
	\| M(P'_l,Q^{*h}_l) - \overline M(P'_l,Q^{*h}_l)\|_F^2 &\leq 6\phi_0 \gamma_u \left[h|\widetilde P^h_l|\sqrt{\frac{|\overline Q^{*h}_l|}{\lambda_0}}\right]\\ 
	&\leq 6\phi_0 \gamma_u \left[\sqrt{h^2|\widetilde P^h_l|^2\frac{|\overline Q^{*h}_l|}{\lambda_0}} \land \sqrt{\frac{|\widetilde P^{h}_l||\overline Q^{*h}_l|}{\lambda_0^2}}\right]\\
	&\leq 12\phi_0\gamma_u \left[\sqrt{(h^2p \lambda_0 \land 1)\frac{p|\overline Q_l^{*h}|}{\lambda_0^2}} + \frac{p}{\lambda_0}\right] \enspace .
\end{align*}

Finally, if $\lambda_0(|Q^{*h}_l|\lor |\widetilde P^{h}_l|) \geq 1$, we are in position to apply \Cref{prop:local_square_norm_reduction}. For all $l \in \cL_-(p)$ and $h \in \cH$ that $\| M(P'_l,Q^{*h}_l) - \overline M(P'_l,Q^{*h}_l)\|_F^2$ is either smaller than $\frac{1}{2|\cH|}\| M(P_l) -  \overline M(P_l)\|_F^2$, or it is smaller than $\kappa_1 \gamma_u^4\left[\frac{1}{\lambda_0}\sqrt{|P_l| |\widehat Q_l^h|} + \frac{|P_l|}{\lambda_0}\right]$. From \Cref{eq:control_l1_to_l2}, it is also smaller than $6\phi_0 \gamma_u h|\widetilde P^{h}_l|\sqrt{\frac{|\overline Q^{*h}_l|}{\lambda_0}}$. As a consequence, we obtain the following upper bound:
\begin{align}\label{eq:var_P_minus_2}
	\begin{split}
		\| M(P'_l,Q^{*h}_l) - \overline M(P'_l,Q^{*h}_l)\|_F^2 \leq &\kappa_2\gamma_u^4 \left[\sqrt{(h^2p \lambda_0 \land 1)\frac{p|\overline Q_l^{*h}|}{\lambda_0^2}} + \frac{p}{\lambda_0}\right]\\
		&~\lor\frac{1}{2|\cH|}\| M(P_l) -  \overline M(P_l)\|_F^2 \enspace ,
	\end{split}
\end{align}

with $\kappa_2 = 12(\kappa_0 \lor \kappa_1)$, and using that $\phi_0 \leq \kappa_0\gamma_u$ and $|\widetilde P^h_l| \leq |P_l| \leq 2p$.

By the two previous cases on $l$, the inequality \Cref{eq:var_P_minus_2} is valid for any $l \in \cL_-(p)$.
Now, we decompose \Cref{eq:var_P_minus_1} into two terms, corresponding to the maximum in \Cref{eq:var_P_minus_2}.
First, since each $P_l$ is in at most one $\cP_-(p)$ for $p \in \cD_n$, we have
\begin{equation}\label{eq:var_P_minus_3}
	\sum_{(p,h) \in \cD_n \times \cH}\sum_{l \in \cL_-(p)}\frac{1}{2|\cH|}\| M(P_l) - \overline M(P_l)\|_F^2 \leq \frac{1}{2}\SN(\cP_-)\enspace .
\end{equation}
Secondly, we have that 
 \begin{align*}
	\kappa_2\gamma_u^4 \sum_{(p,h) \in \cD_n \times \cH} \sum_{l \in \cL_-(p)} &\left[\sqrt{(h^2p \lambda_0 \land 1)\frac{p|\overline Q_l^{*h}|}{\lambda_0^2}} + \frac{p}{\lambda_0}\right]\\ 
	&\leq \kappa_2\gamma_u^6\left[\max_{p,h} \sum_{l \in \cL_-(p)} \sqrt{(h^2p \lambda_0 \land 1)\frac{p|\overline Q_l^{h*}|}{\lambda_0^2}} + \frac{p}{\lambda_0} \right]\\
	&\overset{(a)}{\leq} 2\kappa_2\gamma_u^6\max_{p,h} \left[\frac{n}{\lambda_0} + \sqrt{(h^2p\lambda_0 \land 1)\frac{p|\cL(p)|\sum_{l \in \cL(p)}|\overline Q_l^{h*}|}{\lambda_0^2}} \right]\\
	&\overset{(b)}{\leq}4\kappa_2^2\gamma_u^7\max_{p,h} \left[\frac{n}{\lambda_0} + \sqrt{(h^2p\lambda_0 \land 1)\left(\frac{n^2d}{\lambda_0^2p}\land \left(\frac{nd}{p \lambda_0^3h^3} \lor \frac{nd}{\lambda_0^2h}\right)\right)} \right] \\
	&\leq 4\kappa_2^2\gamma_u^7\max_{p,h} \left[\frac{n}{\lambda_0} + nh\sqrt{\frac{d}{\lambda_0}}\land \sqrt{\frac{n^2dh^2}{\lambda_0}\land \frac{nd}{\lambda_0^2h}} \right] \\
	&\overset{(c)}{\leq}	4\kappa_2^2\gamma_u^7\left[\frac{n}{\lambda_0} + n\sqrt{\frac{d}{\lambda_0}} \land \frac{n^{2/3}\sqrt{d}}{\lambda_0^{5/6}} \right] \enspace .
\end{align*}

where in $(a)$ we used the Jensen inequality, in $(b)$ we used \Cref{lem:Control_Q} with $\phi = \phi_0$ together with the trivial inequality $\sum_{l \in \cL(p)}|\overline Q_l^{h*}| \leq nd/p$ and in $(c)$ the fact that $x \land y \leq x^{2/3}y^{1/3}$ and $h \leq 1$.

Finally, combining this last inequality with \Cref{eq:decomposition_square_norm}, \Cref{eq:ub_var_P_plus} and \Cref{eq:var_P_minus_3},

we obtain 
\begin{align*}
	\SN(\cP') &= \SN (\cP'_+) + \SN(\cP'_-) \\
	&\leq \left(1 - \frac{1}{\gamma_u^3}\right)\SN(\cP_+) + 4\kappa_2^2\gamma_u^7\left[\frac{n}{\lambda_0} + n\sqrt{\frac{d}{\lambda_0}} \land \frac{n^{2/3}\sqrt{d}}{\lambda_0^{5/6}} \right] \lor \left[\frac{1}{2}\SN(\cP_-)\right]\\
	&\leq \left[C\bar \gamma^7\left(\frac{n}{\lambda_0} + n\sqrt{\frac{d}{\lambda_0}} \land \frac{n^{2/3}\sqrt{d}}{\lambda_0^{5/6}} \right)\right] \lor \left[\left(1 - \frac{1}{\bar \gamma^3}\right)\SN(\cP)\right] \enspace ,
\end{align*}

where we recall that $\bar \gamma$ is defined in \Cref{eq:def_gamma} and satisfies $\bar \gamma \geq \gamma_u$. This concludes the proof of \Cref{prop:UB_on_square_norm}.
\section{Proof of the lemmas of \Cref{sec:proof_UB_on_square_norm}}\label{sec:proof_lemma_ub_square_norm}
Recall that we can write
\begin{equation}\label{eq:signal_noise_2}
	\noise = (B- \bbE[B]) \odot M + B \odot \widetilde E \enspace .
\end{equation}
where we recall that $\widetilde E = Y - \bbE[Y | B]$ and that $B$ is a matrix of Bernoulli random variables with parameter $\lambda_1$.

\begin{proof}[Proof of \Cref{lem:inclusion_Q}]
		Assume first that $\lambda_0 \leq 1$. Let us fix $l\in \{1, \dots, L\}$ and $h \in \cH$. We omit the dependence in $l$ in this proof to ease the notation and we write  $P$ for $P_l$.
		Let us define 
		\begin{equation}
			\noise'_{k}(a) := \frac{1}{|\cN_a|}\sum_{i \in \cN_a}\noise_{ik} - \frac{1}{|\cN_{-a}|}\sum_{i \in \cN_{-a}}\noise_{ik} \spaceAnd \nu(a) := |\cN_a| \land |\cN_{-a}| \enspace .
		\end{equation}
		Using \Cref{lem:concentration_proscal_bernstein} with a column matrix $W$ with coefficient in $\{0, \tfrac{1}{|\cN_a|}, -\frac{1}{|\cN_{-a}|}\}$ and a union bound over all $k \in [d]$ and $a \in [n]$, we have with probability at least $1-\delta/L$ that:
		\begin{equation}\label{eq:controle_varepsilon_echange}
			\frac{1}{\lambda_0}\abs{\noise'_{k}(a)} \leq \kappa'_0 \log(nd/\delta)\left[\sqrt{\frac{1}{\lambda_0\nu(a)}}  + \frac{1}{\lambda_0\nu(a)}\right] \enspace ,
		\end{equation} 
		for some numerical constant $\kappa'_0$. In what follows, we work under that \Cref{eq:controle_varepsilon_echange} holds true for all $a \in [n]$ and $k \in [d]$.

		{\bf First inclusion. } Let $k \in Q^{*}(\kappa_0\log(nd/\delta)h)$ with numerical constant $\kappa_0$ to be fixed later. Let $a' \geq 1$ be any integer such that $\nu(a')\geq 1/(\lambda_0 h^2)$. We have  \begin{align}
			\frac{1}{\lambda_0}\abs{\noise'_k(a')} \leq 2\kappa'_0\log(nd/\delta)h \enspace ,
		\end{align} 
		since we work under the event defined by \Cref{eq:controle_varepsilon_echange} and since $h^2 \leq h$.
		Then by consistency of the already constructed graph $\cG_{t,u}$ at the beginning of step $t$, $\cN_{a'}$ (resp. $\cN_{-a'}$) contains by definition \Cref{eq:neighborhood} only experts that are $\pi^*$-above (resp. below) all the experts of $P$. Since by assumption $k$ is in $Q^{*h}$, it holds that $\bDelta^*_k(a') \geq \bDelta^*_k(0) \geq \kappa_0\log(nd/\delta)h$ - see the definition \Cref{eq:definition_Q_pop} of $Q^{*h}$.
		Hence, recalling the signal-noise decomposition \Cref{eq:signal_noise_2}, we have that
		\begin{align}
			\frac{1}{\lambda_0}\widehat \bDelta_k(a') &= \frac{\lambda_1}{\lambda_0}\bDelta_k^*(a') + \frac{1}{\lambda_0}\noise'_k(a') \label{eq:noise_div_lambda_0}
			\geq \log(nd/\delta)((1- 1/e)\kappa_0 - 2\kappa'_0)h \enspace .
		\end{align}
		Choosing $\kappa_0 \geq 10\kappa'_0 + 1$, we obtain by definition \Cref{eq:stat_delta} that $\nu(\hat a_k(h)) \leq \frac{1}{\lambda_0 h^2}$ so that $k \in \widehat Q^h$.

		{\bf Second inclusion. } Let $k \in \widehat Q^h$, and $a' = a^*((\kappa_0\log(nd/\delta))^{-1}h)$ be as defined in \Cref{eq:definition_nu}. By definition, it holds that $\nu(a') \geq \kappa_0\log(nd/\delta)/(\lambda_0h^2) \geq \tfrac{1}{\lambda_0h^2}$. Hence, since $k \in \widehat Q^h$, we have by definition \Cref{eq:set_Q} that $\nu(\hat a_k(h)) \leq \frac{1}{\lambda_0h^2} \leq \nu(a')$, which implies in particular that $\hat a_k(h) \leq a'$. Then, by definition \Cref{eq:stat_delta} of $\hat a_k(h)$ we have that $\tfrac{1}{\lambda_0}\widehat \bDelta_k(a') \geq h$. Using the concentration inequality \Cref{eq:controle_varepsilon_echange} with $h' = (\kappa_0\log(nd/\delta))^{-1}h$ and the fact that $\lambda_0 \geq \lambda_1$ we obtain
		\begin{align}
			\bDelta^*_k(a') \geq h - \frac{2\kappa'_0}{\kappa_0}h \enspace ,
		\end{align}
		and we get the second inclusion by also choosing $\kappa_0 \geq 4(\kappa'_0 + 1)$.
	\end{proof}
		\begin{proof}[Proof of \Cref{lem:Control_Q}]
			For simplicity, we renumber $\cL(p) = (1, 2, \dots, L':=|\cL(p)|)$.
			Let us write $\nu(a,l) = |\cN_a(l)|\land |\cN_{-a}(l)|$ and $\Lambda = \floor{\frac{\phi^2}{p\lambda_0 h^2}} + 1$. We let $a^* := a^*(\phi^{-1}h, l)$ be as defined in \Cref{eq:definition_nu} so that for any $l$, $\nu(a^*, l) \geq \tfrac{\phi^2}{\lambda_0 h^2}$.
			
			By assumption of \Cref{prop:UB_on_square_norm}, it holds that $P_1 \overset{\cG}{\prec} P_2 \overset{\cG}{\prec} \dots \overset{\cG}{\prec} P_{|\cL(p)|}$ where we recall $\cG = \cG_{t,u}$ is the already constructed graph - see \Cref{subsec:step_1}. Hence it holds that $\rk_{\cG, i}(j) \geq \Lambda$ for any $i \in P_l$ and $j \in P_{l+\Lambda}$ - see \Cref{eq:rank} for the definition of $\rk$. Since there are at least $p\Lambda \geq \tfrac{\phi^2}{\lambda_0 h^2}$ experts in the union $P_{l+1} \cup \dots \cup P_{l+ \Lambda}$, we conclude that $a^* \leq \Lambda$, and that any expert in $\cN_{a^*}$ (resp. $\cN_{-a^*})$ is below the maximal expert of $P_{l+\Gamma}$ (resp. above) the minimal expert of $P_{l - \Lambda}$. 
			This implies that, upon writing $\overline \bDelta^*_k(l)$ for the difference of these maximal and minimal experts, we have by definition \Cref{eq:definition_Q_pop} of $\overline Q^{*h}$ that $\overline \bDelta^*_k(l)>h/2$ for all $k$ in $\overline Q^{*h}$. This implies in particular that

			\begin{equation}
				\sum_{l \in \cL(p)}  |\overline Q^{*h}_l(h, \phi)| \leq \sum_{k =1}^d \sum_{l \in \cL(p)}\1\{\overline \bDelta^*_k(l) \geq h/2\} \leq \frac{2}{h}\sum_{k =1}^d \sum_{l \in \cL(p)}\overline \bDelta^*_k(l) \leq (2\Lambda+1) \frac{2d}{h} \leq 6\frac{\Lambda d}{h} \enspace ,
			\end{equation}
			where in the last inequality we used the fact that $M_{i,k} \in [0, 1]$ and that the sequence $P_{l-\Lambda}, \dots, P_{l+\Lambda}$ is of length $2\Lambda + 1$, for any $l \in \cL(p)$.
		\end{proof}

\section{Proof of \Cref{prop:local_square_norm_reduction}}\label{sec:proof_of_prop_reduction}

	Let us fix any $l \in \{1, \dots, L\}$ and $h \in \cH$. Since $l, h$ and $\widehat Q^h_l$ are fixed in this proof, we simplify the notation and we write $(P',\widetilde P, Q) = (P'_l, \widetilde{P}^h_l,\widehat Q^h_l)$ and $M := M(\widetilde P,  Q)$ and $M(P') := M(P', Q)$. We also fix $\delta' = \delta/(L|\cH|)$, where we recall that $L \leq n$ is the number of groups.
	
	Let us assume that

\begin{equation}\label{eq:condition_Frobenius_acp}
	\| M(P') -  \overline M(P')\|_F^2 \geq ~\kappa_1 \gamma_u^4\left[\frac{1}{\lambda_0}\sqrt{|\widetilde P| |Q|} + \frac{|\widetilde P|}{\lambda_0}\right]\ , 
\end{equation}
for some constant $\kappa_1$ to be fixed later. In what follows, we show that under assumption \Cref{eq:condition_Frobenius_acp} for some large enough numerical constant $\kappa_1$, we necessarily have that the square norm of $P'$ is a contraction of the square norm of $P$, that is 
\begin{equation}\label{eq:obj_contraction_pca}
	\|M(P') - \overline M(P')\|_F^2 \leq \left(1 - \frac{1}{4\gamma_u^2}\right)\|M - \overline M\|_F^2 \enspace .
\end{equation}

\medskip

{\bf Step 1: control of the vector $\hat v$}

\medskip

First, the following lemma states that the first singular value of $(M - \overline{M})$ is, up to polylogarithmic terms, of the same order as its Frobenius norm. This is mainly due to the fact that the entries of $M$ lie in $[0,1]$ and that $M - \overline M$ is an isotonic matrix. 

\begin{lemma}[Lemma E.4 in \cite{pilliat2022optimal}]\label{lem:structure_isotonic_matrices}
	Assume that $\|M - \overline{M} \|_F \geq 2$. For any sets $\widetilde P$ and $Q$, we have
	\[
		\|M - \overline{M} \|^2_{\op} \geq \frac{4}{\gamma_u^2} \| M - \overline{M} \|^2_F \enspace .
	\]
\end{lemma}

This lemma was already stated and proved as Lemma E.4 in \cite{pilliat2022optimal}, recalling that $\gamma_u > \phi_{\lone} \geq 8\log(nd)$ -- see \Cref{eq:definition_phil1} and \Cref{eq:valid_gamma}.

Now, write
$\hat v = \argmax_{\|v\|_2 \leq 1} \Big[ \|v^T(Y^{(2)} - \overline{Y}^{(2)})\|_2^2 - \frac{1}{2}\| v^T(Y^{(2)} - \overline{Y}^{(2)} - Y^{(3)} + \overline{Y}^{(3)})\|_2^2\Big]$, 
where the argmax is taken over all $v$ in $\widetilde P$. 
\begin{lemma}\label{lem:concentration_pca}
	Assume that $\lambda_0 |\widetilde P| \geq 1$. There exists a numerical constant $\kappa'_0$ such that if
	\begin{equation}\label{eq:condition_pca}
		\|M - \overline{M}\|_{\op}^2 \geq \kappa'_0 \log^2(nd/\delta')  \left(\frac{1}{\lambda_0}\sqrt{|Q||\widetilde P|} + \frac{|\widetilde P|}{\lambda_0} \right) \enspace ,
	\end{equation}
	then, with probability higher than $1-\delta'$, we have  
	$$ \|\hat v^T \left(M - \overline M\right) \|_2^2 \geq \frac{1}{2}\| M - \overline M \|_{\op}^2 \enspace .$$
\end{lemma}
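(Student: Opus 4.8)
The plan is to view $\hat v$ as the leading eigenvector of an empirical matrix that is an unbiased estimate of $\lambda_1^2(M-\overline M)(M-\overline M)^T$, and to control the eigenvalue perturbation with the tools of \Cref{sec:concentration}. Write $\Pi=\bI-\tfrac1{|\widetilde P|}\1\1^T$ for the row‑centering projection of $\bbR^{\widetilde P}$, set $A:=M-\overline M=\Pi M$ and $G^{(s)}:=\Pi\,\noise^{(s)}$ (restricted to $\widetilde P\times Q$), so that $Y^{(2)}-\overline Y^{(2)}=\lambda_1 A+G^{(2)}$ and $Y^{(2)}-\overline Y^{(2)}-Y^{(3)}+\overline Y^{(3)}=G^{(2)}-G^{(3)}$. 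Since $\|v^T B\|_2^2=v^T(BB^T)v$, the bracket maximised in~\eqref{eq:ACP} is the quadratic form $v\mapsto v^T\Sigma v$ with $\Sigma:=UU^T-\tfrac12 WW^T$, $U:=\lambda_1 A+G^{(2)}$, $W:=G^{(2)}-G^{(3)}$, and $\hat v$ is (up to sign) the leading eigenvector of $\Sigma$, so $\hat v^T\Sigma\hat v=\lambda_{\max}(\Sigma)$. As $\noise^{(2)},\noise^{(3)}$ are independent, centred and identically distributed, $\bbE[WW^T]=2\,\bbE[G^{(2)}(G^{(2)})^T]$ and $\bbE[UU^T]=\lambda_1^2AA^T+\bbE[G^{(2)}(G^{(2)})^T]$, hence $\bbE[\Sigma]=\lambda_1^2AA^T$.

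The reduction is deterministic. Let $v^*$ be a top left singular vector of $A$ and $\eta:=\|\Sigma-\lambda_1^2AA^T\|_{\op}$. Using $\hat v^T\Sigma\hat v\ge (v^*)^T\Sigma v^*$,
\[
 \lambda_1^2\|\hat v^TA\|_2^2 \;\ge\; \hat v^T\Sigma\hat v-\eta \;\ge\; (v^*)^T\Sigma v^*-\eta \;\ge\; \lambda_1^2\|A\|_{\op}^2-2\eta,
\]
so the conclusion holds once $\eta\le\tfrac14\lambda_1^2\|A\|_{\op}^2$. Since $\lambda_1\ge\lambda_0/2$ (as $\lambda_0\le1$), and since hypothesis~\eqref{eq:condition_pca} reads $\|A\|_{\op}^2\ge\kappa'_0\log^2(nd/\delta')\big(\tfrac1{\lambda_0}\sqrt{|Q||\widetilde P|}+\tfrac{|\widetilde P|}{\lambda_0}\big)$, it suffices to show that, with probability at least $1-\delta'$, $\eta\le\kappa''\log^2(nd/\delta')\big(\lambda_0\sqrt{|Q||\widetilde P|}+\lambda_0|\widetilde P|\big)$ for a small absolute constant $\kappa''$, after which one takes $\kappa'_0$ large enough.

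To bound $\eta$, expand, with $\bbE[G^{(2)}(G^{(2)})^T]=\bbE[G^{(3)}(G^{(3)})^T]=:C$,
\[
 \Sigma-\lambda_1^2AA^T=\lambda_1\!\left(A(G^{(2)})^T+G^{(2)}A^T\right)+\tfrac12\!\left(G^{(2)}(G^{(2)})^T-C\right)-\tfrac12\!\left(G^{(3)}(G^{(3)})^T-C\right)+\tfrac12\!\left(G^{(2)}(G^{(3)})^T+G^{(3)}(G^{(2)})^T\right),
\]
and I would bound each summand in operator norm (treating $\widetilde P$, $Q$, hence $A$, as fixed). A direct moment computation on $\noise_{ik}=(B_{ik}-\lambda_1)M_{ik}+B_{ik}\widetilde E_{ik}$ shows the independent, centred entries of $\noise^{(s)}$ satisfy~\eqref{eq:condition_moments_bernstein} with $\sigma^2\asymp\lambda_0$, $K\asymp1$. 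The two diagonal noise blocks are exactly of the form handled by \Cref{prop:concentration_bernstein_op}: $G^{(s)}(G^{(s)})^T-C=\Pi\big(\noise^{(s)}(\noise^{(s)})^T-\bbE[\noise^{(s)}(\noise^{(s)})^T]\big)\Pi$, so applying \Cref{prop:concentration_bernstein_op} with $\Lambda=\Pi$ (rank $\le|\widetilde P|$) and $\delta\leftarrow\delta'/5$ bounds $\|G^{(s)}(G^{(s)})^T-C\|_{\op}$ by $\kappa\log(nd/\delta')\big(\lambda_0\sqrt{|\widetilde P||Q|}+\lambda_0|\widetilde P|+\log|Q|\big)$, of the required order once the $\log|Q|$ is absorbed via $\lambda_0\le1$, $\lambda_0|\widetilde P|\ge1$. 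For the cross term I would exploit that $\rank(A)\le|\widetilde P|$: if $P_V=WW^T$ is the projection of $\bbR^Q$ onto the row space of $A$ ($W$ with $\le|\widetilde P|$ orthonormal columns), then $\|A(G^{(s)})^T\|_{\op}\le\|A\|_{\op}\|G^{(s)}P_V\|_{\op}$ and
\[
 \|G^{(s)}P_V\|_{\op}^2=\big\|(\Pi\,\noise^{(s)}W)(\Pi\,\noise^{(s)}W)^T\big\|_{\op}\le\Big\|\sum_{i\in\widetilde P}\zeta_i\zeta_i^T\Big\|_{\op},\qquad \zeta_i:=W^T(\noise^{(s)}_{i\cdot})^T,
\]
a sum of $|\widetilde P|$ independent rank‑one matrices; matrix Bernstein together with a Hanson--Wright bound on the quadratic forms $\|\zeta_i\|_2^2$ (keeping them at scale $\lambda_0|\widetilde P|$, not $|\widetilde P|$) gives $\|G^{(s)}P_V\|_{\op}^2\lesssim\log^2(nd/\delta')(\lambda_0|\widetilde P|+1)$, whence $\lambda_1\|A(G^{(s)})^T\|_{\op}\lesssim\log(nd/\delta')\lambda_0^{3/2}\sqrt{|\widetilde P|}\,\|A\|_{\op}$, which is a prescribed small fraction of $\lambda_1^2\|A\|_{\op}^2$ precisely under~\eqref{eq:condition_pca} (using $\lambda_0|\widetilde P|\ge1$ to absorb the residual $\lambda_0^{-2}$). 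The last term is handled the same way after conditioning on $\noise^{(3)}$: the column space of $(G^{(3)})^T$ has dimension $\le|\widetilde P|$, so $\|G^{(2)}(G^{(3)})^T\|_{\op}\le\|G^{(3)}\|_{\op}\,\|G^{(2)}P_{V_3}\|_{\op}$ with $\|G^{(3)}\|_{\op}^2\lesssim\log^2(nd/\delta')\,\lambda_0(|\widetilde P|+|Q|)$ (triangle inequality, $\|C\|_{\op}\le c\lambda_0|Q|$, and \Cref{prop:concentration_bernstein_op}) and $\|G^{(2)}P_{V_3}\|_{\op}$ controlled exactly as above; a short computation using $\lambda_0|\widetilde P|\ge1$ shows the product is again $\lesssim\lambda_1^2\|A\|_{\op}^2$. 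A union bound over the $O(1)$ events at level $\delta'/O(1)$ concludes.

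The hard part is that last paragraph: in the sparse, heteroskedastic regime $\lambda_0\ll1$ one cannot afford $\varepsilon$‑net arguments on the sphere of $\bbR^{\widetilde P}$ for the cross and off‑diagonal noise terms — the sub‑exponential scale $K\asymp1$ would then produce an unacceptable $|\widetilde P|^2$ — so the whole difficulty is to route every fluctuation through the non‑commutative matrix‑Bernstein machinery behind \Cref{prop:concentration_bernstein_op} (plus Hanson--Wright for the per‑row quadratic forms), using repeatedly that the signal $A$ has rank at most $|\widetilde P|$ and that $\lambda_0|\widetilde P|\ge1$ to swallow all lower‑order terms.
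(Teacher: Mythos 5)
Your proposal is correct and follows essentially the paper's own route: the paper likewise reduces the lemma to a uniform (operator-norm) perturbation bound of the ACP objective around $\lambda_1^2(M-\overline M)(M-\overline M)^T$, controls the Wishart-type fluctuations with \Cref{prop:concentration_bernstein_op} (its Lemma on $\|\noise\noise^T-\bbE[\noise\noise^T]\|_{\op}$), and handles the signal--noise cross term by projecting onto the rank-$\leq|\widetilde P|$ row space of $M-\overline M$, exactly your $P_V$ trick, before absorbing everything via \Cref{eq:condition_pca}. The only organizational differences are that the paper keeps $Y^{(2)}-Y^{(3)}$ as a single matrix with independent Bernstein-type entries, so the cross-noise block $G^{(2)}(G^{(3)})^T$ you treat by conditioning never appears, and it bounds the signal--noise term through $\|AM\|_{\op}\|\Lambda\noise^T\noise\Lambda\|_{\op}^{1/2}$ with a direct application of \Cref{prop:concentration_bernstein_op} rather than your hand-rolled rank-one matrix-Bernstein/Hanson--Wright estimate (which is sharper than needed); neither difference changes the substance.
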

In light of Lemma~\ref{lem:structure_isotonic_matrices} and Condition~\eqref{eq:condition_Frobenius_acp}, the Condition~\eqref{eq:condition_pca}  in Lemma~\ref{lem:concentration_pca} is valid if we choose $\kappa_1$ in \Cref{prop:local_square_norm_reduction} such that $\kappa_1 \geq 16\kappa'_0$. Consequently, there exists an event of probability higher than $1-\delta'$ such that
\begin{equation}\label{eq:pca_signal_with_Frobenius}
	\|\hat v^T \left(M - \overline M\right) \|^2_2 \geq \frac{2}{\gamma_u^2}\|M - \overline M\|^2_F \enspace .
\end{equation}

\medskip

{\bf Step 2: control of the vector $\hat v_-$}

\medskip

Now remark that since $\|\hat v_{i}\|_2 = 1$, there are at most $\frac{1}{\lambda_0}$ of experts $i$ such that $\hat v_{i} > \sqrt{\lambda_0}$. Hence we have that 
\begin{align*}
	\|\hat v_-^T \left(M - \overline M\right) \|^2_2 &\geq \frac{2}{\gamma_u^2}\|M - \overline M\|^2_F - \sum_{i \in \widetilde P}\1_{\hat v_{i} > \sqrt{\lambda_0}} \|M_{i\cdot} - \overline m\|_2^2 \\
	&\overset{(a)}{\geq} \frac{2}{\gamma_u^2}\|M - \overline M\|^2_F - \frac{3\gamma_u}{\lambda_0}\sqrt{\frac{|\widehat Q|}{\lambda_0}}\\
	&\overset{(b)}{\geq} \frac{1}{\gamma_u^2}\|M - \overline M\|^2_F \enspace .
\end{align*}
$(a)$ comes from the fact that any expert in $\widetilde P$ satisfies \Cref{eq:indistinguishable} under the event of \Cref{lem:event_indistinguishable}. $(b)$ comes from Condition \Cref{eq:condition_Frobenius_acp} and the assumption that $\lambda_0|\widetilde P| \geq 1$.

\medskip

{\bf Step 3: control of the vector $\hat w$}

\medskip

Next, we show that a thresholded version of $\hat z= ( Y^{(4)}- \overline Y^{(4)} )^T \hat v_-$ is almost aligned  with $z^*= \lambda_1(M-\overline{M})^T \hat v_-$. We define the sets $S^*\subset Q$ and $\hat S\subset Q$ of questions by
\begin{equation}\label{eq:def_S_star_S_hat}
 S^* = \left\{ k \in Q ~:~ |z^*_k| \geq 2\gamma_u\sqrt{\lambda_0}\right\} \ ; \quad \hat S = \left\{ k \in Q ~:~ |\hat z_k| \geq \gamma_u\sqrt{\lambda_0}\right\} \enspace .
\end{equation}
$S^*$ stands for the collection of questions $k$ such that $z^*_k$ is large whereas $\hat S$ is the collection questions $k$ with large $\hat z_k$.  Finally, we consider the vectors  $w^*$ and $\hat{w}$ defined as theresholded versions of $z^*$ and $\hat z$ respectively, that is $w^*_k= z^*_k1_{k\in S^*}$ and $\hat{w}_k= \hat z_k\1_{k\in \hat S}$. Note that, up to the sign, $\hat{w}$ stands for the active coordinates computed in $\algoRefineLocally$, \Cref{line:direction} of \Cref{alg:refine_locally_sketch}.

\medskip
Recall that we assume that $\lambda_0 \leq 1$.
We write $v$ for any unit vector in $\mathbb{R}^{|\widetilde P|}$.
Let us apply \Cref{lem:concentration_proscal_bernstein} for each column $k \in Q$ of the noise matrix $\noise$ with the matrix $W$ equal to $v - (\tfrac{1}{|\widetilde P|}\sum_{i \in \widetilde P} v_i)\1_{\widetilde P}$ at column $k$ and $0$ elsewhere. We deduce that, for any fixed matrix $M$, any subsets $\widetilde P$ and $Q$, and any unit vector $v \in \bbR^{\widetilde P}$ such that $\|v\|_{\infty} \leq 2\sqrt{\lambda_0}$, we have
\begin{equation}\label{eq:control_noise_v_hat}
\P\left[\max_{k\in Q} \left|(v^T (\noise^{(3)} - \overline \noise^{(3)}))_k\right| \leq 100\log(2|Q|/\delta')\sqrt{\lambda_0}\right]\geq 1-\delta'\enspace .
\end{equation}
Observe that $\hat z=z^* +  (\noise^{(3)} - \overline \noise^{(3)})^T \hat v_-$.
Conditioning on $\hat v_-$, we deduce that, on an event of probability higher than $1-\delta'$, we have
\begin{equation}\label{eq:upper_noisec2}
	\|\hat z-z^* \|_{\infty}\leq 100\log(2|Q|/\delta')\sqrt{\lambda_0}\ \leq \frac{\gamma_u}{2}\sqrt{\lambda_0} \enspace ,
\end{equation}

where the last inequality comes from $\gamma_u > \phi_{\lone}$. Hence it holds that $S^*\subset \hat S$ and for $k\in \hat S$, we have $z^*_k/\hat z_k\in [1/2,2]$. Next, we shall prove that, under this event,  $\lambda_1\hat v_-^T (M - \overline M) \hat w/\|\hat w\|_2$ is large (in absolute value):
\[
	\lambda_1\abs{\hat v_-^T (M- \overline M) \hat w}
	= \abs{(z^*)^T \hat{w}}= \sum_{k\in \hat S} z^*_k \hat z_l\geq \frac{2}{5}\sum_{k\in \hat S} (z^*_k)^2 + (\hat z_l)^2\geq \frac{2}{5} [\|w^*\|_2^2 + \|\hat{w}\|_2^2]\geq \frac{4}{5}\| \hat w \|_2 \| w^* \|_2\ ,
\]
where we used in the first inequality that $z^*_k/\hat z_k\in [1/2,2]$ and in the second inequality that $S^*\subset \hat S$. Thus, it holds that
\begin{equation}\label{eq:lb_estw_truew}
	\lambda_1^2\abs{\hat v_-^T (M - \overline M) \frac{\hat w}{\|\hat w\|_2}}^2 \geq \frac{16}{25}\|w^* \|_2^2 \enspace .
\end{equation}
It remains to prove that $\|w^* \|_2$ is large enough. Writing $S^{*c}$ for the complementary of $S^*$ in $Q$, it holds that
\begin{equation}\label{eq:wstar_on_S}
	\|w^* \|_2^2 = \|z^*\|_2^2 - \sum_{k\in S^{*c}}(z^*_k)^2\ ,
\end{equation}
so that we need to upper bound the latter quantity. Write $z^*_{S^{*c}}= z^*-w^*$. Coming back to the definition of $z^*$,

\begin{align*}
\left[\sum_{k\in S^{*c}}(z^*_k)^2\right]^2&~= \left[\sum_{k\in S^{*c}}\lambda_1[\hat v_-^T(M - \overline M )]_k z^*_k\right]^2\\ 
&~\leq  \|\lambda_1\left(M - \overline M \right)z^*_{S^{*c}}\|^2_2=
\sum_{i \in \widetilde P}\left(\sum_{k \in S^{*c}}\lambda_1(M_{ik} - \overline m_k)z^*_k \right)^2 \\
&\overset{(a)}{\leq}   \frac{4\gamma_u^2}{|\widetilde  P|^2}\lambda_0 \sum_{i \in \widetilde  P}\left(\sum_{k \in S^{*c}}\sum_{j\in \widetilde  P}\lambda_1|M_{ik} - M_{jk}|\right)^2\\
&~\leq  \frac{4\gamma_u^2}{|\widetilde  P|^2}\lambda_0 \sum_{i \in \widetilde P}\left(\sum_{j\in \widetilde P}\lambda_1\|M_{i\cdot} - M_{j\cdot}\|_1\right)^2\\
&\overset{(b)}{\leq}  40 \gamma_u^4 \lambda_0^2|\widetilde P||Q|\\
&~\leq  \left[7\gamma_u^2 \lambda_0\sqrt{|\widetilde{P}||Q|}\right]^2 \leq \left[\frac{1}{2\gamma_u^2}\lambda_0^2\| M -  \overline M\|_F^2\right]^2\enspace  .
\end{align*}
In $(a)$, we used the definition of $S^*$. In $(b)$, we used \Cref{eq:indistinguishable} that holds true since we are under the event \Cref{lem:inclusion_Q} and $\lambda_0 |Q| \geq 1$. The last inequality comes from Condition \Cref{eq:condition_Frobenius_acp}, choosing $\kappa_1 \geq 14$.

\medskip

Recall that  $z^*= \hat v_-^T(M-\overline{M})$. Combining \Cref{eq:pca_signal_with_Frobenius} and \Cref{eq:wstar_on_S}, we deduce that 
\begin{equation}\label{eq:w_star_captures_significant_part}
	\|w^* \|_2^2 \geq \frac{1}{2\gamma_u^2}\lambda_0^2\|M - \overline M\|_F^2\ ,
\end{equation}
which, together with \Cref{eq:lb_estw_truew} and $\lambda_0 \geq \lambda_1$, yields
\begin{equation}\label{eq:energy_hat_w}
	\left\|(M - \overline M) \frac{\hat w}{\|\hat w\|_2} \right\|_2^2 \geq \abs{\hat v_-^T (M - \overline M) \frac{\hat w}{\|\hat w\|_2}}^2 \geq \frac{1}{2\gamma_u^2}\|M - \overline M\|_F^2 \enspace .
\end{equation}

\medskip

Write $\hat{w}^{(1)}$ and $\hat{w}^{(2)}$ the positive and negative parts of $\hat{w}$
respectively so that $\hat{w}= \hat{w}^{(1)}- \hat{w}^{(2)}$ and $\hat{w}^{+}= \hat{w}^{(1)}+ \hat{w}^{(2)}$. We obviously have $\|\hat{w}\|_2= \|\hat{w}^{+}\|_2$. Besides, if the rows of $M$ are ordered according to the oracle permutation, then
$(M - \overline M)\hat{w}^{(1)}$ and $(M - \overline M)\hat{w}^{(2)}$ are nondecreasing vectors with mean zero. It then follows from Harris' inequality that these two vectors have a nonegative inner product. We have proved that
\begin{equation}\label{eq:lower_former_signal}
	\left\|(M - \overline M) \frac{\hat w^+}{\|\hat w^+\|_2} \right\|_2^2\geq \left\|(M - \overline M) \frac{\hat w}{\|\hat w\|_2} \right\|_2^2  \geq  \frac{1}{2\gamma_u^2}\|M - \overline M\|_F^2 \enspace .
\end{equation}

{\bf Step 4: Showing that $\hat w$ satisfies Condition \Cref{eq:condition_w}}

Recall that we assume for simplicity that $\lambda_0 \leq 1$.
First we upper bound $\|w\|_\infty^2$ by using $(a)$ that $\hat z$ is close to $z^*$ with \Cref{eq:upper_noisec2}, $(b)$ that for any $k \in Q$, $v^TM_{\cdot k} \leq \|v\|_1$ and $(c)$ that $\lambda_0 |\widetilde P| \geq 1$:
\begin{align}
	\|\hat w\|_{\infty}^2 \overset{(a)}{\leq} 2\|z^*\|_{\infty}^2 + \gamma_u^2\lambda_0 \overset{(b)}{\leq} 2\lambda_0^2\|\hat v\|_1^2 + \gamma_u^2\lambda_0 \overset{(c)}{\leq} 3\gamma_u^2 \lambda_0^2 |\widetilde P| \enspace .
\end{align}

Secondly, we lower bound $\|w\|_2^2$ by using $(a)$ that $S^* \subset \hat S$ and that $z^*_k/\hat z_k \in [1/2, 2]$, $(b)$ that $\|w^*\|_2^2$ captures a significant part of the $L_2$ norm -see \Cref{eq:w_star_captures_significant_part}, and $(c)$ the Condition \Cref{eq:condition_Frobenius_acp} with $\kappa_1 \geq 24$:

\begin{align}
	\|\hat w\|_2^2 \overset{(a)}{\geq} \frac{1}{4}\|w^*\|_2^2 
	\overset{(b)}{\geq} \frac{1}{8\gamma_u^2}\lambda_0^2 \|M- \overline M \|_F^2 
	\overset{(c)}{\geq} 3\gamma_u^2 \lambda_0 |\widetilde P| \enspace .
\end{align}

We deduce that $\|\hat w\|_{\infty}^2 \leq \lambda_0 \|\hat w\|_2^2$, which is exactly Condition \Cref{eq:condition_w}. This shows that $\hat w^+$ is considered for the update \Cref{eq:update_W} in the final step of the procedure \Cref{line:update_2} of \Cref{alg:refine_locally_sketch}.

\medskip

{\bf Step 5: upper bound of the Frobenius-norm restricted to $P'$}

\medskip 

Equipped with this bound, we are now in position to show that the set $P'$ of experts obtained from $\widetilde{ P}$ when applying the pivoting algorithm with $\hat w^+/\|\hat w^+\|_2$ has a much smaller square norm.
By \Cref{lem:concentration_proscal_bernstein} used with the matrix $W$ equal to $0$ except at line $i$ where it is equal to the vector $\hat w^+/\|\hat w^+\|_2$, there exists an event of probability higher than $1-\delta'$ such that

$$\max_{i,j\in P'}\abs{\proscal<\noise_{i\cdot} - \noise_{j\cdot},\frac{\hat w^+}{\|\hat w^+\|_2}>}  \leq \phi_{\lone}\sqrt{\lambda_0} \leq \gamma_u\sqrt{\lambda_0}\ , $$
where we recall that $\phi_{l_1}$ is defined in \Cref{eq:definition_phil1}.
Hence, since the vector $\hat w$ is considered in the update \Cref{eq:update_W}, we have $\max_{i,j\in P'}\abs{\proscal<Y_{i\cdot} - Y_{j\cdot},\frac{\hat w^+}{\|\hat w^+\|_2}>} \leq \gamma_u\sqrt{\lambda_0}$ and
\begin{equation}
	\max_{i,j\in P'}\abs{\proscal<M_{i\cdot} - M_{j\cdot},\frac{\hat w^+}{\|\hat w^+\|_2}>} \leq 2\gamma_u \sqrt{\frac{1}{\lambda_0}} \enspace .
\end{equation}

By convexity, it follows that 
\[
	\left\|(M(P') - \overline M(P'))\tfrac{\hat w^+}{\|\hat w^+\|_2} \right\|_2^2 \leq 4\gamma_u^2\frac{1}{\lambda_0}|P'| \leq 4\gamma_u^2\frac{1}{\lambda_0}|\widetilde P|\ .
\]
In light of Condition \Cref{eq:condition_Frobenius_acp}, this quantity is small compared to $\| M - \overline{M}\|_F^2$:
\begin{equation}\label{eq:upper_new_signal}
	\| (M(P') - \overline M(P'))\tfrac{\hat w^+}{\|\hat w^+\|_2} \|_2^2 \leq\frac{1}{4\gamma_u^2} \| M - \overline{M}\|_F^2\ ,
\end{equation}
which together with~\eqref{eq:lower_former_signal} leads to
\begin{equation}\label{eq:upper_new_signal_2}
	\|(M - \overline M )\tfrac{\hat w^+}{\|\hat w^+\|_2} \|_2^2 - \|(M(P') - \overline M(P') )\tfrac{\hat w^+}{\|\hat w^+\|_2} \|_2^2\geq \frac{1}{4\gamma_u^2}  \| M - \overline{M}\|_F^2\enspace .
\end{equation}
Since $P'\subset \widetilde P$, we deduce that, for any vector $w'\in \mathbb{R}^q$, we have
$\|(M - \overline M)w' \|_2^2 \geq \|(M(P') - \overline M(P') )w' \|^2$. It then follows from the Pythagorean theorem that
\[
	\|M - \overline M\|_F^2 - \|M(P') - \overline M(P') \|_F^2 \geq \|(M - \overline M )\tfrac{\hat w^+}{\|\hat w^+\|_2} \|_2^2 - \|(M(P') - \overline M(P') )\tfrac{\hat w^+}{\|\hat w^+\|_2} \|_2^2 \enspace .
\]
Then, together with  \eqref{eq:upper_new_signal_2}, we arrive at
$$\|M(P') - \overline M(P')\|_F^2 \leq \left(1 - \frac{1}{4\gamma_u^2}\right)\|M - \overline M\|_F^2 \enspace .$$

We have shown that if \Cref{eq:condition_Frobenius_acp} is satisfied, then there is a contraction in the sense of \Cref{eq:obj_contraction_pca}. This in turn gives the upper bound \Cref{eq:local_square_norm_reduction} and it concludes the proof of \Cref{prop:local_square_norm_reduction}.
\medskip

\begin{proof}[Proof of \Cref{lem:concentration_pca}]

	Recall that we consider the case $\lambda_0 \leq 1$ and that the case $\lambda_0 \geq 1$ is discussed in \Cref{sec:full}.
	We start with the two following lemmas. To ease the notation, we assume in this proof that $\widetilde P = \{1, \dots, p\}$, that $Q = \{1, \dots q\}$. We only consider the matrices restricted to the sets $\widetilde P, Q$ and we write $\noise := \noise(\widetilde P, Q)$.
	Let us define $J = \1\1^T \in \bbR^{p \times p}$ the matrix with constant coefficients equals to $1$ and $A = (\bI_p - \frac{1}{p}J)$ be the projector on the orthogonal of $\1$, so that $\noise - \overline \noise = A\noise \in \bbR^{p \times q}$.
	The two following lemmas are direct consequences of \Cref{prop:concentration_bernstein_op}, and a discussion of the corresponding concenration inequality on random rectangular matrices can be found in \Cref{sec:concentration}. We state weaker concentration inequalities than what is proven in \Cref{prop:concentration_bernstein_op} in order to factorize the polylogarithmic factors and to ease the reading of the proof.
	\begin{lemma}\label{lem:concentration_quad}
		Assume that $\lambda_0 \leq 1$ and that $\lambda_0(p\lor q) \geq 1$. It holds with probability larger than $1-\delta'/4$ that
		\begin{equation*}
			\|\noise\noise^T - \bbE[\noise \noise^T]\|_{\op} \leq \kappa''_0\log^2(pq/\delta')\left[\lambda_0\sqrt{pq} + \lambda_0 p\right]\enspace .
		\end{equation*}
	\end{lemma}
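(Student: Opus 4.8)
The matrix $\noise\noise^T - \bbE[\noise\noise^T]$ is a sum of $q$ independent centered random matrices. Writing $\noise_{\cdot k}$ for the $k$-th column of $\noise \in \bbR^{p\times q}$, we have
\[
	\noise\noise^T - \bbE[\noise\noise^T] = \sum_{k=1}^q \left( \noise_{\cdot k}\noise_{\cdot k}^T - \bbE[\noise_{\cdot k}\noise_{\cdot k}^T] \right) =: \sum_{k=1}^q Z_k \enspace ,
\]
so that the natural tool is a matrix Bernstein inequality (see \cite{2015arXiv150101571T} or Section~6 of \cite{wainwright2019high}), applied with $p\times p$ symmetric summands. The plan is simply to invoke \Cref{prop:concentration_bernstein_op} with $X = \noise(\widetilde P, Q) \in \bbR^{p\times q}$, $\Lambda = \bI_p$ (so $r_\Lambda = p$), and suitable parameters $\sigma^2$, $K$ in the Bernstein-type condition \Cref{eq:condition_moments_bernstein}. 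The content of the lemma is therefore to verify that the entries of $\noise$ satisfy \Cref{eq:condition_moments_bernstein} with $\sigma^2$ of order $\lambda_0$ (up to a numerical constant) and $K$ of order a constant, then to simplify the resulting bound under the stated hypotheses $\lambda_0 \leq 1$ and $\lambda_0(p\lor q) \geq 1$.

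\textbf{Step 1: moment bound on the noise entries.} Recall the decomposition $\noise_{ik} = (B_{ik} - \lambda_1)M_{ik} + B_{ik}\widetilde E_{ik}$ from \Cref{eq:signal_noise_2}, where $B_{ik}$ is Bernoulli$(\lambda_1)$ with $\lambda_1 = 1 - e^{-\lambda_0} \leq \lambda_0 \leq 1$, the $M_{ik}$ lie in $[0,1]$, and $\widetilde E_{ik}$ is $1$-subGaussian conditionally on $B_{ik} = 1$. I would bound the even moments $\bbE[(\noise_{ik})^{2u}]$ by first conditioning on $B_{ik}$: when $B_{ik} = 0$ the contribution is $(\lambda_1 M_{ik})^{2u} \leq \lambda_1$; when $B_{ik} = 1$ (probability $\lambda_1$) the contribution is $\bbE[((1-\lambda_1)M_{ik} + \widetilde E_{ik})^{2u}]$, which by convexity and the subGaussian moment bound $\bbE[\widetilde E_{ik}^{2u}] \leq 2 u! \, C^{2u}$ is bounded by $2 u! \, C'^{2u}$ for a numerical constant $C'$. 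Multiplying by $\lambda_1$ and combining the two cases gives $\bbE[(\noise_{ik})^{2u}] \leq \tfrac12 u! \, \sigma^2 K^{2(u-1)}$ with $\sigma^2$ of order $\lambda_1$ (hence of order $\lambda_0$ since $\lambda_1 \geq (1-1/e)\lambda_0$) and $K$ a numerical constant $\geq 1$. This is exactly the computation alluded to at \Cref{eq:moment_bernstein}; the only mild care needed is keeping the $\lambda_1$ factor out front so that $\sigma^2 \asymp \lambda_0$ rather than a constant.

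\textbf{Step 2: apply \Cref{prop:concentration_bernstein_op} and simplify.} With $\Lambda = \bI_p$, $r_\Lambda = p$, $\sigma^2 \asymp \lambda_0$, $K \asymp 1$, and writing $\delta = \delta'/4$, \Cref{prop:concentration_bernstein_op} yields
\[
	\|\noise\noise^T - \bbE[\noise\noise^T]\|_{\op} \lesssim \sqrt{(\lambda_0^2 pq + \lambda_0 q)\log(p/\delta')} + (\lambda_0 p + \log q)\log(p/\delta') \enspace .
\]
Under $\lambda_0(p\lor q) \geq 1$ we have $\lambda_0 q \leq \lambda_0^2 p q \cdot \tfrac{1}{\lambda_0 p} \cdot \lambda_0 p = \lambda_0^2 pq$ when $\lambda_0 p \geq 1$; more simply, $\lambda_0 q \leq \lambda_0^2 pq$ iff $\lambda_0 p \geq 1$, and if instead $\lambda_0 q \geq 1$ then $\lambda_0 q \leq \lambda_0^2 q^2 \leq \lambda_0^2 pq$ when $q \leq p$ — in all cases under the hypothesis one gets $\sqrt{\lambda_0^2 pq + \lambda_0 q} \lesssim \sqrt{\lambda_0^2 pq} + \sqrt{\lambda_0 q} \lesssim \lambda_0\sqrt{pq} + \sqrt{\lambda_0 q}$, and one checks $\sqrt{\lambda_0 q} \leq \lambda_0\sqrt{pq}$ exactly when $\lambda_0 p \geq 1$ while otherwise $\sqrt{\lambda_0 q} \leq \lambda_0 \sqrt{q}\cdot\sqrt{q}\leq \lambda_0 q \leq \lambda_0 p$ is absorbed by the linear term. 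Likewise $\log q \lesssim \log(p/\delta')$ absorbs into the $\log(p/\delta')$ factor and, since $\lambda_0 p$ may be small, the term $\log(q)\log(p/\delta')$ is dominated by $\log^2(pq/\delta')$. Collecting terms and bounding everything by $\log^2(pq/\delta')$ gives precisely
\[
	\|\noise\noise^T - \bbE[\noise\noise^T]\|_{\op} \leq \kappa''_0\log^2(pq/\delta')\left[\lambda_0\sqrt{pq} + \lambda_0 p\right] \enspace ,
\]
with probability at least $1 - \delta'/4$, as claimed.

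\textbf{Main obstacle.} There is no real obstacle here: the lemma is a bookkeeping consequence of \Cref{prop:concentration_bernstein_op}, whose proof (matrix Bernstein on the $q$ rank-one summands) is the genuinely substantive ingredient and is carried out separately. The only points demanding a little attention are (i) getting the \emph{correct} scaling $\sigma^2 \asymp \lambda_0$ (not a constant) in the moment bound of Step~1, since this is what produces the $\lambda_0\sqrt{pq}$ rather than $\sqrt{pq}$ — the whole improvement over the Bandeira–Van Handel-type bound discussed in \Cref{sec:concentration} hinges on this — and (ii) the case analysis in Step~2 using $\lambda_0(p\lor q)\geq 1$ to discard the lower-order $\sqrt{\lambda_0 q}$ and $\log q$ terms so that the final bound takes the clean stated form.
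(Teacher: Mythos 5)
Your route is the same as the paper's: verify the Bernstein-type condition \Cref{eq:condition_moments_bernstein} for the entries of $\noise$ with $\sigma^2$ of order $\lambda_0$ and $K$ a numerical constant (this is exactly the computation displayed in \Cref{eq:moment_bernstein}), then apply \Cref{prop:concentration_bernstein_op} with $X=\noise$ and $\Lambda$ the identity, so that $r_\Lambda=p$. Your Step 1 is fine, and you are right that keeping the factor $\lambda_1\asymp\lambda_0$ in front (so $\sigma^2\asymp\lambda_0$, not a constant) is the whole point.

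The gap is in the simplification of Step 2. Under the stated hypothesis $\lambda_0(p\lor q)\geq 1$ alone, the subcase $\lambda_0 p<1\leq\lambda_0 q$ (which forces $q>p$) is not covered by your chain of inequalities: there you write $\sqrt{\lambda_0 q}\leq\lambda_0 q\leq\lambda_0 p$, but the last step requires $q\leq p$, which is precisely what fails in that subcase. Moreover this is not a fixable bookkeeping slip within that regime: since $\sqrt{\lambda_0 q}=\lambda_0\sqrt{pq}/\sqrt{\lambda_0 p}$, the term $\sqrt{\lambda_0 q\log(p/\delta')}$ produced by \Cref{prop:concentration_bernstein_op} exceeds $\lambda_0\sqrt{pq}+\lambda_0 p$ by the factor $1/\sqrt{\lambda_0 p}$, which no $\log^2(pq/\delta')$ factor absorbs when $\lambda_0 p$ is polynomially small; and the diagonal of $\noise\noise^T-\bbE[\noise\noise^T]$ genuinely fluctuates at order $\sqrt{\lambda_0 q}$, so the clean bound of the lemma cannot be recovered there. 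The paper performs this reduction only using $\lambda_0 p\geq 1$ (see the remark following the proof, which invokes the assumption $\lambda_0|\widetilde P|\geq 1$ of \Cref{lem:concentration_pca}, the only place the lemma is used): under $\lambda_0 p\geq 1$ one has $\sqrt{\lambda_0 q}\leq\lambda_0\sqrt{pq}$ and $K^2\log(q)\log(p/\delta')\leq\log^2(pq/\delta')\,\lambda_0 p$, and everything collapses to the stated form. To make your write-up complete, either restrict the case analysis to $\lambda_0 p\geq 1$ (as effectively done in the paper) or keep the unsimplified bound from \Cref{prop:concentration_bernstein_op} in the regime $\lambda_0 p<1\leq\lambda_0 q$.
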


	\begin{lemma}\label{lem:concentration_cross}
		Assume that $\lambda_0 \leq 1$ and that $\lambda_0(p\lor q) \geq 1$. With probability larger than $1-\delta'/4$, one has for any orthogonal projection $\Lambda \in \bbR^{q\times q}$ satisfying $\rank(\Lambda) \leq p$ that
		$$\|\Lambda\noise^T \noise \Lambda\|_{\op} \leq \kappa''_1\log^2(pq/\delta')\left[\lambda_0\sqrt{pq} + \lambda_0 p\right]\enspace ,$$
	\end{lemma}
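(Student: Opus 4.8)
The plan is to obtain Lemma~\ref{lem:concentration_cross} as a direct application of Proposition~\ref{prop:concentration_bernstein_op} to the transposed noise matrix, after two preparatory reductions: checking that the entries of $\noise$ meet the Bernstein-type moment requirement \eqref{eq:condition_moments_bernstein}, and peeling off the (diagonal) expectation $\bbE[\noise^T\noise]$.

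First I would verify that each $\noise_{ik}$ satisfies \eqref{eq:condition_moments_bernstein} with $\sigma^2$ of order $\lambda_0$ and $K$ a numerical constant. Using the decomposition \eqref{eq:signal_noise_2}, write $\noise_{ik} = (B_{ik}-\lambda_1)M_{ik} + B_{ik}\widetilde E_{ik}$, with $\lambda_1\le\lambda_0\le1$, $|M_{ik}|\le1$, and $\widetilde E_{ik}$ conditionally $1$-subGaussian given $B_{ik}=1$ (it is an average of $1$-subGaussian variables). A direct moment computation, of exactly the kind carried out in the discussion following Proposition~\ref{prop:concentration_bernstein_op} for the Bernoulli-times-Gaussian case, then gives $\bbE[\noise_{ik}^{2u}]\le\tfrac12 u!\sigma^2 K^{2(u-1)}$ for all $u\ge1$, with $\sigma^2\le c\lambda_0$ and $K\le c'$. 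Since the $\noise_{ik}$ are independent across both indices, $\bbE[\noise^T\noise]$ is the diagonal matrix with entries $\sum_i\Var(\noise_{ik})\le c\lambda_0 p$, so that $\|\Lambda\,\bbE[\noise^T\noise]\,\Lambda\|_{\op}\le\|\bbE[\noise^T\noise]\|_{\op}\le c\lambda_0 p$, which is of the order of the second term on the right-hand side of the claimed bound.

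Next I would apply Proposition~\ref{prop:concentration_bernstein_op} with $X=\noise^T\in\bbR^{q\times p}$ (so the roles of $p$ and $q$ are exchanged), with $\Lambda$ the given rank-at-most-$p$ projection (hence $r_\Lambda\le p$), and with confidence parameter $\delta'/4$. This controls $\|\Lambda(\noise^T\noise-\bbE[\noise^T\noise])\Lambda\|_{\op}$ by $\kappa\big[\sqrt{(\sigma^4 pq+\sigma^2 p)\log(4q/\delta')}+(\sigma^2 r_\Lambda+K^2\log p)\log(4q/\delta')\big]$. Plugging in $\sigma^2\le c\lambda_0\le c$, $r_\Lambda\le p$ and $K\le c'$, and using the hypothesis $\lambda_0(p\lor q)\ge1$ to absorb the lower-order terms (for instance $\sqrt{\lambda_0 p}\le\lambda_0\sqrt{pq}+\lambda_0 p$) into the two main terms $\lambda_0\sqrt{pq}$ and $\lambda_0 p$ up to the polylogarithmic factor $\log^2(pq/\delta')$, and then combining with the deterministic bound of the previous paragraph via the triangle inequality, yields the stated inequality for a suitable numerical constant $\kappa''_1$.

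The main obstacle is the moment computation in the first step: it is the only place where the specific structure of the sparse noise $\noise$ (a Bernoulli mask times a conditionally subGaussian variable) is used, and it is what justifies invoking the Bernstein-type Proposition~\ref{prop:concentration_bernstein_op} rather than the cruder subGaussian bounds, which — as explained in Section~\ref{sec:concentration} — would lose a $\sqrt{q/p}$-type factor and break the optimal rate. Everything else is bookkeeping: exchanging the roles of $p$ and $q$, tracking the rank constraint on $\Lambda$ so that the $\sigma^2 r_\Lambda$ term stays of order $\lambda_0 p$ rather than $\lambda_0 q$, and the routine algebra recasting the bound in the homogeneous form $\lambda_0\sqrt{pq}+\lambda_0 p$.
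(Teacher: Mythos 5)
Your proposal matches the paper's own proof essentially step for step: the entries of $\noise$ are shown to satisfy the Bernstein condition \eqref{eq:condition_moments_bernstein} with $\sigma^2\asymp\lambda_0$ and constant $K$ (the computation in \eqref{eq:moment_bernstein}), Proposition~\ref{prop:concentration_bernstein_op} is applied to $X=\noise^T$ with the rank-at-most-$p$ projection $\Lambda$, and the diagonal term $\|\bbE[\noise^T\noise]\|_{\op}\lesssim\lambda_0 p$ is added back before absorbing the lower-order terms using $\lambda_0(p\lor q)\ge1$. The only cosmetic difference is that the paper combines the centered and expectation parts through a squared triangle-type inequality rather than the plain triangle inequality you use, which changes nothing.
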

	\begin{proof}[Proofs of \Cref{lem:concentration_quad} and \Cref{lem:concentration_cross}]
		First, we recall that for any $i,k$, we have that $\noise_{ik} = (B_{ik} - \lambda_1)M_{ik} + \widetilde E_{ik}$, and that $\widetilde E$ is an average of $1$-subGaussian random variables, as described in \Cref{eq:aggregated_noise}
		For any $u \geq 0$ we have
		\begin{equation}\label{eq:moment_bernstein}
			\bbE[\noise_{ik}^{2u}] \leq 3^u \bbE\left[ B_{ik} + \lambda_0^{2u} + \widetilde 	E_{ik}^{2u}\right] \leq 3^u \left(2\lambda_0 +  u! \bbE[e^{\widetilde E_{ik}^2}]	\right)
			\leq \frac{1}{2}u!\lambda_0 1000^u \enspace,
		\end{equation}
		where for the last inequality we used the following inequalities:
		\begin{align*}
			\bbE[e^{\widetilde E_{ik}^2}] \leq \sum_{u \geq 1} e^{-\lambda_0} \frac{\lambda_0^u}{u!} e^{1/u} \leq \lambda_0 e \enspace .
		\end{align*}
		Hence condition \Cref{eq:condition_moments_bernstein} is satisfied with $K= 1000$ and $\sigma^2=\lambda_0$ for the coefficients of $\noise$. We just apply \Cref{prop:concentration_bernstein_op} with $X = \noise$ for \Cref{lem:concentration_quad}. For \Cref{lem:concentration_cross}, we apply \Cref{prop:concentration_bernstein_op} with $X = \noise^T$ and we remark that 
		$\| \Lambda \noise^T \noise \Lambda \|_{\op}^2 \leq 2\| \Lambda \noise^T \noise - \bbE[\noise^T \noise] \Lambda \|_{\op}^2 + 2\| \bbE[\noise^T \noise] \|_{\op}^2$ together with the fact that $\|\bbE[\noise^T \noise] \|_{\op}^2 \leq c'\lambda_0p$ for some numerical constant $c'$.
	\end{proof}
	Remark that since we assume in \Cref{lem:concentration_pca} that $\lambda_0 p \geq 1$, it holds that $\sqrt{\lambda_0 p} \leq \lambda_0 p$ and $\sqrt{\lambda_0 q} \leq \lambda_0^2 \sqrt{pq}$, so that both upper bounds of \Cref{lem:concentration_quad} and \Cref{lem:concentration_cross} reduce - up to logarithmic factors - to $\lambda_0\sqrt{pq} + \lambda_0 p$. We write for short in the following 
	\begin{equation}\label{eq:def_F_short}
		F:= F(p,q,\lambda_0, \delta') = \log^2(pq/\delta')[\lambda_0\sqrt{pq} + \lambda_0 p] \enspace,
	\end{equation} 
	and $\kappa''_2 = 8(\kappa''_0 \lor \kappa''_1)$.
	
Now let us write
	\[
		AY = \lambda_1AM + A\noise\enspace ,
	\]
	so that, for any $v\in\mathbb{R}^p$, recalling that $AY = Y - \overline Y$,
	\begin{align*}
		\|v^TAY\|_2^2 = \lambda_1^2\|v^TAM\|_2^2 +  \|v^TA\noise \|_2^2 + 2\lambda_1\langle v^TA\noise, v^TAM \rangle\enspace ,
	\end{align*}
	which, in turn, implies that 
	\begin{align*}
		\Big|\|v^TAY\|_2^2 - \lambda_1^2\|v^TAM\|_2^2 - \E\big[\|v^TA\noise\|_2^2\big] \Big| 
		&~\leq \Big|\|v^TA\noise\|_2^2 - \E \big[\|v^TA\noise\|_2^2\big] \Big|
		+ 2\lambda_1 |v^TAM \noise^T (Av)|\\
		&\stackrel{(a)}{\leq} \|A(\noise\noise^T - \bbE[\noise \noise^T])A\|_{\op}  + 2 \lambda_1\|AM \noise^T\noise (AM)^T\|^{1/2}_{\op} \\
		&~\leq \|\noise\noise^T - \bbE[\noise \noise^T]\|_{\op} + 2\lambda_1\|AM\|_{\op}\| \Lambda \noise^T\noise \Lambda\|^{1/2}_{\op} \enspace ,
	\end{align*}
	Where we define  $\Lambda \in \bbR^{d\times d}$ as the orthogonal projector on the image of $\ker(AM)^{\perp}$ which is of rank less than $p$. For $(a)$, we used the fact that $A$ is contracting the operator norm as an orthogonal projector so that $\|Av\|_2 \leq 1$.
	We now apply \Cref{lem:concentration_quad} and \Cref{lem:concentration_cross} together with the fact that $\lambda_1 \leq \lambda_0$, and we obtain with probability at least $1-\delta'/2$ that

	\begin{equation}\label{eq:upper_1}
		\sup_{v \in \bbR^p, \|v\|=1}\Big|\|v^TAY\|_2^2 - \lambda_1^2\|v^TAM\|_2^2 - \E\big[\|v^TA\noise\|_2^2\big] \Big| \leq \kappa''_2 F + \lambda_1\|AM\|_{\op}\sqrt{\kappa''_2 F}\enspace .
	\end{equation}
	where $F$ is defined in \Cref{eq:def_F_short}.
In the same way, we have that, with probability larger than $1-\delta'/2$, 
	\begin{align*}
		\sup_{v\in \mathbb R^{p}:\ \|v\|_2\leq 1} \Big|\frac{1}{2}\|v^TA(Y-Y')\|_2^2 - \E\big[\|v^TA\noise\|_2^2\big] \Big|
	&= \frac{1}{2}\sup_{v\in \mathbb R^{p}:\ \|v\|_2\leq 1}\Big|\|v^TA(Y-Y')\|_2^2 - \mathbb E \|v^TA(Y-Y')\|_2^2 \Big| \\ 
	&\leq  \kappa''_3F\enspace,
	\end{align*}
	for some numerical constant $\kappa''_3$.
	Putting everything together we conclude that, on an event  of probability higher than $1-\delta'$, we have simultaneously for all	$v \in \mathbb R^{p}$ with $\|v\|_2\leq 1$ that
	\begin{align*}
		 \Big|\|v^TAY\|_2^2 - \|v^TAM\|_2^2 - \frac{1}{2}\|v^TA(Y-Y')\|_2^2 \Big| \leq \kappa''_4 F + \lambda_1\|AM\|_{\op}\sqrt{\kappa''_4 F} \enspace ,
	\end{align*}
	with $\kappa''_4 = \kappa''_2 \lor \kappa''_3$.
	Choosing the numerical constant $\kappa'_0$ of \Cref{lem:concentration_pca} such that $\kappa'_0 \geq 4\cdot 16 (1-1/e)^{-1}\kappa''_4$ we have
	\begin{equation*}
		\lambda_1^2\|AM\|_{\op}^2 \geq 4\cdot 16 \kappa''_4 F\enspace ,
	\end{equation*}
	since it holds that $\lambda_1 \geq (1-1/e)\lambda_0$.
	We deduce that on the same
	event:
	\[
		\sup_{v\in \mathbb{R}^p: \ \|v\|_2\leq 1} \Big|\|v^TAY\|_2^2 - \|v^TAM\|_2^2 - \frac{1}{2}\|v^TA(Y-Y')\|_2^2 \Big|\leq \frac{1}{4}\|AM\|_{\op}^2\enspace .\]
	Writing  $\psi(v)= \big|\|v^T(Y - \overline{Y})\|_2^2  - \frac{1}{2}\|v^TA(Y-Y')\|_2^2\big|$, we deduce that, for $v$ such that $\|v^TAM\|_{2}^2 = \|AM\|_{\op}^2$, we have $\Psi(v)\geq \tfrac{3}{4}\|AM\|_{\op}^2$, whereas, for $v$ such that $\|v^TAM\|_{2}^2< \frac{1}{2} \|AM\|_{\op}^2$, we have $\Psi(v)<\tfrac{3}{4}\|AM\|_{\op}^2$. We conclude that $\hat v$ satisfies $\|\hat v^TAM\|_{2}^2> \frac{1}{2} \|AM\|_{\op}^2$ with probability at least $1-\delta'$.

\end{proof}

\section{Proof of \Cref{th:UB} when $\lambda_0 \geq 1$}\label{sec:full}

The aim of this section is to provide an extension of the proof of \Cref{th:UB} to the case $\lambda_0 \geq 1$.
Recall that we fix $\delta$ to be a small probability the proof of \Cref{th:UB}, and that $E$ and $\widetilde E$ are the matrices defined in \Cref{eq:two_possible_noises} and \Cref{eq:aggregated_noise} by 
\begin{equation*}
	\widetilde E^{(s)}_{ik} = \sum_{t \in N^{(s)}} \tfrac{\varepsilon_t}{\br^{(s)}_{ik}\lor 1}\1\{x_t = (i,k)\} \spaceAnd \noise^{(s)}_{ik} = (B^{(s)}_{ik} - \lambda_1)M + B^{(s)}_{ik}\widetilde E^{(s)}_{ik}\enspace .
\end{equation*}
In what follows, we consider the two subcases where $\lambda_0 > 16\log(5nd/\delta)$ or $\lambda_0 \leq 16\log(5nd/\delta)$, which essentialy rely on the two following ideas:

\begin{itemize}
	\item If $\lambda_0 \leq 16\log(5nd/\delta)$, we use the fact that the coefficients of $\noise$ defined in \Cref{eq:two_possible_noises} are $5$-subGaussian together with the same signal-noise decomposition $Y = \lambda_1 M + \noise$ as in the proofs when $\lambda_0 \leq 1$. The difference from the case $\lambda_0 \leq 1$ lies in the application of subGaussian inequalities of $E_{ik}$ instead of Bernstein inequalities as in \Cref{eq:concentration_bernstein}.
	\item If $\lambda_0 > 16\log(5nd/\delta)$, we show that the event $\{\br_{ik}^{(s)}\geq \lambda_0 /2\}$ holds true for all $i,k,s$ with high probability. Working conditionally to this event, we use the decomposition $Y = M + \widetilde E$ and we show that the noise $\widetilde E$ has $\frac{2}{\lambda_0}$-subGaussian independent coefficients. The rationale behind using $\widetilde E$ when $\lambda_0$ is large is that $\widetilde E_{ik}$ takes advantage of the mean of $2/\lambda_0$ subGaussian variables with high probability.
\end{itemize}
Let $\br_{\min}^{(s)} = \min_{i,k}\br_{ik}^{(s)}$ be the minimum number of observation at positions $(i,k)$ in $N_s$ - see \Cref{eq:batches}.
In the case $\lambda_0 > 16\log(5nd/\delta)$, the following lemma states that with high probability, we observe all the coefficients for all sample $s$ in the full observation regime. 

\begin{lemma}\label{lem:LB_wmin}
	Assume that $\lambda_0 \geq 16\log(5nd/\delta)$. The event $\{\br^{(s)}_{\min} \geq \lambda_0/2 \}$ holds simultaneously for all sample $s$ with probability at least $1 - 5T\delta$.
\end{lemma}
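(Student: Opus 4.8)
The plan is to control, for a single position $(i,k)$ and a single sample $s$, the lower tail of the Poisson-type count $\br^{(s)}_{ik}$, and then take a union bound over the $nd$ positions and the $5T$ samples. First I would recall the sampling mechanism: conditionally on the total number of observations $N \sim \mathcal P(\lambda nd)$, each observation is placed uniformly in $[n]\times[d]$ and independently assigned to one of the $5T$ batches; by standard Poissonization this means that $\br^{(s)}_{ik}$ is itself Poisson with parameter $\lambda nd \cdot \frac{1}{nd}\cdot\frac{1}{5T} = \lambda/(5T) = \lambda_0$, and moreover the $\br^{(s)}_{ik}$ are mutually independent across $(i,k)$ and $s$. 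So the task reduces to a lower-deviation bound for a single $\mathrm{Poi}(\lambda_0)$ random variable.

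Next I would apply the multiplicative Chernoff bound for the Poisson distribution: for $W \sim \mathrm{Poi}(\lambda_0)$ and any $t \in (0,1)$,
\begin{equation*}
	\P\big(W \leq (1-t)\lambda_0\big) \leq \exp\!\big(-\lambda_0 t^2/2\big) \enspace .
\end{equation*}
Taking $t = 1/2$ gives $\P(W \leq \lambda_0/2) \leq e^{-\lambda_0/8}$. Under the hypothesis $\lambda_0 \geq 16\log(5nd/\delta)$ we get $e^{-\lambda_0/8} \leq e^{-2\log(5nd/\delta)} = (\delta/(5nd))^2 \leq \delta/(5nd)$ (using $\delta \leq 5nd$, which is harmless). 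Hence for each fixed $(i,k,s)$ the bad event $\{\br^{(s)}_{ik} < \lambda_0/2\}$ has probability at most $\delta/(5nd)$.

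Finally, a union bound over the $nd$ positions $(i,k)$ and the $5T$ batches $s$ gives that the event $\{\br^{(s)}_{\min} \geq \lambda_0/2\ \text{for all } s\}$ fails with probability at most $5T \cdot nd \cdot \frac{\delta}{5nd} = T\delta$, which is certainly at most $5T\delta$; this proves the lemma. The only mildly delicate point — and the part I would state carefully rather than wave at — is the justification that the per-batch, per-entry counts are independent Poissons with mean $\lambda_0$; this is the standard "Poisson thinning/splitting" fact, but since the paper invokes the Poissonization trick throughout, I would simply cite it as the mechanism already set up in \Cref{subsec:subsample} and \Cref{eq:batches}. There is no real obstacle here: it is a one-line Chernoff estimate plus a union bound, with the hypothesis on $\lambda_0$ tailored precisely so that the exponential beats the $nd$ and $T$ factors.
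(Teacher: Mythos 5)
Your proof is correct and follows essentially the same route as the paper: a Chernoff lower-tail bound for the Poisson count $\br^{(s)}_{ik}\sim\mathrm{Poi}(\lambda_0)$ giving $e^{-\lambda_0/8}$, the hypothesis $\lambda_0\geq 16\log(5nd/\delta)$ to beat the $nd$ factor, and a union bound over all entries and the $5T$ batches. The only cosmetic difference is that you invoke the $\exp(-\lambda_0 t^2/2)$ form of the bound while the paper uses the exact Chernoff exponent via $(1-\log 2)/2\geq 1/8$; both yield the same estimate.
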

\begin{proof}[Proof of \Cref{lem:LB_wmin}]
	We apply Chernoff's inequality - see e.g. section 2.2 of \cite{massart2007concentration} - to derive that for any $i,k$
	\begin{equation}
		\bbP(\br^{(s)}_{ik} \leq \lambda_0/2) \leq \exp(-\frac{1}{8} \lambda_0) \leq \delta/(nd) \enspace ,
	\end{equation}
	where we use the inequality $(1- \log(2))/2 \geq 1/8$. We conclude with a union bound over all coefficients in $[n] \times [d]$ and all $5T$ samples.
\end{proof}

Let us now omit the dependence of $E$ and $\widetilde E$ in the sample $s$. In what follows, use that the coefficients of $\noise$ are $5$-subGaussian, which is a consequence of the fact that $\noise_{ik}$ is the sum of a centered variable bounded by $1$ and a $1$-subgaussian random variable $\widetilde E_{ik}$, so that by Cauchy-Schwarz and the Hoeffding inequality we have
\begin{equation}\label{eq:1_subgaussian}
	\bbE[\exp(x \noise_{ik})] \leq \sqrt{\exp(4x^2/8)}\sqrt{\exp(4x^2/2)} = \exp(5/4x^2)\enspace .
\end{equation}

Under the event of \Cref{lem:LB_wmin}, we use that $\widetilde E_{ik}$ is $\lambda_0/2$-subGaussian, as an average of at least $2/\lambda_0$ random variables that are $1$-subGaussians:
\begin{equation}\label{eq:lambda_subgaussian}
	\bbE[\exp(x \widetilde E_{ik})] \leq \exp(\tfrac{1}{\lambda_0} x^2) \enspace ,
\end{equation}

\subsection{Adjustements for the general analysis}

We first make the changes that should be done in \Cref{sec:general_analysis} to have a proper proof in the case $\lambda_0 \geq 1$.

If $\lambda_0 \in [1, 16\log(5nd/\delta)]$, we simply replace $\lambda_0$ by $1/\lambda_0$ in the upper bound of \Cref{eq:condition_phil1} for the event $\xi$ in \Cref{lem:concentration_l1}. In the proof of the restated \Cref{lem:concentration_l1}, we can replace the inequality \Cref{eq:concentration_bernstein} by 

\begin{equation}\label{eq:subgaussian_1}
	|\proscal<\noise, W>| \leq \sqrt{10\|W\|_F^2\log\left(\frac{2}{\delta'}\right)} \enspace ,
\end{equation}
for any matrix $W \in \bbR^{n \times d}$, with probability at least $1-\delta'$.
We can then obtain $1/\lambda_0$ instead of $\lambda_0$ simply by using that $\phi_{\lone}/\sqrt{\lambda_0} \geq \sqrt{\phi_{\lone}}$, recalling that $\phi_{\lone}$ is defined in \Cref{eq:definition_phil1}.

If $\lambda_0 > 16\log(5nd/\delta)$, we say that we are under event $\xi$ if the event of \Cref{lem:LB_wmin} holds and \Cref{eq:condition_phil1} holds for all pairs $(Q,w)$, replacing $\noise$ by $\widetilde E$, and $\lambda_0$ by $1/\lambda_0$.
The proof of the new version of \Cref{lem:concentration_l1} lies in the Hoeffding inequality applied to $\widetilde E$ under the event of \Cref{lem:LB_wmin}, leading to the subsequent equation:
\begin{equation}\label{eq:hoeffding_E_tilde}
	|\proscal<\widetilde E, W>| \leq \sqrt{\frac{4\|W\|_F^2}{\lambda_0}\log\left(\frac{2}{\delta'}\right)} \enspace ,
\end{equation}
for any matrix $W \in \bbR^{n \times d}$, with probability at least $1-\delta'$. This equation then replaces \Cref{eq:concentration_bernstein}.

\subsection{Adjustments to the proofs of \Cref{prop:UB_on_square_norm}}

We now adapt the proofs in \Cref{sec:proof_UB_on_square_norm} of \Cref{prop:UB_on_square_norm} to the case $\lambda_0 \geq 1$.

All the lemmas of \Cref{sec:proof_UB_on_square_norm} can be stated as is for any $\lambda_0 \geq 1$, and the only adjustments concern the proofs of \Cref{lem:inclusion_Q}, \Cref{lem:event_indistinguishable} and \Cref{prop:local_square_norm_reduction}.

\subsubsection{Adjustments in the proofs of \Cref{lem:inclusion_Q} and \Cref{lem:event_indistinguishable}}
Consider the proof of \Cref{lem:inclusion_Q}.
First, if $\lambda_0 \geq 16 \log(5nd/\delta)$, we place ourselves under the event \Cref{lem:LB_wmin} and replace $\lambda_1$ by $1$ and all the $\noise$ by $\widetilde E$. Instead of inequality \Cref{eq:controle_varepsilon_echange}, we use the fact that the coefficients of $\widetilde E$ are $2/\lambda_0$-subGaussian - see \Cref{eq:lambda_subgaussian} - leading to the following inequality with probability at least $1 - \delta$:
\begin{equation}
	\abs{\widetilde E_{k}(a)} := \abs{\frac{1}{|\cN_a|}\sum_{i \in \cN_a}\widetilde E_{ik} - \frac{1}{|\cN_{-a}|}\sum_{i \in \cN_{-a}}\widetilde E_{ik}} \leq \kappa'_0 \log(nd/\delta)\sqrt{\frac{1}{\lambda_0\nu(a)}} \enspace ,
\end{equation}
for some numerical constant $\kappa'_0$. The rest of the proof remains unchanged.

\medskip

If $\lambda_0 \in [1, 16\log(5nd/\delta)]$, we use the fact that $\noise$ has $5$-subGaussians coefficients - see \Cref{eq:1_subgaussian} and we do not divide by $\lambda_0$ in \Cref{eq:noise_div_lambda_0} - see the definition of $\widehat \bDelta$ \Cref{eq:stat_delta}.

Concerning \Cref{lem:event_indistinguishable}, the adjustments are the same as for \Cref{lem:concentration_l1}, namely working under the event of \Cref{lem:LB_wmin} and we replacing $\noise$ by $\widetilde E$, $\lambda_0$ by $1/\lambda_0$ and $\lambda_1$ by $1$ if $\lambda_0 \geq 16\log(5nd/\delta)$, and using the fact that the coefficient of $\noise$ are $5$-subGaussians - see \Cref{eq:1_subgaussian} if $\lambda_0 \in [1, 16\log(5nd/\delta)]$.

\subsubsection{Adjustments in the proof of \Cref{prop:local_square_norm_reduction}}

We now adapt the proofs in \Cref{sec:proof_of_prop_reduction} of \Cref{prop:local_square_norm_reduction} to the case $\lambda_0 \geq 1$.
First, \Cref{lem:concentration_pca} can be stated as is, and its proof when $\lambda_0 \geq 1$ is directly implied by Lemma E.5 in \cite{pilliat2022optimal} with $\Theta := M$ either conditionally on \Cref{lem:LB_wmin} with noise $N:= \widetilde E$ and $\zeta^2 := 2/\lambda_0$ when $\lambda_0 \geq 16\log(5nd/\delta)$ or with noise $N:= \noise$ and $\zeta^2 := 5$ when $\lambda_0 \leq 16\log(5nd/\delta)$.

\medskip

Secondly, remark that if $\lambda_0 \geq 1$, it holds that $\hat v_- = \hat v$ and that Condition \Cref{eq:condition_w} on $\hat w$ is automatically satisfied, so that step 2 and step 4 can be removed from the proof in that case. For Step 3 and 5, we do the following adjustments:

If $\lambda_0 \in [1, 16\log(5nd/\delta)]$, the proof remains unchanged except that we use that the coefficients of $\noise$ are $5$-subGaussian -see \Cref{eq:1_subgaussian}.

If $\lambda_0 \geq 16\log(5nd/\delta)$, we work conditionnally on the event of \Cref{lem:LB_wmin} and we replace $\lambda_1$ by $1$ and $\noise$ by $\widetilde E$. The subgaussian concentration bound on $\widetilde E$ \Cref{eq:hoeffding_E_tilde} allows us to replace $\lambda_0$ by $\frac{1}{\lambda_0}$ in the equations from \Cref{eq:def_S_star_S_hat} to \Cref{eq:w_star_captures_significant_part}.

\section{Proof of Corollaries \ref{cor:reconstruction_UB} and \ref{cor:ub_reco_biso}}
\begin{proof}[Proof of \Cref{cor:reconstruction_UB}]
	Assume that $\pi^* = \mathrm{id}$ for simplicity. Let $P_{\iso}$ be the projector on the set of isotonic matrices, and $E' = Y^{(2)}_{\hat \pi^{-1}} - M_{\hat \pi^{-1}}$ so that $\hat M_{\iso} = P_{\iso} (M_{\hat \pi^{-1}} + E')$. 
	Remark that the loss can be decomposed as 
	\begin{equation*}
		\|(\hat M_{\iso})_{\hat \pi} - M\|_F^2 = \|P_{\iso}M_{\hat \pi^{-1}} - P_{\iso}M_{} + P_{\iso}(M_{} + E') - M_{} + M_{}- M_{\hat \pi^{-1}}\|_F^2\enspace .
	\end{equation*}
	Using the non-expansiveness of $P_{\iso}$ and the triangular inequality as in the proof of proposition 3.3 of \cite{mao2020towards}, we deduce that 
	\begin{equation}\label{eq:triangular_reco}
		\|\hat M_{\iso} - M\|_F^2 \leq 4\|M_{\hat \pi^{-1}} - M\|_F^2 + 2\|P_{\iso}(M + E') - M\|_F^2 \enspace .
	\end{equation}
	Since the projection of $M+E'$ on isotonic matrices is equal to the columnwise projection on isotonic vectors, it holds that $\sup_{M \in \bbC_{\iso}(n,d)}\bbE\|P_{\iso}(M + E') - M\|_F^2 = d \sup_{M \in \bbC(n,1)}\bbE\|P_{\iso}(M_{\cdot 1} + E'_{\cdot 1}) - M_{\cdot 1}\|_F^2 $, where we also use the notation $P_{\iso}$ for the projector on isotonic vectors. The rate of estimation in $L_2$ norm of an isotonic vector with bounded total variation partial observation can be found in \cite{zhang2002risk}, with $V := 1$ and $\sigma^2 := 1/\lambda$. Hence, we obtain that $\sup_{M \in \bbC(n,1)}\bbE\|P_{\iso}(M_{\cdot 1} + E'_{\cdot 1}) - M_{\cdot 1}\|_F^2 \leq C_1 n^{1/3}/\lambda^{2/3}$. Upper bounding the first term in~\eqref{eq:triangular_reco} with a quantity of order $\rho_{\perm} \leq 2\rho_{\reco}$ by \Cref{th:UB} concludes the proof.

\end{proof}

\begin{proof}[Proof of \Cref{cor:ub_reco_biso}]
	We follow the same steps as in \Cref{cor:reconstruction_UB}. Assume that $\pi^* = \eta^* = \mathrm{id}$, $E'=Y^{(3)}_{\hat\pi^{-1}\hat\eta^{-1}} - M$, and let $P_{\biso}$ be the projector on bi-isotonic matrices. We have that 
	\begin{equation}\label{eq:ineq_trig_biso}
		\|(\hat M_{\biso})_{\hat \pi \hat \eta} - M\|_F^2 \leq 4\|M_{\hat \pi^{-1}\hat\eta^{-1}} - M\|_F^2 + 2\|P_{\biso}(M + E') - M\|_F^2 \enspace .
	\end{equation}
$M$ is isotonic in both directions so that we can apply \Cref{th:UB} in rows and columns. After the first two steps of the above procedure, we obtain two estimator $\hat \pi, \hat \eta$ that satisfy
	\begin{equation}\label{eq:ub_liu_moitra_2_perm}
		\sup_{\stackrel{\pi^*, \eta^* \in \Pi_n}{M:\,  M_{\pi^{*-1} \eta^{*-1}}\in \mathbb{C}_{\mathrm{biso}}}} \E\left[\|M_{\hat{\pi}^{-1}\hat{\eta}^{-1}}- M_{\pi^{*-1}\eta^{*-1}}\|_F^2\right] \leq C''\log^{C''}(n)n^{7/6}\lambda^{-5/6} \enspace .
		\end{equation}
	The second term of \Cref{eq:ineq_trig_biso} is the risk of a bi-isotonic regression by least square, and is smaller than $n/\lambda \leq n^{7/6}\lambda^{-5/6}$ - see e.g. \cite{mao2020towards}.
\end{proof}

\section{Proof of the minimax lower bound}

\begin{proof}[Proof of \Cref{th:lower_bound_poisson}]

Since $\rho_{\perm}(n,d,\lambda)$ is nondecreasing with $n$ and $d$, we can assume without loss of generality that both $n$ and $d$ express as a power of $2$. 

The following proof is strongly related to the proof of Theorem 4.1 in \cite{pilliat2022optimal}. While a worst case distribution is defined on the set of matrices that have nondecreasing rows and nondecreasing columns in \cite{pilliat2022optimal}, we aim here at defining a worst case distribution on matrices only have nondecreasing columns. Since the isotonic model is less constrained than the bi-isotonic model studied in \cite{pilliat2022optimal}, the permutation estimation problem is statistically harder, and the lower bound has a greater order of magnitude.

As in \cite{pilliat2022optimal}, the general idea is first to build a collection of prior $\nu_{{\bf G}}$ indexed by some ${\bf G}\in \boldsymbol{\mathcal{G}}$ on $M$, then to reduce the problem to smaller problems and finally to specify the prior in function of the regime in $n,d$ and $\lambda$. By assumption, the data $y_t$ is distributed as a normal random variable with mean $M_{x_t}$ and variance $1$, conditionally on $M$ and $x_t$. We write as in \cite{pilliat2022optimal} $\mathbf{P}_{\bf G}^{(\mathbf{full})}$ and  $\mathbf{E}_{\bf G}^{(\mathbf{full})}$ the corresponding marginal probability distributions and expectations on the data $(x_t,y_t)$.
Our starting point is the fact that the minimax risk~\eqref{eq:minimax_risk_perm} is higher than the worst Bayesian risk:  
\begin{equation}\label{eq:starting_point}
\cR^*_{\perm}(n,d,\lambda) \geq \inf_{\hat{\pi}}\sup_{{\bf G}\in  \mathbfcal{G}}\mathbf{E}_{\bf G}^{\mathbf{full}}\left[\|M_{\hat{\pi}^{-1}}- M_{\pi^{*-1}}\|_F^2\right]\ . 
\end{equation}

\medskip

{\bf Step 1: Construction of the prior distribution on $M$}

\medskip

Let $p \in \{2, \dots, n\}$ and $q \in [d]$ be two powers of $2$ to be fixed later, and $\overline G^{(\iota)} :=[(\iota -1) p + 1, \iota p]$, for $\iota \in \{1, \dots, n/p\}$. The general idea is to build a simple prior distribution on isotonic matrices in $\bbR^{\overline G^{(\iota)} \times d}$, and to derive a prior distribution on isotonic matrices in $\bbR^{n \times d}$ by combining $n/p$ independent simple prior distributions defined on each strip $\bbR^{\overline G^{(\iota)} \times d}$.

Let $w \in \bbR^n$ be a vector that is constant on each group $\overline G^{(\iota)} =[(\iota -1) p + 1, \iota p]$ and that has linearly nondecreasing steps:
\begin{equation}\label{eq:vector_w_lb}
	w_i = \floor{\frac{i}{p}} \frac{p}{4n} \in [0, 1/4] \enspace .
\end{equation}
Letting $\1_{[d]}$ be constant equal to $1$ in $\bbR^d$, we define
\begin{equation} \label{eq:definition_M_lower_bound}
M = w \1_{[d]}^T + \frac{\upsilon}{\sqrt{p\lambda}} B^{(\mathbf{full})}\ , 
\end{equation}
where the random matrix $B^{(\mathbf{full})}\in \{0,1\}^{n\times d}$ is defined as in \cite{pilliat2022optimal}. We recall the definition of its distribution in what follows for the sake of completeness.

Consider a collection $\mathcal{G}$ of subsets of $[p]$ with size $p/2$ that are well-separated in symmetric difference as defined by the following lemma.
\begin{lemma}\label{lem:packing}
There exists a numerical constant $c_0$ such that the following holds for any even integer $p$. 
There exists a collection $\mathcal G$ of subsets of $[p]$ with size $p/2$ which satisfies $\log(|\cG|)\geq c_0 |p|$ and whose elements are $p/4$-separated, that is 
$|G_1 \Delta G_2| \geq p/4$ for any $G_1\neq G_2$. 
\end{lemma}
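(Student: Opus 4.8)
The statement to prove is Lemma~\ref{lem:packing}: a combinatorial packing lemma asserting the existence of a large family of $p/2$-subsets of $[p]$ that are pairwise $p/4$-separated in symmetric difference. The plan is to obtain this by a standard probabilistic (Gilbert--Varshamov type) argument, which is entirely routine, so I only sketch it.

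\begin{proof}[Proof of \Cref{lem:packing}]
The plan is to use the probabilistic method together with a standard concentration estimate for the hypergeometric distribution. First I would identify subsets of $[p]$ of size $p/2$ with the corresponding $\{0,1\}$-vectors of Hamming weight $p/2$, so that $|G_1 \Delta G_2|$ equals the Hamming distance between the associated vectors. The goal is then a constant-rate constant-relative-distance code inside the middle slice of the hypercube.

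The key steps, in order, are as follows. (i) Draw $G$ uniformly at random among the $\binom{p}{p/2}$ subsets of size $p/2$; equivalently, fix such a subset and consider a uniformly random one. For two independent uniform $p/2$-subsets $G_1, G_2$, the quantity $|G_1 \cap G_2|$ follows a hypergeometric distribution with mean $p/4$. (ii) Apply a hypergeometric tail bound (e.g. the Hoeffding--Chv\'atal inequality for the hypergeometric distribution, or a direct Chernoff estimate) to get $\Prob\big(|G_1 \cap G_2| > 3p/8\big) \leq e^{-c_1 p}$ for a numerical constant $c_1 > 0$. Since $|G_1 \Delta G_2| = p - 2|G_1 \cap G_2|$ when $|G_1| = |G_2| = p/2$, the event $|G_1 \cap G_2| \leq 3p/8$ is exactly $|G_1 \Delta G_2| \geq p/4$. (iii) Run the standard greedy/expurgation argument: sample $N$ i.i.d. uniform $p/2$-subsets; the expected number of "bad" pairs (those violating the separation) is at most $\binom{N}{2} e^{-c_1 p}$, so if $N = \lceil e^{c_1 p /2} \rceil$ this is less than $N/2$; deleting one subset from each bad pair and removing duplicates leaves a family $\cG$ of size at least $N/2 \geq \tfrac{1}{2} e^{c_1 p/2}$ whose elements are pairwise $p/4$-separated. (iv) Taking $c_0 = c_1/4$ (and adjusting for small $p$, where the statement can be made trivial by taking $|\cG| = 1$) yields $\log|\cG| \geq c_0 p$, as required.

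There is essentially no main obstacle here: the only mild care is in choosing the tail bound so that the exponent is a genuine numerical constant independent of $p$, and in checking that the value of $c_0$ produced is indeed universal; the Hoeffding inequality for sampling without replacement (which dominates the i.i.d.\ Bernoulli case) handles this cleanly. One could alternatively invoke an off-the-shelf result on constant-weight codes, but the self-contained probabilistic argument above is shorter and suffices for our purposes.
\end{proof}
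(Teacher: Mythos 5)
Your argument is correct and follows essentially the route the paper intends: the paper does not prove the lemma itself but cites \cite{pilliat2022optimal} and notes it is a consequence of the Varshamov--Gilbert bound (see \cite{tsybakov}), and your random-selection-plus-expurgation argument on the middle slice of the hypercube, using the Hoeffding bound for the hypergeometric law of $|G_1\cap G_2|$ (mean $p/4$, so the bad event $|G_1\cap G_2|>3p/8$ has probability at most $e^{-c_1p}$ with, e.g., $c_1=1/16$), is just a self-contained version of that same packing lemma. One small correction: for small $p$ you cannot fall back on $|\cG|=1$, since then $\log|\cG|=0<c_0p$; instead take the two complementary halves $G_1=\{1,\dots,p/2\}$ and $G_2=\{p/2+1,\dots,p\}$, whose symmetric difference is $p\geq p/4$, and shrink the numerical constant $c_0$ to cover the finitely many small even $p$ -- this is harmless because your main estimate already gives $\log|\cG|\geq c_1p/2-\log 2\geq (c_1/4)\,p$ once $p$ exceeds a fixed numerical threshold.
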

The above result is stated as is in \cite{pilliat2022optimal} and is a consequence of Varshamov-Gilbert's lemma - see e.g.~\cite{tsybakov}.

\medskip 

For each $\iota \in [n/p]$, we fix a subset $G^{(\iota)}$ from $\mathcal{G}$, and its translation $G^{t(\iota)}=\{(\iota-1)p + x: x\in G^{(\iota)}\} \subset \overline G^{(\iota)}$. The experts of $G^{t(\iota)}$ will correspond the $p/2$ experts in $\overline G^{(\iota)}$ that are above the $p/2$ experts in $\overline G^{(\iota)}\setminus G^{t(\iota)}$. We write ${\bf G}= (G^{t(1)}, \ldots, G^{t(n/p)})$ and $\boldsymbol{\mathcal{G}}$ the corresponding collection of all possible ${\bf G}$. Given any such ${\bf G}$, we shall define a distribution $\nu_{{\bf G}}$ of $B^{(\mathbf{full})}$, and equivalently of $M$ by \Cref{eq:definition_M_lower_bound}.

 For $\iota\in [n/p]$, we sample uniformly a subset $Q^{(\iota)}$ of $q$ questions among the $d$ columns. In each of these $q$ columns, the corresponding rows of $B^{(\mathbf{full})}$ are equal to one. More formally, we have
\begin{equation}\label{eq:definition_B_Full}
B^{(\mathbf{full})} = \sum_{\iota=1}^{n/p}\mathbf 1_{G^{t(\iota)}}\1_{Q^{(\iota)}} \enspace .
\end{equation}
As mentioned above, the definition of $B^{(\mathbf{full})}$ is the same as in \cite{pilliat2022optimal}, if $\tilde d$ is set to be equal to $d$. They define a block constant constant matrix when $\tilde d < d$ to get an appropriate prior distribution for bi-isotonic matrices, but we do not need to do that here since we do not put any constraint on the rows of $M$.

\medskip 
The matrix $M$ defined in~\eqref{eq:definition_B_Full} is isotonic up to a permutation of its rows and has coefficients in $[0,1]$, if the following inequality is satisfied.
\begin{equation}\label{eq:condition_v}
\frac{\upsilon}{\sqrt{p\lambda}} \leq \frac{p}{8n} \ . 
\end{equation}

This constraint is strictly weaker than its counterpart (149) in \cite{pilliat2022optimal}, and this is precisely what makes the lower bound in the isotonic setting larger than the lower bound in the bi-isotonic setting of \cite{pilliat2022optimal}. Our purpose will be to wisely choose parameters $p,q$ and $\upsilon > 0$ to maximize the Bayesian risk \Cref{eq:starting_point} with $\nu_{\bG}$.

\medskip

{\bf Step 2: Problem Reduction} 

\medskip

In what follows, we use the same reduction arguments as in \cite{pilliat2022optimal}. Using the notation of \cite{pilliat2022optimal}, we write
$\mathbf{P}_{\bf G}^{(\mathbf{full})}$ and  $\mathbf{E}_{\bG}^{(\mathbf{full})}$ for the probability distribution and corresponding expectation of the data $(x_t, y_t)$, when $M$ is sampled according to $\nu_{\bG}$. Since the distribution of the rows of $M$ in $\overline G^{t(\iota)}$ only depend on $G^{t(\iota)}$, we write $\nu_{G^{t(\iota)}}$ for the distribution of these rows. We also write $\mathbf{P}^{(\mathbf{full})}_{G^{t(\iota)}}$ and $\mathbf{E}^{(\mathbf{full})}_{G^{t(\iota)}}$ for the corresponding marginal distribution and corresponding expectation of the observations $(x_t,y_t)$ such that $(x_t)_1\in \overline{G}^{t(\iota)}$. By the poissonization trick, the distribution $\mathbf{P}^{(\mathbf{full})}_{\bf G}$ is a product measure of $\mathbf{P}^{(\mathbf{full})}_{G^{t(\iota)}}$ for $\iota=1,\ldots, n/p$.

Let $\tilde \pi$ be any estimator of $\pi^*$. Let us provide more details than \cite{pilliat2022optimal} to prove that $\tilde \pi$ can be modified into an estimator $\hat \pi$ satisfying $\hat \pi(\overline G^{(\iota)}) = \overline G^{(\iota)}$ for all $\iota = 1, \dots, n/p$, and reducing the loss $\|M_{\hat \pi^{-1}}- M_{\pi^{*-1}}\|_F^2 \leq \|M_{\tilde \pi^{-1}}- M_{\pi^{*-1}}\|_F^2$ almost surely, for all possible prior $\nu_{\bG}$. For that purpose, we introduce 
\begin{equation*}
	N(\pi) = \sum_{\iota = 1}^{n/p}\sum_{i \in \overline G^{(\iota)}}\1\{i \not \in \overline G^{(\iota)}\} \enspace .
\end{equation*}

If $N(\tilde \pi) > 0$, then there exists $\iota_0$ and $i_0 \in \overline G^{(\iota_0)}$ such that $\tilde \pi(i_0) \in \overline G^{(\iota_1)}$ with $\iota_1 \neq \iota_0$. Then, $\tilde \pi$ being a permutation, we consider its associated cycle containing $i_0$, which we denote by $(i_1, \dots, i_{K})$. Let $(i'_1, i'_2, \dots, i'_L)$ be the elements of this cycle such that $\tilde \pi(i'_l) \not \in \overline G^{(\iota_l)}$, where $\iota_l$ satisfies $i'_l \in \overline G^{(\iota_l)}$. Then it holds that for any $l = 1, \dots, L-1$, $\tilde \pi(i'_l) \in \overline G^{(\iota_{l+1})}$, and $\tilde \pi(i'_L) \in \overline G^{(\iota_{1})}$. We now define $\tilde \pi'(i) = \tilde \pi(i)$ for all $i$, except on the cycle $(i'_1, \dots, i'_L)$ where we set $\tilde \pi'(i'_l) = \tilde \pi(i'_{l-1})$. Then, we easily check that $N(\tilde \pi') = N(\tilde \pi) - L < N(\tilde \pi)$, and that $\|M_{\tilde \pi'^{-1}}- M_{\pi^{*-1}}\|_F^2 \leq \|M_{\tilde \pi^{-1}}- M_{\pi^{*-1}}\|_F^2$ if condition \Cref{eq:condition_v} is satisfied.

We can therefore restrict ourselves to estimators $\hat \pi$ such that $\hat{\pi}(\overline{G}^{(\iota)})=\overline{G}^{(\iota)}$ for all $\iota$. There is however still another catch to obtain the same lines as in \cite{pilliat2022optimal}. Indeed, the restriction $\hat \pi^{(\iota)}$ of $\hat \pi$ to $\overline{G}^{(\iota)}$ is measurable with respect to the observation $Y$, but not necessarily to $Y(\overline{G}^{(\iota)})$. Still, this restriction can be writen as $\hat \pi^{(\iota)} = \hat \pi^{(\iota)}(Y(\overline{G}^{(\iota)}), Y([n]\setminus\overline{G}^{(\iota)}))$, and, for any $\alpha > 0$, there exists $y^{*(\iota)}(\alpha)$ such that
\begin{equation*}
	\mathbf{E}^{(\mathbf{full})}_{\bf G}\left[\|M_{\hat \pi^{(\iota)-1}} - M_{\pi^{*-1}}\|_F^2\right] \geq \mathbf{E}^{(\mathbf{full})}_{\bf G}\left[\|M_{\bar \pi^{(\iota)-1}(\alpha)} - M_{\pi^{*-1}}\|_F^2\right] - \alpha \enspace ,
\end{equation*}
where $\bar \pi^{(\iota)} := \hat \pi^{(\iota)}(Y(\overline{G}^{(\iota)}), y^{*(\iota)}(\alpha))$ is measurable with respect to $Y(\overline{G}^{(\iota)})$. Since it is possible such a stable estimator for any $\alpha > 0$, we finally obtain the inequality
\begin{align*}
	\cR^*_{\perm}(n,d,\lambda)&\geq \inf_{\hat \pi:\  \hat{\pi}(\overline{G}^{(\iota)})=\overline{G}^{(\iota)} }\sup_{{\bf G}\in \boldsymbol{\mathcal{G}}}  \sum_{\iota=1}^{ n/p }  \mathbf{E}^{(\mathbf{full})}_{\bf G} \left[\|\big(M_{\hat \pi^{-1}} - M_{\pi^{*-1}}\big)_{\overline{G}^{(\iota)}}\|_F^2\right]\\
&\geq  \sum_{\iota=1}^{ n/p } \inf_{\hat{\pi}^{(\iota)}}  \sup_{G^{t(\iota)}}\mathbf{E}^{(\mathbf{full})}_{G^{t(\iota)}} \left[\|\big(M_{\hat \pi^{(\iota)-1}} - M_{\pi^{*-1}}\big)_{\overline{G}^{(\iota)}}\|_F^2\right]\ .
\end{align*}
The problem of estimating the permutation $\pi^*$ is now broken down into the $n/p$ smaller problems of estimating the subsets $G^{t(\iota)} \subset \overline G^{(\iota)}$. The square Euclidean distance between to experts in $\overline G^{(\iota)}$ of experts is $0$ is they are both either in or not in $G^{t(\iota)}$ and it is equal to $\frac{q\upsilon^2}{p\lambda}$ otherwise. Let us focus on the easier problem of estimating the subsets $G^{t(\iota)}$ and define $\hat{G}^{t(\iota)}$ the set of the $p/2$ experts that are ranked above according $\hat \pi^{(\iota)}$. Then, we have that 
\[
	\|\big(M_{\hat \pi^{(\iota)-1}} - M_{\pi^{*-1}}\big)_{\overline{G}^{(\iota)}}\|_F^2=\frac{q\upsilon^2}{p\lambda}\big|\hat{G}^{(\iota)}\Delta G^{t(\iota)}| \geq \frac{q\upsilon^2}{4\lambda}\1\{\hat{G}^{(\iota)}\neq G^{t(\iota)}\}\enspace , 
\]
where the last inequality comes from the construction of the sets $G^{t(\iota)}$ by \Cref{lem:packing}.
Hence, we deduce that

\begin{equation}\label{eq:lower_minimax_1}
	\cR^*_{\perm}(n,d,\lambda)\geq  \frac{q\upsilon^2}{4\lambda}\sum_{\iota=1}^{n/p}\inf_{\hat{\pi}^{(\iota)}}  \sup_{G^{t(\iota)}} \mathbf{P}^{(\mathbf{full})}_{G^{t(\iota)}}\left[\hat{G}^{(\iota)}\neq  G^{t(\iota)}\right] \ ,
\end{equation}

so that by symmetry,

\[	\cR^*_{\perm}(n,d,\lambda)\geq \frac{nq\upsilon^2}{4p\lambda}\inf_{\hat{G}^{(1)}}\sup_{G^{t(1)}}\mathbf{P}^{(\mathbf{full})}_{G^{t(1)}}\left[\hat{G}^{(1)}\neq  G^{t(1)}\right]\enspace . 	
\]
Consider the $p\times d$ matrices $N$ and $Y^{\downarrow}$ defined by 
\[
	N_{ik}= \sum_t \1_{x_t=(i,k)} \ ;\quad \quad  Y^{\downarrow}_{ik} =  \sum_t \1_{x_t=(i,k)} (y_t - w_i) \enspace ,
\]
where $w$ is defined in \Cref{eq:vector_w_lb}.
To simplify the notation, we write henceforth $G$ and $\hat{G}$ for  $G^{t(1)}$ and $\hat{G}^{(1)}$ respectively. Letting $\mathbf{P}_{G}$ for the corresponding marginal distribution of $N$ and $Y^{\downarrow}$, the same sufficiency argument as in \cite{pilliat2022optimal} gives that
\[
\inf_{\hat{G}} \sup_{G}	\mathbf{P}^{(\mathbf{full})}_{G}\left[\hat{G}\neq  G\right]=\inf_{\hat{G}} \sup_{G}	\mathbf{P}_{G}\left[\hat{G}\neq  G\right]\ .  
\]

We finally obtain the following inequality:
\begin{equation}\label{eq:lower_minimax_G_T1}
	\cR^*_{\perm}(n,d,\lambda)\geq  \frac{nq\upsilon^2}{4p\lambda}\inf_{\hat{G}} \sup_{G}	\mathbf{P}_{G}\left[\hat{G}\neq  G\right]\enspace . 
\end{equation}

Let $\mathbf{P}_0$ be the distribution on $N$ and  $Y^{\downarrow}$ corresponding to the case $\upsilon=0$. The entries of $N$ of are independent and follow a poisson distribution of parameter $\lambda$. Conditionally to $N_{ik}$, we have $Y^{\downarrow}_{ik}$ is a gaussian variable with mean $0$ and variance $N_{ik}$. Then, we deduce from Fano's inequality~\cite{tsybakov} that  
\begin{align}\label{eq:KLFano}
	\inf_{\hat{G}} \sup_{G\in \cG} \mathbf P_{G}(\hat{G} \neq G) &\geq 1 - \frac{1 + \max_{G \in \cG} \mathrm{KL}(\mathbf P_{G}||\mathbf P_0)}{\log(|\cG|)}\ ,  
	\end{align}
	where $\mathrm{KL}(.||.)$ stands for the Kullback-Leibler divergence. The following lemma gives an upper bound of these Kullback-Leibler divergences. It can be found in \cite{pilliat2022optimal}, with the slightly stronger assumption that $p\lambda \geq 1$.
\begin{lemma}[Lemma J.2 of \cite{pilliat2022optimal}]\label{lem:KLborn}
There exists a numerical constant $c_1$ such that the following holds true. If $\upsilon^2 \leq 1 \land {p \lambda}$, then for any $G\in \cG$, we have 
\begin{align*}
\mathrm{KL}(\mathbf P_G||\mathbf P_0)&\leq c_1\frac{\upsilon^2q^2}{d}\ \enspace .
\end{align*}
\end{lemma}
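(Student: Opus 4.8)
The statement is \Cref{lem:KLborn}, which is quoted from \cite{pilliat2022optimal}; for completeness I indicate the argument I would use to prove it. The plan is to bound $\mathrm{KL}(\mathbf P_G\|\mathbf P_0)$ by a second-moment (chi-square type) computation that accounts for the fact that $\mathbf P_G$ is a mixture over the uniformly random column set $Q^{(1)}$. Write $\mathbf P_{G,Q}$ for the law of $(N,Y^{\downarrow})$ with the column set frozen at a $q$-subset $Q$, so that $\mathbf P_G=\mathbb E_Q[\mathbf P_{G,Q}]$; under $\mathbf P_{G,Q}$ the pairs $(N_{ik},Y^{\downarrow}_{ik})$ are independent, $N_{ik}$ is Poisson with parameter $\lambda$, and conditionally on $N_{ik}$ one has $Y^{\downarrow}_{ik}\sim\mathcal N\big(N_{ik}\tfrac{\upsilon}{\sqrt{p\lambda}}\mathbf 1\{i\in G,\ k\in Q\},\ N_{ik}\big)$, whereas under $\mathbf P_0$ this mean shift is absent. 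Since $\tfrac{d\mathbf P_G}{d\mathbf P_0}$ has $\mathbf P_0$-mean $1$, Jensen's inequality applied to the probability measure $\tfrac{d\mathbf P_G}{d\mathbf P_0}\,d\mathbf P_0$ gives $\mathrm{KL}(\mathbf P_G\|\mathbf P_0)\le\log\mathbb E_{\mathbf P_0}\big[(d\mathbf P_G/d\mathbf P_0)^2\big]$, and introducing two independent copies $Q_1,Q_2$ of $Q$ and using the product structure over coordinates (trivial likelihood ratio for rows $i\notin G$) one gets
\[
\mathbb E_{\mathbf P_0}\Big[\big(\tfrac{d\mathbf P_G}{d\mathbf P_0}\big)^2\Big]
=\mathbb E_{Q_1,Q_2}\Big[\prod_{i\in G}\ \prod_{k\in[d]}\mathbb E_{\mathbf P_0}\big[L_{ik}^{Q_1}L_{ik}^{Q_2}\big]\Big],
\]
where $L_{ik}^{Q}$ denotes the per-coordinate likelihood ratio of $\mathbf P_{G,Q}$ against $\mathbf P_0$.

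Next I would evaluate the per-coordinate factor. For $i\in G$ with $k\notin Q_1\cap Q_2$ at least one of $L_{ik}^{Q_1},L_{ik}^{Q_2}$ is $\equiv 1$, so the $\mathbf P_0$-expectation of the product is $1$ by the density-ratio property. For $k\in Q_1\cap Q_2$ the two factors are the likelihood ratios of $\mathcal N(\mu,\sigma^2)$ against $\mathcal N(0,\sigma^2)$ with the \emph{same} shift $\mu=N_{ik}\upsilon/\sqrt{p\lambda}$ and $\sigma^2=N_{ik}$, so conditionally on $N_{ik}=m$ the Gaussian identity $\int f_1f_2/f_0=\exp(\mu^2/\sigma^2)$ gives $\exp\!\big(m\,\upsilon^2/(p\lambda)\big)$, and averaging over $m$ Poisson$(\lambda)$ yields $\exp\!\big(\lambda(e^{\upsilon^2/(p\lambda)}-1)\big)\le\exp(2\upsilon^2/p)$ by $e^x-1\le 2x$ on $[0,1]$, which is precisely where the hypothesis $\upsilon^2\le p\lambda$ enters. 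Multiplying over the $|G|=p/2$ rows and the $|Q_1\cap Q_2|$ shared columns gives the per-pair bound $\exp\!\big(\upsilon^2|Q_1\cap Q_2|\big)$. It then remains to control $\mathbb E_{Q_1,Q_2}[\exp(\upsilon^2|Q_1\cap Q_2|)]$: conditionally on $Q_1$ the overlap $|Q_1\cap Q_2|$ is hypergeometric with population $d$, $q$ successes and $q$ draws, so its moment generating function is dominated by that of $\mathrm{Bin}(q,q/d)$, giving $\mathbb E[e^{t|Q_1\cap Q_2|}]\le(1+\tfrac qd(e^t-1))^q\le\exp(\tfrac{q^2}{d}(e^t-1))$, and taking $t=\upsilon^2\le1$ and again $e^t-1\le2t$ produces $\exp(2\upsilon^2q^2/d)$. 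Combining the three steps yields $\mathrm{KL}(\mathbf P_G\|\mathbf P_0)\le 2\upsilon^2q^2/d$, i.e.\ the claim with $c_1=2$.

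The main obstacle is not any individual estimate but disentangling the two layers of randomness — the mixture over the random column set $Q^{(1)}$ and the Poisson observation counts $N_{ik}$ — in the right order, so that the second-moment integral factorizes over coordinates before the Poisson expectation of the exponential is taken. The only genuinely external ingredient is the domination of the hypergeometric moment generating function by the binomial one; everything else reduces to the Gaussian correlation identity and two applications of $e^x-1\le 2x$, whose ranges of validity coincide exactly with the two hypotheses $\upsilon^2\le p\lambda$ and $\upsilon^2\le 1$.
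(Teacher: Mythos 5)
Your chi-square/second-moment argument is correct, and it reconstructs essentially the proof that the paper simply imports from Lemma J.2 of \cite{pilliat2022optimal} rather than reproving: the Poisson-mgf quantity $\lambda(e^{\upsilon^2/(p\lambda)}-1)$ you bound is exactly the term $\cI$ the paper singles out when explaining why the original hypothesis $p\lambda \geq 1$ can be weakened to $\upsilon^2 \leq p\lambda$, and your use of $e^x-1\leq 2x$ on $[0,1]$ is precisely that relaxation. The two hypotheses enter where you say they do (Poisson step and overlap-mgf step, respectively), and your constant $c_1=2$ is admissible.
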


The proof of \Cref{lem:KLborn} can be found in \cite{pilliat2022optimal}, with $\tilde n := p$ and $\tilde d = d$. The slighlty stronger assumption that $p\lambda \geq 1$ made in Lemma J.2 in \cite{pilliat2022optimal} is in fact not necessary. Indeed, it is only used to prove that $\cI := \lambda p(e^{\upsilon^2/(\lambda p)} - 1) \leq c_1'\upsilon^2$ in the proof of Lemma J.2 in \cite{pilliat2022optimal}, and this inequality remains valid under the assumption of \Cref{lem:KLborn}, that is $u^2 := \upsilon^2/(\lambda p) \leq 1$.

\medskip

{\bf Step 3: Choice of suitable parameters $p,q$ and $\upsilon$}

\medskip

By combining~\eqref{eq:lower_minimax_G_T1}, \eqref{eq:KLFano}, with Lemma~\ref{lem:KLborn} and the different constraints on the parameters \Cref{eq:condition_v}, we directly obtain the following proposition.
\begin{proposition}\label{prop:lower_minimax_abstract}
There exists a numerical constant $c$ such that if $p \in \{2, \dots, n\}$, $q \in \{1, \dots, d\}$ are dyadic integers, and $\upsilon$ satisfy the following condition:
\begin{eqnarray}\label{eq:cond_1}
\upsilon & \leq& c\left[ 1\wedge \sqrt{p \lambda} \wedge \frac{\sqrt{pd}}{q} \wedge \sqrt{\lambda}\frac{p^{3/2}}{n}\right]\enspace , \label{eq:cond_2}
\end{eqnarray}
then we have
\begin{equation}
\cR^*_{\perm}(n,d,\lambda)\geq   c \frac{n q  \upsilon^2 }{p\lambda } \ .  
\end{equation}
\end{proposition}

The above proposition being a direct consequence of what preceeds it, we consider that it does not require a proof. Let us now apply \Cref{prop:lower_minimax_abstract} for different parameters $p,q$ and $\upsilon$ to conclude the proof of \Cref{th:lower_bound_poisson}.
\medskip

First, using the lower bound in the bi-isotonic case -- see Theorem 4.1 of \cite{pilliat2022optimal}, we have for some constant $c'$ that
\begin{equation}\label{eq:first_lb}
	\cR^*_{\perm}(n,d,\lambda) \geq c' (n/\lambda \land nd) \enspace .
\end{equation}

In what follows, we write $\floor{x}_{\dya}$ for the greatest integer that is a power of two and smaller than $x$.
Let us consider the following inequality:
\begin{equation}\label{eq:assumption_lambda}
	\lambda \geq 1/d \lor n^2/d^3 \enspace .
\end{equation}
In the case where \Cref{eq:assumption_lambda} is not satisfied, then $n\sqrt{d/\lambda}\land n^{2/3}\sqrt{d}\lambda^{-5/6} \leq n/\lambda \land nd$ 
and the lower bound of \Cref{th:lower_bound_poisson} is proven by \Cref{eq:first_lb}.

We subsequently assume that \Cref{eq:assumption_lambda} is satisfied.

\noindent 
{\bf Case 1}: $\lambda n \leq 1$. In this case, we choose $q = \floor{\sqrt{\tfrac{d}{\lambda}}}_{\dya}$ and $p = n/2$. We have that $q \in \{1, \dots, d\}$ since $\lambda \leq 1$ in that case and by assumption \Cref{eq:assumption_lambda}, $\lambda \geq 1/d$. We deduce from \Cref{prop:lower_minimax_abstract} applied with $v/c = \sqrt{p\lambda} = \sqrt{pd}/q$ that 
$$\cR^*_{\perm}(n,d,\lambda) \geq c'' n\sqrt{\tfrac{d}{\lambda}} \enspace .$$

\noindent 
{\bf Case 2}: $\lambda \in [\tfrac{1}{n}, 8n^2]$. In this case, we choose $q = \floor{\tfrac{n^{1/3}\sqrt{d}}{\lambda^{1/6}}}_{\dya}$ and $p = \floor{\tfrac{n^{2/3}}{\lambda^{1/3}}}_{\dya}$. We deduce from \Cref{eq:assumption_lambda} that $q \leq d$. Since $\lambda \in [\tfrac{1}{n}, 8n^2]$, we also necessarily have that that $q \geq 1, p\geq 2$ and $p \leq n$. Applying the above proposition with $\upsilon/c = 1 = \sqrt{pd}/q = \sqrt{\lambda}p^{3/2}/n$, we deduce that 
$$\cR^*_{\perm}(n,d,\lambda) \geq c'' \tfrac{n^{2/3}\sqrt{d}}{\lambda^{5/6}}\enspace .$$

{\bf Case 3}: $\lambda \geq 8n^2$. When $\lambda$ satisfies this condition that is out of the scope of \Cref{th:lower_bound_poisson} but discussed below \Cref{th:lower_bound_poisson}, we choose $q = \floor{\sqrt{d}}_{\dya}$ and $p = 2$. Applying the above proposition with $\upsilon/c = 1$, we deduce that 
$$\cR^*_{\perm}(n,d,\lambda) \geq c'' \tfrac{n\sqrt{d}}{\lambda}\enspace .$$

\medskip 
We have proved that for any $n,d$ and $\lambda$, we have the lower bound 
\begin{equation*}
	\cR^*_{\perm}(n,d,\lambda) \geq c'' \left[n\sqrt{\tfrac{d}{\lambda}} \wedge \tfrac{n^{2/3}\sqrt{d}}{\lambda^{5/6}}\wedge \tfrac{n\sqrt{d}}{\lambda} + n/\lambda \right] \land nd \enspace .
\end{equation*}

This concludes in particular the proof of \Cref{th:lower_bound_poisson}, stated for $\lambda \in [1/d, 8n^2]$.

\end{proof}

\section{Proof of \Cref{prop:concentration_bernstein_op}}

	Let us introduce $\bP_k = \Lambda(X_{\cdot k} X_{\cdot k}^T - \bbE[X_{\cdot k} X_{\cdot k}^T])\Lambda \in \bbR^{p \times p}$, so that

\begin{equation}\label{eq:operator_norm_reduction}
	\Lambda(XX^T - \bbE[XX^T])\Lambda = \sum_{k=1}^q \bP_k \enspace .
\end{equation}

\begin{lemma}\label{lem:prop_concentration_mgf_op}
	There exists a numerical constant $\kappa'''_3$ such that 
	for any \\$x \in [0, (\kappa'''_3(\sigma^2 r_{\Lambda}+ K^2\log(q)))^{-1}]$, we have 
	\begin{equation*}
		\|\bbE[e^{x\bP_k}]\|_{\op} \leq \exp(\kappa'''_3 x^2(\sigma^2 + \sigma^4 p)) + \frac{1}{q} \enspace .
	\end{equation*}
\end{lemma}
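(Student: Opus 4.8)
The plan is to run a truncated matrix–Bernstein argument on a single summand. Fix $k$ and drop it from the notation: put $g := \Lambda X_{\cdot k} \in \bbR^{p}$, so that $\bP := \bP_k = gg^{T} - \bbE[gg^{T}]$ is a centered symmetric matrix with $\lambda_{\max}(\bP) \le \|g\|_2^2$ (because $\bbE[gg^{T}]\succeq 0$). Writing $\Lambda = UU^{T}$ with $U\in\bbR^{p\times r_{\Lambda}}$ having orthonormal columns, one has $\|g\|_2^2 = \|U^{T}X_{\cdot k}\|_2^2$, a quadratic form of the independent Bernstein-type coordinates of $X_{\cdot k}$ living in an $r_{\Lambda}$-dimensional subspace. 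First I would record, via a net over the unit sphere of $\mathrm{range}(\Lambda)$ combined with \Cref{eq:condition_moments_bernstein} applied to each linear form $\langle X_{\cdot k}, Uv\rangle$ (sub-exponential with variance proxy $O(\sigma^{2})$ and Bernstein scale $O(K)$), the bounds $\bbE[\|g\|_2^2]\le \tfrac12\sigma^{2}r_{\Lambda}$ together with a deviation inequality $\bbP(\|g\|_2^2 \ge c_{1}(\sigma^{2}r_{\Lambda}+K^{2}s))\le e^{-s}$ for all $s\ge 1$, uniformly in the projector $\Lambda$. Given $x\le(\kappa_{3}'''(\sigma^{2}r_{\Lambda}+K^{2}\log q))^{-1}$, I then set the truncation level $R:=1/x\ge \kappa_{3}'''(\sigma^{2}r_{\Lambda}+K^{2}\log q)$ and the event $\mathcal{E}:=\{\|g\|_2^2\le R\}$; taking $\kappa_{3}'''$ large enough yields $\bbP(\mathcal{E}^{c})\le q^{-4}$ and, after integrating the tail, $\bbE[e^{x\|g\|_2^2}\mathbf{1}_{\mathcal{E}^{c}}]\le q^{-1}$.

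For the contribution of the bad event I would use that $e^{x\bP}$ is PSD and $\lambda_{\max}(x\bP)\le x\|g\|_2^2$, hence $e^{x\bP}\mathbf{1}_{\mathcal{E}^{c}}\preceq e^{x\|g\|_2^2}\mathbf{1}_{\mathcal{E}^{c}}\,I$; taking expectations (the inequality holds pathwise between PSD matrices) and then operator norms gives $\|\bbE[e^{x\bP}\mathbf{1}_{\mathcal{E}^{c}}]\|_{\op}\le \bbE[e^{x\|g\|_2^2}\mathbf{1}_{\mathcal{E}^{c}}]\le 1/q$, which is the additive $1/q$ in the statement.

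On the good event, $\lambda_{\max}(x\bP)\le xR = 1$, and since $1+t+t^{2}\ge e^{t}$ for every $t\le 1$, spectral calculus gives the operator inequality $e^{x\bP}\mathbf{1}_{\mathcal{E}}\preceq (I + x\bP + x^{2}\bP^{2})\mathbf{1}_{\mathcal{E}}$, so that $\bbE[e^{x\bP}\mathbf{1}_{\mathcal{E}}]\preceq I + x\,\bbE[\bP\mathbf{1}_{\mathcal{E}}] + x^{2}\bbE[\bP^{2}\mathbf{1}_{\mathcal{E}}]$. The cross term is harmless since $\bbE[\bP]=0$ forces $\bbE[\bP\mathbf{1}_{\mathcal{E}}] = -\bbE[\bP\mathbf{1}_{\mathcal{E}^{c}}]$, whose operator norm is at most $\bbE[(\|g\|_2^2+\tfrac12\sigma^{2})\mathbf{1}_{\mathcal{E}^{c}}]$ and is negligible by the tail bound of the first step. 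For the quadratic term I would use $\bbE[\bP^{2}\mathbf{1}_{\mathcal{E}}]\preceq \bbE[\bP^{2}] = \bbE[\|g\|_2^2 gg^{T}] - (\bbE[gg^{T}])^{2}\preceq \bbE[\|g\|_2^2 gg^{T}]$ and bound the matrix variance: for a unit $v\in\mathrm{range}(\Lambda)$, expand $v^{T}\bbE[\|g\|_2^2 gg^{T}]v = \sum_{a,b,c,d}\Lambda_{ab}v_{c}v_{d}\,\bbE[X_{a}X_{b}X_{c}X_{d}]$ and group the index pairings, controlling each block with $\bbE[X_{i}^{2}]\le\tfrac12\sigma^{2}$, $\bbE[X_{i}^{4}]\le\sigma^{2}K^{2}$, $\sum_{a}\Lambda_{aa}=r_{\Lambda}$, $\Lambda_{aa}\le 1$ and $\|\Lambda\|_{F}^{2}=r_{\Lambda}$; this gives $\|\bbE[\bP^{2}]\|_{\op}\le C(\sigma^{4}r_{\Lambda}+\sigma^{2}K^{2})$, where the $\sigma^{2}K^{2}$ piece (coming only from the fully diagonal pairing) is absorbed using $x^{2}\sigma^{2}K^{2}\le x\sigma^{2}/\kappa_{3}'''$ via the admissible range of $x$, leaving the effective bound $x^{2}\|\bbE[\bP^{2}]\|_{\op}\lesssim x^{2}(\sigma^{4}p+\sigma^{2})$ since $r_{\Lambda}\le p$. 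Collecting the pieces, $\|\bbE[e^{x\bP}]\|_{\op}\le 1 + Cx^{2}(\sigma^{4}p+\sigma^{2}) + (\text{negligible}) + 1/q \le \exp\!\bigl(\kappa_{3}''' x^{2}(\sigma^{2}+\sigma^{4}p)\bigr) + 1/q$ after enlarging $\kappa_{3}'''$.

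The main obstacle is the last computation: getting the matrix-variance estimate with the correct power of $\sigma$ (and only the Bernstein scale $K$ relegated to the constraint on $x$), since a crude Cauchy–Schwarz on $\bbE[\|g\|_2^2 gg^{T}]$ loses a spurious factor that would spoil the final optimization in \Cref{prop:concentration_bernstein_op}; keeping track of the $r_{\Lambda}$-dimensional structure and of which index pairings produce $\sigma^{2}K^{2}$ versus $\sigma^{4}r_{\Lambda}$ is where the care is needed. A secondary technical point is the uniform-in-$\Lambda$ sub-exponential tail for the quadratic form $\|g\|_2^2$ in the first step, which is a routine Hanson–Wright/net estimate but must be carried out with the $r_{\Lambda}$ and $K^{2}\log q$ scales made explicit so that the truncation level $R=1/x$ is exactly affordable.
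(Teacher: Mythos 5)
Your overall strategy is essentially the paper's: treat the single rank-one summand $\bP_k$, truncate on the event that $\|\Lambda X_{\cdot k}\|_2^2$ stays below the level $1/x \asymp \sigma^2 r_\Lambda + K^2\log q$, control the second-order (variance) term by a direct moment computation of order $\sigma^4 p + \sigma^2K^2$, and let the bad event account for the additive $1/q$. The deviations are cosmetic: you use the quadratic truncation $e^t\le 1+t+t^2$ (with an explicit treatment of the cross term $x\bbE[\bP\1_{\mathcal E}]$) where the paper keeps the full series and uses $\bP_k^u \preceq (\kappa(\sigma^2 r_\Lambda + K^2\log q))^{u-2}\bP_k^2$ on the good event; you integrate the tail on the bad event where the paper uses Cauchy--Schwarz with the MGF; and you keep the inner projection, getting $\sigma^4 r_\Lambda$ in place of the paper's $\sigma^4 p$.

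The step that would fail as written is the claimed deviation bound $\bbP\bigl(\|\Lambda X_{\cdot k}\|_2^2 \ge c_1(\sigma^2 r_\Lambda + K^2 s)\bigr)\le e^{-s}$ obtained ``via a net over the unit sphere of $\mathrm{range}(\Lambda)$'' applied to the linear forms $\langle X_{\cdot k}, Uv\rangle$. A union bound over a net of cardinality $e^{c r_\Lambda}$ with Bernstein tails forces the sub-exponential part of the maximum to scale like $K(r_\Lambda+s)$, so this route only gives $\|\Lambda X_{\cdot k}\|_2^2 \lesssim \sigma^2(r_\Lambda+s) + K^2(r_\Lambda+s)^2$ with probability $1-e^{-s}$. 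The parasitic $K^2 r_\Lambda^2$ term can far exceed the truncation level $R=1/x \asymp \sigma^2 r_\Lambda + K^2\log q$, so you cannot conclude $\bbP(\mathcal E^c)\le q^{-4}$, and both bad-event estimates ($\bbE[e^{x\|g\|_2^2}\1_{\mathcal E^c}]\le 1/q$ and the negligibility of the cross term) collapse. What is needed is a Hanson--Wright-type inequality valid for coordinates satisfying the Bernstein moment condition \Cref{eq:condition_moments_bernstein}, i.e.\ exactly the result of \cite{bellec2019concentration} that the paper isolates as Lemma~\ref{lem:prop_concentration_norme_2_bellec} (an MGF bound $\bbE[e^{x\|\Lambda X_{\cdot k}\|_2^2}]\le e^{\kappa\sigma^2 r_\Lambda x}$ for $x\lesssim K^{-2}$, hence a tail at scale $\sigma^2 r_\Lambda + K^2 s$). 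You name ``Hanson--Wright/net'' in your closing caveat, but the concrete derivation you describe is the net one, which does not deliver the required scales.

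A smaller point: your absorption of the $\sigma^2K^2$ part of the variance via $x^2\sigma^2K^2 \le x\sigma^2/\kappa_3'''$ leaves a term linear in $x$, which does not literally fit the stated form $\exp(\kappa_3''' x^2(\sigma^2+\sigma^4 p))$. The paper's own proof is no more careful at this exact spot (it merges the $48\sigma^2K^2$ bound into $\kappa_2'''(\sigma^2+\sigma^4 p)$, in effect treating $K$ as a numerical constant, which is how the lemma is used with $K=1000$), so this is a wrinkle shared with the paper rather than a defect specific to your argument; still, be aware that only the displayed variance bound $\sigma^2K^2+\sigma^4 p$ (or $K$ constant) is what the computation actually supports.
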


Moreover, applying the Matrix Chernoff techniques for the independent matrices $\bP_{k}$ (see lemma 6.12 and 6.13 of \cite{wainwright2019high}), we have for any $t > 0$ that
\begin{align*}
	\log(\bbP(\|\sum_{k=1}^q\bP_k \|_{\op} \geq t)) &\leq \log(\tr\left[\bbE[e^{x \sum_{k=1}^q\bP_k} ]\right]) -xt \\
	&\leq \log\left(\tr\left[\exp\left(\sum_{k=1}^q\log(\bbE[e^{x\bP_k}])\right) \right]\right) -xt \\
	&\leq \log(p) + \sum_{k =1}^q \|\log(\bbE[e^{x \bP_k}]) \|_{\op} - xt \\
	&= \log(p) + \sum_{k =1}^q \log(\|\bbE[e^{x \bP_k}] \|_{\op}) - xt \enspace .\\
\end{align*}

Applying \Cref{lem:prop_concentration_mgf_op}, it holds for any $x \in [0, (\kappa'''_3(\sigma^2 r_{\Lambda}+ K^2\log(q)))^{-1}]$ that
\begin{align*}
	\sum_{k =1}^q \log(\|\bbE[e^{x \bP_k}] \|_{\op}) &\leq  q\log\left(\exp\left(\kappa'''_3 x^2 (\sigma^2 + \sigma^4 p)\right) + \frac{1}{q}\right) \\
	&\leq \kappa'''_3 x^2 (\sigma^2q + \sigma^4 pq) + 1 \enspace .
\end{align*}
where in the last inequality we used the fact that for any $a\geq 1$ and $u> 0$, $\log(a + u) \leq \log(a) + u/a$.

Hence we obtain 
\begin{align*}
	\log(\bbP(\|\sum_{k=1}^q\bP_k \|_{\op} \geq t)) \leq \log(ep) + \kappa'''_3 x^2 (\sigma^2q + \sigma^4 pq) -xt \enspace .
\end{align*}
Hence if $t \leq 2\frac{\sigma^2 q + \sigma^4 pq}{\sigma^2 r_{\Lambda}+ K^2\log(q)}$, we choose $x = \frac{t}{2\kappa'''_3(\sigma^2 q + \sigma^4 pq)}$ and if $t > 2\frac{\sigma^2 q + \sigma^4 pq}{\sigma^2 r_{\Lambda}+ K^2\log(q)}$ we choose $x = \tfrac{1}{\kappa'''_3(\sigma^2 r_{\Lambda}+ K^2\log(q))}$, which gives 

\begin{equation*}
	\bbP(\|\sum_{k=1}^q\bP_k \|_{\op} \geq t) \leq e p \max\left[\exp\left(-\frac{1}{\kappa_3}\left(\frac{t^2}{4(\sigma^2 q + \sigma^4 pq)} \lor \frac{t}{2(\sigma^2 r_{\Lambda}+ K^2\log(q))}\right)\right)\right]\enspace .
\end{equation*}

We deduce that with probability at least $1-\delta$, it holds that
\begin{equation*}
	\|\sum_{k=1}^q\bP_k \|_{\op} \leq \kappa \left[\sqrt{(\sigma^4pq + \sigma^2 q)\log(p/\delta'')} + (\sigma^2r_{\Lambda} + K^2\log(q))\log(p/\delta'')\right]\enspace ,
\end{equation*}
for some numerical constant $\kappa$.

\begin{proof}[Proof of \Cref{lem:prop_concentration_mgf_op}]
	
	Since $\|\Lambda X_{\cdot k}  X_{\cdot k}^T\Lambda \|_{\op} = \|\Lambda X_{\cdot k}\|_2^2$, we state the following lemma controlling the moment generating function of the $L_2$ norm of the projection $\Lambda X_{\cdot k}$:

	\begin{lemma} {\color{red} }\label{lem:prop_concentration_norme_2_bellec}
		There exists a numerical constant $\kappa'''_0$ such that for any $x \leq \tfrac{1}{\kappa'''_0K^2}$ we have
		\begin{equation*}
			\bbE[e^{x \| \Lambda X_{\cdot k} \|_2^2}] \leq e^{\kappa'''_0
			\sigma^2 r_{\Lambda} x} \enspace .
		\end{equation*}
	\end{lemma}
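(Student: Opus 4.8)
\textbf{Plan for the proof of Lemma~\ref{lem:prop_concentration_norme_2_bellec}.}

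The statement is a standard sub-exponential (Bernstein-type) control of the moment generating function of $\|\Lambda X_{\cdot k}\|_2^2 = X_{\cdot k}^T \Lambda X_{\cdot k}$, a quadratic form in the independent, mean-zero coordinates of $X_{\cdot k}$ with the Bernstein moment bound~\eqref{eq:condition_moments_bernstein}. I would prove it directly by a Taylor-type expansion of $\bbE[e^{x\|\Lambda X_{\cdot k}\|_2^2}]$ in powers of $x$, bounding each moment $\bbE[(\|\Lambda X_{\cdot k}\|_2^2)^u]$ and checking that the series converges geometrically for $x$ small compared to $1/K^2$. Concretely, write $Z = \|\Lambda X_{\cdot k}\|_2^2 = \sum_{i,j} \Lambda_{ij} X_{ik} X_{jk}$; the first task is to show $\bbE[Z^u] \le C^u u!\, (\sigma^2 r_\Lambda + \sigma^4 K^{2(u-1)} \cdot (\text{something}))$-type bounds. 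The cleanest route is to first handle $\bbE[Z] = \Tr(\Lambda \,\bbE[X_{\cdot k} X_{\cdot k}^T])$, which is at most $\sigma^2 r_\Lambda$ since the diagonal entries of $\bbE[X_{\cdot k}X_{\cdot k}^T]$ are the variances $\bbE[X_{ik}^2] \le \sigma^2$ and $\Tr(\Lambda) = r_\Lambda$. Then one controls the fluctuation $Z - \bbE[Z]$ via the Hanson-Wright inequality for independent entries under a Bernstein/$\psi_1$-type tail (e.g., the version in Bellec~\cite{bellec2019concentration}, whose Assumption 1 is exactly~\eqref{eq:condition_moments_bernstein}), which gives a sub-exponential tail for $Z - \bbE[Z]$ with variance proxy of order $\sigma^4 \|\Lambda\|_F^2 = \sigma^4 r_\Lambda$ and scale proxy of order $K^2 \|\Lambda\|_{\op} = K^2$.

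More precisely, the plan is: (i) apply the Hanson-Wright-type bound from~\cite{bellec2019concentration} (which holds precisely under the moment condition~\eqref{eq:condition_moments_bernstein}) to get, for all $t\geq 0$,
\[
\Prob\left[ |Z - \bbE[Z]| \geq t \right] \leq 2\exp\left( - c\min\left( \frac{t^2}{\sigma^4 r_\Lambda}, \frac{t}{K^2} \right) \right),
\]
for a numerical constant $c$; (ii) convert this sub-exponential tail into an MGF bound: for $x \leq c/(2K^2)$,
\[
\bbE\left[ e^{x(Z - \bbE[Z])} \right] \leq \exp\left( C' \sigma^4 r_\Lambda x^2 \right) \leq \exp\left( C' \sigma^2 r_\Lambda x \right),
\]
where the last step uses $x \leq 1/(\kappa'''_0 K^2) \leq 1/(\kappa'''_0 \sigma^2)$ (recall $K \geq 1$ and, in the intended application, $\sigma^2 \leq 1$, so $\sigma^4 x \leq \sigma^2 x$; more carefully one keeps track that $\sigma^2 \le K^2$ so $\sigma^4 r_\Lambda x^2 \le \sigma^2 r_\Lambda x \cdot (\sigma^2 x) \le \sigma^2 r_\Lambda x \cdot (1/\kappa_0''')$); (iii) combine with $e^{x\bbE[Z]} \leq e^{\sigma^2 r_\Lambda x}$ to obtain
\[
\bbE\left[ e^{x\|\Lambda X_{\cdot k}\|_2^2} \right] = e^{x\bbE[Z]}\, \bbE\left[ e^{x(Z - \bbE[Z])} \right] \leq e^{(1 + C')\sigma^2 r_\Lambda x} = e^{\kappa'''_0 \sigma^2 r_\Lambda x},
\]
after adjusting $\kappa'''_0$ to absorb all constants and to ensure the restriction $x \le 1/(\kappa_0''' K^2)$ also guarantees $x \le c/(2K^2)$.

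Alternatively, to keep the paper self-contained and avoid citing the precise constant-tracking of~\cite{bellec2019concentration}, I could instead prove the moment bound $\bbE[(\|\Lambda X_{\cdot k}\|_2^2)^u] \le u!\,(\kappa \sigma^2 r_\Lambda)(\kappa(\sigma^2 r_\Lambda + K^2))^{u-1}$ by hand using the decoupling/diagonal-off-diagonal splitting of the quadratic form and the entrywise Bernstein bound~\eqref{eq:condition_moments_bernstein}, then sum the exponential series; this is the more laborious but fully elementary route. The main obstacle in either approach is bookkeeping: tracking how the two scales $\sigma^2$ and $K^2$ enter the variance proxy ($\sigma^4 r_\Lambda$) versus the scale proxy ($K^2 \lor \sigma^2 r_\Lambda$), and confirming that the stated threshold $x \lesssim 1/K^2$ is indeed enough to kill both the quadratic term in the MGF and the off-diagonal contributions — in particular, that the factor $r_\Lambda$ in the exponent is the right (rank, not ambient dimension) quantity, which is exactly what using $\|\Lambda\|_F^2 = r_\Lambda$ and $\|\Lambda\|_{\op} = 1$ delivers. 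No subtlety beyond this constant-chasing is expected; the result is a routine consequence of Hanson-Wright under a Bernstein moment hypothesis.
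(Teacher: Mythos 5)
Your route is essentially the paper's: both reduce the lemma to Bellec's Hanson--Wright-type result under the Bernstein moment assumption (Assumption 1 of \cite{bellec2019concentration}), use $\bbE[\|\Lambda X_{\cdot k}\|_2^2]\leq \sigma^2 r_\Lambda$, $\|\Lambda\|_F^2=r_\Lambda$, $\|\Lambda\|_{\op}=1$, and absorb the quadratic term in the exponent thanks to the restriction $xK^2\lesssim 1$; the paper simply cites the MGF form of that result directly rather than passing through the tail bound and back. One bookkeeping correction: under \Cref{eq:condition_moments_bernstein} the variance proxy of the quadratic form is $\sigma^2K^2\|\Lambda\|_F^2$ (since $\Var(X_{ik}^2)$ is only controlled by $\sigma^2K^2$, not $\sigma^4$), so your step (ii) should read $\bbE[e^{x(Z-\bbE Z)}]\leq \exp(C'\sigma^2K^2r_\Lambda x^2)$; this actually makes the argument cleaner, because $x\leq 1/(\kappa'''_0K^2)$ then gives $\sigma^2K^2r_\Lambda x^2\leq \sigma^2 r_\Lambda x/\kappa'''_0$ directly, without invoking $\sigma^2\leq K^2$ (which is not implied by the moment assumption, only by the intended application where $\sigma^2=\lambda_0\leq 1\leq K^2$). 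With that adjustment your argument matches the paper's proof.
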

	
	Now we define the event $\xi_{\op} := \{ \max_{k=1,\dots,d} \| \Lambda X_{\cdot k}\|_2^2 \leq \kappa'''_0(\sigma^2 r_{\Lambda} + K^2\log(q^3))\}$, where $\kappa'''_0$ is the numerical constant given by \Cref{lem:prop_concentration_norme_2_bellec}. Applying the same lemma together with the Chernoff bound, a union bound over all $k = 1 \dots d$ gives 
	\begin{equation*}
		\bbP(\xi_{\op}^c) \leq \tfrac{1}{q^2} \enspace .
	\end{equation*}

We consider in what follows the relation order $\preceq$ induced by the cone of nonegative symetric matrices $\bbS_n^+$, namely $X' \preceq X''$ if and only if $X'' - X' \in \bbS_n^+$.
Under the event $\xi_{\op}$, it holds that for any $u \geq 2$,
\begin{align*}
	\bP_k^u &\preceq \|\bP_k \|_{\op}^{u-2} \bP_k^2 \\
	&\preceq \|\Lambda(X_{\cdot k}^T X_{\cdot k} - \bbE[X_{\cdot k}^T X_{\cdot k}]) \Lambda \|_{\op}^{u-2} \bP_k ^2\\
	&\preceq ( \kappa'''_1(\sigma^2 r_{\Lambda} + K^2\log(q)))^{u-2}\bP_k^2 \enspace ,
\end{align*}
for some numerical constant $\kappa'''_1$ (depending on $\kappa'''_0$). In the third inequality we used the definition of $\xi_{\op}$ the fact that $\bbE[\|\Lambda X_{\cdot k}\|_2^2] \leq \kappa'''_0 \sigma^2 r_{\Lambda}$.

We now give an upper bound of $\|\bbE[\bP_k^2]\|_{\op}$, which is the operator norm of the variance of $\bP_k$ as defined in section 6 in \cite{wainwright2019high}.
Remark that since any matrix $U \in \bbR^{q \times q}$ satisfies $U\Lambda U^T \preceq U U^T$, we have that $\bP_k \preceq \Lambda (X_{\cdot k}^T X_{\cdot k} - \bbE[X_{\cdot k}^T X_{\cdot k}])^2 \Lambda$.

Let us compute the expectation of $(X_{\cdot k}^T X_{\cdot k} - \bbE[X_{\cdot k}^T X_{\cdot k}])^2$:
	
	\begin{equation*}
		\bbE[(X_{\cdot k}^T X_{\cdot k} - \bbE[X_{\cdot k}^T X_{\cdot k}])^2]_{ij} = \sum_{l \in P} \bbE[( X_{ik} X_{lk} - \bbE[ X_{ik} X_{lk}])( X_{lk} X_{jk} - \bbE[ X_{lk} X_{jk}])] \enspace .
	\end{equation*}
	The off diagonal terms are zero, and the $i^{th}$ diagonal element satisfies:
	\begin{equation}\label{eq:diagonal_coef_X}
		\bbE[(X_{\cdot k}^T X_{\cdot k} - \bbE[X_{\cdot k}^T X_{\cdot k}])^2]_{ii} = \bbE[( X_{ik}^2 - \bbE[ X_{ik}^2])^2] + \sum_{j \neq i}\bbE[ X_{ik}^2]\bbE[ X_{jk}^2] \enspace .
	\end{equation}
	By assumption \Cref{eq:condition_moments_bernstein}, the first term of \Cref{eq:diagonal_coef_X} satisfies 
	\begin{equation*}
		\bbE[( X_{ik}^2 - \bbE[ X_{ik}^2])^2] \leq 4\bbE[( X_{ik}^4)] \leq   48\sigma^2K^2\enspace .
	\end{equation*}
	The second term of \Cref{eq:diagonal_coef_X} is smaller than $\sigma^4p$, still by assumption \Cref{eq:condition_moments_bernstein}. Hence we have some numerical constant $\kappa'''_2$ that
	
	$$\|\bbE[\bP_k^2]\|_{\op} \leq \|\bbE[( X_{ik}^2 - \bbE[ X_{ik}^2])^2]\|_{\op} \leq \kappa'''_2 (\sigma^2 + \sigma^4 p) \enspace .$$

	Now, by the definition of the exponential of matrices, the triangular inequality and the fact that $\bP_k$ is centered, we have
	\begin{equation}\label{eq:decomposition_evenement}
		\|\bbE[\exp(x\bP_k)]\|_{\op} = 1 + \sum_{u \geq 2} \frac{x^u}{u!}\|\bbE[\bP_k^u\1_{\xi_{\op}}]\|_{\op} + \sum_{u \geq 2} \frac{x^u}{u!}\|\bbE[\bP_k^u\1_{\xi^c_{\op}}]\|_{\op} \enspace .
	\end{equation}

	By definition of $\xi_{\op}$ together with the upper bound of the variance of $\bP^2_k \1_{\xi_{\op}} \preceq \bP^2_k$, it holds for any $x \in [0, (\kappa'''_1(\sigma^2 r_{\Lambda} + K^2\log(q)))^{-1}]$ that 

	\begin{align*}
		\sum_{u \geq 2} x^u\|\bbE[\bP_k^u\1_{\xi_{\op}}]\|_{\op} &\leq x^2 \|\bbE[\bP_k^2]\|_{\op}\sum_{u\geq 2} \frac{x^{u-2}}{u!} (\kappa'''_1(\sigma^2 r_{\Lambda} + K^2\log(q)))^{u-2}\\
		&\leq x^2 \kappa'''_2(\sigma^2 +\sigma^4 p) \sum_{u \geq 0} \frac{x^u}{(u+2)!} (\kappa'''_1(\sigma^2 r_{\Lambda} + K^2\log(q)))^u \\
		&\leq \exp(\kappa'''_3 x^2 (\sigma^2 +\sigma^4 p)) - 1 \enspace , \\
	\end{align*}
	for some numerical constant $\kappa'''_3$.
	We now control the second term of \Cref{eq:decomposition_evenement} under the complementary event $\xi_{\mathrm{\op}}$, for any $x \in [0, (2\kappa'''_0(\sigma^2 r_{\Lambda} + K^2\log(q)))^{-1}]$:
	
	\begin{align*}
		\sum_{u \geq 2} \frac{x^u}{u!}\|\bbE[\bP_k^u\1_{\xi^c_{\op}}] &~\leq \bbE[\exp(x\|\bP_k \|_{\op}\1_{\xi_{\op}^c}]]  \\
		&\overset{(a)}{\leq} \sqrt{\frac{1}{q^2}}\sqrt{\bbE[\exp(2x \|\bP_k \|_{\op})]} \\
		&\overset{(b)}{\leq} \frac{1}{q}\exp(x \kappa'''_0 \sigma^2 r_{\Lambda}) \\
		&~\leq \frac{1}{q}\enspace ,
	\end{align*}
	where in $(a)$ we used the cauchy-schwarz inequality for real random variables and in $(b)$ we applied \Cref{lem:prop_concentration_norme_2_bellec}.
	
	\end{proof}

	\begin{proof}[Proof of \Cref{lem:prop_concentration_norme_2_bellec}]
		We use the result of \cite{bellec2019concentration} which is a generalization of the Hanson-Wright inequality to random variables with coefficients with bernstein's moments.
		
		[Assumption 1 of \cite{bellec2019concentration}] is satisfied with parameters $\sigma^2$ and $K$, and we have the following upper bound on the moment generating function of the quadratic form $\|\Lambda  X_{\cdot k}^T \|_2^2 = | X_{\cdot k} \Lambda  X_{\cdot k}^T |$:
	
		\begin{equation}
			\bbE[e^{x \| \Lambda X_{\cdot k}^T \|_2^2}] \leq e^{x \bbE[\| \Lambda X_{\cdot k}^T \|_2^2]} e^{\kappa'''_0 x^2 K^2\sigma^2\|\Lambda \|_F^2} \leq e^{\kappa'''_1 x \sigma^2 r_{\Lambda}}\enspace ,
		\end{equation}
		for any $x$ satisfying condition $(6)$ of \cite{bellec2019concentration}, that is $128 x\|\Lambda \|_{\op} K^2 \leq 1$. For the last inequality, we used the fact that $\|\Lambda\|_F^2 = \rank(\Lambda)$. We obtain the result by choosing $\kappa'''_2 = \kappa'''_1 \lor 128$.
	\end{proof}

\bibliographystyle{abbrv}
\bibliography{biblio}

\end{document}